\renewcommand{\abovecaptionskip}{0pt}
\renewcommand{\belowcaptionskip}{6pt}
\renewcommand{\@makecaption}[2]{
\vspace{\abovecaptionskip}%
\sbox{\@tempboxa}{#1. #2}%
\global\@minipagefalse \hbox to \hsize {{\scshape \hfil #1.
#2\hfil}} \vspace{\belowcaptionskip}}
\newcommand{\CC}{\mathbb C}
\newcommand{\QQ}{\mathbb Q}
\newcommand{\ZZ}{\mathbb Z}
\newcommand{\PP}{\mathbb P}
\renewcommand{\ge}{\geqslant}
\renewcommand{\le}{\leqslant}
\newcommand{\Ker}{\operatorname{Ker}}
\newcommand{\rk}{\operatorname{rk}}
\newcommand{\SL}{\operatorname{SL}}
\newcommand{\GL}{\operatorname{GL}}
\newcommand{\PGL}{\operatorname{PGL}}
\newcommand{\Spin}{\operatorname{Spin}}
\newcommand{\CR}{\operatorname{CR}}
\newcommand{\Der}{\operatorname{Der}}
\newcommand{\Div}{\operatorname{div}}
\DeclareMathOperator{\ad}{\mathrm{ad}}
\DeclareMathOperator{\Supp}{\mathrm{Supp}}
\DeclareMathOperator{\supp}{\mathit{supp}}
\DeclareMathOperator{\Hom}{\mathrm{Hom}}
\DeclareMathOperator{\ord}{\mathrm{ord}}
\DeclareMathOperator{\Quot}{\mathrm{Quot}}
\DeclareMathOperator{\Cl}{\mathrm{Cl}}
\DeclareMathOperator{\Spec}{\mathrm{Spec}}
\DeclareMathOperator{\Aut}{\mathrm{Aut}}
\DeclareMathOperator{\hgt}{ht}
\newtheorem{theorem}{Theorem}
\newtheorem{proposition}[theorem]{Proposition}
\newtheorem{lemma}[theorem]{Lemma}
\newtheorem{corollary}[theorem]{Corollary}
\newtheorem{conjecture}[theorem]{Conjecture}
\theoremstyle{definition}
\newtheorem{definition}[theorem]{Definition}
\newtheorem{example}[theorem]{Example}
\theoremstyle{remark}
\newtheorem{remark}[theorem]{Remark}
\numberwithin{equation}{section}
\def\LT@makecaption#1#2#3{%
 \LT@mcol \LT@cols c{\hbox to\z@{\hss\parbox[t]\LTcapwidth{%
 \normalsize\@captionfont\@captionheadfont
 \sbox{\@tempboxa}{#2. #3}%
 \ifdim\wd\@tempboxa>\hsize
 #2. #3%
 \else
 \hbox to\hsize{\hfil\box\@tempboxa\hfil}%
 \fi
 \endgraf\vskip\baselineskip}%
 \hss}}}
\def\fnum@table{\tablename\hskip.33em\thetable}
\begin{document}

\renewcommand{\proofname}{Proof}
\renewcommand{\abstractname}{Abstract}
\renewcommand{\refname}{References}
\renewcommand{\figurename}{Figure}
\renewcommand{\tablename}{Table}

\title[Strongly solvable spherical subgroups]{Strongly solvable spherical subgroups\\and their combinatorial invariants}

\author{Roman Avdeev}

\thanks{Partially supported by the Russian Foundation for Basic Research,
grant no. 12-01-00704. A considerable part of this paper was written
during the author's stays at the University of Bielefeld (Bielefeld,
Germany, June and July, 2012) supported by the Alexander von
Humboldt Foundation and SFB~701 and at the University of Bochum
(Bochum, Germany, November and December, 2012) supported by the DFG
priority program 1388 (Representation theory). The work on this
paper was completed during the author's stay at the University of
Cologne (Cologne, Germany, April and May, 2013) supported by the DFG
priority program 1388 (Representation theory). The author expresses
his gratitude to H.~Abels, P.~Heinzner and S.~Cupit-Foutou, and
P.~Littelmann for the invitations.}

\address{%
{\bf Roman Avdeev} \newline\indent National Research University
``Higher School of Economics'', Moscow, Russia}

\email{suselr@yandex.ru}


\subjclass[2010]{14M27, 14M17, 05E15}

\keywords{Algebraic group, homogeneous space, representation,
spherical variety, wonderful variety, spherical subgroup, solvable
subgroup}

\begin{abstract}
A subgroup $H$ of an algebraic group $G$ is said to be strongly
solvable if $H$ is contained in a Borel subgroup of~$G$. This paper
is devoted to establishing relationships between the following three
combinatorial classifications of strongly solvable spherical
subgroups in reductive complex algebraic groups: Luna's general
classification of arbitrary spherical subgroups restricted to the
strongly solvable case, Luna's 1993 classification of strongly
solvable wonderful subgroups, and the author's 2011 classification
of strongly solvable spherical subgroups. We give a detailed
presentation of all the three classifications and exhibit
interrelations between the corresponding combinatorial invariants,
which enables one to pass from one of these classifications to any
other.
\end{abstract}

\maketitle

\sloppy

\section{Introduction} \label{sect_introduction}

Let $G$ be a connected reductive complex algebraic group and let $B$
be a Borel subgroup of~$G$. A closed subgroup~$H \subset G$, as well
as the corresponding homogeneous space $G / H$, is said to be
\textit{spherical} if any one of the following equivalent conditions
is satisfied:
\begin{enumerate}[label=\textup{(S\arabic*)},ref=\textup{S\arabic*}]
\item \label{S1}
the group $B$ has an open orbit in $G / H$;

\item
for every homogeneous line bundle $\mathcal L$ over $G / H$, the
representation of $G$ on the space of regular sections of $\mathcal
L$ is multiplicity free;

\item \label{S3}
for every simple $G$-module~$V$ and every character $\chi$ of~$H$,
the subspace $V^{(H)}_\chi \subset V$ of $H$-semi-invariant vectors
of weight $\chi$ is at most one-dimensional.
\end{enumerate}

Apart from properties~(\ref{S1})--(\ref{S3}), which will play an
important role in this paper, there are many other equivalent
sphericity conditions. An extensive list of them, as well as proofs
of the equivalences, can be found in the monograph by Timashev;
see~\cite[\S\,25]{Tim}.

We recall that, for an arbitrary homogeneous space $G / H$, every
normal irreducible $G$-variety containing $G / H$ as an open orbit
is said to be an \textit{equivariant embedding} (or simply an
\textit{embedding}) of $G / H$. Equivariant embeddings of spherical
homogeneous spaces are said to be \textit{spherical varieties}.
Among arbitrary normal irreducible $G$-varieties, spherical
$G$-varieties are characterized by the existence of an open orbit
for the induced action of~$B$.

Spherical varieties form an extremely interesting class of
$G$-varieties. The most famous representatives of this class are
toric varieties, symmetric varieties, and flag varieties. Countless
papers are devoted to various aspects concerned with spherical
varieties or certain subclasses of them. Some of these aspects are
reflected in~\cite[Chapter~5]{Tim}. In this context we also mention
the recent survey paper~\cite{Per} devoted to the geometry of
spherical varieties.

One of the important problems in the theory of spherical varieties
is the problem of classifying them. The spherical $G$-varieties with
a given open $G$-orbit, that is, the embeddings of a given spherical
homogeneous space, were classified by Luna and Vust in the framework
of their general theory of embeddings of arbitrary homogeneous
spaces developed in~\cite{LV}. Later, in the particular case of
spherical varieties this theory was considerably simplified and
restated in a more transparent form by Knop~\cite{Kn91}. As a
result, the embeddings of a given spherical homogeneous space are
classified in terms of certain objects of convex geometry called
colored fans, which generalize usual fans used for classifying toric
varieties. Thus the classification of spherical varieties reduces to
that of spherical homogeneous spaces, which by definition is
equivalent to the classification (up to conjugacy) of spherical
subgroups in reductive algebraic groups.

In classifying spherical homogeneous spaces $G / H$ (or,
equivalently, spherical subgroups $H \subset G$ up to conjugacy) one
may restrict the consideration to the case where $G$ is semisimple;
see, for instance,~\cite[\S\,I.3.3, Corollary~2]{Vin01}
or~\cite[\S\,10.2]{Tim}. For semisimple~$G$, a complete
classification of \textit{affine} spherical homogeneous spaces $G /
H$ (that is, with reductive $H$) was obtained by Kr\"amer~\cite{Kr}
(in the case of simple~$G$), Mikityuk~\cite{Mi}, Brion~\cite{Br87}
(both treated independently the case of non-simple semisimple~$G$),
and Yakimova~\cite{Yak} (small corrections). We note that this
classification is essentially given by a list of spaces and does not
involve any combinatorial invariants.

At present, there is a complete classification in combinatorial
terms of spherical homogeneous spaces $G / H$, which is a result of
joint decade-long efforts of several researchers. The idea of this
classification was proposed by Luna in 2001~\cite{Lu01}, therefore
we shall refer to it as Luna's general classification. This
classification is carried out in two steps. The first one is to
reduce the classification to the case of so-called wonderful
subgroups. A spherical subgroup $H \subset G$ is said to be
\textit{wonderful} if $G / H$ admits a wonderful completion, which
is a smooth complete embedding with certain additional properties
(see Definitions~\ref{def_wonderful1} and~\ref{def_wonderful2}).
Wonderful completions of spherical homogeneous spaces $G / H$ are
also known as wonderful $G$-varieties. They are generalizations of
complete symmetric varieties considered in~\cite{ConP}. The second
step of the classification is to describe all the wonderful
$G$-varieties. In the paper~\cite{Lu01}, Luna himself performed the
first step in full generality, stated a conjecture for the second
step, and proved this conjecture in the case where $G$ is a product
of simple groups of type~$\mathsf A$. Luna's conjecture claimed that
wonderful $G$-varieties are classified by combinatorial objects
called \textit{spherical systems}. During the following several
years the conjecture was proved in certain other particular cases,
see~\cite{BP1}, \cite{Bra1}, and~\cite{BC}. The uniqueness part of
Luna's conjecture was proved by Losev~\cite{Lo1} in 2009 by a
general argument. The first general proof of the existence part of
this conjecture was obtained by Cupit-Foutou~\cite{Cu} via invariant
Hilbert schemes. Another proof of the existence part, which follows
the lines of the original constructive approach employed by Luna,
was recently suggested by Bravi and Pezzini in the series of
papers~\cite{BP2}, \cite{BP3}, and \cite{BP4}. We also mention the
paper~\cite{BL}, which is an introduction to wonderful varieties
with an emphasis put on the combinatorics of spherical systems.

The combinatorial invariants involved in Luna's general
classification are called the \textit{principal combinatorial
invariants} in this paper. They come from the Luna--Vust theory
mentioned above: the description of all embeddings of a fixed
spherical homogeneous space is given in terms of these invariants. A
detailed discussion of the principal combinatorial invariants,
including definitions and basic properties, can be found in
\S\,\ref{subsec_pci}.

A subgroup $H \subset G$ is said to be \textit{strongly solvable} if
it is contained in a Borel subgroup of~$G$. We note that every
connected solvable subgroup of $G$ is automatically strongly
solvable. An example of a solvable but not strongly solvable
subgroup is given by the normalizer of a maximal torus in $\SL_2$.

Apart from Luna's general classification of arbitrary spherical
subgroups, there are two additional combinatorial classifications in
the case of strongly solvable spherical subgroups. Both of them can
be developed independently of each other and Luna's general
classification. The first one was obtained by Luna in his
unpublished 1993 preprint~\cite{Lu93}. This classification applies
only to the case of wonderful strongly solvable subgroups. In what
follows we shall refer to this classification as Luna's 1993
classification. The second classification was recently obtained by
the author~\cite{Avd_solv} and deals with connected solvable
subgroups. In fact, the connectedness condition turns out to be
inessential in this classification, so that it extends almost
unchanged to the case of arbitrary strongly solvable subgroups.

Both Luna's general classification and Luna's 1993 classification
have a geometric origin: the invariants involved arise from the
geometry of the corresponding homogeneous spaces. As a result, both
classifications provide no simple method for relating the invariants
with an explicit description of the corresponding subgroups, which
leaves the following two problems unsolved:
\begin{enumerate}[label=\textup{(P\arabic*)},ref=\textup{P\arabic*}]
\item \label{P1}
compute the invariants of a given subgroup specified by a Levi
subgroup of it together with the Lie algebra of its unipotent
radical;

\item \label{P2}
determine (in the same sense) the subgroup corresponding to a given
set of invariants.
\end{enumerate}

\noindent We should mention here that a considerable progress in
solving problem (\ref{P2}) for wonderful subgroups has been achieved
by Bravi and Pezzini in~\cite[\S\S\,3,\,5]{BP2} and~\cite{BP3}.
Namely, their approach in fact enables one to construct a wonderful
subgroup $H$ starting from its spherical system, however the
procedure is very indirect and consists in several reduction steps
leading in the end to a list of ``primitive'' cases. From this
procedure, one can extract an explicit description of a Levi
subgroup of~$H$. As for determining the unipotent radical $H_u
\subset H$, they suggested a technique that helps to guess $H_u$ in
every concrete example. Unfortunately, so far there is no proof that
this technique will always work.

In contrast to both Luna's classifications, the author's 2011
classification is much more algebraic: the invariants involved
encode explicitly an embedding of the Lie algebra of $H$ in the Lie
algebra of $G$, so that $H$ can be easily recovered from the
corresponding invariants. For this reason, in what follows the
author's 2011 classification will be referred to as the explicit
classification.

The main goal of this paper is to give a detailed presentation of
all the three above-mentioned combinatorial classifications and
reveal interrelations between the corresponding combinatorial
invariants. Specifically, for every two classifications under
consideration we provide either explicit formulas or an effective
method for computing the invariants involved in the first one
starting from the invariants involved in the second one. Of course,
the latter is possible in the situation where both classifications
apply, that is, for strongly solvable spherical subgroups or for
strongly solvable wonderful subgroups.

This paper is intermediate between a survey and a research paper. On
the one hand, an extensive description of a wide range of known
results concerned with Luna's general classification and some other
aspects makes the paper resemble a survey. On the other hand, our
ultimate goal consists in solving concrete problems, therefore,
unlike a survey, this paper contains a number of new results.
Results that are definitely new are contained in
\S\,\ref{sect_explicit_classification} and concerned with
establishing relationships between the explicit classification and
the other two classifications under consideration. In particular,
these results solve problems (\ref{P1}) and (\ref{P2}) in the case
of strongly solvable spherical subgroups.

A special attention is paid in this paper to Luna's 1993
classification of wonderful strongly solvable subgroups. At the
moment, this classification can be found only in Luna's unpublished
preprint~\cite{Lu93}, which is extremely hard to access. Few
references to this preprint existing in the literature give no idea
on the employed approach and the classification itself. Thus the
classification now seems to be almost forgotten. Being sure that it
does not deserve such a fate, in this paper we make an attempt to
document this classification in full detail, providing complete
proofs for all statements. (We note that Luna's preprint is written
in a rather sketchy style.) Luna's 1993 classification is based on
the description of automorphism groups of smooth complete toric
varieties obtained by Demazure in~1970~\cite{Dem}. To present the
classification, we use a much more transparent version of this
description obtained by Cox in 1995~\cite{Cox} via his realization
of toric varieties as quotients of vector spaces by actions of
diagonalizable groups.

Along with the invariants involved in the three classifications in
question, in this paper we consider one more invariant of arbitrary
spherical homogeneous spaces, called the extended weight semigroup
(see its definition in~\S\,\ref{subsec_ews}). The term ``extended
weight semigroup'' was introduced in the recent paper~\cite{Avd_EWS}
though the semigroup itself appeared implicitly many times in
earlier papers of different authors. This semigroup is closely
related to the principal combinatorial invariants. Namely, it turns
out that, knowing the extended weight semigroup, one can compute all
but one principal combinatorial invariants, and in the strongly
solvable case this semigroup determines all the principal
combinatorial invariants. On the other hand, the extended weight
semigroup is recovered from the principal combinatorial invariants.
A systematic study of the interrelations between these invariants is
undertaken in~\S\,\ref{subsec_ews_pci_interrelations}.

The significance of the extended weight semigroup in this paper
becomes apparent in relating Luna's general classification to the
explicit classification. Thanks to the paper~\cite{AG}, extended
weight semigroups are computed for all spherical homogeneous spaces
with strongly solvable stabilizer in terms of the combinatorial
invariants involved in the explicit classification. In view of the
previous paragraph, this enables one to pass between the explicit
classification and Luna's general classification. In this situation,
the extended weight semigroup plays the role of an intermediate
invariant between the invariants involved in the two
classifications.

This paper is organized as follows.

In \S\,\ref{sect_invariants} we introduce the principal
combinatorial invariants and the extended weight semigroup of a
spherical homogeneous space and then study interrelations between
these invariants.

The main goal of \S\,\ref{sect_Luna_gen} is to present Luna's
general classification of spherical homogeneous spaces. To this end,
we introduce the important notion of a wonderful $G$-variety and
explain how the classification reduces to that of wonderful
$G$-varieties. Apart from precise description of combinatorial
objects involved in Luna's general classification, we also introduce
several related notions needed later in this paper. At last, we
state and prove criteria for spherical or wonderful subgroups to be
strongly solvable in terms of their combinatorial invariants.

In \S\,\ref{sect_Luna_1993} we present Luna's 1993 classification of
strongly solvable wonderful subgroups. We first show how this
classification reduces to that of so-called wonderful
$B^-$-varieties, where $B^-$ is the Borel subgroup of~$G$ opposite
to~$B$ with respect to a fixed maximal torus~$T \subset B$. Next, we
provide a detailed description of connected automorphism groups of
smooth complete toric varieties and then apply it to obtain a
classification of wonderful $B^-$-varieties, which also implies a
classification of strongly solvable wonderful subgroups. Finally, we
find out how the invariants involved in Luna's 1993 classification
are related to the invariants involved in Luna's general
classification.

The explicit classification of strongly solvable spherical subgroups
is presented in~\S\,\ref{sect_explicit_classification}. We begin
with an outline of main ideas employed in this classification and
then state the classification itself. After that, we establish
interrelations between the explicit classification and two Luna's
classifications.

In~\S\,\ref{sect_generalizations} we discuss possible
generalizations of Luna's 1993 classification and the explicit
classification to the case of arbitrary spherical subgroups.

In Appendix~\ref{sect_homogeneous_bundles}, we recall the
construction and main properties of homogeneous bundles, which play
an important role in \S\,\ref{sect_Luna_1993}.

Appendix~\ref{sect_lists} can be considered as an illustration of
relationships between the three classifications under consideration.
Here, we list on the combinatorial level all wonderful strongly
solvable subgroups in all semisimple groups of rank at most~$2$ and
also in simple groups of type~$\mathsf A_3$. For every such
subgroup, we indicate its invariants with respect to each of the
three classifications.

\textbf{Acknowledgements.} The author cordially thanks D.~Luna for
discussions and private notes, which among other things helped the
author to learn much about Luna's general classification of
spherical homogeneous spaces. Thanks are also due to
I.\,V.~Arzhantsev, P.~Bravi, S.~Cupit-Foutou, D.\,A.~Timashev,
E.\,B.~Vinberg, and V.\,S.~Zhgoon for helpful discussions on
particular topics. At last, the author is grateful to the referee
for numerous corrections and valuable suggestions.

\textbf{Some notation and conventions.}

In this paper the base field is the field $\mathbb C$ of complex
numbers. All topological terms relate to the Zariski topology. All
groups and their subgroups are assumed to be algebraic. The Lie
algebras of groups denoted by capital Latin letters are denoted by
the corresponding small Gothic letters. A $K$-variety is an
algebraic variety equipped with a regular action of a group~$K$.

$\ZZ^+$ is the set of non-negative integers;

$\QQ^+$ is the set of non-negative rational numbers;

$\CC^\times$ is the multiplicative group of the field~$\CC$;

$\langle \cdot\,, \cdot \rangle$ is the natural pairing between
$\Hom_\ZZ(L, \QQ)$ and~$L$, where $L$ is a lattice;

$e$ is the identity element of an arbitrary group;

$o$ is the base point of any homogeneous space $L / K$, $o = eK$

$|E|$ is the cardinality of a finite set~$E$;

$V^*$ is the space of linear functions on a vector space~$V$;

$\langle A \rangle$ is the linear span of a subset $A$ of a vector
space~$V$;

$K^0$ is the connected component of the identity of a group~$K$;

$K^\sharp$ is the common kernel of all characters of a group~$K$;

$(K,K)$ is the derived subgroup of a group~$K$;

$\mathfrak X(K)$ is the character group (in additive notation) of a
group~$K$;

$N_L(K)$ is the normalizer of a subgroup $K$ in a group~$L$;

$\CC[X]$ is the algebra of regular functions on an algebraic
variety~$X$;

$\CC(X)$ is the field of rational functions on an algebraic
variety~$X$;

$\Quot A$ is the field of quotients of a commutative algebra $A$
with no zero divisors;

$G$ is a connected reductive algebraic group;

$C \subset G$ is the connected center of~$G$;

$G^{ss} = (G, G)$;

$B \subset G$ is a fixed Borel subgroup of~$G$;

$T \subset B$ is a fixed maximal torus of~$G$;

$U \subset B$ is the unipotent radical of~$B$;

$B^- \subset G$ is the Borel subgroup opposite to $B$ with respect
to~$T$, that is, $B \cap B^- = T$;

$B^{ss} = B \cap G^{ss}$ is the Borel subgroup of $G^{ss}$ contained
in~$B$;

$T^{ss} = T \cap G^{ss}$ is the maximal torus of $G^{ss}$ contained
in~$T$;

$(\cdot\,, \cdot)$ is a fixed inner product on~$\mathfrak X(T)
\otimes_\ZZ \QQ$ invariant with respect to the Weyl group~$N_G(T) /
T$;

$\Delta \subset \mathfrak X(T)$ is the root system of $G$ with
respect to~$T$;

$\Delta^+ \subset \Delta$ is the subset of positive roots with
respect to~$B$;

$\Pi \subset \Delta^+$ is the set of simple roots;

$\mathfrak g_\alpha \subset \mathfrak g$ is the root subspace
corresponding to a root~$\alpha \in \Delta$;

$e_\alpha \in \mathfrak g_\alpha$ is a fixed nonzero element;

$\alpha^\vee \in \Hom_\ZZ(\ZZ\Delta, \ZZ)$ is the dual root
corresponding to a root $\alpha \in \Delta$;

$\mathfrak X_+(B) \subset \mathfrak X(B)$ is the set of dominant
weights of~$B$;

$\mathfrak X_+(B^{ss}) \subset \mathfrak X_+(B)$ is the set of
dominant weights of~$B^{ss}$;

$V(\lambda)$ is the simple $G$-module with highest weight $\lambda
\in \mathfrak X_+(B)$;

$v_\lambda$ is a highest-weight vector of $V(\lambda)$ with respect
to~$B$;

$w_\lambda$ is a lowest-weight vector of $V(\lambda)$ with respect
to~$B$ (that is, the line $\langle w_\lambda \rangle$ is
$B^-$-stable);

$\lambda^*$ is the highest weight of the simple $G$-module
$V(\lambda)^*$, so that $V(\lambda^*) \simeq V(\lambda)^*$;

$\varpi_\alpha \in \mathfrak X(T) \otimes_\ZZ \QQ$ is the
fundamental weight associated with a simple root~$\alpha$.

The lattices $\mathfrak X(B)$ and $\mathfrak X(T)$ are identified
via the restriction of characters from~$B$ to~$T$.

For every element $\gamma = \sum \limits_{\alpha \in \Pi}k_\alpha
\alpha$, where $k_\alpha \in \QQ^+$ for all $\alpha \in \Pi$, we set
$\Supp \gamma = \lbrace \alpha \mid \nobreak k_\alpha > \nobreak 0
\rbrace$. If moreover $\gamma \in \Delta^+$, then we set $\hgt
\gamma = \sum \limits_{\alpha \in \Pi} k_\alpha$.

For every weight $\lambda \in \mathfrak X_+(B^{ss})$, one has
$\lambda = \sum \limits_{\alpha \in \Pi} l_\alpha \varpi_\alpha$ for
some non-negative integers~$l_\alpha$. We set $\supp \lambda =
\lbrace \varpi_\alpha \mid l_\alpha > 0 \rbrace$.

For every subset $\Pi' \subset \Pi$, $P_{\Pi'} \supset B$ is the
parabolic subgroup of~$G$ whose Lie algebra is the direct sum of
$\mathfrak b$ and all root subspaces $\mathfrak g_{-\alpha}$ with
$\alpha \in \Delta^+$ and $\Supp \alpha \subset \Pi'$.

If $V$ is a vector space equipped with an action of a group~$K$,
then the notation $V^K$ stands for the subspace of $K$-invariant
vectors and, for every $\chi \in \mathfrak X(K)$, the notation
$V^{(K)}_\chi$ stands for the subspace of $K$-semi-invariant vectors
of weight~$\chi$.

The actions of $G$ on itself by left translation ($(g,x) \mapsto
gx$) and right translation ($(g,x) \mapsto xg^{-1}$) induce its
actions on $\mathbb C[G]$ and $\CC(G)$ by the formulas $(gf)(x)
=f(g^{-1}x)$ and $(gf)(x) =f(xg)$, respectively. For brevity, we
refer to these actions as the action \textit{on the left} and
\textit{on the right}, respectively. Unless otherwise specified, for
every subgroup $K \subset G$ the notation $\CC[G]^K$ (resp.
$\CC[G]^{(K)}_\chi$) stands for the $K$-invariants (resp.
$K$-semi-invariants of weight~$\chi$) with respect to the action
of~$K$ on the right.

Let $K$ be a group and let $K_1, K_2$ be subgroups of it. We write
$K = K_1 \rightthreetimes K_2$ if $K$ is a semidirect product of
$K_1, K_2$ with $K_2$ being a normal subgroup of~$K$.

For connected Dynkin diagrams, the numbering of nodes (that is,
simple roots) is the same as in the book~\cite{OV}.

For the notion of a geometric quotient used in this paper we refer
to~\cite[\S\,4.2]{PV}.

Let $Q$ be a finite-dimensional vector space over~$\QQ$.

A \textit{cone} in $Q$ is a subset $\mathcal C \subset Q$ that is
invariant under addition and multiplication by elements in $\QQ^+$,
that is, $q_1 x_1 + q_2 x_2 \in \mathcal C$ whenever $x_1, x_2 \in
\mathcal C$ and $q_1, q_2 \in \QQ^+$.

A cone $\mathcal C \subset Q$ is said to be \textit{finitely
generated} if there are finitely many elements $q_1, \ldots, q_s \in
Q$ with $\mathcal C = \QQ^+ q_1 + \ldots + \QQ^+ q_s$. All cones
considered in this paper are finitely generated.

A cone $\mathcal C \subset Q$ is said to be \textit{strictly convex}
if $\mathcal C \cap (- \mathcal C) = \lbrace 0 \rbrace$.

The \textit{dimension} of a cone is the dimension of its linear
span.

The \textit{dual cone} of a cone $\mathcal C \subset Q$ is the cone
$$
\mathcal C^\vee = \lbrace \xi \in Q^* \mid \xi(q) \ge 0 \text{\;for
all\;} q \in \mathcal C \rbrace.
$$
One always has $(\mathcal C^\vee)^\vee = \mathcal C$.

A \textit{face} of a cone $\mathcal C \subset Q$ is a subset of the
form $\mathcal C \cap \lbrace q \in Q \mid \xi(q) = 0 \rbrace$ for
some $\xi \in \mathcal C^\vee$.

A \textit{facet} of a cone $\mathcal C \subset Q$ is a face of
codimension~$1$.

The \textit{relative interior} $\mathcal C^\circ$ of a cone
$\mathcal C$ is $\mathcal C$ with all proper faces removed.

A \textit{fan} in $Q$ is a collection $\mathcal F$ of cones in $Q$
satisfying the two axioms below:

(1) if $\mathcal C \in \mathcal F$, then every face of $\mathcal C$
also belongs to~$\mathcal F$;

(2) if $\mathcal C_1, \mathcal C_2 \in \mathcal F$, then $\mathcal
C_1 \cap \mathcal C_2$ is a face of both $\mathcal C_1$
and~$\mathcal C_2$.

A fan $\mathcal F$ in $Q$ is said to be \textit{strictly convex} if
every cone in $\mathcal F$ is strictly convex.

A fan $\mathcal F$ in $Q$ is said to be \textit{complete} if $Q =
\bigcup \limits_{\mathcal C \in \mathcal F} \mathcal C$.

Let $L \subset Q$ be a fixed lattice of maximal rank, so that $Q = L
\otimes_\ZZ \QQ$. For every cone $\mathcal C \subset Q$, let
$\mathcal C^1$ denote the set of primitive elements $q$ of $L$ such
that $\QQ^+ q$ is a face of~$\mathcal C$. Similarly, for every fan
$\mathcal F$ in~$Q$, let $\mathcal F^1$ denote the set of primitive
elements $q$ of $L$ such that $\QQ^+ q$ is a cone in~$\mathcal F$.
Clearly, $\mathcal F^1 = \bigcup \limits_{\mathcal C \in \mathcal F}
\mathcal C^1$. In this paper we shall often find ourselves in the
situation where $Q = \Hom_\ZZ(M, \QQ)$ and $L = \Hom_\ZZ(M, \ZZ)$
for some lattice~$M$.

Under the assumptions of the previous paragraph, a cone $\mathcal C
\subset Q$ is said to be \textit{regular} if $\mathcal C$ is
strictly convex and the set $\mathcal C^1$ is a part of a basis
of~$L$. A fan $\mathcal F$ in $Q$ is said to be \textit{regular} if
every cone in $\mathcal F$ is regular. We note that every regular
fan is strictly convex.

\section{Combinatorial invariants of spherical homogeneous
spaces} \label{sect_invariants}

\subsection{The principal combinatorial invariants} \label{subsec_pci}

In this subsection we work with a fixed spherical homogeneous space
$G/H$.

Let $P = P_{G/H}$ be the stabilizer of the open $B$-orbit in $G/H$.
Evidently, $P$ is a parabolic subgroup of $G$ containing~$B$, so
that $P = P_{\Pi^p}$ for some subset $\Pi^p \subset \Pi$. The set
$\Pi^p = \Pi^p_{G/H}$ is the first invariant associated with $G/H$.

The second invariant is the \textit{weight lattice} $\Lambda =
\Lambda_{G/H}$. It is defined to be the lattice of weights of
$B$-semi-invariant rational functions on $G/H$:
$$
\Lambda = \lbrace \mu \in \mathfrak X(T) \mid \CC(G/H)^{(B)}_\mu \ne
\lbrace 0 \rbrace \rbrace.
$$
(Semi-invariants are taken with respect to the action of~$B$ on the
left.) The rank of $\Lambda$ is said to be the \textit{rank}
of~$G/H$. We also introduce the rational vector space $\mathcal Q =
\mathcal Q_{G/H} = \Hom_\ZZ(\Lambda, \QQ)$.

For every $\mu \in \Lambda$, one has $\dim \CC(G/H)^{(B)}_\mu = 1$
since there is an open $B$-orbit in $G/H$. Fix a nonzero function
$f_\mu$ in each of the subspaces $\CC(G/H)^{(B)}_\mu$.

Let $\mathcal V = \mathcal V_{G/H}$ denote the set of all discrete
$\QQ$-valued $G$-invariant valuations of the field $\CC(G/H)$
vanishing on $\CC^\times$. We define a map $\phi \colon \mathcal V
\to \mathcal Q$ by the formula
$$
\langle \phi(v), \mu \rangle = v(f_\mu),
$$
where $v \in \mathcal V$, $\mu \in \Lambda$. It is known that the
map $\phi$ is injective (see~\cite[\S\,7.4]{LV}
or~\cite[Corollary~1.8]{Kn91}) and its image is a finitely generated
cone containing the image in~$\mathcal Q$ of the antidominant Weyl
chamber (see~\cite[\S\,4.1, Corollary, i)]{BriP}
or~\cite[Corollary~5.3]{Kn91}). This cone is called the
\textit{valuation cone} of~$G/H$. Later on, we identify it
with~$\mathcal V$. We note that $\mathcal V$ spans $\mathcal Q$ as a
vector space. Brion proved that $\mathcal V$ is a fundamental
chamber of a finite subgroup in $\GL(\mathcal Q)$ generated by
reflections; see~\cite[\S\,3]{Bri90}.

Let $\Sigma = \Sigma_{G/H}$ be the set of primitive elements
$\sigma$ of $\Lambda$ with the following properties:
\begin{enumerate}[label=\textup{(\arabic*)},ref=\textup{\arabic*}]

\item
$\Ker \sigma \subset \mathcal Q$ contains a facet of~$\mathcal V$;

\item
$\langle \sigma, \mathcal V\rangle \le 0$.
\end{enumerate}

\noindent The elements in $\Sigma$ are called the \textit{spherical
roots} of~$G/H$. In particular, the set $\Sigma$ is linearly
independent. We note that the valuation cone $\mathcal V$ and the
set of spherical roots $\Sigma$ uniquely determine each other
whenever the weight lattice $\Lambda$ is known. The set $\Sigma$ is
the third invariant associated with $G/H$.

The fourth invariant is the set $\mathcal D = \mathcal D_{G/H}$ of
$B$-stable prime divisors in~$G/H$. The elements in $\mathcal D$ are
called the \textit{colors} of $G/H$. The set $\mathcal D$ is
considered together with a map $\varkappa = \varkappa_{G/H} \colon
\mathcal D \to \Hom_\ZZ(\Lambda, \ZZ) \subset \mathcal Q$ defined in
the following way. For a color $D \in \mathcal D$, one has $\langle
\varkappa(D), \mu \rangle = \ord_D(f_\mu)$ for all $\mu \in
\Lambda$, where $\ord_D(f_\mu)$ is the order of $f_\mu$ along~$D$.

For every $\alpha \in \Pi$, we consider the corresponding minimal
parabolic subgroup $P_{\lbrace \alpha \rbrace} \supset B$ and put
$\mathcal D(\alpha)$ to be the set of $P_{\lbrace \alpha
\rbrace}$-unstable colors.

\begin{proposition}[{\cite[\S\S\,2.7,~3.4]{Lu97}}]
\label{prop_alternative} The following assertions hold:
\begin{enumerate}[label=\textup{(\alph*)},ref=\textup{\alph*}]
\item
$\mathcal D = \bigcup \limits_{\alpha \in \Pi} \mathcal D(\alpha)$;

\item
every $\alpha \in \Pi$ belongs to exactly one of the following
types:
\begin{enumerate}
\item[type]$p$:
$\mathcal D(\alpha) = \varnothing$;

\item[type]$a$:
$\alpha \in \Sigma$, $\mathcal D(\alpha) = \lbrace D^+, D^-
\rbrace$, and $\varkappa(D^+) + \varkappa(D^-) = \left. \alpha^\vee
\right|_{\Lambda}$;

\item[type]$a'$:
$2\alpha \in \Sigma$, $\mathcal D(\alpha) = \lbrace D \rbrace$, and
$\varkappa(D) = \frac12\left. \alpha^\vee \right|_{\Lambda}$;

\item[type]$b$:
$\QQ \alpha \cap \Sigma = \varnothing$, $\mathcal D(\alpha) =
\lbrace D \rbrace$, and $\varkappa(D) = \left. \alpha^\vee
\right|_{\Lambda}$.
\end{enumerate}
\end{enumerate}
\end{proposition}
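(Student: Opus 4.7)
My plan is to treat the two parts separately, part~(a) being essentially a formality and part~(b) requiring a rank-one local analysis of the action of each minimal parabolic $P_{\{\alpha\}}$ on~$G/H$.

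For part~(a), a color $D \in \mathcal D$ is by definition a $B$-stable prime divisor of $G/H$. It cannot be stabilised by all of~$G$, as otherwise $D$ would be a proper $G$-invariant closed subset of the single $G$-orbit $G/H$. Since $G$ is generated by the minimal parabolics $P_{\{\alpha\}}$, $\alpha \in \Pi$, at least one of them must fail to stabilise~$D$, placing $D$ in~$\mathcal D(\alpha)$. Hence $\mathcal D = \bigcup_{\alpha\in\Pi} \mathcal D(\alpha)$.

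For part~(b), fix $\alpha \in \Pi$ and split into cases. If $\alpha \in \Pi^p$, then $P_{\{\alpha\}} \subseteq P$ preserves the open $B$-orbit~$O$ and hence its complement; being connected, $P_{\{\alpha\}}$ fixes each irreducible component of $(G/H) \setminus O$ setwise, so $\mathcal D(\alpha) = \varnothing$, which is type~$p$. Otherwise $P_{\{\alpha\}}$ moves~$O$, every $D \in \mathcal D(\alpha)$ satisfies $P_{\{\alpha\}}\cdot D = G/H$, and the colors in $\mathcal D(\alpha)$ together with~$O$ assemble into a one-parameter family of $B$-orbits over $P_{\{\alpha\}}/B \cong \mathbb P^1$. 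I would then invoke the classification of rank-one spherical homogeneous spaces of $\mathrm{PGL}_2$ (Akhiezer) applied to a suitable Levi-equivariant local model, which yields precisely three non-trivial possibilities -- realised by $\mathrm{SL}_2/T$, $\mathrm{SL}_2/N(T)$, and $\mathrm{SL}_2/U$ -- corresponding to types~$a$, $a'$, and~$b$ with $|\mathcal D(\alpha)| = 2$, $1$, and~$1$ respectively.

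The formulas for $\varkappa$ would then follow from an explicit $\mathrm{SL}_2$-computation. For $\mu \in \Lambda$, comparing $\ord_D(f_\mu)$ with $\ord_D(e_{-\alpha}\cdot f_\mu)$ along the colors in~$\mathcal D(\alpha)$ and invoking the weight identity from $\mathfrak{sl}_2$-theory gives $\varkappa(D^+)+\varkappa(D^-) = \left.\alpha^\vee\right|_{\Lambda}$ in type~$a$, $\varkappa(D) = \left.\alpha^\vee\right|_{\Lambda}$ in type~$b$, and $\varkappa(D) = \tfrac12 \left.\alpha^\vee\right|_{\Lambda}$ in type~$a'$, the factor~$\tfrac12$ reflecting the two-to-one cover $\mathrm{SL}_2/T \to \mathrm{SL}_2/N(T)$; the identification of $\alpha$, of $2\alpha$, or of no $\mathbb Q\alpha$-multiple at all as a spherical root reads off directly from the $B$-semi-invariant generator of the rational function field of the rank-one model. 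The principal obstacle is the rank-one reduction itself: rigorously constructing the Levi-equivariant local model on which Akhiezer's classification applies, and verifying that the invariants $\Sigma$ and $\varkappa$ transfer correctly through this localisation.
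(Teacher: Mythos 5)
First, a point of reference: the paper itself contains no proof of this proposition --- it is imported verbatim from Luna's paper [Lu97, \S\S\,2.7, 3.4] --- so your attempt can only be compared with the standard argument from that source. Your overall plan is indeed that standard route: part (a) from homogeneity of $G/H$ plus the fact that the minimal parabolics generate $G$, and part (b) by localizing the open $B$-orbit $O$ at $P_{\lbrace\alpha\rbrace}$ and classifying the resulting rank-one picture. Part (a) is complete as written, and the observation that $\alpha\in\Pi^p$ yields type $p$ is fine.

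In part (b), however, there are two genuine gaps. First, the case list is wrong as stated: what must be classified is the image $H'$ in $\PGL_2$ of the stabilizer of a point of $O$ (equivalently, the $\PGL_2$-homogeneous space onto which the open $P_{\lbrace\alpha\rbrace}$-orbit fibers), and the possibilities are $\PGL_2$, a Borel subgroup, a maximal torus $T_2$, its normalizer $N_{\PGL_2}(T_2)$, and the horospherical subgroups $U_2$ and $F\cdot U_2$ with $F$ finite. Your list of models omits the Borel case, i.e.\ the rank-zero model $\PP^1$, which is in fact the typical source of type $b$ --- for instance, every simple root is of type $b$ for $G/B^-$, precisely the strongly solvable setting of this paper --- and which, together with the horospherical cases, shows that Akhiezer's classification of rank-one spaces is not the right lever: the local model can have rank zero, and the trichotomy $a$/$a'$/$b$ is not a partition by rank-one isomorphism type (both $\PP^1$ and $\PGL_2/U_2$ give type $b$). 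Second, the actual content of the proposition --- that $H'=T_2$ forces $\alpha\in\Sigma$, that $H'=N_{\PGL_2}(T_2)$ forces $2\alpha\in\Sigma$, that the Borel and horospherical cases force $\QQ\alpha\cap\Sigma=\varnothing$, and that the stated formulas for $\varkappa$ hold on all of $\Lambda$ --- is exactly the transfer of the invariants of $G/H$ (the valuation-theoretically defined $\Sigma$, and $\ord_D$ on $B$-semi-invariant rational functions) through the structure of $P_{\lbrace\alpha\rbrace}\cdot O$ as a homogeneous bundle over $\PGL_2/H'$; writing that this ``reads off directly'' or ``follows from an explicit $\SL_2$-computation'' defers precisely the step that constitutes the proof, as your own last sentence concedes. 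So what you have is a correct plan in the spirit of the cited source, with its decisive steps missing and one concrete error in the list of local models.
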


According to Proposition~\ref{prop_alternative}, the set $\Pi$
splits into the disjoint union
\begin{equation} \label{eqn_division_of_Pi}
\Pi = \Pi^p \cup \Pi^a \cup \Pi^{a'} \cup \Pi^b,
\end{equation}
where each superscript signifies the corresponding type of simple
roots. We note that the set $\Pi^p$ appearing
in~(\ref{eqn_division_of_Pi}) is nothing else than the set $\Pi^p$
defined in the beginning of this subsection. We denote by $\mathcal
D^a$ (resp. $\mathcal D^{a'}$, $\mathcal D^b$) the union of the sets
$\mathcal D(\alpha)$ where $\alpha$ runs over~$\Pi^a$
(resp.~$\Pi^{a'}, \Pi^b$). It turns out (see~\cite[\S\,2.7]{Lu97}
or~\cite[\S\,30.10]{Tim}) that there is the following disjoint
union:
\begin{equation} \label{eqn_disjoint_union_colors}
\mathcal D = \mathcal D^a \cup \mathcal D^{a'} \cup \mathcal D^b.
\end{equation}

For two spherical homogeneous spaces $G/H$ and $G/H'$, we write
$\mathcal D_{G/H} = \mathcal D_{G/H'}$ if there exists a bijection
$i \colon \mathcal D_{G/H} \to \mathcal D_{G/H'}$ such that
$\varkappa_{G/H} = \varkappa_{G/H'} \circ i$.

The following theorem is just a reformulation
of~\cite[Theorem~1]{Lo1}.

\begin{theorem} \label{thm_uniqueness}
The spherical homogeneous space $G / H$ is uniquely determined by
the quadruple $(\Lambda, \Pi^p, \Sigma, \mathcal D)$. In other
words, if $G / H'$ is another spherical homogeneous space such that
$\Lambda_{G/H} = \Lambda_{G/H'}$, $\Pi^p_{G/H} = \Pi^p_{G/H'}$,
$\Sigma_{G/H} = \Sigma_{G/H'}$, and $\mathcal D_{G/H} = \mathcal
D_{G/H'}$, then the subgroups $H$ and $H'$ are conjugate in~$G$.
\end{theorem}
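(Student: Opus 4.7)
The plan is to follow the strategy of Losev~\cite{Lo1}, splitting the argument into a reduction to the wonderful case and the uniqueness statement for wonderful subgroups.

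For the reduction, I would pass from $H$ to its spherical closure $\hat H$, defined as the kernel inside $N_G(H)$ of the natural action on the set of colors~$\mathcal D$. A standard verification shows that $G/\hat H$ is wonderful and that $\hat H / H$ is diagonalizable. The invariants of $G/\hat H$ can be read off from the quadruple $(\Lambda, \Pi^p, \Sigma, \mathcal D)$: the weight lattice $\Lambda_{G/\hat H}$ is the sublattice of $\Lambda$ generated by $\Sigma$, since spherical roots span the weight lattice of a wonderful variety; the sets $\Pi^p$ and $\Sigma$ coincide on the two sides; and the map $\varkappa$ restricted to colors of type $a$, $a'$, $b$ descends canonically. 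Conversely, $H$ is recovered from $\hat H$ as the common kernel in $\hat H$ of the characters associated with the inclusion $\Lambda_{G/\hat H} \subset \Lambda$, since $\mathfrak X(\hat H / H) \cong \Lambda / \Lambda_{G/\hat H}$. Thus if two quadruples coincide, so do the induced data on the wonderful side, and the passage from $\hat H$ back to $H$ is a purely toric matter.

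For the wonderful case itself, I would invoke Losev's uniqueness theorem, which asserts that two wonderful subgroups with the same spherical system $(\Pi^p, \Sigma, \mathcal D^a)$ are conjugate in~$G$. Its proof rests on Knop's description of the connected $G$-automorphism group of a wonderful variety as an explicit torus built out of the spherical system, combined with a comparison of cotangent bundles via moment maps that yields a $G$-equivariant isomorphism between any two candidate wonderful varieties. The main obstacle is exactly this step: the reduction is essentially bookkeeping once the interaction between the spherical closure and the combinatorial invariants is understood, but the uniqueness for wonderful varieties is a deep result requiring the symplectic and equivariant-Hamiltonian machinery developed by Knop and Losev, which I would not attempt to reproduce in a sketch.
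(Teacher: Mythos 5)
Your proposal is correct in outline, but it takes a different (and longer) route than the paper, which offers no argument at all: Theorem~\ref{thm_uniqueness} is stated as a direct reformulation of Losev's Theorem~1 in~\cite{Lo1}, and that theorem already concerns \emph{arbitrary} spherical homogeneous spaces, asserting uniqueness given the weight lattice, the valuation cone (equivalently $\Sigma$, once $\Lambda$ is known), and the colors. So no reduction to the wonderful case is needed here; the deep input is cited in exactly the generality required. Your alternative — pass to the spherical closure $\overline H$, apply the wonderful-case uniqueness, then recover $H$ — retraces Luna's two-step program and is viable, but two points in your reduction need repair. First, $\Lambda_{G/\overline H} = \ZZ\,\overline\Sigma_{G/H}$, not $\ZZ\Sigma_{G/H}$: by Proposition~\ref{prop_sph_syst_of_sph_closure} some spherical roots get doubled when passing to the spherical closure, so the correct lattice is generated by $\overline\Sigma$, which is still determined by $(\Pi^p,\Sigma)$. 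Second, recovering $H$ inside $\overline H$ is not governed by the lattice inclusion $\Lambda_{G/\overline H}\subset\Lambda$ alone: one needs the full map $\varkappa\colon\mathcal D\to\Hom_\ZZ(\Lambda,\ZZ)$ to identify which characters of $\overline H$ (which are generated by the $\chi_D$, cf.\ Proposition~\ref{prop_gen_char}) vanish on $H$; this is Luna's description of spherical subgroups with a given spherical closure (\cite[Proposition~6.3]{Lu01}), which, while far more elementary than Losev's theorem, is itself a nontrivial statement rather than pure bookkeeping. With those caveats your sketch assembles into a proof, but it buys nothing over the paper's one-line citation, since both arguments bottom out in the same result of Losev.
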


Within the framework of Luna's general classification, in
\S\,\ref{subsec_classification} we shall explain which quadruples
$(\Lambda, \Pi^p, \Sigma, \mathcal D)$ arise from spherical
homogeneous spaces.

\subsection{The extended weight semigroup} \label{subsec_ews}

Basic references for this subsection
are~\cite[\S\S\,1.2,~1.3]{Avd_EWS} and~\cite[\S\S\,1.2,~1.3]{AG}.
(Both sources deal with the case of semisimple~$G$.)

First assume $H \subset G$ to be an arbitrary subgroup. We recall
that characters of $H$ are in one-to-one correspondence with
homogeneous line bundles over $G/H$ (see~\cite[Theorem~4]{Pop}).
Namely, for every character $\chi \in \mathfrak X(H)$ the
corresponding homogeneous line bundle, denoted by $G *_H \CC_\chi$,
is defined to be the quotient $(G \times \CC_\chi) / H$, where $H$
acts on $G$ by right translation and on $\CC_\chi \simeq \CC$ via
the character~$\chi$ (see Appendix~\ref{sect_homogeneous_bundles}
for more details). Let $\Gamma(G *_H \CC_\chi)$ denote the space of
regular sections of $G *_H \CC_\chi$. For every $\chi \in \mathfrak
X(H)$ there is a natural isomorphism
$$
\Gamma(G *_H \CC_{-\chi}) \simeq \CC[G]^{(H)}_\chi.
$$
For every $\lambda \in \mathfrak X_+(B)$, let $\CC[G]^{(B \times
H)}_{(\lambda, \chi)} \subset \CC[G]^{(H)}_\chi$ be the subspace of
$(B \times H)$-semi-invariant functions in $\CC[G]$ of
weight~$(\lambda, \chi)$, where the semi-invariants are taken with
respect to the action of $B$ on the left and the action of $H$ on
the right. Every nonzero function $f \in \CC[G]^{(B \times
H)}_{(\lambda, \chi)}$ is a highest-weight vector of a simple
$G$-submodule of $\CC[G]^{(H)}_\chi$ with highest weight~$\lambda$,
and vice versa. Let $\widehat \Lambda^+ = \widehat \Lambda^+_{G /
H}$ denote the set of all pairs $(\lambda, \chi)$, where $\lambda
\in \mathfrak X_+(B)$ and $\chi \in \mathfrak X(H)$, such that $\dim
\CC[G]^{(B \times H)}_{(\lambda, \chi)} \ne 0$, that is, the
$G$-module $\CC[G]^{(H)}_\chi \simeq \Gamma(G *_H \CC_{-\chi})$
contains the simple $G$-module $V(\lambda)$. The set $\widehat
\Lambda^+$ is a semigroup with zero (that is, a monoid) called the
\textit{extended weight semigroup} of $G/H$.

We recall the following well-known isomorphism of $(G \times
G)$-modules (see, for instance,~\cite[Theorem~2.15]{Tim}):
\begin{equation} \label{eqn_GtimesG}
\CC[G] \simeq \bigoplus \limits_{\lambda \in \mathfrak X_+(B)}
V(\lambda) \otimes V(\lambda^*),
\end{equation}
where in the left-hand side $G \times G$ acts on the left and on the
right, and in the right-hand side the first (resp. second) factor of
$G \times G$ acts on the first (resp. second) tensor factor. For a
fixed $\lambda \in \mathfrak X_+(B)$, the embedding $V(\lambda)
\otimes V(\lambda^*) \hookrightarrow \CC[G]$ is defined as follows.
For $u \in V(\lambda)$ and $v \in V(\lambda^*)$, $u \otimes v$ maps
to the function whose value at a point $g \in G$ is $\langle u, gv
\rangle$, where $\langle \cdot \,, \cdot \rangle$ is the natural
pairing between $V(\lambda)$ and $V(\lambda^*)$. Under
isomorphism~(\ref{eqn_GtimesG}), the subspace $\CC[G]^{(B \times
H)}_{(\lambda, \chi)} \subset \CC[G]$ corresponds to the subspace
$V(\lambda)^{(B)}_{\lambda} \otimes V(\lambda^*)^{(H)}_\chi =
\langle v_\lambda \rangle \otimes V(\lambda^*)^{(H)}_\chi \subset
V(\lambda) \otimes V(\lambda^*)$, where $v_\lambda$ is a
highest-weight vector in~$V(\lambda)$. Hence $\dim \CC[G]^{(B \times
H)}_{(\lambda, \chi)} = \dim V(\lambda^*)^{(H)}_\chi$ and $(\lambda,
\chi) \in \widehat \Lambda^+$ if and only if
$V(\lambda^*)^{(H)}_\chi \ne 0$.

Until the end of this subsection we assume that $H$ is spherical
in~$G$.

It is well known (see~\cite[Theorem~1]{VK}) that the sphericity of
$H$ is equivalent to the property that the representation of $G$ on
$\Gamma(G *_H \CC_{-\chi}) \simeq \CC[G]^{(H)}_\chi$ is multiplicity
free for every $\chi \in \mathfrak X(H)$. Hence $(\lambda, \chi) \in
\widehat \Lambda^+$ if and only if $\dim \CC[G]^{(B \times
H)}_{(\lambda, \chi)} = \dim V(\lambda^*)^{(H)}_\chi = 1$.

If $G$ is semisimple and simply connected, then the semigroup
$\widehat \Lambda^+$ is free and isomorphic to the semigroup of
effective $B$-stable divisors in $G/H$, which is freely generated by
the colors of~$G/H$ (see~\cite[Theorem~2]{AG}). Under this
isomorphism, a color $D$ of $G/H$ corresponds to an indecomposable
element $(\lambda_D, \chi_D)$ of $\widehat \Lambda^+$ such that $D$
is the divisor of zeros of a (unique up to proportionality) regular
section $s_D$ of $G *_H \CC_{-\chi_D}$ where $s_D$ is
$B$-semi-invariant of weight~$\lambda_D$.

Now assume that $G = C \times G^{ss}$ and $G^{ss}$ is simply
connected. Then $\mathfrak X(G) \simeq \mathfrak X(C)$, $\mathfrak
X(B) \simeq \mathfrak X(C) \oplus \nobreak \mathfrak X(B^{ss})$, and
$\mathfrak X_+(B) \simeq \mathfrak X(C) \oplus \mathfrak
X_+(B^{ss})$. For every subgroup $H \subset G$, let $H^{ss}$ denote
the projection of~$H$ to~$G^{ss}$. It is easily deduced from
condition (\ref{S1}) that $H$ is spherical in~$G$ if and only if
$H^{ss}$ is spherical in~$G^{ss}$. For every $\nu \in \mathfrak
X(C)$, let $\nu_H \in \mathfrak X(H)$ be the restriction of $\nu$
to~$H$.

Since every simple $G$-module is isomorphic to the tensor product of
a simple $G^{ss}$-module with a one-dimensional $C$-module, the
following simple result holds.

\begin{proposition} \label{prop_ews_in_general}
Modulo the natural inclusions $\mathfrak X_+(B^{ss}) \subset
\mathfrak X_+(B)$ and $\mathfrak X(H^{ss}) \subset \mathfrak X(H)$,
one has
\begin{equation} \label{eqn_ews_reductive_case}
\widehat \Lambda^+_{G/H} = \widehat \Lambda^+_{G^{ss}/H^{ss}} \oplus
\lbrace (\nu, -\nu_H) \mid \nu \in \mathfrak X(C) \rbrace.
\end{equation}
\end{proposition}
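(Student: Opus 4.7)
The plan is to use the hypothesis $G = C \times G^{ss}$ to reduce an $H$-semi-invariant vector problem in $V(\lambda)^*$ to one in a simple $G^{ss}$-module, tracked by an explicit twist by a character of~$C$. By the criterion established above, $(\lambda,\chi)\in\widehat\Lambda^+_{G/H}$ if and only if $V(\lambda)^{*}$ contains a nonzero $H$-semi-invariant vector of weight~$\chi$, so this is the only thing to analyze.

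First, using $G = C \times G^{ss}$ with $G^{ss}$ simply connected, I would invoke the decomposition $\mathfrak X_+(B) \simeq \mathfrak X(C) \oplus \mathfrak X_+(B^{ss})$ and write each $\lambda \in \mathfrak X_+(B)$ uniquely as $\lambda = \nu + \mu$ with $\nu \in \mathfrak X(C)$ and $\mu \in \mathfrak X_+(B^{ss})$. Every simple $G$-module is the outer tensor product of a one-dimensional $C$-module and a simple $G^{ss}$-module, so $V(\lambda) \simeq \CC_\nu \otimes V(\mu)$ and dually $V(\lambda)^{*} \simeq \CC_{-\nu} \otimes V(\mu)^{*}$ as $G$-modules.

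Next, I would look at $H$-semi-invariants in this tensor product. Since the $G$-action on the second factor descends through the projection $G \to G^{ss}$, the restricted $H$-action on $V(\mu)^{*}$ factors through $H \to H^{ss}$. A vector $1 \otimes v \in \CC_{-\nu}\otimes V(\mu)^{*}$ is $H$-semi-invariant of weight $\chi$ precisely when $v\in V(\mu)^{*}$ is $H$-semi-invariant of weight $\chi + \nu_H$; moreover, because the $H$-action on $V(\mu)^{*}$ kills the kernel of $H \to H^{ss}$, such a $v$ can exist only if $\chi + \nu_H$ lies in $\mathfrak X(H^{ss}) \subset \mathfrak X(H)$, in which case it coincides with an $H^{ss}$-semi-invariant of weight $\chi + \nu_H$. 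Putting these equivalences together, I obtain
\[
(\lambda,\chi) \in \widehat\Lambda^+_{G/H}
\;\Longleftrightarrow\;
(\mu,\,\chi + \nu_H) \in \widehat\Lambda^+_{G^{ss}/H^{ss}}.
\]

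Finally I would verify that the identity
\[
(\lambda,\chi) = (\mu,\chi+\nu_H) + (\nu,-\nu_H)
\]
realizes the claimed direct sum of monoids. The first summand lies in $\widehat\Lambda^+_{G^{ss}/H^{ss}}$ and the second in $\{(\nu,-\nu_H) \mid \nu \in \mathfrak X(C)\}$, and conversely any sum of this form produces an element of $\widehat\Lambda^+_{G/H}$ by the equivalence just proved. Uniqueness follows because $\nu$ is determined as the $C$-part of $\lambda$, after which $\mu$ and $\chi + \nu_H$ are determined; in particular the intersection of the two summands is $\{(0,0)\}$. I do not expect a serious obstacle: the only delicate point is to check that the implicit identifications $\mathfrak X_+(B^{ss}) \subset \mathfrak X_+(B)$ and $\mathfrak X(H^{ss}) \subset \mathfrak X(H)$ are compatible, so that the equality in~(\ref{eqn_ews_reductive_case}) is a genuine equality of submonoids of $\mathfrak X_+(B) \oplus \mathfrak X(H)$, rather than merely a bijection.
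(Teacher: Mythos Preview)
Your proposal is correct and follows exactly the idea the paper intends: the paper's own ``proof'' consists of a single sentence (``Since every simple $G$-module is isomorphic to the tensor product of a simple $G^{ss}$-module with a one-dimensional $C$-module, the following simple result holds''), and you have simply spelled out the details of that tensor-product argument carefully. There is no divergence in approach---you have merely written out what the paper leaves implicit.
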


Consider the natural surjective morphism
$$
\phi \colon G / H \to G^{ss} / H^{ss} \simeq G / CH.
$$
It follows from~\cite[Theorem~4.2]{PV} that $\phi$ is a geometric
quotient for the action of~$C$ on $G / H$ on the left. Therefore,
given a color $D$ of $G / H$, $\phi(D)$ is a color of $G^{ss} /
H^{ss}$ and $D = \phi^{-1}(\phi(D))$, so that there is a natural
bijection between $\mathcal D_{G / H}$ and $\mathcal D_{G^{ss} /
H^{ss}}$. For every $D \in \mathcal D_{G / H}$, we put $\lambda_D =
\lambda_{\phi(D)}$ and define $\chi_D$ to be the image in $\mathfrak
X(H)$ of the character $\chi_{\phi(D)} \in \mathfrak X(H^{ss})$.
Clearly, the map $D \mapsto (\lambda_D, \chi_D)$ yields a bijection
between the colors of $G / H$ and the indecomposable elements of the
semigroup $\widehat \Lambda^+_{G^{ss} / H^{ss}}$ regarded as a
subsemigroup of $\widehat \Lambda^+_{G / H}$.

For every $D \in \mathcal D_{G/H}$, let $s_D$ be the image of the
section $s_{\phi(D)}$ defining $\phi(D)$ under the chain
$$
\Gamma(G^{ss} *_{H^{ss}} \CC_{-\chi_{\phi(D)}}) \simeq
\CC[G^{ss}]^{(H^{ss})}_{\chi_{\phi(D)}} \hookrightarrow
\CC[G]^{(H)}_{\chi_D} \simeq \Gamma(G *_H \CC_{-\chi_D}).
$$
Then $D$ is the divisor of zeros of the section~$s_D$.

We also note that for every $\nu \in \mathfrak X(C)$ the subspace
$\CC[G]^{(B \times H)}_{(\nu, - \nu_H)}$ is spanned by the character
$\nu^{-1}$, which nowhere vanishes.

\subsection{Relations between the principal combinatorial invariants
and the extended weight semigroup}
\label{subsec_ews_pci_interrelations}

In this subsection we begin with some auxiliary considerations.

For every group~$K$, let $K^\sharp$ denote the subgroup of $K$
defined to be the common kernel of all characters of~$K$. Then
$K/K^\sharp$ is a diagonalizable group and there is a natural
isomorphism $\mathfrak X(K) \simeq \mathfrak X(K/K^\sharp)$.

We recall that a subgroup $L_0$ of a group $L$ is said to be
\textit{observable} in $L$ if the homogeneous space $L/L_0$ is
quasi-affine. The following lemma is well known, but for convenience
of the reader we provide a proof of it.

\begin{lemma} \label{lemma_observable}
For arbitrary groups $K \subset L$, the subgroup $K^\sharp$ is
observable in~$L$.
\end{lemma}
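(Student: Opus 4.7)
The plan is to realize $K^\sharp$ as the $L$-stabilizer of a single vector~$v$ in some finite-dimensional rational $L$-module~$V$. Granted this, the orbit map $g \mapsto gv$ identifies $L/K^\sharp$ with the $L$-orbit of~$v$, which is a locally closed subvariety of~$V$ and therefore quasi-affine; that is precisely the observability of $K^\sharp$ in~$L$.

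The first step is to exploit that $K^\sharp$ is a closed \emph{normal} subgroup of~$K$, being the intersection of the kernels of the characters $K \to \CC^\times$, each of which is normal in~$K$. I then plan to apply the normal-subgroup refinement of Chevalley's embedding theorem --- the classical statement that, given a closed subgroup $K \subset L$ and a closed normal subgroup $N \subset K$, there exist a finite-dimensional rational $L$-module $V$ and a subspace $W \subset V$ such that $K = N_L(W)$ and $N$ is precisely the kernel of the $K$-action on~$W$ --- to the pair $K^\sharp \subset K$ inside~$L$. This produces $V$ and $W \subset V$ with $K = N_L(W)$ and $K^\sharp = \{k \in K : k|_W = \operatorname{id}_W\}$.

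The final step is a routine unwinding. The pointwise stabilizer $\bigcap_{w \in W} \operatorname{Stab}_L(w)$ automatically lies in $N_L(W) = K$ (an element fixing every vector of $W$ certainly preserves $W$ setwise), and within~$K$ it coincides with $K^\sharp$ by construction. Choosing a basis $w_1, \ldots, w_n$ of~$W$, we get $K^\sharp = \operatorname{Stab}_L(w_1 \oplus \cdots \oplus w_n)$ for the diagonal $L$-action on $V^{\oplus n}$, which is exactly the input requested in the first paragraph. There is no genuine obstacle here: the only substantive ingredient is Chevalley's normal-subgroup theorem, which is classical, and the remainder of the argument is bookkeeping.
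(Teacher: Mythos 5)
Your argument stands or falls with the ``normal-subgroup refinement of Chevalley's theorem'' you invoke, and that statement is neither classical nor true. The classical refinements are: (i) Chevalley: $K$ is the setwise stabilizer $N_L(W)$ of a subspace (even a line) of a finite-dimensional $L$-module, and (ii) if $N$ is normal in the \emph{ambient} group $L$, then $N$ is the kernel of some finite-dimensional representation of~$L$. For $N$ normal merely in $K$, with the kernel of the $K$-action on $W$ prescribed to be exactly $N$, the statement fails: take $L = \SL_2$ and $N = K = B$ a Borel subgroup. Any subspace $W$ of a finite-dimensional $L$-module that is fixed pointwise by $B$ consists of $L$-fixed vectors (for $w \in W$ the morphism $L/B \to V$, $g \mapsto gw$, is constant because $L/B$ is complete), so $N_L(W) = L \ne K$; similar proper-$N$ counterexamples exist, e.g. $N = B \times \lbrace e \rbrace$ normal in $K = B \times \SL_2 \subset \SL_2 \times \SL_2$. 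The same obstruction is present in the case you actually need, $N = K^\sharp$: cutting the kernel of the $K$-action on $W$ down to $K^\sharp$ requires realizing enough characters of $K$ on $K$-eigenlines \emph{inside $L$-modules}, and this is exactly what can fail when $K$ is not observable (for $K = B \subset \SL_2$ only dominant characters of $B$ occur on $B$-eigenvectors of $L$-modules). Worse, being the full $L$-stabilizer of a vector in a finite-dimensional $L$-module is \emph{equivalent} to observability (Bia\l ynicki-Birula--Hochschild--Mostow, Grosshans), so with $N = K^\sharp$ the theorem you quote is essentially the lemma itself; citing it as classical begs the question. Your final ``bookkeeping'' paragraph is fine, but all the weight is carried by the unproved (and, as stated, false) existence claim.

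The gap can be closed by giving up the insistence on staying inside $L$-modules, which is how the paper proceeds: Chevalley gives an $L$-module $V$ and $v \ne 0$ with $K$ the stabilizer of the line $\langle v \rangle$; the stabilizer $K_v$ of the vector $v$ contains $K^\sharp$ and is observable in $L$ (its homogeneous space is an orbit in $V$, hence quasi-affine). Then $K^\sharp$ is the stabilizer of $v_1 + \ldots + v_k$ in the $K$-module $\CC_{\chi_1} \oplus \ldots \oplus \CC_{\chi_k}$, where $\chi_1, \ldots, \chi_k$ generate $\mathfrak X(K)$; this module need not embed into any $L$-module, but it is in particular a $K_v$-module, so $K^\sharp$ is observable in $K_v$. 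Transitivity of observability then gives observability of $K^\sharp$ in $L$. If you want to keep your single-vector strategy, you must first prove, not assume, that $K^\sharp$ is a full $L$-stabilizer of a vector -- which is exactly the content of the lemma plus the Grosshans characterization.
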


\begin{proof}
By a theorem of Chevalley (see~\cite[\S\,11.2]{Hum}) there exist a
finite-dimensional $L$-module $V$ and a nonzero vector $v \in V$
such that $K$ is the stabilizer of the line $\langle v \rangle$. Let
$K_v \subset K$ be the stabilizer of~$v$. Evidently, $K_v \supset
K^\sharp$ and $K_v$ is observable in~$L$. Let $\chi_1, \ldots,
\chi_k$ be generators of the group $\mathfrak X(K)$. For $i = 1,
\ldots, k$ denote by $V_i$ the one-dimensional $K$-module on which
$K$ acts via the character~$\chi_i$. In each of the spaces $V_i$ fix
a nonzero vector~$v_i$. Then $K^\sharp$ is the stabilizer of the
vector $v_1 + \ldots + v_k$ in the $K$-module $V_1 \oplus \ldots
\oplus V_k$, which is also a $K_v$-module. Hence $K^\sharp$ is
observable in~$K_v$. As $K_v$ is observable in~$L$, it follows that
$K^\sharp$ is observable in~$L$ (see~\cite[\S\,5]{BHM}
or~\cite[Corollary~2.3]{Gro}).
\end{proof}

We now turn to the situation where $G$ is a connected reductive
group and $H \subset G$ is an arbitrary subgroup.

\begin{lemma} \label{lemma_H^sharp_com_stab_modules}
The group $H^\sharp$ is the common stabilizer in $G$ of all
$H^\sharp$-fixed vectors in all simple $G$-modules.
\end{lemma}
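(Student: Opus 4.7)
The inclusion $H^\sharp \subseteq K$, where $K$ denotes the common stabilizer in $G$ of all $H^\sharp$-fixed vectors in all simple $G$-modules, is immediate from the definition of~$K$, so the task reduces to proving the opposite inclusion $K \subseteq H^\sharp$.

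The plan is to exhibit $H^\sharp$ itself as the stabilizer of a single vector $v$ lying in a finite-dimensional $G$-module $V$, and then to decompose $v$ into summands each of which is an $H^\sharp$-fixed vector in a simple $G$-submodule of~$V$. First, by Lemma~\ref{lemma_observable}, the subgroup $H^\sharp$ is observable in~$G$. By the standard characterization of observable subgroups (see, e.g., \cite[\S\,5]{BHM} or~\cite[Corollary~2.3]{Gro}), there exist a finite-dimensional $G$-module $V$ and a vector $v \in V$ such that $H^\sharp = G_v$. Decompose $V$ as a $G$-module into isotypic components $V = \bigoplus_{\lambda} V_{[\lambda]}$, and fix in every isotypic component $V_{[\lambda]}$ a decomposition into simple $G$-submodules $V_{[\lambda]} = \bigoplus_j V_{\lambda, j}$. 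This gives a direct sum decomposition $V = \bigoplus_{\lambda, j} V_{\lambda, j}$ of $V$ into simple $G$-modules, so $v$ writes uniquely as $v = \sum_{\lambda, j} v_{\lambda, j}$ with $v_{\lambda, j} \in V_{\lambda, j}$.

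Since $H^\sharp$ preserves each summand $V_{\lambda, j}$ and fixes the total sum $v$, the uniqueness of the decomposition forces $H^\sharp$ to fix every component $v_{\lambda, j}$ individually. Each $v_{\lambda, j}$ is therefore an $H^\sharp$-fixed vector in the simple $G$-module $V_{\lambda, j}$, so by the definition of $K$ every element of $K$ fixes every $v_{\lambda, j}$. Summing, $K$ fixes $v$, hence $K \subseteq G_v = H^\sharp$, which completes the proof.

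The main point to get right is the decomposition step: one must use that $V$ can be split as a direct sum of simple $G$-modules (which is automatic since $G$ is reductive), and that the components of an $H^\sharp$-fixed vector in such a decomposition are themselves $H^\sharp$-fixed. The only non-formal ingredient is the characterization of observable subgroups as stabilizers of vectors in finite-dimensional representations, which is a classical fact. Everything else is bookkeeping.
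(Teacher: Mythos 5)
Your proposal is correct and follows essentially the same route as the paper: observability of $H^\sharp$ (Lemma~\ref{lemma_observable}), the characterization of observable subgroups as stabilizers of vectors in finite-dimensional $G$-modules, and complete reducibility of that module (since $G$ is reductive) to reduce to components lying in simple $G$-submodules. The only difference is that you spell out the decomposition and the bookkeeping explicitly, which the paper leaves implicit.
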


\begin{proof}
In view of Lemma~\ref{lemma_observable} the group $H^\sharp$ is
observable in~$G$, so by~\cite[Theorem~2.1]{Gro} there exist a
finite-dimensional $G$-module $V$ and a vector $v \in V$ such that
$H^\sharp$ is the stabilizer of~$v$ in~$G$. Hence $H^\sharp$ is the
common stabilizer in~$G$ of all $H^\sharp$-fixed points in~$V$.
Since $G$ is reductive, the $G$-module $V$ is completely reducible,
which implies that $H^\sharp$ is the common stabilizer in $G$ of all
$H^\sharp$-fixed points in all simple $G$-modules contained in~$V$,
whence the required result.
\end{proof}

\begin{corollary} \label{crl_H^sharp}
The group $H^\sharp$ is the common stabilizer in $G$ of all elements
in $\CC[G]^{H^\sharp}$ with respect to the action of $G$ on the
right.
\end{corollary}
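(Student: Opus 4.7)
The plan is to derive the corollary from Lemma~\ref{lemma_H^sharp_com_stab_modules} via the standard $(G \times G)$-module decomposition~(\ref{eqn_GtimesG}) of $\CC[G]$.

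First, note that $H^\sharp$ clearly fixes every element of $\CC[G]^{H^\sharp}$ under the right action by definition, so one inclusion is trivial. For the reverse inclusion, let $g \in G$ be an element fixing every function in $\CC[G]^{H^\sharp}$ with respect to the right action. Using isomorphism~(\ref{eqn_GtimesG}), in which the right $G$-action operates on the second tensor factor, the fact that $H^\sharp$ acts trivially on $V(\lambda)$ (in the first factor) yields
$$
\CC[G]^{H^\sharp} \simeq \bigoplus_{\lambda \in \mathfrak X_+(B)} V(\lambda) \otimes V(\lambda^*)^{H^\sharp}.
$$
Since $g$ fixes every decomposable tensor $u \otimes v$ with $u \in V(\lambda)$ arbitrary and $v \in V(\lambda^*)^{H^\sharp}$, and since $g$ acts only on the second factor, it follows that $g$ fixes every vector in $V(\lambda^*)^{H^\sharp}$ for every $\lambda \in \mathfrak X_+(B)$.

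As $\lambda$ runs over $\mathfrak X_+(B)$, the modules $V(\lambda^*)$ exhaust (up to isomorphism) all simple $G$-modules. Therefore $g$ lies in the common stabilizer in $G$ of all $H^\sharp$-fixed vectors in all simple $G$-modules. By Lemma~\ref{lemma_H^sharp_com_stab_modules}, this forces $g \in H^\sharp$, completing the proof.

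The argument is essentially bookkeeping; the only point demanding attention is correctly matching the left/right conventions in~(\ref{eqn_GtimesG}) so that the right $H^\sharp$-invariants are taken purely in the second tensor factor, which is what permits the reduction to Lemma~\ref{lemma_H^sharp_com_stab_modules}.
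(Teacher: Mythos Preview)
Your proof is correct and takes essentially the same approach as the paper, which simply states that the corollary follows from Lemma~\ref{lemma_H^sharp_com_stab_modules} and isomorphism~(\ref{eqn_GtimesG}). You have spelled out in detail exactly what the paper leaves implicit: that the right $H^\sharp$-invariants in $\CC[G]$ decompose as $\bigoplus_\lambda V(\lambda) \otimes V(\lambda^*)^{H^\sharp}$, reducing the stabilizer computation to the lemma.
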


\begin{proof}
This follows from Lemma~\ref{lemma_H^sharp_com_stab_modules} and
isomorphism~(\ref{eqn_GtimesG}).
\end{proof}

We now proceed to establishing relations between the principal
combinatorial invariants and the extended weight semigroup of a
spherical homogeneous space $G / H$.

Let $\widehat \Lambda = \widehat \Lambda_{G/H}$ denote the
sublattice in $\mathfrak X(B) \oplus \mathfrak X(H)$ generated by
$\widehat \Lambda^+$.

\begin{proposition} \label{prop_weight_lattice}
The map $\mu \mapsto (\mu, 0)$ induces an isomorphism
$$
\Lambda \simeq \widehat \Lambda \cap \mathfrak X(B) = \lbrace
(\lambda, \chi) \in \widehat \Lambda \mid \chi = 0 \rbrace.
$$
\end{proposition}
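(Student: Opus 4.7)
The map $\mu \mapsto (\mu,0)$ is manifestly an injective group homomorphism $\Lambda \to \mathfrak X(B) \oplus \mathfrak X(H)$ whose image lies in the subgroup $\lbrace \chi = 0 \rbrace$, so the task reduces to verifying the two inclusions between $\Lambda$ and $\widehat \Lambda \cap \mathfrak X(B)$.

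The inclusion $\widehat \Lambda \cap \mathfrak X(B) \subset \Lambda$ is the easier half. Given $(\lambda,0) \in \widehat \Lambda$, I will first use that $\widehat \Lambda^+$ is a monoid generating $\widehat \Lambda$ as a group to split $(\lambda,0) = (\lambda_1,\chi) - (\lambda_2,\chi)$ with both summands in $\widehat \Lambda^+$ and sharing a common $H$-weight $\chi$: group the positive and negative terms of any $\ZZ$-combination expression for $(\lambda,0)$ and invoke closedness of $\widehat \Lambda^+$ under addition. Picking nonzero $p \in \CC[G]^{(B \times H)}_{(\lambda_1,\chi)}$ and $q \in \CC[G]^{(B \times H)}_{(\lambda_2,\chi)}$, the ratio $p/q \in \CC(G)$ is then $H$-invariant under the right action (same $\chi$) and $B$-semi-invariant of weight $\lambda_1 - \lambda_2 = \lambda$ under the left action, so it descends to a nonzero element of $\CC(G/H)^{(B)}_\lambda$, yielding $\lambda \in \Lambda$.

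For the reverse inclusion I will start from $\mu \in \Lambda$ and a nonzero $f_\mu \in \CC(G/H)^{(B)}_\mu$, and represent its lift $\widetilde f_\mu \in \CC(G)$ as a ratio of two $(B \times H)$-semi-invariants in $\CC[G]$ with a common $H$-weight. The plan is divisorial: $\Div(f_\mu) = \sum_{D} n_D D$ is a $\ZZ$-linear combination of colors, since $f_\mu$ is $B$-semi-invariant and the homogeneous space $G/H$ has no $G$-stable prime divisors. Invoking the description recalled in~\S\,\ref{subsec_ews}, each color $D$ is the divisor of a regular section $s_D$ of a homogeneous line bundle, corresponding to a $(B \times H)$-semi-invariant $s_D \in \CC[G]^{(B \times H)}_{(\lambda_D,\chi_D)}$ with $\Div(s_D) = \pi^{-1}(D)$, where $\pi \colon G \to G/H$. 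The divisors of $\widetilde f_\mu$ and of $\prod_D s_D^{n_D}$ on $G$ then coincide, so by Rosenlicht's theorem on units of connected algebraic groups their ratio equals $c \cdot \eta$ for some $c \in \CC^\times$ and $\eta \in \mathfrak X(G)$. Matching $(B \times H)$-weights in $\widetilde f_\mu = c\, \eta \prod_D s_D^{n_D}$ expresses $(\mu,0)$ as an explicit integer combination of the $(\lambda_D, \chi_D) \in \widehat \Lambda^+$ and the weight of $\eta$; since $\eta^{-1} \in \CC[G]^{(B \times H)}$ (its $B$-weight is Weyl-invariant, hence dominant), that weight itself belongs to $\widehat \Lambda$, and therefore $(\mu,0) \in \widehat \Lambda$.

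The step I expect to be the main obstacle is the appeal to the bijection $D \leftrightarrow (\lambda_D, \chi_D)$, which in~\S\,\ref{subsec_ews} was formulated under the hypothesis $G = C \times G^{ss}$ with $G^{ss}$ simply connected. For a general connected reductive $G$, I would either reduce to this case via a finite isogeny $\widetilde G \to G$ of that form, tracking how $\Lambda$, $\widehat \Lambda$, and the set of colors transfer between $G/H$ and $\widetilde G / \widetilde H$, or argue directly that $\pi^{-1}(D)$ is cut out in $\CC[G]$ by a $(B \times H)$-semi-invariant generator of its prime, $(B \times H)$-stable vanishing ideal.
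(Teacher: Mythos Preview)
Your proposal is correct, and both arguments treat the inclusion $\widehat \Lambda \cap \mathfrak X(B) \subset \Lambda$ the same way (the paper simply calls it obvious). For the harder inclusion $\Lambda \subset \widehat \Lambda$, however, the paper takes a genuinely different and shorter route: it regards $f_\mu$ as a rational function on $G/H^\sharp$, notes that this variety is quasi-affine (by the observability of $H^\sharp$, Lemma~\ref{lemma_observable}), and then invokes the standard fact that on a quasi-affine variety every $B$-semi-invariant rational function is a ratio $F_1/F_2$ of $B$-semi-invariant regular functions; since $H/H^\sharp$ is diagonalizable, one may further assume $F_1, F_2$ are $H/H^\sharp$-semi-invariant of a common weight $\chi$, giving $(\mu,0) = (\lambda_1,\chi)-(\lambda_2,\chi)$ directly. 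This bypasses the color machinery entirely and works for arbitrary connected reductive $G$, whereas your divisorial argument needs $\CC[G]$ to be factorial (equivalently $G = C \times G^{ss}$ with $G^{ss}$ simply connected) for the sections $s_D$ to exist as honest functions, and otherwise the isogeny reduction you flag. On the other hand, your approach produces the explicit expansion $(\mu,0) = \sum_D n_D (\lambda_D,\chi_D) + (\text{character term})$, which is precisely the formula the paper derives separately in the subsequent Proposition~\ref{prop_value_of_color}; so your route is longer for this statement but packages the next step into the same computation.
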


\begin{proof}
The inclusion $\Lambda \supset \widehat \Lambda \cap \mathfrak X(B)$
is obvious. To prove the converse inclusion, let $\mu \in \Lambda$
and regard the corresponding element $f_\mu \in \CC(G/H)$ as a $(B
\times H/H^\sharp)$-semi-invariant rational function on $G/H^\sharp$
of weight $(\mu, 0)$. Lemma~\ref{lemma_observable} yields that $G /
H^\sharp$ is quasi-affine, therefore $\CC(G/H^\sharp) = \Quot \CC[G
/ H^\sharp]$ and by~\cite[Theorem~3.3(a)]{PV} one has $f_\mu = F_1 /
F_2$ for some $B$-semi-invariant functions $F_1, F_2 \in \CC[G /
H^\sharp]$. As $H / H^\sharp$ is diagonalizable, we may also assume
$F_1, F_2$ to be $H / H^\sharp$-semi-invariant (of the same weight).
Let $(\lambda_1, \chi), (\lambda_2, \chi) \in \mathfrak X_+(B)
\oplus \mathfrak X(H/H^\sharp)$ be the ($B \times
H/H^\sharp$)-weights of $F_1, F_2$, respectively. Then, modulo the
isomorphism $\CC[G / H^\sharp] \simeq \CC[G]^{H^\sharp}$, one has
$F_1 \in \CC[G]^{(B \times H)}_{(\lambda_1, \chi)}$ and $F_2 \in
\CC[G]^{(B \times H)}_{(\lambda_2, \chi)}$, whence $(\lambda_1,
\chi), (\lambda_2, \chi) \in \widehat \Lambda^+$ and $(\mu, 0) =
(\lambda_1, \chi) - (\lambda_2, \chi)$.
\end{proof}

Until the end of this subsection we assume that $G = C \times
G^{ss}$ and $G^{ss}$ is simply connected.

By Proposition~\ref{prop_weight_lattice}, for every $\mu \in
\Lambda$ there is a unique expression of the form
\begin{equation} \label{eqn_(mu, 0)}
(\mu, 0) = \sum\limits_{D \in \mathcal D} c(D, \mu)(\lambda_D,
\chi_D) + (\nu, - \nu_H),
\end{equation}
where $\nu \in \mathfrak X(C)$. Taking into account the information
in the last three paragraphs of \S\,\ref{subsec_ews}, we immediately
obtain the following result.

\begin{proposition} \label{prop_value_of_color}
$\langle \varkappa(D), \mu \rangle = c(D, \mu)$ for every $D \in
\mathcal D$.
\end{proposition}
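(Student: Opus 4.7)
The plan is to exhibit the rational function $f_\mu$ as an explicit product of the sections $s_D$ and the character $\nu^{-1}$, and then read off orders along colors. Concretely, I would first recall that by the definition of $\varkappa$ one has $\langle \varkappa(D), \mu \rangle = \ord_D(f_\mu)$, so the statement becomes $\ord_D(f_\mu) = c(D, \mu)$.

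Next, starting from the decomposition~(\ref{eqn_(mu, 0)}) I would form the rational function
$$
F = \Bigl(\prod_{D \in \mathcal D} s_D^{c(D, \mu)}\Bigr) \cdot \nu^{-1}
$$
on $G$, where $s_D$ is the distinguished section from \S\,\ref{subsec_ews} (regarded as an element of $\CC[G]^{(H)}_{\chi_D}$), and $\nu^{-1}$ is the nowhere vanishing function on $G$ spanning $\CC[G]^{(B \times H)}_{(\nu, -\nu_H)}$. By construction $F$ is $(B \times H)$-semi-invariant, and the second coordinate of~(\ref{eqn_(mu, 0)}) shows that its right $H$-weight vanishes, so $F$ descends to a $B$-semi-invariant rational function on $G/H$ of weight $\mu$. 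Since $\dim \CC(G/H)^{(B)}_\mu = 1$, we obtain $F = c \cdot f_\mu$ for some constant $c \in \CC^\times$.

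Finally, I would compute $\ord_D(F)$ color by color. Pulling back along the smooth (hence flat) quotient map $\pi \colon G \to G/H$ preserves orders, so $\ord_D(F) = \ord_{\pi^{-1}(D)}(F)$. The last paragraph of \S\,\ref{subsec_ews} says precisely that the divisor of zeros of $s_{D'}$ on $G/H$ is $D'$, i.e.\ $\ord_{\pi^{-1}(D)}(s_{D'}) = \delta_{D, D'}$; since $\nu^{-1}$ is nowhere zero on $G$, its order along $\pi^{-1}(D)$ is $0$. Summing gives $\ord_D(F) = c(D, \mu)$, and hence $\langle \varkappa(D), \mu \rangle = \ord_D(f_\mu) = c(D, \mu)$.

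The only subtle step is identifying $F$ with a well-defined rational function on $G/H$ and justifying that the order of a section of a line bundle $G *_H \CC_{-\chi_D}$ along a color $D$ really equals $1$ in the sense needed (as opposed to, e.g., the preimage $\pi^{-1}(D)$ carrying extra multiplicities from the $H$-action). Since $\pi$ is a principal $H$-bundle it is smooth, so $\pi^{-1}(D)$ is a reduced Cartier divisor and orders transfer unchanged; this is what makes the elementary multiplicative manipulation legitimate. Aside from this bookkeeping, the proposition is essentially a direct translation of~(\ref{eqn_(mu, 0)}) into a divisorial identity on $G/H$.
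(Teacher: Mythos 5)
Your proposal is correct and is essentially the paper's own argument written out in full: the paper derives the proposition ``immediately'' from formula~(\ref{eqn_(mu, 0)}) together with the facts recalled at the end of \S\,\ref{subsec_ews} (each color $D$ is the zero divisor of the section $s_D$, and $\nu^{-1}$ is nowhere vanishing), which is exactly the computation you perform by descending $F = \prod_D s_D^{c(D,\mu)} \cdot \nu^{-1}$ to a multiple of $f_\mu$ and comparing orders along colors. Your extra care about pulling back orders along the flat quotient $G \to G/H$ is sound bookkeeping and consistent with how the paper treats $\widetilde D$ in the proof of Lemma~\ref{lemma_fwls}.
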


\begin{lemma} \label{lemma_fwls}
Suppose that $\alpha \in \Pi$ and $D \in \mathcal D$. Then $D \in
\mathcal D(\alpha)$ if and only if $\varpi_\alpha \in \supp
\lambda_D$.
\end{lemma}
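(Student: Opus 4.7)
I would first use the projection $\phi \colon G/H \to G^{ss}/H^{ss}$ to reduce to the case of $G$ semisimple and simply connected. The last three paragraphs of \S\,\ref{subsec_ews} show that $\phi$ is $P_{\{\alpha\}}$-equivariant (being a geometric quotient by the action of $C$ on the left, which commutes with the left $P_{\{\alpha\}}$-action), induces a natural bijection $\mathcal D_{G/H} \simeq \mathcal D_{G^{ss}/H^{ss}}$, and preserves the data $\lambda_D = \lambda_{\phi(D)}$. It therefore suffices to establish the lemma for $G^{ss}/H^{ss}$, so I will assume throughout the rest of the argument that $G$ is semisimple and simply connected.

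The heart of the argument is the assertion that $D \in \mathcal D(\alpha)$ if and only if the line $\langle s_D \rangle \subset V(\lambda_D) \subset \Gamma(G *_H \CC_{-\chi_D})$ is \emph{not} stable under the left action of $P_{\{\alpha\}}$. The reverse direction is immediate: if $\langle s_D \rangle$ is $P_{\{\alpha\}}$-stable, then $\Div(g \cdot s_D) = \Div(s_D) = D$ for every $g \in P_{\{\alpha\}}$, so $D$ is $P_{\{\alpha\}}$-stable. For the forward direction, identify $s_D$ with the function $F_D \in \CC[G]^{(B \times H)}_{(\lambda_D, \chi_D)}$; its scheme of zeros is the preimage $\widetilde D$ of $D$ under $G \to G/H$, taken with multiplicity one. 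Assuming $D$ is $P_{\{\alpha\}}$-stable, for each $g \in P_{\{\alpha\}}$ the function $g \cdot F_D$ has zero divisor $g \cdot \widetilde D = \widetilde D$, again with multiplicity one; so the ratio $(g \cdot F_D)/F_D$ is a nowhere-vanishing regular function on the affine variety $G$. Since $G$ is semisimple and simply connected, $\mathfrak X(G) = 0$, and hence $\CC[G]^\times = \CC^\times$, so this ratio is a nonzero scalar. Thus $g \cdot s_D \in \CC s_D$ for every $g \in P_{\{\alpha\}}$, establishing $P_{\{\alpha\}}$-stability of $\langle s_D \rangle$.

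It remains to translate failure of $P_{\{\alpha\}}$-stability of $\langle s_D \rangle$ into representation-theoretic terms. As $s_D$ is a $B$-highest-weight vector of weight $\lambda_D$ in the simple $G$-module $V(\lambda_D)$, and $P_{\{\alpha\}}$ is generated by~$B$ together with the root subgroup corresponding to~$-\alpha$, the line $\langle s_D \rangle$ is $P_{\{\alpha\}}$-stable if and only if $e_{-\alpha} \cdot s_D = 0$: the vector $e_{-\alpha} \cdot s_D$ has weight $\lambda_D - \alpha \ne \lambda_D$, so any scalar multiple of $s_D$ appearing there must vanish. By $\mathfrak{sl}_2$-theory applied to the triple generated by $e_\alpha$ and $e_{-\alpha}$, this vanishing is equivalent to $\langle \alpha^\vee, \lambda_D \rangle = 0$, i.e.\ to the coefficient of $\varpi_\alpha$ in $\lambda_D$ being zero, i.e.\ to $\varpi_\alpha \notin \supp \lambda_D$. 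Combining the two steps yields the lemma. I expect the main subtlety to be the forward direction of the middle step: it is essential that after the reduction $\mathfrak X(G) = 0$, otherwise $(g \cdot F_D)/F_D$ could be a nontrivial character of~$G$ and the conclusion $g \cdot s_D \in \CC s_D$ would need a less direct justification.
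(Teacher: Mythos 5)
Your argument is correct and follows essentially the same route as the paper: both pass to the preimage $\widetilde D \subset G$ and its defining function $F_D$, identify $P_{\{\alpha\}}$-instability of $D$ with failure of $P_{\{\alpha\}}$-semi-invariance of $s_D$, and then read off that failure from $\langle \alpha^\vee, \lambda_D \rangle \ne 0$. The only difference is that the step you prove by hand (a function with $P_{\{\alpha\}}$-stable divisor is $P_{\{\alpha\}}$-semi-invariant, which you secure by first reducing to $G^{ss}$ simply connected so that $\CC[G]^\times = \CC^\times$) is exactly what the paper obtains by citing~\cite[Theorem~3.1]{PV}.
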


\begin{proof}
Recall from~\S\,\ref{subsec_ews} that $D$ is the divisor of zeros of
a (unique up to proportionality) regular section $s_D \in \Gamma(G
*_H \CC_{-\chi_D})$ that is $B$-semi-invariant of weight
$\lambda_D$. Clearly, $D$ is $P_\alpha$-unstable if and only if its
preimage $\widetilde D \subset G$ under the map $G \to G / H$ is so.
Note that $\widetilde D$ is the divisor of zeros of $s_D$ considered
as an element of $\CC[G]$. It follows from \cite[Theorem~3.1]{PV}
that $\widetilde D$ is $P_\alpha$-unstable if and only if $s_D$ is
not $P_\alpha$-semi-invariant. The latter is equivalent to
$\varpi_\alpha \in \supp \lambda_D$.
\end{proof}

Lemma~\ref{lemma_fwls} and Proposition~\ref{prop_alternative} imply
the proposition below.

\begin{proposition} \label{prop_Sigma_via_supports}
Suppose that $\alpha \in \Pi$. Then:
\begin{enumerate}[label=\textup{(\alph*)},ref=\textup{\alph*}]
\item
$\alpha \in \Pi^p$ if and only if there is no free generator
$(\lambda, \chi)$ of\, $\widehat \Lambda^+_{G^{ss} / H^{ss}}$ such
that $\varpi_\alpha \in \supp \lambda$;

\item
$\alpha \in \Sigma$ if and only if there are exactly two different
free generators $(\lambda_1, \chi_1)$, $(\lambda_2, \chi_2)$ of
$\widehat \Lambda^+_{G^{ss} / H^{ss}}$ such that $\varpi_\alpha \in
\supp \lambda_1$ and $\varpi_\alpha \in \supp \lambda_2$.
\end{enumerate}
\end{proposition}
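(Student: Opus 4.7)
The plan is that this proposition follows by directly translating Lemma~\ref{lemma_fwls} and Proposition~\ref{prop_alternative} through the bijection (recalled in the paragraph preceding the proposition) between the colors $D \in \mathcal D_{G/H}$ and the free generators $(\lambda_D, \chi_D)$ of the semigroup $\widehat\Lambda^+_{G^{ss}/H^{ss}}$. Under this bijection, the condition $\varpi_\alpha \in \supp \lambda$ on a free generator $(\lambda, \chi)$ corresponds, by Lemma~\ref{lemma_fwls}, to the condition $D \in \mathcal D(\alpha)$ on the associated color. Therefore the cardinality of the set of free generators $(\lambda, \chi)$ with $\varpi_\alpha \in \supp \lambda$ coincides with $|\mathcal D(\alpha)|$.

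For part (a), Proposition~\ref{prop_alternative}(b) says that $\alpha \in \Pi^p$ is equivalent to $\mathcal D(\alpha) = \varnothing$. By the equivalence just noted, this is the same as the nonexistence of a free generator $(\lambda, \chi)$ of $\widehat\Lambda^+_{G^{ss}/H^{ss}}$ with $\varpi_\alpha \in \supp \lambda$.

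For part (b), the key step is to observe that among the four types in Proposition~\ref{prop_alternative}(b), $\alpha$ itself lies in $\Sigma$ only when $\alpha$ is of type~$a$. Indeed, in type $p$ this is vacuous; in type~$b$ one has $\QQ\alpha \cap \Sigma = \varnothing$; and in type~$a'$ the element $2\alpha$ is a primitive element of~$\Lambda$, which forces $\alpha \notin \Lambda$ and thus $\alpha \notin \Sigma$. On the other hand, type~$a$ is precisely characterized by $|\mathcal D(\alpha)| = 2$, whereas types $a'$ and $b$ give $|\mathcal D(\alpha)| = 1$ and type~$p$ gives $|\mathcal D(\alpha)| = 0$. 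Hence $\alpha \in \Sigma$ if and only if $|\mathcal D(\alpha)| = 2$, which by the translation above is equivalent to the existence of exactly two distinct free generators $(\lambda_1, \chi_1), (\lambda_2, \chi_2)$ of $\widehat\Lambda^+_{G^{ss}/H^{ss}}$ with $\varpi_\alpha \in \supp \lambda_i$ for $i=1,2$.

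There is no real obstacle here: once Lemma~\ref{lemma_fwls} is combined with the color/free-generator bijection, the statement is an immediate reformulation of Proposition~\ref{prop_alternative}(b). The only subtle point worth checking explicitly is the type-$a'$ case in part~(b), where one must invoke primitivity of the elements of $\Sigma$ in the lattice~$\Lambda$ to conclude $\alpha \notin \Sigma$ from $2\alpha \in \Sigma$.
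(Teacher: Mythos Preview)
Your proof is correct and follows exactly the approach the paper takes: the paper simply states that the proposition is implied by Lemma~\ref{lemma_fwls} and Proposition~\ref{prop_alternative}, and you have spelled out this implication in detail. The only minor simplification available is that in part~(b) you need not invoke primitivity for type~$a'$, since Proposition~\ref{prop_alternative}(b) already asserts that the four types are mutually exclusive and that type~$a$ is characterized by $\alpha \in \Sigma$; hence $\alpha \in \Sigma$ is automatically equivalent to $|\mathcal D(\alpha)| = 2$.
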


A combination of
Propositions~\ref{prop_weight_lattice},~\ref{prop_value_of_color},
and~\ref{prop_Sigma_via_supports} yields the following result.

\begin{proposition} \label{prop_EWS_determines_almost_everything}
The pair $(\mathfrak X(H), \widehat \Lambda^+)$ uniquely determines
$\Lambda$, $\Pi^p$, $\mathcal D$, and $\Sigma \cap \Pi$.
\end{proposition}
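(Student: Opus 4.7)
The plan is to observe that the proposition is essentially a bookkeeping combination of the three preceding propositions (\ref{prop_weight_lattice}, \ref{prop_value_of_color}, and~\ref{prop_Sigma_via_supports}), together with a preliminary step of extracting $\widehat\Lambda^+_{G^{ss}/H^{ss}}$ from $\widehat\Lambda^+_{G/H}$. I first recover $\Lambda$: since $\mathfrak X(H)$ is given, the condition ``$\chi=0$'' on elements of $\widehat\Lambda^+ \subset \mathfrak X(B) \oplus \mathfrak X(H)$ is meaningful, and Proposition~\ref{prop_weight_lattice} identifies $\Lambda$ with $\{(\lambda,\chi) \in \widehat\Lambda \mid \chi=0\}$, where $\widehat\Lambda = \ZZ\widehat\Lambda^+$.

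The second step is to extract $\widehat\Lambda^+_{G^{ss}/H^{ss}}$ from $\widehat\Lambda^+_{G/H}$ using the decomposition~(\ref{eqn_ews_reductive_case}). Since $G = C \times G^{ss}$ is part of the fixed data, the subgroup $\mathfrak X(C) \subset \mathfrak X(B)$ is known. I claim that the subset of $\widehat\Lambda^+$ consisting of pairs $(\lambda, \chi)$ with $\lambda \in \mathfrak X(C)$ coincides exactly with $\{(\nu, -\nu_H) \mid \nu \in \mathfrak X(C)\}$: indeed, by~(\ref{eqn_ews_reductive_case}) any such element decomposes as $(0, \chi') + (\nu, -\nu_H)$ with $(0, \chi') \in \widehat\Lambda^+_{G^{ss}/H^{ss}}$, and the latter forces $\chi' = 0$ since $G^{ss}$ has no nontrivial characters. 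This recovers the restriction map $\nu \mapsto \nu_H$, after which each $(\lambda, \chi) \in \widehat\Lambda^+$ admits a unique decomposition as in~(\ref{eqn_ews_reductive_case}), yielding $\widehat\Lambda^+_{G^{ss}/H^{ss}}$.

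With $\widehat\Lambda^+_{G^{ss}/H^{ss}}$ identified, it is a free semigroup whose free generators $(\lambda_D, \chi_D)$ are in canonical bijection with $\mathcal D$; thus $\mathcal D$ is recovered as an abstract set. To recover the map $\varkappa$, for each $\mu \in \Lambda$ the pair $(\mu, 0)$ has a unique expression of the form~(\ref{eqn_(mu, 0)}), and Proposition~\ref{prop_value_of_color} gives $\langle \varkappa(D), \mu\rangle = c(D, \mu)$ for every $D \in \mathcal D$; this determines $\varkappa$ on the lattice $\Lambda$ already recovered. Finally, reading off the supports $\supp \lambda_D$ of the free generators and applying Proposition~\ref{prop_Sigma_via_supports}(a),(b) gives $\Pi^p$ and $\Sigma \cap \Pi$, respectively.

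The only nontrivial point is the second step (extracting the semisimple part of the extended weight semigroup), and it is not so much an obstacle as a routine verification that rests on the hypothesis $G = C \times G^{ss}$. Everything else is a direct application of the three previously established propositions.
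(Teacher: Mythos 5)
Your proposal is correct and follows essentially the same route as the paper, which simply combines Propositions~\ref{prop_weight_lattice}, \ref{prop_value_of_color}, and~\ref{prop_Sigma_via_supports}; your extra step of isolating $\widehat\Lambda^+_{G^{ss}/H^{ss}}$ inside $\widehat\Lambda^+_{G/H}$ via the elements with $\lambda \in \mathfrak X(C)$ is a correct spelling-out of what the paper leaves implicit under the standing assumption $G = C \times G^{ss}$.
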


A slightly stronger form of
Proposition~\ref{prop_EWS_determines_almost_everything} will be
given by Theorem~\ref{thm_EWS_determines_almost_everything}.

Taking into account Theorem~\ref{thm_uniqueness} we get the
corollary below.

\begin{corollary}
The spherical homogeneous space $G / H$ is uniquely determined by
the triple $(\mathfrak X(H), \widehat \Lambda^+, \Sigma)$.
\end{corollary}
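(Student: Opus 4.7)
The plan is simply to combine the two previously established results: Proposition~\ref{prop_EWS_determines_almost_everything} recovers three of the four principal invariants from the pair $(\mathfrak X(H), \widehat \Lambda^+)$, while Theorem~\ref{thm_uniqueness} turns the full quadruple into a uniqueness statement for $G/H$. Concretely, given the data $(\mathfrak X(H), \widehat \Lambda^+, \Sigma)$, I would first invoke Proposition~\ref{prop_EWS_determines_almost_everything} to read off $\Lambda$, $\Pi^p$, and the set of colors $\mathcal D$ together with the map $\varkappa$ (these are precisely what the proposition guarantees can be extracted from $(\mathfrak X(H), \widehat \Lambda^+)$ alone). The third entry of the triple supplies $\Sigma$ directly, so no further reconstruction is needed there; the partial information $\Sigma \cap \Pi$ provided by the proposition is automatically consistent but superseded.

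At this point we have the whole quadruple $(\Lambda, \Pi^p, \Sigma, \mathcal D)$. Applying Theorem~\ref{thm_uniqueness} yields the conclusion that $G/H$ is determined up to conjugation of $H$ in $G$. Since both Proposition~\ref{prop_EWS_determines_almost_everything} and the preceding results in \S\,\ref{subsec_ews_pci_interrelations} are stated under the standing assumption $G = C \times G^{ss}$ with $G^{ss}$ simply connected, the corollary is understood in the same setting; no additional reduction is required inside the proof.

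There is no serious obstacle: the argument is a one-line combination of the two cited results. The only point worth flagging is that $\Sigma$ genuinely needs to be given as part of the input rather than being extracted from $(\mathfrak X(H), \widehat \Lambda^+)$, because the proposition only recovers $\Sigma \cap \Pi$; the spherical roots that are not simple roots are not detectable from the extended weight semigroup alone, which is precisely why the triple (and not just the pair) appears in the statement.
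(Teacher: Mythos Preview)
Your proposal is correct and matches the paper's own argument exactly: the corollary is stated immediately after Proposition~\ref{prop_EWS_determines_almost_everything} with the one-line justification ``Taking into account Theorem~\ref{thm_uniqueness} we get the corollary below.'' Your additional remarks about the standing hypothesis and about why $\Sigma$ must be supplied separately are accurate and consistent with the surrounding discussion in the paper.
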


\begin{example}
Table~1 in~\cite{Avd_EWS} contains the free generators of the
semigroups $\widehat \Lambda^+$ for certain affine spherical
homogeneous spaces $G / H$ with non-simple semisimple~$G$, which by
Proposition~\ref{prop_EWS_determines_almost_everything} enables one
to compute the invariants $\Lambda$, $\Pi^p$, $\mathcal D$, and
$\Sigma \cap \Pi$ for these spaces. In particular, for spherical
homogeneous spaces $(\SL_n \times \SL_{n+1}) / (\SL_n \times
\CC^\times)$ with $n \ge 2$ and $(\Spin_n \times \Spin_{n+1}) /
\Spin_n$ with $n \ge 3$ (see items~1,~2 in~\cite[Table~1]{Avd_EWS})
it turns out that $\Pi \cap \Sigma = \Pi$, in which case $\Sigma =
\Pi$ and the entire quadruple $(\Lambda, \Pi^p, \Sigma, \mathcal D)$
is uniquely determined by the pair $(\mathfrak X(H), \widehat
\Lambda^+)$.
\end{example}

\begin{example}
A spherical homogeneous space $G / H$ is said to be \textit{model}
if it is quasi-affine and there is a $G$-module isomorphism
$$
\CC[G / H] \simeq \bigoplus \limits_{\lambda \in \mathfrak X_+(B)}
V(\lambda),
$$
that is, for every $\lambda \in \mathfrak X_+(B)$ the $G$-module
$\CC[G / H]$ contains a simple $G$-submodule isomorphic
to~$V(\lambda)$. An example of such a space is given by $G / U$. All
model spherical homogeneous spaces of (not necessarily simply
connected) semisimple groups were classified by Luna in~\cite{Lu07}.
If $G$ is simply connected, then for every such space $G / H$ one
has $\mathfrak X(H) = 0$ and $\widehat \Lambda^+ = \lbrace (\lambda,
0) \mid \lambda \in \mathfrak X_+(B) \rbrace$. Therefore, $G / H$ is
uniquely determined by the set~$\Sigma$, which may be any subset of
a certain finite set $\Sigma_G^{\mathrm{mod}} \subset \Delta^+$. We
note that $\Sigma_G^{\mathrm{mod}} \ne \varnothing$ whenever $G \ne
\SL_2$. This example shows that in general the set $\Sigma$ is not
determined by the pair $(\mathfrak X(H), \widehat \Lambda^+)$.
\end{example}

We now turn to the problem of expressing $\mathfrak X(H)$ and
$\widehat \Lambda^+$ in terms of the quadruple $(\Lambda, \Pi^p,
\Sigma, \mathcal D)$.

\begin{proposition} \label{prop_gen_char}
The group $\mathfrak X(H)$ is generated by the characters $\chi_D$,
$D \in \mathcal D$, and the characters $\nu_H$, $\nu \in \mathfrak
X(C)$.
\end{proposition}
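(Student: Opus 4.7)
The strategy is to analyze the image of $\widehat\Lambda$ under the second projection $\mathrm{pr}_2\colon \mathfrak X(B)\oplus\mathfrak X(H)\to\mathfrak X(H)$. By Proposition~\ref{prop_ews_in_general} together with the discussion in~\S\,\ref{subsec_ews} that follows it, the semigroup $\widehat\Lambda^+$ is generated by the elements $(\lambda_D,\chi_D)$ with $D\in\mathcal D$ and $(\nu,-\nu_H)$ with $\nu\in\mathfrak X(C)$; hence $\widehat\Lambda$ is generated as an abelian group by the same elements, and $\mathrm{pr}_2(\widehat\Lambda)$ is generated by $\chi_D$ ($D\in\mathcal D$) together with $-\nu_H=(-\nu)_H$ ($\nu\in\mathfrak X(C)$). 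Proving the proposition thus reduces to showing that $\mathrm{pr}_2(\widehat\Lambda)=\mathfrak X(H)$.

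To this end, set $S=\lbrace\chi\in\mathfrak X(H)\mid\CC[G]^{(H)}_\chi\ne 0\rbrace$. For every $\chi\in S$, sphericity of $H$ implies that $\CC[G]^{(H)}_\chi$ is a nonzero multiplicity-free $G$-module, so it contains a highest-weight vector of some weight~$\lambda$; this gives $(\lambda,\chi)\in\widehat\Lambda^+\subset\widehat\Lambda$. Consequently $S\subset\mathrm{pr}_2(\widehat\Lambda)$, and it suffices to prove that $S$ generates $\mathfrak X(H)$ as an abelian group.

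The key ingredient here is Corollary~\ref{crl_H^sharp}: the subgroup $H^\sharp$ coincides with the common stabilizer in $G$ of all elements of $\CC[G]^{H^\sharp}$ under the right action. Consequently the induced right action of the diagonalizable group $H/H^\sharp$ on $\CC[G]^{H^\sharp}$ is faithful. Decomposing this algebra into $H/H^\sharp$-weight spaces, the set of weights that occur is precisely $S\subset\mathfrak X(H/H^\sharp)=\mathfrak X(H)$. If the subgroup generated by $S$ were proper in $\mathfrak X(H/H^\sharp)$, its annihilator in $H/H^\sharp$ would be a nontrivial subgroup acting trivially on every weight space, and therefore on the whole algebra, contradicting faithfulness. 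Hence $S$ generates $\mathfrak X(H)$, completing the argument.

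\textbf{Main obstacle.} The only nontrivial point is the final step: the passage from faithfulness of the $H/H^\sharp$-action on $\CC[G]^{H^\sharp}$ to the assertion that $S$ generates $\mathfrak X(H)$. This is a short Pontryagin-duality style argument that relies crucially on $H/H^\sharp$ being diagonalizable. The remaining ingredients—sphericity, the semigroup decomposition surrounding Proposition~\ref{prop_ews_in_general}, and Corollary~\ref{crl_H^sharp}—are already in place.
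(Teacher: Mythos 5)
Your proof is correct and follows essentially the same route as the paper: both rest on the explicit generators $(\lambda_D,\chi_D)$, $(\nu,-\nu_H)$ of $\widehat\Lambda^+$ together with Corollary~\ref{crl_H^sharp} and the diagonalizability of $H/H^\sharp$. You phrase the last step dually (faithfulness of the $H/H^\sharp$-action on $\CC[G]^{H^\sharp}$ forces the occurring weights to generate $\mathfrak X(H)$), whereas the paper shows the common kernel $H_0$ of the listed characters equals $H^\sharp$ and leaves the duality step implicit; the content is the same.
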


\begin{proof}

Let $H_0 \subset H$ be the common kernel of all the
characters~$\chi_D$, $D \in \mathcal D$, and all the characters
$\nu_D$, $\nu \in \mathfrak X(C)$. Clearly, $H^\sharp \subset H_0$.
On the other hand, since $\widehat \Lambda^+_{G/H}$ is generated by
all the elements $(\lambda_D, \chi_D)$, $D \in \mathcal D$, and all
the elements $(\nu, - \nu_H)$, $\nu \in \mathfrak X(C)$, one has
$\CC[G]^{H^\sharp} = \CC[G]^{H_0}$. Corollary~\ref{crl_H^sharp}
yields $H_0 \subset H^\sharp$, whence $H_0 = H^\sharp$.
\end{proof}

Let $\ZZ^{\mathcal D}$ be the free Abelian group consisting of
integer linear combinations of elements in~$\mathcal D$.
Proposition~\ref{prop_gen_char} yields a surjective homomorphism
$$
\psi \colon \ZZ^{\mathcal D} \oplus \mathfrak X(C) \to \mathfrak
X(H)
$$
given by $D \mapsto\nobreak \chi_D$ for every $D \in \mathcal D$ and
$\nu \mapsto \nu_H$ for every $\nu \in \mathfrak X(C)$, hence
$\mathfrak X(H) \simeq \ZZ^{\mathcal D} / \Ker \psi$.

Every element $\mu \in \Lambda$ admits a unique expression of the
form $\mu = \mu^{ss} + \mu^C$, where $\mu^{ss} \in \mathfrak
X_+(B^{ss})$ and $\mu^C \in \mathfrak X(C)$. Combining
Propositions~\ref{prop_weight_lattice} and~\ref{prop_value_of_color}
together with formula~(\ref{eqn_(mu, 0)}), we obtain the following
result.

\begin{proposition} \label{prop_kernel}
The kernel of $\psi$ is generated by the elements $\sum \limits_{D
\in \mathcal D} \langle \varkappa(D), \mu \rangle D - \mu^C$, where
$\mu$ runs over a basis of~$\Lambda$.
\end{proposition}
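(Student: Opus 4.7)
My strategy is to lift $\psi$ to a map into the extended weight lattice $\widehat\Lambda$ and then exploit Proposition~\ref{prop_weight_lattice} to identify the kernel with~$\Lambda$. Introduce the homomorphism $\widetilde\Psi\colon\ZZ^{\mathcal D}\oplus\mathfrak X(C)\to\widehat\Lambda$ defined on generators by $D\mapsto(\lambda_D,\chi_D)$ and $\nu\mapsto(-\nu,\nu_H)$. Composing with the projection $\pi_2\colon\widehat\Lambda\to\mathfrak X(H)$ onto the second factor yields $\pi_2\circ\widetilde\Psi=\psi$, so $\ker\psi=\widetilde\Psi^{-1}(\ker\pi_2)$, and by Proposition~\ref{prop_weight_lattice} the subgroup $\ker\pi_2\subset\widehat\Lambda$ is exactly the image of $\Lambda$ under $\mu\mapsto(\mu,0)$.

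The next step is to verify that $\widetilde\Psi$ is an isomorphism of abelian groups. Surjectivity is immediate because the elements $(\lambda_D,\chi_D)$ together with the $(\nu,-\nu_H)$ generate the semigroup $\widehat\Lambda^+$ and hence the group $\widehat\Lambda$. Injectivity relies on the structural information collected in~\S\,\ref{subsec_ews}: by Proposition~\ref{prop_ews_in_general} one has $\widehat\Lambda^+=\widehat\Lambda^+_{G^{ss}/H^{ss}}\oplus\{(\nu,-\nu_H)\mid\nu\in\mathfrak X(C)\}$, and the first component of $(\nu,-\nu_H)$ recovers $\nu$, so the $\mathfrak X(C)$-summand embeds faithfully. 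The surjectivity of $H\to H^{ss}$ renders the restriction $\mathfrak X(H^{ss})\hookrightarrow\mathfrak X(H)$ injective, so the elements $(\lambda_D,\chi_D)$ in $\widehat\Lambda_{G/H}$ may be identified with the free semigroup generators of $\widehat\Lambda^+_{G^{ss}/H^{ss}}$; this in turn identifies $\ZZ^{\mathcal D}$ with $\widehat\Lambda_{G^{ss}/H^{ss}}$, completing the proof that $\widetilde\Psi$ is an isomorphism.

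Once this is established, formula~(\ref{eqn_(mu, 0)}) combined with Proposition~\ref{prop_value_of_color} gives, for every $\mu\in\Lambda$,
\[
(\mu,0)=\sum_{D\in\mathcal D}\langle\varkappa(D),\mu\rangle(\lambda_D,\chi_D)+(\mu^C,-\mu^C_H)=\widetilde\Psi(x_\mu),
\]
where $x_\mu:=\sum_{D\in\mathcal D}\langle\varkappa(D),\mu\rangle D-\mu^C$; the identification $\nu=\mu^C$ in~(\ref{eqn_(mu, 0)}) falls out by decomposing the first component $\mu=\mu^{ss}+\mu^C$ according to $\mathfrak X(B)=\mathfrak X(C)\oplus\mathfrak X(B^{ss})$. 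Injectivity of $\widetilde\Psi$ then yields $\widetilde\Psi^{-1}(\mu,0)=\{x_\mu\}$, whence $\ker\psi=\{x_\mu\mid\mu\in\Lambda\}$. Because $\mu\mapsto x_\mu$ is a $\ZZ$-linear map from $\Lambda$ to $\ZZ^{\mathcal D}\oplus\mathfrak X(C)$, this subgroup is generated by the images of any $\ZZ$-basis of~$\Lambda$, as claimed.

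The main obstacle I anticipate is the injectivity of $\widetilde\Psi$, specifically that the direct sum decomposition of $\widehat\Lambda^+$ survives after passage to the groupification and that the identifications between $\mathfrak X(H^{ss})$ and its image in $\mathfrak X(H)$ and between $\ZZ^{\mathcal D}$ and $\widehat\Lambda_{G^{ss}/H^{ss}}$ are compatible with each other; once this bookkeeping is in place, everything else is formal manipulation of the formulas already at our disposal.
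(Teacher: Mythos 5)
Your proposal is correct and follows essentially the same route as the paper, whose proof is precisely the combination of Proposition~\ref{prop_weight_lattice}, Proposition~\ref{prop_value_of_color} and formula~(\ref{eqn_(mu, 0)}). Your explicit lift $\widetilde\Psi$ and the verification of its injectivity simply spell out the uniqueness already implicit in~(\ref{eqn_(mu, 0)}) (coming from the freeness of $\widehat\Lambda^+_{G^{ss}/H^{ss}}$ and the splitting of Proposition~\ref{prop_ews_in_general}), so the extra bookkeeping you flag is exactly the content the paper leaves to the reader.
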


For every $D \in \mathcal D$, we consider the expression $\lambda_D
= \sum \limits_{\alpha \in \Pi} n_{\alpha, D} \varpi_\alpha$.

\begin{proposition}[{\cite[\S\,2.2, Theorem~2.2]{Fos}, see also~\cite[Lemma~30.24]{Tim}}]
The numbers $n_{\alpha, D}$ are determined as follows:
$$
n_{\alpha, D} =
\begin{cases}
0 & \text{if }\, D \notin \mathcal D(\alpha); \\
1 & \text{if }\, D \in \mathcal D(\alpha) \text{ and } 2\alpha
\notin
\Sigma; \\
2 & \text{if }\, D \in \mathcal D(\alpha) \text{ and } 2\alpha \in
\Sigma.
\end{cases}
$$
\end{proposition}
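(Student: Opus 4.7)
My plan is to reduce the statement to an identity relating the $n_{\alpha, D}$ to the valuations $\varkappa(D)$, then combine this identity with Proposition~\ref{prop_alternative} to recover the three cases. The expected main obstacle is the type-$a$ case, where the algebraic identity alone is inconclusive and a geometric rank-one argument will be needed.

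The vanishing case $D \notin \mathcal D(\alpha)$ is immediate: $n_{\alpha, D} = \langle \lambda_D, \alpha^\vee\rangle > 0$ is equivalent to $\varpi_\alpha \in \supp \lambda_D$, which by Lemma~\ref{lemma_fwls} is equivalent to $D \in \mathcal D(\alpha)$.

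For the remaining assertions I first extract a universal identity. Projecting formula~(\ref{eqn_(mu, 0)}) to its first coordinate and invoking Proposition~\ref{prop_value_of_color}, for every $\mu \in \Lambda$ one has $\mu = \sum_{D \in \mathcal D} \langle \varkappa(D), \mu \rangle \lambda_D + \nu$ for some $\nu \in \mathfrak X(C)$. Pairing both sides with $\alpha^\vee$ (which annihilates $\mathfrak X(C)$, while $\langle \lambda_D, \alpha^\vee \rangle = n_{\alpha, D}$) and letting $\mu$ range over $\Lambda$, I deduce $\alpha^\vee|_\Lambda = \sum_{D \in \mathcal D(\alpha)} n_{\alpha, D}\, \varkappa(D)$ in $\Hom_\ZZ(\Lambda, \ZZ)$, the sum being restricted to $\mathcal D(\alpha)$ by the previous paragraph. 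Plugging in Proposition~\ref{prop_alternative}: for $\alpha$ of type~$b$ we have $\mathcal D(\alpha) = \{D\}$ and $\varkappa(D) = \alpha^\vee|_\Lambda$, forcing $n_{\alpha, D} = 1$ once one knows $\alpha^\vee|_\Lambda \ne 0$; for $\alpha$ of type~$a'$ we have $\mathcal D(\alpha) = \{D\}$ and $\varkappa(D) = \tfrac12 \alpha^\vee|_\Lambda$, giving $n_{\alpha, D} = 2$; for $\alpha$ of type~$a$ with $\mathcal D(\alpha) = \{D^+, D^-\}$, the identity reduces to $(n_{\alpha, D^+} - 1) \varkappa(D^+) + (n_{\alpha, D^-} - 1) \varkappa(D^-) = 0$, which is consistent with, but does not by itself force, $n_{\alpha, D^\pm} = 1$.

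The main obstacle is therefore the type-$a$ case. To close it I would pass to an equivariant completion of $G/H$ and use the following geometric argument: for a generic point $p$ of a color $D \in \mathcal D(\alpha)$, the closure of $P_\alpha \cdot p$ is a smooth rational curve $C \cong \mathbb P^1$ on which $B$ has exactly two fixed points, and in type~$a$ precisely one of these two fixed points lies on $D^+$ and the other on $D^-$. Consequently $s_D|_C$ is a section of $L_D|_C$ with a single simple zero, so $n_{\alpha, D}$ equals the degree of $L_D|_C$, which is~$1$. Rigorous verification of this picture is the hardest step and rests on the standard local analysis of $P_\alpha$-orbits on spherical varieties (``rank-one localization''), via which in fact all three types can be treated uniformly by counting, with multiplicities, the intersections of $C$ with the support of $s_D$.
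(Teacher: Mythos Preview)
The paper does not prove this proposition; it is quoted from \cite{Fos} and \cite{Tim} with no argument given, so there is no in-paper proof to compare against. Your sketch is therefore an addition rather than a comparison, and the geometric rank-one picture you describe at the end is indeed essentially the approach taken in the cited references.

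That said, your algebraic reduction can be pushed further than you realise, and in particular the type-$a$ case does \emph{not} require the geometric argument. You already know from Lemma~\ref{lemma_fwls} that $n_{\alpha,D^\pm}\ge 1$. Since $\alpha\in\Sigma\subset\Lambda$, evaluate your relation $(n_{\alpha,D^+}-1)\varkappa(D^+)+(n_{\alpha,D^-}-1)\varkappa(D^-)=0$ at $\alpha$; by axiom~(\ref{A1}) (equivalently by the very definition of $\mathcal D(\alpha)$ in Definition~\ref{def_HSD&SS}) one has $\langle\varkappa(D^\pm),\alpha\rangle=1$, so the evaluation reads $(n_{\alpha,D^+}-1)+(n_{\alpha,D^-}-1)=0$, and both summands being non-negative forces $n_{\alpha,D^\pm}=1$. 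So what you flagged as the ``main obstacle'' is in fact the easiest case once the positivity $n_{\alpha,D}\ge 1$ from Lemma~\ref{lemma_fwls} is brought in.

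The genuine residual gap in your purely algebraic route is type~$b$: your identity gives $(n_{\alpha,D}-1)\,\alpha^\vee|_\Lambda=0$, and you need $\alpha^\vee|_\Lambda\ne 0$. This is equivalent to the standard fact that $\Pi^p=\{\alpha\in\Pi:\alpha^\vee|_\Lambda=0\}$, which is true but is not proved anywhere in the present paper (axiom~(\ref{label_S}) only records one implication). You should either cite this characterisation of $\Pi^p$, or fall back on the uniform rank-one localisation argument you outline, which is what \cite{Fos} and \cite{Tim} do.
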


\begin{corollary}[{\cite[\S\,2.1.2]{Cu},~\cite[Lemma~30.24]{Tim}}]
\label{crl_lambda_D} Depending on the type of~$D$, the weight
$\lambda_D$ is determined as follows:
$$
\lambda_D =
\begin{cases}
\sum \limits_{\alpha \in \Pi : D \in \mathcal D(\alpha)}
\varpi_\alpha & \text{if }\, D \in \mathcal D^a \text{ or } D
\in \mathcal D^b ;\\
\qquad \qquad 2\varpi_\alpha & \text{if }\, D \in \mathcal D^{a'}
\text{ and } D \in \mathcal D(\alpha).
\end{cases}
$$
\end{corollary}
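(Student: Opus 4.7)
The plan is to deduce the corollary directly from the preceding proposition by combining the formula $\lambda_D = \sum_{\alpha \in \Pi} n_{\alpha, D}\varpi_\alpha$ with a case-by-case analysis based on the type of~$D$, using the disjoint decomposition $\mathcal D = \mathcal D^a \cup \mathcal D^{a'} \cup \mathcal D^b$ from~(\ref{eqn_disjoint_union_colors}) and the type classification of simple roots in Proposition~\ref{prop_alternative}.

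First, for $D \in \mathcal D^a \cup \mathcal D^b$: the disjointness of the decomposition forces every simple root $\alpha$ with $D \in \mathcal D(\alpha)$ to itself be of type~$a$ or type~$b$ (otherwise $D$ would also lie in $\mathcal D^{a'}$). For $\alpha$ of type~$a$ we have $\alpha \in \Sigma$, and since spherical roots are primitive in $\Lambda$, $2\alpha \notin \Sigma$; for $\alpha$ of type~$b$ we have $\QQ\alpha \cap \Sigma = \varnothing$, so again $2\alpha \notin \Sigma$. Consequently the preceding proposition gives $n_{\alpha, D} = 1$ for every such~$\alpha$, which reproduces the first line of the formula.

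For $D \in \mathcal D^{a'}$: the same disjointness argument shows that any simple root $\alpha$ with $D \in \mathcal D(\alpha)$ lies in $\Pi^{a'}$, whence $2\alpha \in \Sigma$ and the preceding proposition yields $n_{\alpha, D} = 2$. The only nonroutine point is the uniqueness of such~$\alpha$, which is needed in order to write $\lambda_D = 2\varpi_\alpha$ rather than a longer sum; I would deduce this from the rigid description $\mathcal D(\alpha) = \lbrace D \rbrace$ and $\varkappa(D) = \tfrac12 \left.\alpha^\vee\right|_{\Lambda}$ from Proposition~\ref{prop_alternative}, which pins~$\alpha$ down via the value of $\varkappa(D)$. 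Apart from this small bookkeeping point, the entire argument is mechanical.
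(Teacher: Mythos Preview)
Your proposal is correct and follows precisely the route the paper intends: the corollary is stated without proof, being an immediate unpacking of the proposition via the decomposition~(\ref{eqn_disjoint_union_colors}) and the type classification in Proposition~\ref{prop_alternative}. Your uniqueness argument for the $a'$ case is essentially the same as the remark in the proof of Proposition~\ref{prop_colors_only_a} that the map $\Pi^{a'}_{G/H} \to \mathcal D^{a'}_{G/H}$ is bijective; to make it completely watertight you could note that $\tfrac12\alpha^\vee|_\Lambda$ determines $\alpha$ because evaluating on $2\alpha \in \Sigma \subset \Lambda$ gives $2$, whereas $\tfrac12\beta^\vee$ evaluated on $2\alpha$ is nonpositive for $\beta \ne \alpha$.
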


Thus the principal combinatorial invariants of $G / H$ uniquely
determine the pair $(\mathfrak X(H), \widehat \Lambda^+)$, where
$\mathfrak X(H)$ is regarded as an abstract group.

Let us mention the following result implied by
Corollary~\ref{crl_lambda_D}.

\begin{corollary} \label{crl_2alpha}
Suppose that $\alpha \in \Pi$. Then $2\alpha \in \Sigma$ if and only
if there is a free generator of $\widehat \Lambda^+_{G^{ss} /
H^{ss}}$ of the form $(2\varpi_\alpha, \chi)$ for some $\chi \in
\mathfrak X(H)$.
\end{corollary}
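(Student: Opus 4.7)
The plan is to derive both implications directly from Corollary \ref{crl_lambda_D}, which describes $\lambda_D$ according to the type of the color $D$. Recall that, in the setup where $G = C \times G^{ss}$ with $G^{ss}$ simply connected, the free generators of $\widehat \Lambda^+_{G^{ss} / H^{ss}}$ are exactly the pairs $(\lambda_D, \chi_D)$ indexed by the colors $D \in \mathcal{D}$. So the corollary is really a statement about when $\lambda_D = 2\varpi_\alpha$ can occur.

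For the forward implication, I would start by assuming $2\alpha \in \Sigma$. By the case analysis in Proposition \ref{prop_alternative}, this forces $\alpha$ to belong to $\Pi^{a'}$, because types $p$, $a$, and $b$ explicitly exclude the possibility $2\alpha \in \Sigma$. Then $\mathcal{D}(\alpha) = \lbrace D \rbrace$ for a unique color $D$, which lies in $\mathcal{D}^{a'}$. Corollary \ref{crl_lambda_D} gives $\lambda_D = 2\varpi_\alpha$, so $(\lambda_D, \chi_D) = (2\varpi_\alpha, \chi_D)$ is the desired free generator of $\widehat \Lambda^+_{G^{ss}/H^{ss}}$.

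For the converse, I would suppose a free generator has the form $(2\varpi_\alpha, \chi)$. Since free generators are exactly the $(\lambda_D, \chi_D)$ with $D \in \mathcal{D}$, we get some color $D$ with $\lambda_D = 2\varpi_\alpha$. Consulting Corollary \ref{crl_lambda_D}: if $D \in \mathcal{D}^a \cup \mathcal{D}^b$, then $\lambda_D$ is a sum of \emph{distinct} fundamental weights each with coefficient~$1$, so it cannot equal $2\varpi_\alpha$. Hence $D \in \mathcal{D}^{a'}$, and the formula $\lambda_D = 2\varpi_\beta$ where $D \in \mathcal{D}(\beta)$ forces $\beta = \alpha$. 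Thus $\alpha \in \Pi^{a'}$, and by the definition of type $a'$ in Proposition \ref{prop_alternative} we conclude $2\alpha \in \Sigma$.

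No step of this argument is really an obstacle, since everything is a direct matching of cases against Corollary \ref{crl_lambda_D}; the only point requiring care is ruling out a spurious coincidence $\lambda_D = 2\varpi_\alpha$ for $D \in \mathcal{D}^a \cup \mathcal{D}^b$, which is settled by the observation that in those two cases each fundamental weight appears with coefficient~$1$ in $\lambda_D$.
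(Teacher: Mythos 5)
Your argument is correct and is exactly the route the paper intends: Corollary~\ref{crl_2alpha} is stated there as an immediate consequence of Corollary~\ref{crl_lambda_D} together with the type classification in Proposition~\ref{prop_alternative}, and your case analysis (including the check that a sum of distinct fundamental weights with unit coefficients can never equal $2\varpi_\alpha$) just makes that implication explicit.
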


Combining Proposition~\ref{prop_EWS_determines_almost_everything}
with Corollary~\ref{crl_2alpha}, we obtain the following result.

\begin{theorem} \label{thm_EWS_determines_almost_everything}
The pair $(\mathfrak X(H), \widehat \Lambda^+)$ uniquely determines
$\Lambda$, $\Pi^p$, $\mathcal D$, and $\Sigma \cap (\Pi \cup 2\Pi)$.
\end{theorem}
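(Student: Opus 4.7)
The theorem is a straightforward strengthening of Proposition~\ref{prop_EWS_determines_almost_everything}, obtained by additionally determining $\Sigma \cap 2\Pi$ via Corollary~\ref{crl_2alpha}. My plan is therefore to treat the two pieces of $\Sigma \cap (\Pi \cup 2\Pi)$ separately, using the already-established machinery.

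First, from $(\mathfrak X(H), \widehat \Lambda^+)$ one must extract $\widehat \Lambda^+_{G^{ss}/H^{ss}}$, since every recovery step in \S\,\ref{subsec_ews_pci_interrelations} works with this subsemigroup. Using the splitting $\mathfrak X_+(B) \simeq \mathfrak X(C) \oplus \mathfrak X_+(B^{ss})$ (valid under our standing assumption $G = C \times G^{ss}$ with $G^{ss}$ simply connected), Proposition~\ref{prop_ews_in_general} identifies $\widehat \Lambda^+_{G^{ss}/H^{ss}}$ as the set of pairs $(\lambda,\chi) \in \widehat \Lambda^+_{G/H}$ with $\lambda \in \mathfrak X_+(B^{ss})$. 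Since $G$ (and hence $B$, $C$, $B^{ss}$) is fixed throughout, this extraction is canonical from the data $(\mathfrak X(H), \widehat \Lambda^+)$.

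Second, I invoke Proposition~\ref{prop_EWS_determines_almost_everything} directly to conclude that $\Lambda$, $\Pi^p$, $\mathcal D$, and $\Sigma \cap \Pi$ are determined by $(\mathfrak X(H), \widehat \Lambda^+)$. The only new content is the determination of $\Sigma \cap 2\Pi$. For this I quote Corollary~\ref{crl_2alpha}: for $\alpha \in \Pi$, one has $2\alpha \in \Sigma$ if and only if $\widehat \Lambda^+_{G^{ss}/H^{ss}}$ admits a free generator of the form $(2\varpi_\alpha, \chi)$ for some $\chi \in \mathfrak X(H)$. This condition is manifestly readable off the pair $(\mathfrak X(H), \widehat \Lambda^+)$ once the first step has singled out $\widehat \Lambda^+_{G^{ss}/H^{ss}}$ and its free generators (which are intrinsic to the semigroup since, by~\cite[Theorem~2]{AG} cited in \S\,\ref{subsec_ews}, this semigroup is free).

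Combining these observations, the set $\Sigma \cap (\Pi \cup 2\Pi) = (\Sigma \cap \Pi) \cup (\Sigma \cap 2\Pi)$ is determined by $(\mathfrak X(H), \widehat \Lambda^+)$, yielding the theorem. There is no serious obstacle: the only subtlety is being careful that the $\mathfrak X(C)$-summand in Proposition~\ref{prop_ews_in_general} is disjoint from the free generators of $\widehat \Lambda^+_{G^{ss}/H^{ss}}$ (which is immediate, as its elements have $\lambda$-component in $\mathfrak X(C)$ rather than in $\mathfrak X_+(B^{ss}) \setminus \{0\}$), so the free generators of $\widehat \Lambda^+_{G^{ss}/H^{ss}}$ really are intrinsically recoverable from $(\mathfrak X(H), \widehat \Lambda^+)$.
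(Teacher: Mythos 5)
Your proposal is correct and follows the paper's own argument: the theorem is obtained precisely by combining Proposition~\ref{prop_EWS_determines_almost_everything} (giving $\Lambda$, $\Pi^p$, $\mathcal D$, $\Sigma \cap \Pi$) with Corollary~\ref{crl_2alpha} (detecting $\Sigma \cap 2\Pi$ via free generators of the form $(2\varpi_\alpha,\chi)$). Your additional remarks on extracting $\widehat\Lambda^+_{G^{ss}/H^{ss}}$ from the pair just make explicit what the paper leaves implicit.
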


\section{Luna's general classification of spherical homogeneous spaces}
\label{sect_Luna_gen}

\subsection{Simple embeddings of spherical homogeneous spaces}
\label{subsec_simple_embeddings}

Let $G / H$ be a spherical homogeneous space. We retain all the
notation introduced in~\S\,\ref{sect_invariants}.

\begin{definition}
An embedding $X$ of $G / H$ is said to be \textit{simple} if $X$
contains exactly one closed $G$-orbit.
\end{definition}

Simple embeddings are classified by strictly convex colored cones.

\begin{definition}[{see \cite[\S\,3]{Kn91}}]
A \textit{colored cone} is a pair $(\mathcal C, \mathcal A)$ with
$\mathcal C \subset \mathcal Q$ and $\mathcal A \subset \mathcal D$
having the following properties:
\begin{enumerate}[label=\textup{(CC\arabic*)},ref=\textup{CC\arabic*}]
\item
$\mathcal C$ is a cone generated by $\varkappa(\mathcal A)$ and
finitely many elements of $\mathcal V$;

\item
$\mathcal C^\circ \cap \mathcal V \ne \varnothing$.
\end{enumerate}

A colored cone is said to be \textit{strictly convex} if the
following property holds:
\begin{enumerate}[label=\textup{(SCC)},ref=\textup{SCC}]
\item
$\mathcal C$ is strictly convex and $0 \notin \varkappa(\mathcal
A)$.
\end{enumerate}
\end{definition}

\begin{definition}[{see \cite[\S\,4]{Kn91}}] \label{def_colored_subspace}
A colored cone $(\mathcal C, \mathcal A)$ is said to be a
\textit{colored subspace} if $\mathcal C$ is a vector subspace of
$\mathcal Q$.
\end{definition}

Let $X$ be a simple embedding of $G / H$ and let $Y$ be its closed
$G$-orbit. We consider all $B$-stable prime divisors in~$X$
containing~$Y$. These can be divided into two parts. The first part,
denoted by~$\mathcal G(X)$, consists of divisors that are
$G$-stable. Divisors in the second part are closures of colors. Let
$\mathcal A(X)$ denote the set of colors arising in this way.

Let $\mathcal C(X)$ be the cone in $\mathcal Q$ generated by
$\varkappa(\mathcal A(X))$ and the images of $G$-invariant
valuations associated with elements in~$\mathcal G(X)$.

\begin{proposition}[{\cite[\S\,8.10, Proposition]{LV},
\cite[Theorem~3.1]{Kn91}}] \label{prop_simple_embeddings} The map $X
\mapsto (\mathcal C(X), \mathcal A(X))$ is a bijection between
simple embeddings of $G / H$ \textup(considered up to
$G$-equivariant isomorphism\textup) and strictly convex colored
cones in~$\mathcal Q$.
\end{proposition}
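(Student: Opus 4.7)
My plan is to construct the inverse map explicitly and then verify the bijection in both directions, following the approach developed by Luna--Vust and simplified by Knop.

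\textbf{Setting up local data.} For the forward direction, start with a simple embedding $X$ with closed $G$-orbit $Y$. First I would verify that $(\mathcal C(X), \mathcal A(X))$ is indeed a strictly convex colored cone. Property (CC1) is immediate from the definition, since $\mathcal C(X)$ is by construction generated by $\varkappa(\mathcal A(X))$ together with finitely many elements of~$\mathcal V$ (those coming from the $G$-stable divisors in~$\mathcal G(X)$, which give $G$-invariant valuations). For (CC2), I would use the fact that the relative interior of $\mathcal C(X)$ corresponds precisely to those $B$-stable valuations whose center in $X$ equals~$Y$; since $Y$ is itself a $G$-orbit, there must be a $G$-invariant such valuation, placing a point of $\mathcal V$ in $\mathcal C(X)^\circ$. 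Strict convexity (SCC) is the assertion that no $B$-stable prime divisor containing $Y$ can be ``trivial'', and this follows from the fact that $Y$ has positive codimension whenever $X \ne G/H$ (the case $X = G/H$ giving the trivial colored cone~$(0, \varnothing)$).

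\textbf{Recovering the variety from the colored cone.} The key tool in both injectivity and surjectivity is the existence of a distinguished $B$-stable affine open subset $X_0 \subset X$ obtained by removing the closures of all colors not in $\mathcal A(X)$ and all $G$-stable prime divisors not in $\mathcal G(X)$. I would show that $X_0$ meets every $G$-orbit in~$X$, so that $X = G \cdot X_0$, and that $\mathbb C[X_0]$ decomposes as a direct sum of weight spaces $\mathbb C[X_0]^{(B)}_\mu$ indexed by the semigroup
$$
\Lambda^+(X) = \lbrace \mu \in \Lambda \mid \langle \varkappa(D), \mu \rangle \ge 0 \text{ for all } D \in \mathcal A(X), \text{ and } v(f_\mu) \ge 0 \text{ for all } v \in \mathcal G(X) \rbrace,
$$
which is exactly $\mathcal C(X)^\vee \cap \Lambda$. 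This identifies $X_0$ with $\Spec \bigoplus_{\mu \in \mathcal C(X)^\vee \cap \Lambda} \mathbb C f_\mu$, viewed as a $B$-stable subalgebra of~$\mathbb C(G/H)$. Since $X = G \cdot X_0$ as a variety, the pair $(\mathcal C(X), \mathcal A(X))$ determines $X$ up to $G$-equivariant isomorphism, which gives injectivity.

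\textbf{Constructing embeddings from colored cones.} For surjectivity, given a strictly convex colored cone $(\mathcal C, \mathcal A)$, I would define $X_0$ to be the spectrum of the subalgebra
$$
A_{(\mathcal C, \mathcal A)} = \bigoplus_{\mu \in \mathcal C^\vee \cap \Lambda} \mathbb C f_\mu \subset \mathbb C(G / H),
$$
check that it is a finitely generated normal $\CC$-algebra (using that $\mathcal C$ is finitely generated and strictly convex, hence $\mathcal C^\vee \cap \Lambda$ is a finitely generated saturated semigroup), and verify that the natural $B$-action extends to a $G$-action on a simple embedding $X$ containing $X_0$ as a $B$-stable affine open chart. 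Setting $X = G \cdot X_0$ inside an ambient $G$-variety produced via Sumihiro's theorem (or directly via the local structure of spherical varieties), I would then confirm $\mathcal C(X) = \mathcal C$ and $\mathcal A(X) = \mathcal A$ by reversing the analysis of the previous paragraph.

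\textbf{Main obstacle.} The hardest technical point is showing that the $G$-variety built from $X_0$ is separated and has a unique closed $G$-orbit corresponding to the relative interior of~$\mathcal C$. This requires a careful study of $B$-stable valuations on $\mathbb C(G/H)$ centered in $X_0$: one must verify that the valuation cone $\mathcal V$ together with the ``colored'' valuations $\varkappa(\mathcal A)$ cover precisely the right locus, and that no extra $G$-orbit sneaks in. The separatedness is subtle because a priori $X$ could be only a prevariety; Knop's argument handles this by comparing $B$-stable valuations with the colored cone axiom (CC2). Once these checks are complete, the two constructions are mutually inverse by the matching between $X_0$ and $\mathcal C^\vee \cap \Lambda$.
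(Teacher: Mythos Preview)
The paper does not supply its own proof of this proposition; it is quoted as a classical result of Luna--Vust and Knop, and the surrounding text moves on immediately to the next subsection. So there is no ``paper's proof'' to compare against --- your sketch is being measured against the original sources, and in outline it does follow Knop's argument in~\cite{Kn91} quite closely: isolate the $B$-stable affine chart $X_0$ meeting the closed orbit, read off the colored cone from its $B$-semi-invariants, and invert by building $X_0$ from the dual semigroup and then gluing $G$-translates.

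One imprecision is worth flagging. You write that the weight-space decomposition ``identifies $X_0$ with $\Spec \bigoplus_{\mu \in \mathcal C(X)^\vee \cap \Lambda} \CC f_\mu$''. That spectrum is $X_0 /\!\!/ U$, not $X_0$ itself; the algebra $\CC[X_0]$ is strictly larger than its $U$-invariants. What makes the argument work is that $\CC[X_0]$ is \emph{determined} by $\CC[X_0]^U$: either via the local structure theorem (which gives $X_0 \simeq P_u \times \Spec \CC[X_0]^U$ for the appropriate parabolic $P$), or by the observation that a multiplicity-free $G$-algebra is recovered from its highest-weight vectors. In the surjectivity step the same issue recurs: the algebra $A_{(\mathcal C,\mathcal A)}$ you define is only the ring of $U$-invariants of the intended chart, so you must instead take the intersection of the full valuation rings $\mathcal O_{v_D}$ (for $D \in \mathcal A$) and $\mathcal O_v$ (for the chosen $v \in \mathcal V$) inside $\CC(G/H)$, or equivalently generate $A_{(\mathcal C,\mathcal A)}$ as a $B$-module. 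Your justification of (SCC) is also too brief: the condition $0 \notin \varkappa(\mathcal A)$ is not a formal consequence of ``$Y$ has positive codimension'' and requires a separate argument (in Knop's treatment it comes from the fact that $X_0$ is affine with field of fractions $\CC(G/H)$, forcing the weight semigroup to span~$\Lambda$). With these adjustments the plan is sound.
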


\subsection{Standard completions and wonderful $G$-varieties}
\label{subsec_standard_completions}

In this subsection we retain all the notation introduced
in~\S\,\ref{subsec_simple_embeddings}.

\begin{definition}
An embedding $X$ of $G / H$ is said to be \textit{toroidal} if no
color contains a $G$-orbit in its closure.
\end{definition}

In other words, $X$ is toroidal if every irreducible $B$-stable
closed subvariety containing a closed $G$-orbit is actually
$G$-stable.

\begin{definition}
A complete simple toroidal embedding of $G / H$ is said to be a
\textit{standard completion}\footnote{The term ``standard
embedding'' seems to be more common in this situation, however we
avoid using this term since it will appear later in this paper in
another context.} of $G / H$.
\end{definition}

\begin{proposition} \label{prop_standard_completion}
Suppose that $X$ is a simple embedding of $G / H$. Then $X$ is a
standard completion if and only if $\mathcal A(X) = \varnothing$ and
$\mathcal C(X) = \mathcal V$. In particular, a standard completion
is unique if exists.
\end{proposition}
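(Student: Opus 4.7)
The plan is to unpack the two conditions in the statement (toroidality and completeness) separately in terms of the colored cone $(\mathcal C(X), \mathcal A(X))$, and then combine them, using Proposition~\ref{prop_simple_embeddings} for the uniqueness claim.

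First I would show that toroidality of $X$ is equivalent to $\mathcal A(X) = \varnothing$. By construction, a color $D$ lies in $\mathcal A(X)$ iff the closure $\overline D$ contains the unique closed $G$-orbit $Y$, so if $X$ is toroidal then no color contains any $G$-orbit in its closure, in particular none contains $Y$, whence $\mathcal A(X) = \varnothing$. Conversely, if some color $D$ has a $G$-orbit $Z$ in $\overline D$, then because $Y$ is the only closed $G$-orbit of $X$ it lies in $\overline Z \subset \overline D$, which forces $D \in \mathcal A(X)$. Hence $\mathcal A(X) = \varnothing$ is equivalent to toroidality of~$X$.

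Next I would handle completeness. Here I would invoke the standard Luna--Vust characterization (see \cite[Theorem~4.2]{Kn91}), according to which a spherical $G$-variety is complete precisely when the union of the cones of its colored fan contains the valuation cone~$\mathcal V$. For the simple embedding~$X$, this colored fan is generated by the single colored cone $(\mathcal C(X), \mathcal A(X))$, so completeness translates to $\mathcal C(X) \supset \mathcal V$.

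Now I would combine the two. If $X$ is a standard completion, then $\mathcal A(X) = \varnothing$ by toroidality, and $\mathcal C(X) \supset \mathcal V$ by completeness. By (CC1), the cone $\mathcal C(X)$ is then generated by finitely many elements of~$\mathcal V$, whence $\mathcal C(X) \subset \mathcal V$, giving $\mathcal C(X) = \mathcal V$. Conversely, if $\mathcal A(X) = \varnothing$ and $\mathcal C(X) = \mathcal V$, the first equality gives toroidality, the second gives completeness, so $X$ is a standard completion. Finally, uniqueness follows immediately from Proposition~\ref{prop_simple_embeddings}: a simple embedding of $G/H$ is determined up to $G$-equivariant isomorphism by its colored cone, so at most one simple embedding can have colored cone $(\mathcal V, \varnothing)$.

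The only non-routine input is the completeness criterion $\mathcal C(X) \supset \mathcal V$; once this is cited from Knop's paper, the rest is a direct matching of definitions against condition~(CC1).
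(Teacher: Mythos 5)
Your proof is correct and follows essentially the same route as the paper: toroidality is matched with $\mathcal A(X) = \varnothing$ (with $\mathcal C(X) \subset \mathcal V$ then automatic from (CC1)), completeness is the criterion $\mathcal C(X) \supset \mathcal V$ from \cite[Theorem~4.2]{Kn91}, and uniqueness comes from the bijection in Proposition~\ref{prop_simple_embeddings}. You merely spell out the details the paper leaves implicit.
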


\begin{proof}
It follows from the definition that $X$ is toroidal if and only if
$\mathcal A(X) = \varnothing$ and $\mathcal C(X) \subset \mathcal
V$. By~\cite[Theorem~4.2]{Kn91}, $X$ is complete if and only if
$\mathcal C(X) \supset \mathcal V$.
\end{proof}

\begin{definition}[{see~\cite{Lu97}}]
The subgroup $H$ is said to be \textit{sober} if the group $N_G(H) /
H$ is finite.
\end{definition}

\begin{corollary}
A standard completion of\, $G / H$ exists if and only if the group
$H$ is sober.
\end{corollary}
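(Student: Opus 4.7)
The plan is to combine Proposition~\ref{prop_standard_completion} with Proposition~\ref{prop_simple_embeddings} and then invoke the known description of the linear part of the valuation cone~$\mathcal{V}$ in terms of~$N_G(H)/H$.

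First I would observe that by Proposition~\ref{prop_standard_completion}, a standard completion of $G/H$, if it exists, must correspond under the bijection of Proposition~\ref{prop_simple_embeddings} to the colored cone $(\mathcal{V}, \varnothing)$; conversely, any simple embedding whose colored cone is $(\mathcal{V}, \varnothing)$ is a standard completion. Hence the existence of a standard completion is equivalent to $(\mathcal{V}, \varnothing)$ being a strictly convex colored cone in the sense of~\S\,\ref{subsec_simple_embeddings}.

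Next I would check that conditions (CC1) and (CC2) are automatic for the pair $(\mathcal{V}, \varnothing)$: the cone $\mathcal{V}$ is finitely generated (so it is in particular generated by finitely many elements of itself, and $\varkappa(\varnothing) = \varnothing$), and $\mathcal{V}^\circ \cap \mathcal{V} = \mathcal{V}^\circ \neq \varnothing$ since $\mathcal{V}$ spans $\mathcal{Q}$ as a vector space, as was noted after the definition of~$\mathcal{V}$. Therefore the existence question reduces to verifying property~(SCC), which in our situation (with $\mathcal{A} = \varnothing$, so the condition $0 \notin \varkappa(\mathcal{A})$ is vacuous) amounts precisely to the strict convexity of~$\mathcal{V}$.

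The final and only substantive step is to invoke Brion's classical description of the linear part of the valuation cone: one has
\[
\dim\bigl(\mathcal{V} \cap (-\mathcal{V})\bigr) = \dim\bigl(N_G(H)/H\bigr),
\]
because elements of $\mathcal{V} \cap (-\mathcal{V})$ correspond to $G$-invariant valuations arising from central one-parameter subgroups of the $G$-automorphism group $\Aut^G(G/H) \simeq N_G(H)/H$ (see~\cite{Bri90}, or~\cite[\S\,24]{Tim}). Granting this, $\mathcal{V}$ is strictly convex if and only if $\dim N_G(H)/H = 0$, i.e., if and only if $N_G(H)/H$ is finite, which is exactly the definition of $H$ being sober. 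Combining this with the reduction of the previous paragraph yields the corollary. The main obstacle is purely bibliographic: supplying a precise citation for the dimension formula; the reasoning itself is a short assembly of results already recalled in the paper.
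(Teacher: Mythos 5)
Your argument is correct and follows the same route as the paper: reduce via Propositions~\ref{prop_simple_embeddings} and~\ref{prop_standard_completion} to the strict convexity of $\mathcal V$, and then use the equivalence of strict convexity with finiteness of $N_G(H)/H$, for which the paper simply cites \cite[\S\,5.3, Corollary]{BriP} (the same fact you phrase as the dimension formula for the linear part $\mathcal V \cap (-\mathcal V)$). No gaps beyond the bibliographic point you already flag.
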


\begin{proof}
It follows from Propositions~\ref{prop_simple_embeddings}
and~\ref{prop_standard_completion} that a standard completion exists
if and only if the cone $\mathcal V$ is strictly convex.
By~\cite[\S\,5.3, Corollary]{BriP} the latter is equivalent to $H$
being sober.
\end{proof}

Until the end of this subsection we assume $H$ to be sober. Let $X$
be the standard completion of $G / H$. We put $X_B = X \backslash
\bigcup \limits_{D \in \mathcal D} \overline D$.
By~\cite[Theorem~2.1]{Kn91} the set $X_B$ is $B$-stable, affine, and
open. We call it the \textit{canonical $B$-chart} of~$X$. We note
that $X_B$ is nothing else than the union of $B$-orbits in $X$ whose
closure contains the closed $G$-orbit. Let $\mathcal C_B$ denote the
cone in $\Lambda \otimes_\ZZ \QQ$ generated by the weights of
$B$-semi-invariant regular functions on~$X_B$.

\begin{proposition} \label{prop_B-chart_spherical_roots}
Under the above assumptions, $\mathcal C_B^1 = \lbrace - \sigma \mid
\sigma \in \Sigma \rbrace$.
\end{proposition}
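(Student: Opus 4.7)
The plan is to identify $\mathcal C_B$ with the dual cone $\mathcal V^\vee$ and then invoke the definition of $\Sigma$ to read off the primitive rays.

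First I would describe the $B$-stable prime divisors of~$X_B$. Since $X$ is the standard completion, Proposition~\ref{prop_standard_completion} gives $\mathcal A(X) = \varnothing$ and $\mathcal C(X) = \mathcal V$, so $X$ is toroidal and simple. Every $B$-stable prime divisor of~$X$ is thus either the closure~$\overline D$ of a color $D \in \mathcal D$ (which need not contain the closed orbit~$Y$) or an element of~$\mathcal G(X)$ (each of which contains~$Y$). By the definition of the canonical $B$-chart, $X_B = X \setminus \bigcup_{D \in \mathcal D}\overline D$, so the $B$-stable prime divisors of~$X_B$ are exactly the nonempty intersections $E \cap X_B$ with $E \in \mathcal G(X)$.

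Next I would use the Luna--Vust correspondence for the standard completion: the map $E \mapsto v_E$ sending a $G$-stable prime divisor to its associated $G$-invariant valuation is a bijection between $\mathcal G(X)$ and $\mathcal V^1$ (the primitive generators of the rays of $\mathcal C(X) = \mathcal V$). Since $H$ is sober, $\mathcal V$ is strictly convex, so $\mathcal V^\vee$ is full-dimensional in $\Lambda \otimes_\ZZ \QQ$ and its rays are dual to the facets of~$\mathcal V$.

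For any $\mu \in \Lambda$, the rational function~$f_\mu$ restricts to a $B$-semi-invariant rational function on the open subset $X_B \supset G/H$, and every $B$-semi-invariant rational function on~$X_B$ arises this way (up to scalar) by the one-dimensionality of $\CC(G/H)^{(B)}_\mu$. Such $f_\mu$ is regular on~$X_B$ if and only if $\ord_E f_\mu = v_E(f_\mu) = \langle v_E, \mu \rangle \ge 0$ for every $E \in \mathcal G(X)$, equivalently for every $v \in \mathcal V^1$, equivalently $\mu \in \mathcal V^\vee$. Hence the semigroup of weights occurring in $\CC[X_B]^{(B)}$ equals $\mathcal V^\vee \cap \Lambda$, and so $\mathcal C_B = \mathcal V^\vee$.

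Finally, comparing with the definition of spherical roots: an element $\sigma \in \Lambda$ belongs to $\Sigma$ iff $\sigma$ is primitive, $\langle \sigma, \mathcal V \rangle \le 0$, and $\Ker \sigma$ contains a facet of~$\mathcal V$. Rewriting in terms of $-\sigma$, these conditions say exactly that $-\sigma$ is a primitive element of $\Lambda$ lying in~$\mathcal V^\vee$ and generating a ray (i.e.\ a one-dimensional face) of it, since the rays of $\mathcal V^\vee$ are in bijection with the facets of~$\mathcal V$ by strict convexity. Therefore $(\mathcal V^\vee)^1 = \lbrace -\sigma \mid \sigma \in \Sigma \rbrace$, and combined with $\mathcal C_B = \mathcal V^\vee$ this yields the claim. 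The main technical point is the identification of the $B$-stable prime divisors of~$X_B$ and their valuations with $\mathcal V^1$; everything else is formal linear duality once that correspondence is in hand.
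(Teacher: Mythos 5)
Your argument is correct, and at the top level it follows the same scheme as the paper: reduce everything to the identity $\mathcal C_B = \mathcal V^\vee$ and then read off the rays via the definition of $\Sigma$. The difference is in how that identity is justified. The paper disposes of it in one line by citing Knop's result that $\mathcal C_B = \mathcal C(X)^\vee$ for the canonical $B$-chart of a simple embedding \cite[Theorem~2.5(a)]{Kn91}, which for the standard completion gives $\mathcal V^\vee$ immediately; you instead reprove this special case by hand, via the divisorial description of $X_B$. That re-derivation is sound, but be aware of the standard facts it silently uses: (i) since $f_\mu$ is a $B$-eigenfunction, its divisor of zeros and poles on the normal variety $X_B$ is $B$-stable, which is why checking $\ord_E f_\mu \ge 0$ only along $B$-stable prime divisors suffices; (ii) every $G$-stable prime divisor of the simple embedding actually meets $X_B$ (this comes from \cite[Theorems~2.1, 2.3]{Kn91}, since $X_B$ meets every $G$-orbit) --- without it your system of inequalities would be too restrictive and you would only get $\mathcal C_B \supset \mathcal V^\vee \cap \Lambda$-type containments in one direction; (iii) the bijection $\mathcal G(X) \to \mathcal V^1$, $E \mapsto v_E$, which is \cite[Lemma~2.4]{Kn91} combined with $\mathcal C(X) = \mathcal V$, $\mathcal A(X) = \varnothing$; and (iv) rationality of $\mathcal V$ (its facets are cut out by elements of $\Lambda$), so that the lattice points of $\mathcal V^\vee$ generate it as a cone and the weight semigroup $\mathcal V^\vee \cap \Lambda$ indeed spans $\mathcal C_B = \mathcal V^\vee$. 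Your concluding duality step --- facets of the full-dimensional, strictly convex, finitely generated cone $\mathcal V$ correspond to the extreme rays of $\mathcal V^\vee$, so the primitive generators of those rays are exactly the $-\sigma$, $\sigma \in \Sigma$ --- is precisely what the paper means by ``implied by the definition of spherical roots.'' In short: your proof buys self-containedness at the cost of re-establishing a piece of the Luna--Vust/Knop machinery that the paper simply quotes.
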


\begin{proof}
By~\cite[Theorem~2.5(a)]{Kn91}, $\mathcal C_B = \mathcal C(X)^\vee =
\mathcal V^\vee$, hence the required result is implied by the
definition of spherical roots.
\end{proof}

\begin{definition} \label{def_wonderful1}
A smooth standard completion of $G / H$ is said to be
\textit{wonderful}.
\end{definition}

\begin{definition} \label{def_wonderful2}
The subgroup $H$ is said to be \textit{wonderful} if $G / H$ admits
a wonderful completion.
\end{definition}

Wonderful subgroups $H \subset G$ are characterized by the following
property.

\begin{proposition} \label{prop_Sigma_generates_Lambda}
A spherical subgroup $H \subset G$ is wonderful if and only if
$\Lambda = \ZZ \Sigma$.
\end{proposition}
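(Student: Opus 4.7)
The plan is to reduce the question to smoothness of the canonical $B$-chart $X_B$ of the standard completion of $G/H$, and then to translate this into a regularity condition on the cone $\mathcal C_B \subset \Lambda \otimes_\ZZ \QQ$ via Proposition~\ref{prop_B-chart_spherical_roots}.

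As a preliminary step I would verify that in both directions the standard completion $X$ of $G/H$ actually exists, i.e., that $H$ is sober. For a wonderful subgroup this is built into Definitions~\ref{def_wonderful1} and~\ref{def_wonderful2}. For the converse, the hypothesis $\Lambda = \ZZ\Sigma$ combined with the linear independence of $\Sigma$ (which always holds by construction) forces $\Sigma$ to be a $\ZZ$-basis of~$\Lambda$. Dualising, $\mathcal V$ is the cone in $\mathcal Q$ defined by the inequalities $\langle \sigma, \cdot \rangle \le 0$, $\sigma \in \Sigma$, so it is simplicial and strictly convex; by~\cite[\S\,5.3, Corollary]{BriP} this gives sobriety of~$H$, hence $X$ exists.

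Given $X$, I would next reduce smoothness of $X$ to smoothness of $X_B$. Since the closed $G$-orbit meets $X_B$ and $X_B$ is $B$-stable, one has $X = G \cdot X_B$, and because $G$ acts on $X$ by automorphisms $X$ is smooth iff $X_B$ is smooth. The key input at this stage is the local product structure of $X_B$: using the description of $\CC[X_B]$ via $B$-semi-invariants (Knop's canonical chart; cf.~\cite[\S\,2]{Kn91}) together with the toroidal property, one obtains a decomposition $X_B \cong P_u \times S$, where $P_u$ is the unipotent radical of~$P_{G/H}$ and $S$ is an affine toric $T$-variety whose associated cone in $\Lambda \otimes_\ZZ \QQ$ is exactly $\mathcal C_B$. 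Therefore $X_B$ is smooth iff $S$ is smooth iff $\mathcal C_B$ is a regular cone in the lattice~$\Lambda$.

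It remains to rephrase regularity of $\mathcal C_B$ as the equality $\Lambda = \ZZ\Sigma$. By Proposition~\ref{prop_B-chart_spherical_roots} the set $\mathcal C_B^1$ of primitive ray generators of $\mathcal C_B$ is exactly $\{-\sigma \mid \sigma \in \Sigma\}$, and $\mathcal C_B = \mathcal V^\vee$ is full-dimensional and strictly convex in $\Lambda \otimes_\ZZ \QQ$ (sobriety making $\mathcal V$ full-dimensional and the duality preserving both features). A full-dimensional strictly convex cone is regular precisely when its primitive ray generators form a $\ZZ$-basis of the ambient lattice; since $\Sigma$ is automatically linearly independent, this translates into $-\Sigma$ being a $\ZZ$-basis of~$\Lambda$, equivalently $\Lambda = \ZZ\Sigma$, which finishes the equivalence. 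The principal obstacle in the plan is the invocation of the local decomposition $X_B \cong P_u \times S$: this is a standard consequence of the Luna--Vust/Brion--Luna--Vust local structure theorem applied to a toroidal simple embedding, but its proper justification should come from a precise citation rather than an ad hoc derivation.
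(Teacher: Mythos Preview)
The paper does not actually prove this proposition; it merely attributes the observation to Knop~\cite{Kn96} and refers to~\cite[\S\,30.1]{Tim} for a proof. Your argument is correct and is essentially the standard one found in those references: the local structure theorem for toroidal spherical varieties reduces smoothness of the standard completion to smoothness of the affine toric slice, and Proposition~\ref{prop_B-chart_spherical_roots} identifies the relevant cone so that regularity becomes the condition $\Lambda = \ZZ\Sigma$. One small point worth making explicit: when you assert that a full-dimensional strictly convex cone is regular iff its primitive ray generators form a $\ZZ$-basis, you are implicitly using that $\mathcal C_B$ is simplicial, which follows because sobriety forces $|\Sigma| = \rk\Lambda$ (strict convexity of $\mathcal V$ gives $\bigcap_{\sigma\in\Sigma}\Ker\sigma = 0$, hence $|\Sigma|\ge\rk\Lambda$, while linear independence gives the reverse inequality). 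The citation you flag for the decomposition $X_B \cong P_u \times S$ can be filled by the local structure theorem as in~\cite[\S\,2]{Kn91} or~\cite[\S\,29]{Tim}.
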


This fact seems to have been first observed by Knop in \cite{Kn96}.
For a proof see also~\cite[\S\,30.1]{Tim}.

\begin{definition}[{\cite{Lu96}}] \label{def_wond_G-var}
A smooth complete irreducible $G$-variety $X$ is said to be
\textit{wonderful} of rank~$r$ if the three conditions below are
satisfied:
\begin{enumerate}[label=\textup{(WG\arabic*)},ref=\textup{WG\arabic*}]
\item
$X$ contains an open $G$-orbit whose complement is a union of
$G$-stable prime divisors $D_1, \ldots, D_r$;

\item
$D_1, \ldots, D_r$ are smooth and have a non-empty transversal
intersection;

\item
for every two points $x, x' \in X$, $Gx = Gx'$ if and only if
$\lbrace i \mid x \in D_i \rbrace = \lbrace j \mid x' \in D_j
\rbrace$.
\end{enumerate}
\end{definition}

In~1996 Luna~\cite{Lu96} proved that every wonderful $G$-variety is
spherical, which implies the following result (see
also~\cite[Theorem~30.15]{Tim}).

\begin{theorem} \label{thm_criterion_wonderful}
A $G$-variety $X$ is wonderful if and only if $X$ is a wonderful
completion of a spherical homogeneous space $G / H$.
\end{theorem}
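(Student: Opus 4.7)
The plan is to establish the two implications separately. The forward direction ``wonderful completion $\Rightarrow$ wonderful $G$-variety'' is a direct consequence of the theory built up in \S\ref{subsec_simple_embeddings}--\S\ref{subsec_standard_completions}; the reverse direction relies on the deep theorem of \cite{Lu96}, already cited immediately before the statement, that conditions (WG1)--(WG3) already force sphericity of the open $G$-orbit.

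For $(\Leftarrow)$, suppose $X$ is a wonderful completion of a spherical homogeneous space $G/H$. By Definition \ref{def_wonderful1}, $X$ is a smooth complete simple toroidal embedding, and Proposition \ref{prop_standard_completion} gives $\mathcal{A}(X) = \varnothing$ and $\mathcal{C}(X) = \mathcal{V}$. By the Luna--Vust theory, the $G$-stable prime divisors of $X$ correspond bijectively to the rays in $\mathcal{V}^1$; enumerate them $D_1,\ldots,D_r$, so that (WG1) is immediate. Smoothness of $X$, together with the local structure theorem for toroidal spherical varieties, forces $\mathcal{V}$ to be a regular cone, which translates into $D_1,\ldots,D_r$ being smooth and meeting transversally in the unique closed $G$-orbit, giving (WG2). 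Again by Luna--Vust, the $G$-orbits of a simple embedding correspond in an order-reversing way to faces of its colored cone; in the toroidal case every face is spanned by some subset $I \subset \mathcal{V}^1$, and the associated orbit closure equals $\bigcap_{i \in I} D_i$. Hence two points lie in the same $G$-orbit precisely when they belong to the same $D_i$'s, which is (WG3).

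For $(\Rightarrow)$, suppose $X$ satisfies (WG1)--(WG3). The crucial step, Luna's 1996 theorem, is to exhibit an open $B$-orbit in $X$. Granting this, the open $G$-orbit $G/H$ is spherical and $X$ is a simple complete embedding, the unique closed orbit being $Y = D_1 \cap \ldots \cap D_r$ forced by (WG2) and (WG3). Condition (WG3) further guarantees that every $B$-stable prime divisor whose closure contains $Y$ coincides with one of the $D_i$, and is therefore $G$-stable; consequently no color of $G/H$ has closure containing a $G$-orbit, so $X$ is toroidal. Being smooth, complete, simple, and toroidal, $X$ is by definition a wonderful completion of $G/H$.

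The principal obstacle is the sphericity claim at the heart of $(\Rightarrow)$: axioms (WG1)--(WG3) are purely geometric and contain no reference to $B$. Luna's strategy is an induction on the rank $r$, exploiting the fact that, thanks to (WG2), each divisor $D_i$ inherits from $X$ the structure of a wonderful $G$-variety of rank $r-1$; this inductive engine, combined with a careful analysis of a $B$-stable affine neighborhood of a point of the closed orbit and of the normal bundle to $Y$, produces the required open $B$-orbit.
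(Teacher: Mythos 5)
Your proposal is correct and follows essentially the same route as the paper, which proves nothing beyond invoking Luna's 1996 theorem that axioms (WG1)--(WG3) force sphericity and referring to the standard Luna--Vust dictionary (cf.\ \cite[Theorem~30.15]{Tim}) for the rest. Your filling-in of the easy direction via Proposition~\ref{prop_standard_completion}, the regularity of $\mathcal V$, and the face--orbit correspondence is exactly the intended bookkeeping around that citation.
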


\begin{remark} \label{rem_rank}
The rank of a wonderful $G$-variety $X$ equals $|\Sigma_{G / H}|$,
where $G / H \subset X$ is the open $G$-orbit. Indeed,
by~\cite[Lemma~2.4]{Kn91} the number of $G$-stable prime divisors
in~$X$ equals $|\mathcal V^1_{G / H}|$. Since the cone $\mathcal
V_{G / H}$ is simplicial and generates $\mathcal Q_{G / H}$ as a
vector space, one has $|\mathcal V^1_{G / H}| = |\Sigma_{G / H}|$.
\end{remark}

\begin{remark}
It follows from Remark~\ref{rem_rank} and
Proposition~\ref{prop_Sigma_generates_Lambda} that the rank of a
wonderful $G$-variety~$X$ coincides with the rank of $X$ as a
spherical $G$-variety.
\end{remark}

It was noted in~\cite{Lu96} that every wonderful $G$-variety $X$ is
projective and the connected center of $G$ acts trivially on~$X$.
This follows from the proposition below, provided together with a
proof for convenience of the reader.

\begin{proposition}
Suppose that $X$ is a complete normal $G$-variety containing a
unique closed orbit. Then $X$ is projective and $C$ acts trivially
on~$X$.
\end{proposition}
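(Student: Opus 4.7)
The plan is to prove the two assertions in sequence. For projectivity, I would apply Sumihiro's theorem on $G$-invariant quasi-projective open neighborhoods in a normal $G$-variety. Picking a point $y \in Y$, this yields an open $G$-invariant quasi-projective subset $U \ni y$, which by $G$-invariance contains the entire closed orbit $Y$. The complement $X \setminus U$ is closed and $G$-invariant, so if non-empty it would contain a closed $G$-orbit; the uniqueness of $Y$ together with $Y \subset U$ then forces $X \setminus U = \varnothing$, so $X = U$ is quasi-projective. Combined with completeness, $X$ is projective.

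For the triviality of the $C$-action, I would exploit a $G$-equivariant projective embedding. Since $X$ is now projective and normal, Sumihiro's linearization theorem makes some positive power of any ample line bundle $G$-linearized, and its global sections provide a $G$-equivariant closed embedding $X \hookrightarrow \PP(V)$ for some finite-dimensional $G$-module~$V$. I would then decompose $V = \bigoplus_\chi V_\chi$ into $C$-weight subspaces; since $C$ is central in~$G$, each $V_\chi$ is $G$-stable. Fix a base point $x_0 \in Y$; its stabilizer is a parabolic subgroup $P \supset C$, so $C$ acts on the line $\ell_{x_0} \subset V$ corresponding to $x_0$ via a single character~$\chi_0$, placing $\ell_{x_0} \subset V_{\chi_0}$. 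Using $G$-equivariance and $G$-stability of~$V_{\chi_0}$, this gives $Y \subset \PP(V_{\chi_0})$. The remaining goal is to upgrade this to $X \subset \PP(V_{\chi_0})$, which forces $C$ to act trivially on $X$ (as $C$ acts on $V_{\chi_0}$ by scalars).

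The crux will be an argument by contradiction. Suppose some $x \in X$ has $\ell_x \not\subset V_{\chi_0}$; write a generator $v \in \ell_x$ as $v = \sum_\chi v_\chi$ and pick $\chi_1 \ne \chi_0$ with $v_{\chi_1} \ne 0$. I would choose a one-parameter subgroup $\lambda \colon \CC^\times \to C$ with $\langle \lambda, \chi_1 \rangle$ strictly smaller than $\langle \lambda, \chi \rangle$ for every other weight $\chi$ with $v_\chi \ne 0$; then $\lim_{t \to 0} \lambda(t) \cdot x = [v_{\chi_1}]$ in $\PP(V)$. Call this limit $z$; by completeness of~$X$, $z \in X$, and clearly $z$ is $C$-fixed with $z \in \PP(V_{\chi_1})$. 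Since $V_{\chi_1}$ is $G$-stable and $C$ commutes with~$G$, the orbit $Gz$ lies in $\PP(V_{\chi_1})$ and is pointwise $C$-fixed, hence so is its closure $\overline{Gz}$. By the uniqueness of the closed $G$-orbit, $Y \subset \overline{Gz} \subset \PP(V_{\chi_1})$; combined with $Y \subset \PP(V_{\chi_0})$ and $V_{\chi_0} \cap V_{\chi_1} = 0$, this yields $Y = \varnothing$, a contradiction.

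The main obstacle I anticipate is minor: verifying that the one-parameter subgroup $\lambda$ can be chosen to separate the weight $\chi_1$ from the others, and carrying out the limit computation cleanly. Everything else is structural, namely a $G$-equivariant embedding via Sumihiro, the weight-space decomposition induced by the centrality of $C$, and the orbit-closure principle that the $G$-invariant subset $\overline{Gz}$ must contain the unique closed orbit of~$X$.
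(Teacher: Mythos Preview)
Your proposal is correct and follows essentially the same strategy as the paper: Sumihiro for projectivity, then a $G$-equivariant projective embedding, a $C$-weight decomposition of~$V$, and a one-parameter-subgroup limit argument exploiting uniqueness of the closed orbit.

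The only notable difference is in how the limit argument is organized. The paper does not first pin $Y$ to a particular weight space~$V_{\chi_0}$; instead it fixes one cocharacter $q \colon \CC^\times \hookrightarrow C$ acting non-trivially, takes \emph{both} limits $t \to 0$ and $t \to \infty$ of a $C_q$-unstable point, lands in two distinct weight spaces, and observes that the two resulting orbit closures are disjoint while each must contain~$Y$. This two-sided version is slightly more symmetric and avoids the one subtlety in your write-up: as stated, you pick $\chi_1 \ne \chi_0$ first and then look for~$\lambda$, but not every such~$\chi_1$ can be separated from the remaining weights of~$v$ by a cocharacter---you need $\chi_1$ to be a vertex of the convex hull of $\{\chi : v_\chi \ne 0\}$. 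Since that convex hull has at least two vertices whenever $|\{\chi : v_\chi \ne 0\}| \ge 2$, you can always choose such a $\chi_1 \ne \chi_0$, so the fix is immediate; you already flagged this step as the point needing care.
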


\begin{proof}
By~\cite[Lemma~8]{Sum}, $X$ is covered by a finite number of
$G$-stable quasi-projective open subsets. Every open subset
containing the closed $G$-orbit then coincides with~$X$, since
otherwise its complement would contain a closed $G$-orbit. Hence $X$
is quasi-projective. The completeness of $X$ implies that $X$ is
projective. Next, by~\cite[Theorem~1]{Sum}, there are a
finite-dimensional vector space $V$, a homomorphism $G \to \PGL(V)$,
and a $G$-equivariant closed embedding $X \hookrightarrow \PP(V)$.
In what follows we assume $X \subset \PP(V)$. Replacing $G$ by a
suitable finite covering of its image in $\PGL(V)$, we may assume
that $V$ is a $G$-module.

Assume that $C$ acts non-trivially on~$X$. Then there is a
one-parameter subgroup $q \colon \CC^\times \hookrightarrow C$
acting non-trivially on~$X$. Let $C_q \subset C$ be the image
of~$q$. Consider the decomposition $V = V_1 \oplus \ldots \oplus
V_s$ into a direct sum of weight subspaces with respect to~$C_q$.
Let $n_1, \ldots, n_s$ be the integers such that $q(t)v = t^{n_i}v$
for every $t \in \CC^\times$ and $v \in V_i$, $i = 1, \ldots, s$.
Without loss of generality we may assume $n_1 < \ldots < n_s$.
Choose any vector $v \in V \backslash \lbrace 0 \rbrace$ such that
the line $\langle v \rangle \in \PP(V)$ belongs to $X$ and is
$C_q$-unstable. Consider the expression $v = v_1 + \ldots + v_s$,
where $v_i \in V_i$ for all $i = 1, \ldots, s$. Let $a$ (resp.~$b$)
be the minimal (resp. maximal) value of~$i$ such that $v_i \ne 0$.
Since $\langle v \rangle$ is $C_q$-unstable, one has $a \ne b$. Then
$\lim \limits_{t \to 0} q(t)\langle v \rangle = \langle v_a \rangle
\in X$ and $\lim \limits_{t \to \infty} q(t)\langle v \rangle =
\langle v_b \rangle \in X$. As $C_q$ is a central subgroup of~$G$,
each of the subspaces $V_1, \ldots, V_s$ is $G$-stable. It follows
that the sets $\overline{G \langle v_a \rangle}, \overline{G \langle
v_b \rangle} \subset X$, each of them containing a closed $G$-orbit,
do not intersect, a contradiction.
\end{proof}

\subsection{The spherical closure of a spherical subgroup}
\label{subsec_spherical_closure}

Let $H \subset G$ be a spherical subgroup. We consider the action of
$N_G(H)$ on $G / H$ given by $(n, gH) \mapsto gn^{-1}H$, where $n
\in N_G(H)$, $g \in G$. This action commutes with the action of $G$
on $G / H$ by left translation and therefore induces an action of
$N_G(H)$ on the set of colors~$\mathcal D$.

\begin{definition}
The kernel of the above action of $N_G(H)$ on~$\mathcal D$ is said
to be the \textit{spherical closure} of~$H$.
\end{definition}

We denote the spherical closure of $H$ by~$\overline H$. It follows
directly from the definition that $\overline H \supset N_G(H)^0$ and
$\overline H$ contains the center of~$G$.

\begin{remark}
By~\cite[\S\,5.2, Corollary]{BriP}, for every spherical subgroup $H
\subset G$ the group $N_G(H) / H$ is diagonalizable. Hence
$\overline H / H$ is also diagonalizable and $(\overline H)^\sharp
\subset H \subset \overline H$.
\end{remark}

\begin{definition}
The subgroup $H$ is said to be \textit{spherically closed} if
$\overline H = H$.
\end{definition}

Assume that $G = C \times G^{ss}$ and $G^{ss}$ is simply connected.
For every $D \in \mathcal D$, we consider the corresponding element
$(\lambda_D, \chi_D) \in \widehat \Lambda^+$ and let $v_D$ be a
nonzero element of the one-dimensional space
$V(\lambda_D^*)^{(H)}_{\chi_D}$. For every $\nu \in \mathfrak X(C)$,
we fix a nonzero vector $v_\nu$ in the one-dimensional
space~$V(\nu)$.

\begin{proposition}
\label{prop_spherical_closure} The spherical closure $\overline H$
of $H$ is the common stabilizer in $G$ of all the lines $\langle v_D
\rangle$, where $D$ runs over~$\mathcal D$.
\end{proposition}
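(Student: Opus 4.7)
The plan is to establish the two inclusions between $\overline H$ and the common stabilizer $S \subset G$ of the lines $\langle v_D\rangle$ for $D \in \mathcal D$. The key preliminary observation is that, under the $(G\times G)$-isomorphism~(\ref{eqn_GtimesG}), the section $s_D$ corresponds to $v_{\lambda_D}\otimes v_D$, so $s_D(x) = \langle v_{\lambda_D}, xv_D\rangle$; consequently, for any $g \in G$, the right translate $g\cdot s_D$ is a scalar multiple of $s_D$ if and only if $g$ stabilizes the line $\langle v_D\rangle$ in $V(\lambda_D^*)$.

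The inclusion $\overline H \subset S$ is immediate: any $n \in \overline H \subset N_G(H)$ acts on $G/H$ by right translation and fixes each color $D$ as a divisor, so the preimage of $D$ in $G$ is preserved by right translation by $n$, which therefore sends $s_D$ into the one-dimensional space $\CC[G]^{(B\times H)}_{(\lambda_D, \chi_D)}$. By the preceding observation, $nv_D \in \langle v_D\rangle$.

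For the reverse inclusion, take $g \in S$ and write $gv_D = c_D(g)v_D$. Once we know $g \in N_G(H)$, the observation above (read in reverse) implies that $g$ acts trivially on each color, so $g \in \overline H$. To prove $g \in N_G(H)$, fix $h \in H$ and set $\gamma := g^{-1}hg$. A direct computation yields $\gamma v_D = \chi_D(h) v_D$, so $\gamma h^{-1}$ fixes $v_D$; and centrality of~$C$ gives $\nu(\gamma) = \nu(h)$, so $\gamma h^{-1}$ fixes every nonzero element of the one-dimensional module $V(\nu^*)$ for every $\nu \in \mathfrak X(C)$. Translating to $\CC[G]$, since right translation is an algebra automorphism, $\gamma h^{-1}$ fixes $s_D$ and $\nu^{\pm 1}$, hence every monomial $\prod s_D^{a_D}\cdot \nu^{-1}$. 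By Proposition~\ref{prop_ews_in_general} together with the freeness of $\widehat\Lambda^+_{G^{ss}/H^{ss}}$, these monomials furnish, up to scalars, generators of all the one-dimensional spaces $\CC[G]^{(B\times H)}_{(\lambda^*, \chi)}$ for $(\lambda^*, \chi) \in \widehat\Lambda^+$.

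Under~(\ref{eqn_GtimesG}), fixing such a generator is equivalent to fixing the corresponding nonzero vector in $V(\lambda)^{(H)}_\chi$, so $\gamma h^{-1}$ fixes every $H$-semi-invariant in every simple $G$-module. Since $H/H^\sharp$ is diagonalizable, the subspaces $V(\lambda)^{(H)}_\chi$ span $V(\lambda)^{H^\sharp}$, and Lemma~\ref{lemma_H^sharp_com_stab_modules} therefore forces $\gamma h^{-1} \in H^\sharp \subset H$, whence $\gamma \in H$. This establishes $g^{-1}Hg \subset H$; applying the same argument to $g^{-1} \in S$ produces $gHg^{-1} \subset H$, so $g \in N_G(H)$. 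The main obstacle is precisely this last stretch: passing from the relatively weak hypothesis that $g$ stabilizes each line $\langle v_D \rangle$ individually to the strong conclusion that $g$ normalizes $H$. Overcoming it requires translating the hypothesis into a statement about the right action of $g^{-1}hg$ on $\CC[G]$ and then using Lemma~\ref{lemma_H^sharp_com_stab_modules} to characterize $H^\sharp$ via its action on simple $G$-modules.
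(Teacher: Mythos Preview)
Your proof is correct and follows essentially the same strategy as the paper: both arguments identify $\overline H$ with $\widetilde H \cap N_G(H)$ via the correspondence between colors and the lines $\langle v_D\rangle$, and then show $\widetilde H \subset N_G(H)$ by proving that the commutator $g^{-1}hgh^{-1}$ lies in $H^\sharp$. The paper's version is slightly more streamlined: rather than your explicit computation through monomials in $\CC[G]$, it observes directly that the action of $\widetilde H$ on $\bigoplus_D \langle v_D\rangle \oplus \bigoplus_\nu \langle v_\nu\rangle$ has kernel $H^\sharp$ (this is Lemma~\ref{lemma_H^sharp}, which you effectively reprove inline), so $\widetilde H/H^\sharp$ is diagonalizable and hence commutative.
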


In the proof of this proposition we shall need the following lemma.

\begin{lemma} \label{lemma_H^sharp}
For a spherical subgroup $H \subset G$, the group $H^\sharp$ is the
common stabilizer in~$G$ of all the vectors~$v_D$, where $D$ runs
over~$\mathcal D$, and all the vectors $v_\nu$, where $\nu$ runs
over a basis of~$\mathfrak X(C)$.
\end{lemma}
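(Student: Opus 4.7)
The plan is to exploit Corollary~\ref{crl_H^sharp}, the $(G \times G)$-decomposition~(\ref{eqn_GtimesG}), and Proposition~\ref{prop_ews_in_general} in order to translate the pointwise stabilizer condition on vectors in simple $G$-modules into a pointwise stabilizer condition on functions in $\CC[G]^{H^\sharp}$.

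First I would settle the easy inclusion $H^\sharp \subset H_0$, where $H_0$ denotes the common stabilizer in question: by construction $v_D$ is $H$-semi-invariant of weight $\chi_D$, and $v_\nu$ spans the one-dimensional $G$-module $V(\nu)$, on which $H$ acts through $\nu_H$. Both $\chi_D$ and $\nu_H$ belong to $\mathfrak X(H)$, so they vanish on $H^\sharp$, and $H^\sharp$ fixes each $v_D$ and each $v_\nu$.

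For the converse inclusion, I would observe that under~(\ref{eqn_GtimesG}) the vector $v_D$ corresponds to the function $f_D(g) = \langle v_{\lambda_D}, g v_D\rangle$ spanning $\CC[G]^{(B \times H)}_{(\lambda_D, \chi_D)}$, and the condition $h \cdot v_D = v_D$ is equivalent to $h \cdot f_D = f_D$ under the right action (since $v_{\lambda_D}$ generates the simple $G$-module $V(\lambda_D)^* \simeq V(\lambda_D^*)$). Similarly, for a basis $\{\tau_i\}$ of $\mathfrak X(C)$ the condition $h \cdot v_{\tau_i} = v_{\tau_i}$ is equivalent to $\tau_i(h) = 1$, hence to $h \cdot \tau_i^{-1} = \tau_i^{-1}$ as a regular function under the right action. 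By Proposition~\ref{prop_ews_in_general}, every $(\lambda, \chi) \in \widehat \Lambda^+$ admits a decomposition
\[
(\lambda, \chi) = \sum_{D \in \mathcal D} n_D (\lambda_D, \chi_D) + (\nu, -\nu_H)
\]
with $n_D \in \ZZ^+$ and $\nu \in \mathfrak X(C)$, and sphericity of $H$ forces the space $\CC[G]^{(B \times H)}_{(\lambda, \chi)}$ to be one-dimensional; hence it is spanned by the product $\prod_D f_D^{n_D} \cdot \nu^{-1}$, which is fixed by any $h \in H_0$ under the right action.

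Finally, since the left and right $G$-actions on $\CC[G]$ commute, $h$ also fixes every left $G$-translate of these products. Invoking~(\ref{eqn_GtimesG}) once more, each isotypic component $V(\lambda) \otimes V(\lambda^*)^{(H)}_\chi$ of $\CC[G]^{(H)}_\chi$ is generated under the left $G$-action by its $B$-highest-weight line $\CC[G]^{(B \times H)}_{(\lambda, \chi)}$; so $h$ fixes all of $\CC[G]^{(H)}_\chi$ for each $\chi$, and hence all of $\CC[G]^{H^\sharp} = \bigoplus_\chi \CC[G]^{(H)}_\chi$. Corollary~\ref{crl_H^sharp} yields $h \in H^\sharp$. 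The only nontrivial point is the identification of $\CC[G]^{(B \times H)}_{(\lambda, \chi)}$ with the span of the explicit product $\prod_D f_D^{n_D} \cdot \nu^{-1}$; this rests on sphericity (one-dimensionality) together with the direct-sum decomposition of $\widehat \Lambda^+_{G/H}$ and the fact that $\widehat \Lambda^+_{G^{ss}/H^{ss}}$ is freely generated by the $(\lambda_D, \chi_D)$, both of which have already been established in~\S\,\ref{subsec_ews}.
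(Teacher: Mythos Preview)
Your argument is correct and follows essentially the same route as the paper: both proofs exploit that products of the $(B\times H)$-semi-invariants $f_D$ and characters $\nu^{-1}$ exhaust (by sphericity and the free generation of $\widehat\Lambda^+_{G^{ss}/H^{ss}}$) all the one-dimensional spaces $\CC[G]^{(B\times H)}_{(\lambda,\chi)}$, so that fixing the $v_D$ and $v_\nu$ forces fixing every $H$-semi-invariant in every simple $G$-module. The only cosmetic difference is that the paper invokes Lemma~\ref{lemma_H^sharp_com_stab_modules} directly and phrases the multiplicativity step inside the modules $V(\lambda^*)$, whereas you pass through $\CC[G]$ and quote Corollary~\ref{crl_H^sharp}; under~(\ref{eqn_GtimesG}) these are the same computation.
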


\begin{proof}
As the group $H/H^\sharp$ is diagonalizable,
Lemma~\ref{lemma_H^sharp_com_stab_modules} implies that $H^\sharp$
is the common stabilizer in~$G$ of all vectors in the spaces
$V(\lambda^*)^{(H)}_{\chi}$, where $(\lambda, \chi)$ runs over the
whole semigroup $\widehat \Lambda^+$. In view of
isomorphism~(\ref{eqn_GtimesG}), for every $(\lambda_1, \chi_1),
(\lambda_2, \chi_2) \in \widehat \Lambda^+$ and every $v_1 \in
V(\lambda_1^*)^{(H)}_{\chi_1}$, $v_2 \in
V(\lambda_2^*)^{(H)}_{\chi_2}$, $v_3 \in V(\lambda_1^* +
\lambda_2^*)^{(H)}_{\chi_1 + \chi_2}$, the common stabilizer of
$v_1$ and $v_2$ stabilizes~$v_3$. Using
equality~(\ref{eqn_ews_reductive_case}) concludes the proof.
\end{proof}

\begin{proof}
[Proof of Proposition~\textup{\ref{prop_spherical_closure}}]

Let $\widetilde H$ be the common stabilizer in $G$ of all the lines
$\langle v_D \rangle$, $D \in \mathcal D$. We note that $\widetilde
H$ automatically stabilizes all the lines $\langle v_\nu \rangle =
V(\nu)$, $\nu \in \mathfrak X(C)$. Clearly, $H^\sharp \subset H
\subset \widetilde H$. We first show that $\overline H = \widetilde
H \cap N_G(H)$. Indeed, an argument similar to the proof of
Lemma~\ref{lemma_fwls} shows that an element $n \in N_G(H)$ fixes a
color $D$ if and only if $n$ fixes the line
$$
\CC[G]^{(B \times H)}_{(\lambda_D, \chi_D)} \subset
\CC[G]^{(H)}_{\chi_D} \simeq \Gamma(G *_H \CC_{-\chi_D}).
$$
Taking into account isomorphism~(\ref{eqn_GtimesG}), we find that
$n$ fixes $D$ if and only if $n$ fixes the line
$V(\lambda^*_D)^{(H)}_{\chi_D} = \langle v_D \rangle$.

To complete the proof, it suffices to show that $\widetilde H
\subset N_G(H)$. By the definition of $\widetilde H$, we have the
natural diagonalizable action of $\widetilde H$ on the vector space
$\bigoplus \limits_{D \in \mathcal D} \langle v_D \rangle \oplus
\bigoplus \limits_\nu \langle v_\nu \rangle$ (in the latter sum
$\nu$ runs over a basis of~$\mathfrak X(C)$). By
Lemma~\ref{lemma_H^sharp}, the kernel of this action is~$H^\sharp$,
so $H^\sharp$ is a normal subgroup of $\widetilde H$ and the
quotient $\widetilde H / H^\sharp$ is commutative. The latter
implies that for every $h \in H$ and $\widetilde h \in \widetilde H$
one has $\widetilde h h \widetilde h^{-1} \in hH^\sharp \subset H$,
therefore $\widetilde H \subset N_G(H)$ and $\widetilde H =
\overline H$.
\end{proof}

Until the end of this subsection we assume that $G$ is an arbitrary
connected reductive group. Fix a finite covering group $\widetilde
G$ of $G$ that is a direct product of a torus with a simply
connected semisimple group. For every simple $\widetilde
G$-module~$V$, the corresponding projective space $\PP(V)$ has the
natural structure of a $G$-variety. Every $G$-variety arising in
this way is said to be a \textit{simple projective $G$-space}.

\begin{corollary}[{see~\cite[\S\,2.4.2, Lemma]{BL}}]
\label{crl_simple_projective_spaces}
For every spherical subgroup $H \subset G$, its spherical closure
$\overline H$ is the common stabilizer in $G$ of all $H$-fixed
points in all simple projective $G$-spaces.
\end{corollary}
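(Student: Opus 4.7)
The plan is to reduce the corollary to Proposition~\ref{prop_spherical_closure} by passing to a finite covering $\pi\colon\widetilde G\to G$ of the form $\widetilde G=\widetilde C\times\widetilde G^{ss}$ with $\widetilde C$ a torus and $\widetilde G^{ss}$ simply connected. Set $\widetilde H:=\pi^{-1}(H)$, which is spherical in $\widetilde G$; the identification $\widetilde G/\widetilde H\simeq G/H$ yields a bijection $\mathcal D_{\widetilde G/\widetilde H}\simeq\mathcal D_{G/H}$, and since $\ker\pi$ is central one obtains $\overline{\widetilde H}=\pi^{-1}(\overline H)$. Moreover, $\ker\pi$ acts by scalars on every simple $\widetilde G$-module~$V$ and hence trivially on $\PP(V)$; thus the $G$-stabilizer of a point of $\PP(V)$ is the $\pi$-image of its $\widetilde G$-stabilizer, and the $H$- and $\widetilde H$-fixed loci coincide. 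This reduces the problem to showing that $\overline{\widetilde H}$ equals the common stabilizer $F$ in $\widetilde G$ of all $\widetilde H$-fixed points in $\PP(V)$ as $V$ ranges over simple $\widetilde G$-modules.

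By condition~(\ref{S3}), for every simple $\widetilde G$-module~$V$ and every $\chi\in\mathfrak X(\widetilde H)$ the subspace $V^{(\widetilde H)}_\chi$ is at most one-dimensional, and the $\widetilde H$-fixed points of $\PP(V)$ are exactly the (finitely many) lines so spanned. The distinguished lines $\langle v_D\rangle$ associated with colors $D\in\mathcal D_{\widetilde G/\widetilde H}$ lie among these; hence $F$ stabilizes a superset of the collection featured in Proposition~\ref{prop_spherical_closure}, which applied to $\widetilde G\supset\widetilde H$ immediately gives $F\subset\overline{\widetilde H}$.

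For the reverse inclusion, one must show that $\overline{\widetilde H}$ stabilizes every line spanned by an $\widetilde H$-semi-invariant in a simple $\widetilde G$-module. By Proposition~\ref{prop_ews_in_general}, every $(\lambda,\chi)\in\widehat\Lambda^+_{\widetilde G/\widetilde H}$ decomposes as a sum of indecomposables, each of the form $(\lambda_D,\chi_D)$ for a color $D$ or $(\nu,-\nu_{\widetilde H})$ for $\nu\in\mathfrak X(\widetilde C)$; in the latter case $\PP(V(\nu))$ is a single point and poses no constraint. For a nontrivial sum, multiplication in $\CC[\widetilde G]$ furnishes a nonzero $(\widetilde G\times\widetilde H)$-equivariant map
\[
\CC[\widetilde G]^{(B\times\widetilde H)}_{(\lambda_1,\chi_1)}\otimes\CC[\widetilde G]^{(B\times\widetilde H)}_{(\lambda_2,\chi_2)}\longrightarrow\CC[\widetilde G]^{(B\times\widetilde H)}_{(\lambda_1+\lambda_2,\chi_1+\chi_2)},
\]
which under isomorphism~(\ref{eqn_GtimesG}) corresponds to the Cartan projection $V(\lambda_1^*)\otimes V(\lambda_2^*)\to V((\lambda_1+\lambda_2)^*)$ sending $v_1\otimes v_2$ to a nonzero $\widetilde H$-semi-invariant of weight $\chi_1+\chi_2$. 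Consequently any element stabilizing both $\langle v_1\rangle$ and $\langle v_2\rangle$ also stabilizes the one-dimensional space $V((\lambda_1+\lambda_2)^*)^{(\widetilde H)}_{\chi_1+\chi_2}$, and induction yields $\overline{\widetilde H}\subset F$. The main obstacle is precisely this multiplicative step: one must keep careful track of the two actions on $\CC[\widetilde G]$ and verify that the product of two nonzero semi-invariants is mapped by~(\ref{eqn_GtimesG}) to a nonzero element of the Cartan component; once this is in place, descending to $G$ via $\pi$ completes the proof.
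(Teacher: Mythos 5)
Your proposal is correct and follows essentially the same route as the paper: reduce to a covering of the form $C \times G^{ss}$ with $G^{ss}$ simply connected and then apply Proposition~\ref{prop_spherical_closure}, the inclusion $F \subset \overline H$ being immediate and the reverse inclusion coming from the fact that $\widehat \Lambda^+$ is generated by the elements $(\lambda_D, \chi_D)$ and $(\nu, -\nu_H)$ together with multiplicativity of $(B \times H)$-semi-invariants in $\CC[G]$. The multiplicative step you single out as the main obstacle is exactly the argument already used (for vectors rather than lines) in the proof of Lemma~\ref{lemma_H^sharp}, which is why the paper can treat the corollary as a direct consequence.
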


\begin{proof}
Passing from $G$ to $\widetilde G$, one can easily reduce the
problem to the case where $G = C \times G^{ss}$ and $G^{ss}$ is
simply connected. In the latter case the assertion is a direct
consequence of Proposition~\ref{prop_spherical_closure}.
\end{proof}

The following well-known result is implied by
Corollary~\ref{crl_simple_projective_spaces}.

\begin{corollary}
For every spherical subgroup $H \subset G$, the group $\overline H$
is spherically closed.
\end{corollary}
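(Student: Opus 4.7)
The plan is to deduce the statement directly from Corollary~\ref{crl_simple_projective_spaces}, which characterizes the spherical closure of any spherical subgroup as the common stabilizer in~$G$ of the $H$-fixed points in all simple projective $G$-spaces. First, I observe that $\overline{H}$ is itself spherical in~$G$: since $H \subset \overline{H}$, the openness of $BH$ in $G$ guaranteed by condition~(\ref{S1}) immediately implies the openness of $B\overline{H}$, so the iterated spherical closure $\overline{\overline{H}}$ is a well-defined object of the same theory.

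The key step is to show that in every simple projective $G$-space $\PP(V)$ the fixed-point sets of $H$ and of $\overline{H}$ coincide. The inclusion $\PP(V)^{\overline{H}} \subset \PP(V)^{H}$ is immediate from $H \subset \overline{H}$. For the reverse inclusion, I invoke Corollary~\ref{crl_simple_projective_spaces} applied to~$H$: by the very characterization it provides, each element of $\overline{H}$ stabilizes every $H$-fixed point in every simple projective $G$-space, so $\PP(V)^{H} \subset \PP(V)^{\overline{H}}$.

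It then remains to apply Corollary~\ref{crl_simple_projective_spaces} to the spherical subgroup $\overline{H}$: the group $\overline{\overline{H}}$ is the common stabilizer in~$G$ of all $\overline{H}$-fixed points in all simple projective $G$-spaces. By the previous step, this set of fixed points agrees with the set of $H$-fixed points, whose common stabilizer is, by the same corollary, exactly $\overline{H}$. Hence $\overline{\overline{H}} = \overline{H}$, which is the assertion. There is no serious obstacle in this argument: everything is formal once Corollary~\ref{crl_simple_projective_spaces} is available, and in particular the more delicate description of $\overline{H}$ from Proposition~\ref{prop_spherical_closure} (which requires the factorization $G = C \times G^{ss}$ with $G^{ss}$ simply connected) is not needed.
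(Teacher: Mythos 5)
Your argument is correct and follows exactly the route the paper intends: the paper simply asserts that the statement is implied by Corollary~\ref{crl_simple_projective_spaces}, and your proof spells out that implication (equality of $H$- and $\overline H$-fixed points in all simple projective $G$-spaces, then applying the corollary to the spherical subgroup $\overline H$). The only cosmetic point is that the sphericity of $\overline H$ is more precisely phrased via the openness of some $B$-orbit $BgH$ rather than of $BH$ itself, but this does not affect the argument.
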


\begin{proposition} \label{prop_sph_clos_in_parabolic}
Let $H \subset G$ be a spherical subgroup. Suppose that $H \subset
P$ for some parabolic subgroup $P \subset G$. Then $\overline H
\subset P$.
\end{proposition}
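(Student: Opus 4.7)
The plan is to reduce the statement to Corollary~\ref{crl_simple_projective_spaces}, which characterizes $\overline H$ as the common stabilizer in $G$ of all $H$-fixed points in all simple projective $G$-spaces. Accordingly, it will suffice to exhibit a simple projective $G$-space $X$ containing an $H$-fixed point whose full stabilizer in $G$ is exactly $P$.

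First I would dispose of the choice of Borel. The characterization in Corollary~\ref{crl_simple_projective_spaces} is manifestly conjugation-equivariant: $\overline{gHg^{-1}} = g\overline H g^{-1}$ for every $g \in G$, so replacing the pair $(H, P)$ by $(gHg^{-1}, gPg^{-1})$ does not affect the conclusion to be proved. Choosing $g$ that sends some Borel subgroup contained in $P$ onto $B$, we may thus assume $P \supset B$, in which case $P = P_I$ for a subset $I \subset \Pi$.

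Next, to produce the required projective $G$-space, pass to the finite covering $\widetilde G$ used in the definition of simple projective $G$-spaces, which we may take of the form $\widetilde C \times \widetilde G^{ss}$ with $\widetilde G^{ss}$ simply connected. Take $\lambda = \sum_{\alpha \in \Pi \setminus I} \varpi_\alpha$, a dominant weight of $\widetilde B^{ss}$ (well defined because $\widetilde G^{ss}$ is simply connected), and let $V(\lambda)$ be the corresponding simple $\widetilde G$-module on which the central torus $\widetilde C$ acts trivially. A direct computation with root subgroups shows that the stabilizer in $\widetilde G$ of the highest-weight line $\langle v_\lambda\rangle$ has Lie algebra $\mathfrak b + \sum_{\alpha \in \Delta^+,\, \Supp\alpha\subset I}\mathfrak g_{-\alpha}$, that is, this stabilizer is the preimage of $P_I$ in $\widetilde G$. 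Consequently, the stabilizer in $G$ of the point $[v_\lambda] \in \PP(V(\lambda))$ equals $P$. Since $H \subset P$ by hypothesis, the point $[v_\lambda]$ is $H$-fixed, and Corollary~\ref{crl_simple_projective_spaces} then yields $\overline H \subset P$.

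The only step I expect to require any care is the verification that $\PP(V(\lambda))$ is a bona fide $G$-variety rather than merely a $\widetilde G$-variety; but this is immediate, since the kernel of $\widetilde G \to G$ is a finite central subgroup that acts on the simple $\widetilde G$-module $V(\lambda)$ by scalars and thus trivially on the projective space. The remaining ingredients — the standard identification of the stabilizer of $\langle v_\lambda\rangle$ with the parabolic $P_I$ determined by the set $\{\alpha \in \Pi : \langle \lambda, \alpha^\vee\rangle = 0\}$, and the conjugation-invariance of the spherical closure — are routine.
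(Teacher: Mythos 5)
Your proof is correct and follows essentially the same route as the paper: both arguments invoke Corollary~\ref{crl_simple_projective_spaces} together with the standard fact that the stabilizer of a highest-weight line is the corresponding parabolic. The only (immaterial) difference is that you use a single simple module with highest weight $\sum_{\alpha \in \Pi \setminus I}\varpi_\alpha$, whereas the paper takes each fundamental weight $\varpi_\alpha$, $\alpha \in \Pi \setminus \Pi'$, separately and intersects the resulting maximal parabolics.
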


\begin{proof}
Without loss of generality we may assume that $G = C \times G^{ss}$,
$G^{ss}$ is simply connected, and $P = P_{\Pi'}$ for some subset
$\Pi' \subset\nobreak \Pi$. For every $\alpha \in \Pi \backslash
\Pi'$, the point $\langle v_{\varpi_\alpha} \rangle \in
\PP(V(\varpi_\alpha))$ is fixed by~$P$. Moreover, it is well known
that the stabilizer in~$G$ of this point is~$P_{\Pi \backslash
\lbrace \alpha \rbrace}$. Applying
Corollary~\ref{crl_simple_projective_spaces} we obtain
$$
\overline H \subset \bigcap \limits_{\alpha \in \Pi \backslash \Pi'}
P_{\Pi \backslash \lbrace \alpha \rbrace} = P,
$$
which completes the proof.
\end{proof}

\begin{corollary} \label{crl_sph_clos_str_solv}
Let $H \subset G$ be a spherical subgroup. If $H$ is strongly
solvable, then so is~$\overline H$.
\end{corollary}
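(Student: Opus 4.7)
The plan is to observe that this corollary is essentially an immediate consequence of the preceding Proposition~\ref{prop_sph_clos_in_parabolic}. By definition, $H$ being strongly solvable means that $H$ is contained in some Borel subgroup $B' \subset G$. Since every Borel subgroup is in particular a parabolic subgroup, we may apply Proposition~\ref{prop_sph_clos_in_parabolic} with $P = B'$.

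The proposition then yields $\overline H \subset B'$, which by definition means that $\overline H$ is strongly solvable. No additional argument is needed; in particular, we obtain the stronger statement that $\overline H$ is contained in the very same Borel subgroup that contains~$H$.

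There is essentially no obstacle here, since all the work is done inside Proposition~\ref{prop_sph_clos_in_parabolic}. The only thing worth noting is that the proof goes through without any reductive-group decomposition hypothesis on $G$, because Proposition~\ref{prop_sph_clos_in_parabolic} was already established for arbitrary connected reductive~$G$.
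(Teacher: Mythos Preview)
Your proof is correct and matches the paper's approach: the corollary is stated without proof in the paper, being an immediate consequence of Proposition~\ref{prop_sph_clos_in_parabolic} applied with $P$ equal to a Borel subgroup containing~$H$.
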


The following theorem is a crucial point in reducing the
classification of spherical subgroups to that of wonderful
varieties.

\begin{theorem}[{\cite[\S\S\,7.6, 7.2]{Kn96}}]
\label{thm_sph_closed_wonderful} Let $H \subset G$ be a spherical
subgroup. If $H$ is spherically closed, then $H$ is wonderful. In
particular, $H$ is wonderful whenever $N_G(H) = H$.
\end{theorem}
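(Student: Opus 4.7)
The plan is to verify the two conditions defining wonderfulness: that $H$ is sober (so a standard completion of $G/H$ exists) and that this standard completion is smooth, the latter being equivalent to $\Lambda = \ZZ\Sigma$ by Proposition~\ref{prop_Sigma_generates_Lambda}.

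The soberness is immediate. The action of $N_G(H)$ on $\mathcal D$ factors through $N_G(H)/H$, which is diagonalizable by the remark in \S\ref{subsec_spherical_closure}, and by definition its kernel is $\overline H/H$. Under the hypothesis $\overline H = H$ the action is faithful, and a diagonalizable group acting faithfully on a finite set is finite. Hence $N_G(H)/H$ is finite, $H$ is sober, and a standard completion $X$ of $G/H$ exists.

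For the smoothness, the key step is to construct a natural isomorphism
\[
\rho\colon \overline H / H \;\xrightarrow{\;\sim\;}\; \Hom_\ZZ(\Lambda/\ZZ\Sigma,\CC^\times),
\]
the idea being that both sides compute the $G$-equivariant automorphism group of $X$. On the one hand, any $G$-equivariant automorphism of $X$ restricts to a $G$-equivariant automorphism of the open orbit $G/H$, hence to right translation by an element of $N_G(H)/H$; such an automorphism permutes the set $\mathcal D$ of colors viewed as $B$-stable prime divisors of $X$, but because $X$ is the unique standard completion its automorphisms must preserve each color individually, so the image lies in $\overline H/H$, and the resulting map $\overline H/H \to \mathrm{Aut}^G(X)$ is an isomorphism. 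On the other hand, the local-structure isomorphism $X_B\cong R\times Z$ (where $R$ is the unipotent radical of the parabolic opposite to $P$ and $Z$ is a toric variety for the torus with character lattice $\Lambda$ and fan determined by $\mathcal V$) identifies $\mathrm{Aut}^G(X)$ with the group of torus-equivariant automorphisms of the complete toric variety obtained by gluing $G$-translates of $Z$, which by a classical computation equals $\Hom_\ZZ(\Lambda/\ZZ\Sigma,\CC^\times)$; note this group is finite because sobriety forces $\mathcal V$ to be full-dimensional and simplicial, so $|\Sigma|=\mathrm{rk}\,\Lambda$.

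Granting the isomorphism $\rho$, the hypothesis $\overline H = H$ forces $\Hom_\ZZ(\Lambda/\ZZ\Sigma,\CC^\times)=0$, whence $\Lambda=\ZZ\Sigma$, and so $X$ is wonderful. The ``in particular'' clause follows since $N_G(H)=H$ implies $\overline H=H$. The main obstacle will be the surjectivity of $\rho$: one must realise every automorphism of the toric slice (preserving its $G$-relevant structure) as a global $G$-equivariant automorphism of $X$, which requires careful patching across the local charts corresponding to the various $G$-orbits of $X$ and is where the techniques of Knop~\cite{Kn96} are essential.
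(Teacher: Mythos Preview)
The paper does not give its own proof of this theorem: it is stated with a bare citation to Knop~\cite[\S\S\,7.6,\,7.2]{Kn96} and no argument is supplied. So there is nothing in the paper to compare your attempt against; what follows is an assessment of your sketch on its own terms.

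Your sobriety argument is fine (and in fact the paper already records, just before the definition of spherical closure, that $\overline H\supset N_G(H)^0$, which gives the finiteness of $N_G(H)/H$ immediately once $\overline H=H$). The heart of the matter is the claimed isomorphism
\[
\overline H/H \;\simeq\; \Hom_\ZZ(\Lambda/\ZZ\Sigma,\CC^\times),
\]
and here your write-up is a plan rather than a proof. You correctly identify that $G$-equivariant automorphisms of the standard completion $X$ correspond on one side to elements of $N_G(H)/H$ fixing every color, and you gesture at a local-structure description on the other side; but the passage from ``torus-equivariant automorphisms of the toric slice $Z$'' to $\Hom_\ZZ(\Lambda/\ZZ\Sigma,\CC^\times)$ is asserted, not shown, and the surjectivity direction (that every such homomorphism is realised by a global $G$-automorphism of $X$) is exactly the substantial content of Knop's paper---as you yourself concede in your last paragraph. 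In other words, your argument reduces the theorem to precisely the result it is citing.

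There is also a subtlety you have glossed over: the lattice generated by the spherical roots of $G/\overline H$ is $\ZZ\,\overline\Sigma_{G/H}$, not $\ZZ\,\Sigma_{G/H}$ (see Proposition~\ref{prop_sph_syst_of_sph_closure}), and these can differ when some $\sigma\in\Sigma$ has $2\sigma\in\Sigma_G$. A correct version of the isomorphism must keep track of this, which is why Luna's treatment in~\cite[\S6]{Lu01} separates the description of $\overline H$ from the description of all $H$ with a given spherical closure. Your sketch is a reasonable heuristic for why the theorem should be true, but it is not an independent proof.
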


\subsection{Classification of spherical homogeneous spaces and
wonderful $G$-varieties} \label{subsec_classification}

In this subsection we present Luna's general classification of
spherical homogeneous spaces and wonderful $G$-varieties. The idea
of this classification was proposed in the paper~\cite{Lu01} and
consists in performing two steps. At the first step, one classifies
all spherical subgroups $H \subset G$ with a given spherical
closure~$\overline H$. By Theorem~\ref{thm_sph_closed_wonderful} the
classification then reduces to that of wonderful $G$-varieties,
which is performed at the second step. As was mentioned in
Introduction, Luna himself accomplished the first step, proposed a
conjecture for the second step, and managed to prove this conjecture
in the case where $G$ is a product of simple groups of type~$\mathsf
A$. More details about the history of the proof of Luna's conjecture
can be found in \S\,\ref{sect_introduction}.

In the classification of wonderful $G$-varieties an important role
is played by wonderful $G$-varieties of small rank. As follows from
Definition~\ref{def_wond_G-var}, wonderful $G$-varieties of rank
zero are just complete homogeneous $G$-varieties, which are well
known to have the form $G / P$ for some parabolic subgroup $P
\subset G$. Rank-one wonderful varieties were classified by
Akhiezer~\cite{Akh83} and, by another method, Brion~\cite{Bri89}.
Wonderful varieties of rank two were classified by
Wasserman~\cite{Was}.

Elements of $\mathfrak X(T)$ appearing as spherical roots of
rank-one wonderful $G$-varieties are said to be \textit{spherical
roots of~$G$}. Let $\Sigma_G$ denote the set of all spherical roots
of~$G$. It is a finite set easily obtained from the classification
of rank-one wonderful $G$-varieties. Spherical roots are
non-negative linear combinations of simple roots of $G$ with
coefficients in $\frac12 \ZZ$. Spherical roots $\sigma$ that belong
to the root lattice of $G$ are listed in
Table~\ref{table_spherical_roots}. An element $\mu \in \mathfrak
X(T) \backslash \ZZ \Delta$ is a spherical root of $G$ if and only
if $\sigma = 2\mu$ appears in Table~\ref{table_spherical_roots} and
its number is marked by an asterisk. (In
Table~\ref{table_spherical_roots}, the notation $\alpha_i$ stands
for the $i$th simple root of the set $\Supp \sigma$ whenever the
Dynkin diagram of $\Supp \sigma$ is connected. If $\Supp \sigma$ is
of type $\mathsf A_1 \times \mathsf A_1$, then $\alpha, \beta$ are
the two distinct roots in $\Supp \sigma$.)

A pair $(\Pi^p, \sigma)$ with $\Pi^p \subset \Pi$ and $\sigma \in
\Sigma_G$ is said to be \textit{compatible} if there exists a
rank-one wonderful variety $X$ such that $\Pi^p_X = \Pi^p$ and
$\Sigma_X = \lbrace \sigma \rbrace$. Based on the classification of
rank-one wonderful $G$-varieties, the compatibility condition can be
reformulated in purely combinatorial terms (see, for
instance,~\cite[\S\,1.1.6]{BL}). Namely, the pair $(\Pi^p, \sigma)$
is compatible if and only if
$$
\Pi^{pp}(\sigma) \subset \Pi^p \subset \Pi^p(\sigma),
$$
where $\Pi^p(\sigma) = \lbrace \alpha \in \Pi \mid \langle
\alpha^\vee, \sigma \rangle = 0 \rbrace$ and the set
$\Pi^{pp}(\sigma) \subset \Pi$ is determined as follows:
$$
\Pi^{pp}(\sigma) =
\begin{cases}
\Supp \sigma \cap \Pi^p(\sigma) \backslash \lbrace \alpha_r \rbrace
& \text{if } \sigma = \alpha_1 + \alpha_2 + \ldots + \alpha_r
\text{ with support of type } \mathsf B_r; \\
\Supp \sigma \cap \Pi^p(\sigma) \backslash \lbrace \alpha_1 \rbrace
& \text{if } \sigma \text{ has support of type } \mathsf C_r; \\
\Supp \sigma \cap \Pi^p(\sigma) & \text{otherwise}.
\end{cases}
$$
For the reader's convenience, in the column ``$\Pi^{pp}(\sigma)$''
of Table~\ref{table_spherical_roots} we listed all roots in the set
$\Pi^{pp}(\sigma)$ for every spherical root~$\sigma \in \ZZ \Delta$.
If $\mu \in \Sigma_G \backslash \ZZ \Delta$, then $\Pi^{pp}(\mu) =
\Pi^{pp}(2\mu)$.

\begin{table}[h]

\caption{Spherical roots} \label{table_spherical_roots}

\begin{tabular}{|c|c|c|c|c|}
\hline

No. & \begin{tabular}{c} Type of \\ $\Supp \sigma$\end{tabular} &
$\sigma$ & $\Pi^{pp}(\sigma)$ & Note\\

\hline

$1$ & $\mathsf A_1$ & $\alpha_1$ & $\varnothing$ & \\

\hline

$2$ & $\mathsf A_1$ & $2\alpha_1$ & $\varnothing$ & \\

\hline

$3\lefteqn{^*}$ & $\mathsf A_1 \times \mathsf A_1$ & $\alpha +
\beta$ & $\varnothing$ & \\

\hline

$4$ & $\mathsf A_r$ & $\alpha_1 + \alpha_2 + \ldots +
\alpha_r$ & $\alpha_2, \alpha_3, \ldots, \alpha_{r-1}$ & $r \ge 2$\\

\hline

$5\lefteqn{^*}$ & $\mathsf A_3$ & $\alpha_1 + 2\alpha_2 + \alpha_3$
& $\alpha_1, \alpha_3$ & \\

\hline

$6$ & $\mathsf B_r$ & $\alpha_1 + \alpha_2 + \ldots +
\alpha_r$ & $\alpha_2, \alpha_3, \ldots, \alpha_{r-1}$ & $r \ge 2$\\

\hline

$7$ & $\mathsf B_r$ & $2\alpha_1 + 2\alpha_2 + \ldots +
2\alpha_r$ & $\alpha_2, \alpha_3, \ldots, \alpha_r$ & $r \ge 2$\\

\hline

$8\lefteqn{^*}$ & $\mathsf B_3$ & $\alpha_1 + 2\alpha_2 + 3\alpha_3$
& $\alpha_1, \alpha_2$ & \\

\hline

$9$ & $\mathsf C_r$ & $\alpha_1 + 2\alpha_2 + 2\alpha_3 + \ldots +
2\alpha_{r-1} + \alpha_r$ & $\alpha_3, \alpha_4, \ldots,
\alpha_r$ & $r \ge 3$\\

\hline

$10\lefteqn{^*}$ & $\mathsf D_r$ & $2\alpha_1 + 2\alpha_2 + \ldots +
2\alpha_{r-2} + \alpha_{r-1} + \alpha_r$ & $\alpha_2, \alpha_3,
\ldots, \alpha_r$ & $r \ge 4$ \\

\hline

$11$ & $\mathsf F_4$ & $2\alpha_1 + 3\alpha_2 + 2\alpha_3 +
\alpha_4$ &
$\alpha_2, \alpha_3, \alpha_4$ & \\

\hline

$12$ & $\mathsf G_2$ & $\alpha_1 + \alpha_2$ & $\varnothing$ & \\

\hline

$13$ & $\mathsf G_2$ & $2\alpha_1 + \alpha_2$ & $\alpha_2$ & \\

\hline

$14$ & $\mathsf G_2$ & $4\alpha_1 + 2\alpha_2$ & $\alpha_2$ & \\

\hline

\end{tabular}

\end{table}

Let $H \subset G$ be a spherical subgroup.

\begin{proposition} \label{prop_colors_only_a}
The following assertions hold:
\begin{enumerate}[label=\textup{(\alph*)},ref=\textup{\alph*}]
\item \label{prop_colors_only_a_a}
the quadruple $(\Lambda_{G/H}, \Pi^p_{G/H}, \Sigma_{G/H}, \mathcal
D_{G/H})$ amounts to the quadruple $(\Lambda_{G/H}, \Pi^p_{G/H},
\Sigma_{G/H}, \mathcal D^a_{G/H})$;

\item \label{prop_colors_only_a_b}
if $H$ is wonderful, then the quadruple $(\Lambda_{G/H},
\Pi^p_{G/H}, \Sigma_{G/H}, \mathcal D_{G/H})$ amounts to the triple
$(\Pi^p_{G/H}, \Sigma_{G/H}, \mathcal D^a_{G/H})$.
\end{enumerate}
\end{proposition}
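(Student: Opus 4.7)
The strategy is to reconstruct the pair $(\mathcal D, \varkappa)$ from $(\mathcal D^a, \varkappa|_{\mathcal D^a})$ together with $(\Lambda, \Pi^p, \Sigma)$, using Proposition~\ref{prop_alternative} as the main tool; part~(b) will then follow from part~(a) by eliminating $\Lambda$ via Proposition~\ref{prop_Sigma_generates_Lambda}.

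First I would observe that the partition $\Pi = \Pi^p \cup \Pi^a \cup \Pi^{a'} \cup \Pi^b$ is read off directly from $(\Pi^p, \Sigma)$ by Proposition~\ref{prop_alternative}(b): $\Pi^a = \Pi \cap \Sigma$, $\Pi^{a'} = \lbrace \alpha \in \Pi \mid 2\alpha \in \Sigma \rbrace$, and $\Pi^b = \Pi \backslash (\Pi^p \cup \Pi^a \cup \Pi^{a'})$. Combined with the disjoint union~(\ref{eqn_disjoint_union_colors}), Proposition~\ref{prop_alternative}(a) yields
$$
\mathcal D^{a'} = \bigcup\limits_{\alpha \in \Pi^{a'}} \mathcal D(\alpha), \qquad \mathcal D^b = \bigcup\limits_{\alpha \in \Pi^b} \mathcal D(\alpha).
$$

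Next, for each $\alpha \in \Pi^{a'}$, Proposition~\ref{prop_alternative}(b) asserts that $\mathcal D(\alpha)$ is a singleton and determines $\varkappa$ on it as $\frac12 \alpha^\vee|_\Lambda$. A supplementary argument, using that each color of type $a'$ is attached to a distinguished spherical root $2\alpha \in \Sigma$, shows that distinct elements of $\Pi^{a'}$ yield distinct colors; hence $\mathcal D^{a'}$ is canonically identified with $\Pi^{a'}$ equipped with the map $\alpha \mapsto \frac12 \alpha^\vee|_\Lambda$. For $\alpha \in \Pi^b$, the set $\mathcal D(\alpha)$ is again a singleton $\lbrace D_\alpha \rbrace$ with $\varkappa(D_\alpha) = \alpha^\vee|_\Lambda$, but the natural surjection $\Pi^b \to \mathcal D^b$, $\alpha \mapsto D_\alpha$, need not be injective. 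The key claim is that $D_\alpha = D_{\alpha'}$ if and only if $\alpha^\vee|_\Lambda = (\alpha')^\vee|_\Lambda$; the forward implication is immediate from Proposition~\ref{prop_alternative}(b), so that $\mathcal D^b$ is recovered as $\Pi^b / \sim$ where $\alpha \sim \alpha'$ iff $\alpha^\vee|_\Lambda = (\alpha')^\vee|_\Lambda$, with $\varkappa$ induced in the obvious way.

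The main obstacle is the reverse direction of this last equivalence, i.e.\ that colors of type $b$ are separated by the values of $\varkappa$. I expect this to follow from the finer structural theory of colors for spherical varieties (comparing the local pictures around the rank-one subvarieties attached to $\alpha$ and $\alpha'$), but it is genuinely the only nontrivial step, since the identifications between type~$b$ colors are part of what the proposition asserts can be deduced from $\varkappa$. Granting this, part~(a) is complete. For part~(b), if $H$ is wonderful then Proposition~\ref{prop_Sigma_generates_Lambda} gives $\Lambda = \ZZ\Sigma$, so $\Lambda$ is already encoded in $\Sigma$, and the triple $(\Pi^p, \Sigma, \mathcal D^a)$ recovers the full quadruple via part~(a).
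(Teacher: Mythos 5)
Your reduction to the identification rule for type-$b$ colors has the right shape, and your handling of the partition of $\Pi$, of $\mathcal D^{a'}$, and of part~(\ref{prop_colors_only_a_b}) via Proposition~\ref{prop_Sigma_generates_Lambda} agrees with the paper. But the key claim you isolate --- that for distinct $\alpha, \alpha' \in \Pi^b$ one has $D_\alpha = D_{\alpha'}$ if and only if $\alpha^\vee|_\Lambda = (\alpha')^\vee|_\Lambda$ --- is not just the unproven step: it is false. Take $G = \SL_2 \times \SL_2$ and $H = B$, so that $G/H \simeq \PP^1 \times \PP^1$. Then $\Lambda = 0$, $\Sigma = \varnothing$, $\Pi^p = \varnothing$, both simple roots are of type~$b$, and $\varkappa(D_{\alpha_1}) = \varkappa(D_{\alpha_2}) = 0$ in $\Hom_\ZZ(\Lambda, \ZZ) = 0$; yet the two colors $\lbrace \mathrm{pt} \rbrace \times \PP^1$ and $\PP^1 \times \lbrace \mathrm{pt} \rbrace$ are distinct. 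Your rule would merge them and reconstruct a set of colors of the wrong cardinality; the same failure occurs for $G/B$ whenever $G$ has semisimple rank at least~$2$, and more generally whenever $\Lambda$ is too small for the functionals $\alpha^\vee|_\Lambda$ to separate anything. So the fibers of the map $\Pi^b \to \mathcal D^b$ genuinely cannot be read off from $\varkappa$ alone.

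The paper instead invokes the known structural fact (cited there from \cite[\S\,2.7]{Lu97}, \cite[\S\,2.3]{Lu01}, \cite[\S\,30.10]{Tim}) that two distinct roots $\alpha, \beta \in \Pi^b$ are sent to the same color if and only if $\alpha \perp \beta$ and $\alpha + \beta \in \Sigma \cup 2\Sigma$. This criterion is expressed through $\Sigma$ (and the root system), hence is computable from the retained invariants, and in the example above it correctly predicts no identification since $\Sigma = \varnothing$. To repair your argument, replace your equivalence by this criterion (proving or citing it); with that substitution the rest of your outline --- bijectivity of $\Pi^{a'} \to \mathcal D^{a'}$, the values of $\varkappa$ on $\mathcal D^{a'} \cup \mathcal D^b$ from Proposition~\ref{prop_alternative}, and the deduction of~(\ref{prop_colors_only_a_b}) from~(\ref{prop_colors_only_a_a}) together with $\Lambda = \ZZ\Sigma$ --- matches the paper's proof.
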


\begin{proof}
(\ref{prop_colors_only_a_a}) In view of the disjoint
union~(\ref{eqn_disjoint_union_colors}) we need to show that the
sets $\mathcal D^{a'}_{G/H}$ and $\mathcal D^{b}_{G/H}$ are uniquely
determined by the other combinatorial invariants of $G / H$.
Clearly, the sets $\Pi^a_{G / H}$ and $\Pi^{a'}_{G / H}$ are
determined by $\Sigma_{G / H}$ and $\Pi^{b}_{G / H} = \Pi \backslash
(\Pi^p_{G / H} \cup \Pi^a_{G / H} \cup \Pi^{a'}_{G / H})$,
see~(\ref{eqn_division_of_Pi}). Proposition~\ref{prop_alternative}
yields surjective maps $\Pi^{a'}_{G / H} \to \mathcal D^{a'}_{G /
H}$ and $\Pi^b_{G / H} \to \mathcal D^b_{G / H}$ sending $\alpha$ to
the unique color $D_\alpha$ in $G / H$ moved by~$P_{\lbrace \alpha
\rbrace}$. We note that in both cases the element
$\varkappa(D_\alpha)$ is uniquely determined by~$\alpha$. One easily
checks that the map $\Pi^{a'}_{G / H} \to \mathcal D^{a'}_{G / H}$
is in fact bijective. It turns out that two different roots $\alpha,
\beta \in \Pi^b_{G / H}$ are taken to the same color if and only if
$\alpha \perp \beta$ and $\alpha + \beta \in \Sigma_{G/H} \cup
2\Sigma_{G/H}$; see~\cite[\S\,2.7]{Lu97}, or~\cite[\S\,2.3]{Lu01},
or~\cite[\S\,30.10]{Tim}.

Part~(\ref{prop_colors_only_a_b}) follows directly from
part~(\ref{prop_colors_only_a_a}) and
Proposition~\ref{prop_Sigma_generates_Lambda}.
\end{proof}

The set $\mathscr H_{G / H} = (\Lambda_{G/H}, \Pi^p_{G/H},
\Sigma_{G/H}, \mathcal D^a_{G/H})$ is said to be the
\textit{homogeneous spherical datum} of $G/H$. If $H$ is wonderful,
then the set $\mathscr S_{G / H} = (\Pi^p_{G/H}, \Sigma_{G/H},
\mathcal D^a_{G/H})$ is said to be the \textit{spherical system} of
$G/H$. These two combinatorial objects satisfy certain axioms, which
are listed in the definition below. These axioms trace back to
Proposition~\ref{prop_alternative} and Wasserman's classification of
rank-two wonderful varieties~\cite{Was}. Knop \cite{Kn14} has
recently deduced these axioms using only the classification of
rank-one wonderful varieties.

\begin{definition}[{\cite[\S\,2]{Lu01}}] \label{def_HSD&SS}
Suppose that $\Lambda$ is a sublattice in~$\mathfrak X(T)$, $\Pi^p$
is a subset of~$\Pi$, $\Sigma \subset \Sigma_G \cap \Lambda$ is a
linearly independent set consisting of indivisible elements
in~$\Lambda$, and $\mathcal D^a$ is a finite set equipped with a map
$\varkappa \colon \mathcal D^a \to \Hom_\ZZ(\Lambda, \ZZ)$. For
every $\alpha \in \Pi \cap \Sigma$, put $\mathcal D(\alpha) =
\lbrace D \in \mathcal D^a \mid \langle \varkappa(D), \alpha \rangle
= 1 \rbrace$.

The quadruple $(\Lambda, \Pi^p, \Sigma, \mathcal D^a)$ is said to be
a \textit{homogeneous spherical datum} if it satisfies the following
axioms:
\begin{enumerate}
\renewcommand{\labelenumi}{(A\arabic{enumi})}
\renewcommand{\theenumi}{A\arabic{enumi}}

\item \label{A1}
$\langle \varkappa(D), \sigma \rangle \le 1$ for all $D \in \mathcal
D^a$ and $\sigma \in \Sigma$, and the equality is attained if and
only if $\sigma = \alpha \in \Pi \cap \Sigma$ and $D \in \mathcal
D(\alpha)$;

\item \label{A2}
for every $\alpha \in \Pi \cap \Sigma$, the set $\mathcal D(\alpha)$
contains exactly two elements $D_\alpha^+$ and $D_\alpha^-$ such
that $\langle \varkappa(D_\alpha^+), \lambda \rangle + \langle
\varkappa(D_\alpha^-), \lambda \rangle = \langle \alpha^\vee,
\lambda \rangle$ for all $\lambda \in \Lambda$;

\item \label{A3}
the set $\mathcal D^a$ is the union of the sets $\mathcal D(\alpha)$
over all $\alpha \in \Pi \cap \Sigma$;

\setcounter{enumi}{0}
\renewcommand{\labelenumi}{($\Sigma$\arabic{enumi})}
\renewcommand{\theenumi}{$\Sigma$\arabic{enumi}}

\item \label{Sigma1}
if $\alpha \in \Pi \cap \frac12 \Sigma$, then $\langle \alpha^\vee,
\Lambda \rangle \subset 2\ZZ$ and $\langle \alpha^\vee, \sigma
\rangle \le 0$ for all $\sigma \in \Sigma \backslash \lbrace 2\alpha
\rbrace$;

\item \label{Sigma2}
if $\alpha, \beta \in \Pi$, $\alpha \perp \beta$, and $\alpha +
\beta \in \Sigma \cup 2\Sigma$, then $\langle \alpha^\vee, \lambda
\rangle = \langle \beta^\vee, \lambda \rangle$ for all $\lambda \in
\Lambda$;

\renewcommand{\labelenumi}{(S)}
\renewcommand{\theenumi}{S}

\item \label{label_S}
$\langle \alpha^\vee, \lambda \rangle = 0$ for all $\alpha \in
\Pi^p$ and $\lambda \in \Lambda$, and for every $\sigma \in \Sigma$
the pair $(\Pi^p, \sigma)$ is compatible.
\end{enumerate}

The triple $(\Pi^p, \Sigma, \mathcal D^a)$ is said to be a
\textit{spherical system} if it satisfies the above axioms with
$\Lambda = \ZZ\Sigma$.
\end{definition}

\begin{remark}
For every homogeneous spherical datum $(\Lambda, \Pi^p, \Sigma,
\mathcal D^a)$, the triple $(\Pi^p, \Sigma, \mathcal D^a)$ with
$\varkappa$ restricted to $\ZZ \Sigma$ is a spherical system.
\end{remark}

Let $X$ be a wonderful $G$-variety and let $G / H$ be the open
$G$-orbit in~$X$. By definition, we put $\Pi^p_X = \Pi^p_{G / H}$,
$\Sigma_X = \Sigma_{G / H}$, and $\mathcal D^a_X = \lbrace \overline
D \mid D \in \mathcal D^a_{G / H} \rbrace$. The triple $\mathscr S_X
= (\Pi^p_X, \Sigma_X, \mathcal D^a_X)$ is said to be the
\textit{spherical system} of~$X$. We note that the whole set of
colors of~$X$, which consists of the closures in $X$ of colors of~$G
/ H$, is usually defined as the set of $B$-stable prime divisors
of~$X$ that are not $G$-stable.

Luna's general classification is given by the following theorem.

\begin{theorem} \label{thm_classification}
The following assertions hold:
\begin{enumerate}[label=\textup{(\alph*)},ref=\textup{\alph*}]
\item
\textup(Luna's conjecture\textup) the map $X \mapsto (\Pi^p_X,
\Sigma_X, \mathcal D^a_X)$ is a bijection between wonderful
$G$-varieties \textup(considered up to $G$-equivariant
isomorphism\textup) and spherical systems for~$G$;

\item
the map $G / H \mapsto (\Lambda_{G / H}, \Pi^p_{G / H}, \Sigma_{G /
H}, \mathcal D^a_{G / H})$ is a bijection between spherical
homogeneous spaces of~$G$ \textup(considered up to $G$-equivariant
isomorphism\textup) and homogeneous spherical data for~$G$;
\end{enumerate}
\end{theorem}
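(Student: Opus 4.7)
The plan is to assemble the result from the existing literature, since both parts are deep theorems whose proofs span many papers. For each of the two bijections I would verify well-definedness, injectivity, and surjectivity separately, relying on known results at each step.

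For part~(a), well-definedness amounts to showing that for a wonderful $G$-variety $X$ with open orbit $G/H$ the triple $(\Pi^p_X, \Sigma_X, \mathcal D^a_X)$ satisfies all the axioms of Definition~\ref{def_HSD&SS}. Axioms (\ref{A1})--(\ref{A3}) trace back to Proposition~\ref{prop_alternative}, while axioms (\ref{Sigma1})--(\ref{Sigma2}) and (\ref{label_S}) can be verified by localizing at codimension-one faces of the valuation cone to produce rank-one wonderful subquotients, and at codimension-two faces to produce rank-two ones, then invoking the classifications of Akhiezer--Brion and Wasserman, respectively; alternatively, Knop has given a uniform deduction from the rank-one classification alone. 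Injectivity is the uniqueness part of Luna's conjecture, i.e.\ Theorem~\ref{thm_uniqueness} specialized to the wonderful case. Surjectivity---the existence part---is by far the deepest ingredient: for each spherical system one must construct a wonderful $G$-variety realizing it, and here I would cite Cupit-Foutou's proof via invariant Hilbert schemes or, equivalently, the constructive reduction of Bravi and Pezzini to a finite list of primitive cases.

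For part~(b), the strategy is to reduce to part~(a) through the spherical closure. Well-definedness of $G/H \mapsto \mathscr H_{G/H}$ is checked exactly as in (a), supplemented by the lattice condition (\ref{label_S}) with $\Lambda$ in place of $\ZZ\Sigma$. Injectivity combines Theorem~\ref{thm_uniqueness} with Proposition~\ref{prop_colors_only_a}, which shows that the full colored quadruple $(\Lambda, \Pi^p, \Sigma, \mathcal D)$ is recovered from the data retained in~$\mathscr H_{G/H}$. For surjectivity, given a homogeneous spherical datum $(\Lambda, \Pi^p, \Sigma, \mathcal D^a)$, I would first restrict $\varkappa$ to $\ZZ\Sigma$ to obtain a spherical system, apply part~(a) to construct a wonderful $G$-variety whose open orbit has the form $G/\overline H$ for some spherically closed~$\overline H$, and then use Luna's analysis in~\cite{Lu01} to describe all spherical subgroups $H \subset \overline H$ with $\overline H$ as their spherical closure: such subgroups correspond bijectively to subgroups of the diagonalizable quotient $\overline H / (\overline H)^\sharp$, and these are in turn parametrized by sublattices of the character lattice of $G / (\overline H)^\sharp$ satisfying compatibility with $\ZZ\Sigma$ and the images of $\varkappa$. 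The lattice $\Lambda$ prescribed by the datum then picks out a unique such~$H$.

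The main obstacle is the existence part of Luna's conjecture in~(a). The axioms defining a spherical system are intricate, and producing a wonderful variety realizing an arbitrary spherical system requires either the heavy machinery of invariant Hilbert schemes or a lengthy reduction to primitive cases with a case-by-case analysis across the types of simple factors of~$G$; by comparison, all remaining steps are of a more routine, verification-style character.
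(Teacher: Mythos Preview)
Your proposal is correct and matches the paper's treatment: the paper does not give a proof of this theorem at all, presenting it as an established result assembled from the literature (Losev for uniqueness, Cupit-Foutou and Bravi--Pezzini for existence, Luna~\cite{Lu01} for the reduction from part~(b) to part~(a) via the spherical closure). Your outline accurately identifies these ingredients and the logical structure connecting them.
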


\begin{example} \label{example_hom_wond}
Homogeneous wonderful $G$-varieties (that is, rank zero wonderful
$G$-varieties) are characterized by the condition $\Sigma =
\varnothing$, which immediately implies $\mathcal D^a =
\varnothing$. It is well known that, for every subset $\Pi' \subset
\Pi$, the stabilizer of the open $B$-orbit in $X = G / P_{\Pi'}$
coincides with $P_{\Pi'}$, whence the spherical system of $X$ is
$(\Pi', \varnothing, \varnothing)$. In particular, the spherical
system of $G / B$ is $(\varnothing, \varnothing, \varnothing)$.
\end{example}

Suppose we are given a homogeneous spherical datum $\mathscr H =
(\Lambda, \Pi^p, \Sigma, \mathcal D^a)$ or a spherical system
$\mathscr S = (\Pi^p, \Sigma, \mathcal D^a)$. Set $\Pi^a = \Pi \cap
\Sigma$, $\Pi^{a'} = \Pi \cap \frac12 \Sigma$, and $\Pi^b = \Pi
\backslash (\Pi^p \cup \Pi^a \cup \Pi^{a'})$. Axiom~(\ref{Sigma2})
implies that for every $\alpha \in \Pi^b$ there is at most one root
$\beta \in \Pi^b$ such that $\alpha \perp \beta$ and $\alpha + \beta
\in \Sigma \cup 2\Sigma$. Therefore we can introduce an equivalence
relation on $\Pi^b$ as follows. For $\alpha, \beta \in \Pi^b$ we
write $\alpha \sim \beta$ if and only if $\alpha = \beta$ or $\alpha
\perp \beta$ and $\alpha + \beta \in \Sigma \cup 2\Sigma$. We set
$\mathcal D^{a'} = \Pi^{a'}$, $\mathcal D^b = \Pi^b / \sim$ and let
$\mathcal D$ denote the disjoint union $\mathcal D^a \cup \mathcal
D^{a'} \cup \mathcal D^b$. Recall from Definition~\ref{def_HSD&SS}
that every $\alpha \in \Pi^a$ is associated with a subset $\mathcal
D(\alpha) \subset \mathcal D^a$. We extend the definition of
$\mathcal D(\alpha) \subset \mathcal D$ from $\Pi^a$ to $\Pi$ in the
following way:
$$
\mathcal D(\alpha) = \begin{cases}%
\varnothing & \text{ if } \alpha \in \Pi^p;\\
\alpha \text{ (as an element of } \mathcal D^{a'}) & \text{ if } \alpha \in \Pi^{a'};\\
\text{the equivalence class of } \alpha \text{ (as an element of }
\mathcal D^b) & \text{ if } \alpha \in \Pi^b.
\end{cases}
$$
We extend the map $\varkappa \colon \mathcal D^a \to
\Hom_\ZZ(\Lambda, \ZZ)$ from $\mathcal D^a$ to $\mathcal D$ as
follows: every $\alpha \in \Pi^{a'}$ maps to $\left.\frac12
\alpha^\vee \right|_\Lambda$ and for every $\alpha \in \Pi^b$ its
equivalence class maps to $\left. \alpha^\vee \right|_\Lambda$. The
set $\mathcal D$ equipped with the map $\varkappa \colon \mathcal D
\to \Hom_\ZZ(\Lambda, \ZZ)$ is said to be the \textit{set of colors
associated with $\mathscr H$ or~$\mathscr S$}. Let $G / H$ be a
spherical homogeneous space such that $\mathscr H_{G / H} = \mathscr
H$ or $\mathscr S_{G / H} = \mathscr S$. As follows from the proof
of Proposition~\ref{prop_colors_only_a}, there is a natural
bijection $i \colon \mathcal D_{G / H} \to \mathcal D$ such that
$\varkappa_{G / H} = \varkappa \circ i$, $\mathcal D^{a'} =
i(\mathcal D^{a'}_{G / H})$, $\mathcal D^b = i(\mathcal D^b_{G /
H})$, and $\mathcal D(\alpha) = i(\mathcal D_{G / H}(\alpha))$ for
every $\alpha \in \Pi \backslash \Pi^p$.

Let $H \subset G$ be a spherical subgroup and let $\overline H$ be
the spherical closure of~$H$. For every $\sigma \in \Sigma_{G/H}$,
we introduce the spherical root $\overline \sigma \in \Sigma_G$ in
the following way.
$$
\overline \sigma = \begin{cases} 2\sigma & \text{if } \sigma \notin
\Pi, 2\sigma \in \Sigma_G, \text{ and the pair } (\Pi^p_{G/H},
2\sigma) \text{ is compatible}; \\ \sigma & \text{
otherwise}.\end{cases}
$$
We set $\overline \Sigma_{G/H} = \lbrace \overline \sigma \mid
\sigma \in \Sigma_{G/H} \rbrace$.

\begin{proposition}[{\cite[Lemma~7.1]{Lu01}}] %
\label{prop_sph_syst_of_sph_closure}
The spherical system of $G / \overline H$ is determined as follows:
\begin{enumerate}[label=\textup{(\alph*)},ref=\textup{\alph*}]
\item
$\Pi^p_{G / \overline H} = \Pi^p_{G / H}$;

\item
$\Sigma_{G / \overline H} = \overline \Sigma_{G / H}$;

\item
the natural map $G / H \to G / \overline H$ induces a bijection
$\mathcal D_{G / H} \to \mathcal D_{G / \overline H}$, and the map
$\varkappa_{G / \overline H}$ is the restriction of the map
$\varkappa_{G / H}$ to $\Lambda_{G / \overline H}$.
\end{enumerate}
\end{proposition}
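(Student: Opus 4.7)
The plan is to work with the natural projection $\pi\colon G/H \to G/\overline H$. Since $\overline H \subset N_G(H)$ with $\overline H / H$ diagonalizable, and since the right action of $\overline H / H$ on $G/H$ has trivial stabilizers (an element $n \in \overline H$ with $gH \cdot n^{-1} = gH$ must lie in $H$), the map $\pi$ is a principal $\overline H/H$-bundle. In particular it is smooth, open, surjective, and $G$-equivariant with respect to the left $G$-actions.

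Parts~(a) and~(c) follow in a fairly direct way from this setup. For~(a), $G$-equivariance and openness of $\pi$ send the unique open $B$-orbit $Bo_H \subset G/H$ onto the unique open $B$-orbit $Bo_{\overline H} \subset G/\overline H$, and uniqueness then forces $\pi^{-1}(Bo_{\overline H}) = Bo_H$; consequently an element of $G$ stabilizes $Bo_H$ if and only if it stabilizes $Bo_{\overline H}$, proving $P_{G/H} = P_{G/\overline H}$ and hence $\Pi^p_{G/H} = \Pi^p_{G/\overline H}$. For~(c), by the very definition of $\overline H$ every color of $G/H$ is $\overline H$-stable, so $\pi$ sends colors to $B$-stable prime divisors in $G/\overline H$, and the same argument applied downstairs shows that this produces a bijection $i\colon \mathcal D_{G/H} \to \mathcal D_{G/\overline H}$. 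Smoothness of $\pi$ together with the set-theoretic equality $\pi^{-1}(i(D)) = D$ yields $\pi^* i(D) = D$ with multiplicity one, and for $\mu \in \Lambda_{G/\overline H}$ the function $f_\mu$ on $G/H$ is the pullback of $f_\mu$ on $G/\overline H$, so the orders of vanishing along $D$ and $i(D)$ coincide, giving the required equality $\varkappa_{G/\overline H} \circ i = \varkappa_{G/H}|_{\Lambda_{G/\overline H}}$.

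The main obstacle is part~(b). My approach is to identify $\Lambda_{G/\overline H}$ with the kernel of the homomorphism $\Lambda_{G/H} \to \mathfrak X(\overline H/H)$ that records the weight by which $\overline H/H$ acts on each one-dimensional space $\CC(G/H)^{(B)}_\mu$. Under the dual restriction $\mathcal Q_{G/H} \to \mathcal Q_{G/\overline H}$ the valuation cone $\mathcal V_{G/H}$ maps onto $\mathcal V_{G/\overline H}$, so the facet cut out by a spherical root $\sigma \in \Sigma_{G/H}$ descends to a facet of $\mathcal V_{G/\overline H}$, whose associated primitive element in $\Lambda_{G/\overline H}$ is either $\sigma$ itself (if $\sigma \in \Lambda_{G/\overline H}$) or $2\sigma$ otherwise. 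The remaining task is to match this dichotomy with the definition of $\overline \sigma$: when $\sigma \in \Pi$, axiom~(\ref{A2}) together with the color bijection from part~(c) shows that $\sigma$ already lies in $\Lambda_{G/\overline H}$, so $\overline\sigma = \sigma$; when $\sigma \notin \Pi$, the doubled element $2\sigma$ appears as the corresponding spherical root of $G/\overline H$ precisely when $2\sigma \in \Sigma_G$ and $(\Pi^p_{G/H}, 2\sigma)$ is compatible, the compatibility being exactly the condition for the rank-one toroidal completion of $G/\overline H$ along the corresponding facet to be wonderful with $2\sigma$ as its spherical root. This final case analysis, grounded in the classification of rank-one wonderful varieties and the fact (Theorem~\ref{thm_sph_closed_wonderful}) that the spherically closed group $\overline H$ is wonderful, is the main technical point.
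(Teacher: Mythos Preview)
The paper does not supply its own proof of this proposition; it is stated with a bare citation to \cite[Lemma~7.1]{Lu01} and then the text moves on to the next subsection. So there is nothing in the paper to compare your argument against directly, and your proposal should be read as an attempt to reconstruct Luna's proof.

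Your treatment of parts~(a) and~(c) is correct and standard: the principal $\overline H/H$-bundle structure of $\pi$ gives everything needed, and the color bijection follows precisely because $\overline H$ is defined as the kernel of the $N_G(H)$-action on~$\mathcal D_{G/H}$.

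Part~(b) has a genuine gap. You correctly reduce to showing that each spherical root of $G/\overline H$ lying in the ray $\QQ_{>0}\sigma$ is either $\sigma$ or $2\sigma$, and for $\sigma \in \Pi$ your argument via the color count in $\mathcal D(\sigma)$ (really Proposition~\ref{prop_alternative} combined with part~(c)) is fine. The problem is the case $\sigma \notin \Pi$. You establish the ``only if'' direction: if the spherical root of $G/\overline H$ is $2\sigma$, then $2\sigma \in \Sigma_G$ and $(\Pi^p_{G/H}, 2\sigma)$ is compatible, because $\overline H$ is wonderful and its spherical system must satisfy axiom~(\ref{label_S}). But you do not prove the ``if'' direction. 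When $\sigma \notin \Pi$, $2\sigma \in \Sigma_G$, and $(\Pi^p_{G/H}, 2\sigma)$ is compatible, why is the spherical root of $G/\overline H$ forced to be $2\sigma$ rather than $\sigma$? Your sentence about the rank-one toroidal completion being wonderful with spherical root $2\sigma$ is a restatement of the desired conclusion, not an argument for it: both $\sigma$ and $2\sigma$ are a priori admissible at this stage, and wonderfulness of $\overline H$ alone does not select between them. What is needed is an argument that actually uses spherical closedness of $\overline H$ --- for instance, via the explicit description in Proposition~\ref{prop_spherical_closure} or Corollary~\ref{crl_simple_projective_spaces}, combined with the case-by-case knowledge of stabilizers in rank-one wonderful varieties --- to show that $\sigma \notin \Lambda_{G/\overline H}$ in this situation. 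This is the substantive content of Luna's lemma, and your sketch does not supply it.
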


\subsection{Distinguished subsets of colors and quotient systems}

\label{subsec_dist_subsets}

Let $(\Pi^p, \Sigma, \mathcal D^a)$ be a spherical system and let
$\mathcal D$ be the associated set of colors. Set $\Lambda = \ZZ
\Sigma$ and, for every subset ${\mathcal D' \subset \mathcal D}$,
let $\mathcal C_{\mathcal D'}$ denote the cone in $\mathcal Q =
\Hom_\ZZ(\Lambda, \QQ)$ generated by the set $\varkappa(\mathcal
D')$. We set $\mathcal V = \lbrace q \in \mathcal Q \mid \langle q,
\sigma \rangle \le 0 \text{ for all } \sigma \in \Sigma \rbrace$.

\begin{definition}[{see~\cite[\S\,3.3]{Lu01}}] %
\label{def_distinguished}
A subset $\mathcal D' \subset \mathcal D$ is said to be
\textit{distinguished} if either of the two equivalent conditions
below holds:
\begin{itemize}
\item the set $\mathcal C_{\mathcal D'}^\circ$ meets $-\mathcal V$;

\item there exists an element $\delta = \sum \limits_{D \in \mathcal
D'} n_D \varkappa(D)$, where $n_D > 0$ for all $D \in \mathcal D'$,
such that $\langle \delta, \sigma \rangle \ge 0$ for all $\sigma \in
\Sigma$.
\end{itemize}
\end{definition}

Let $\mathcal D' \subset \mathcal D$ be a distinguished subset of
colors. We put $\Sigma_{\mathcal D'} \subset \Sigma$ to be the set
of spherical roots $\sigma$ such that $\langle \delta, \sigma
\rangle = 0$ for all $\delta \in \mathcal C^\circ_{\mathcal D'} \cap
(-\mathcal V)$. Let $\mathcal V_{\mathcal D'}$ be the largest face
of $\mathcal V$ such that $\mathcal C^\circ_{\mathcal D'} \cap
(-\mathcal V_{\mathcal D'}^\circ) \ne \varnothing$. Then $\mathcal
V_{\mathcal D'} = \lbrace q \in \mathcal V \mid \langle q, \sigma
\rangle = 0 \text{ for all } \sigma \in \Sigma_{\mathcal D'}
\rbrace$. Let $V_{\mathcal D'}$ be the vector subspace of $\mathcal
Q$ generated by the set $\varkappa(\mathcal D') \cup \mathcal
V_{\mathcal D'}$. Clearly, $(V_{\mathcal D'}, \mathcal D')$ is a
colored subspace (see Definition~\ref{def_colored_subspace}) and
$V_{\mathcal D'} \cap \mathcal V = \mathcal V_{\mathcal D'}$.

Given a distinguished subset of colors $\mathcal D' \subset \mathcal
D$, one defines the \textit{quotient system} $(\Pi^p, \Sigma,
\mathcal D^a) / \mathcal D' = (\Pi^p / \mathcal D', \Sigma /
\mathcal D', \mathcal D^a / \mathcal D')$. To do that, we first
introduce the lattice
$$
\Lambda / \mathcal D' = \lbrace \lambda \in \Lambda \mid \langle q,
\lambda \rangle = 0 \text{ for all } q \in V_{\mathcal D'} \rbrace =
\lbrace \lambda \in \ZZ \Sigma_{\mathcal D'} \mid \langle
\varkappa(D), \lambda \rangle = 0 \text{ for all } D \in \mathcal D'
\rbrace.
$$
Then the elements of the quotient system are defined as follows:
\begin{itemize}
\item $\Pi^p / \mathcal D' = \lbrace \alpha \in \Pi \mid \mathcal
D(\alpha) \subset \mathcal D' \rbrace$;

\item $\Sigma / \mathcal D'$ is the set of indecomposable elements
of the semigroup $ \ZZ^+ \Sigma \cap \Lambda / \mathcal D'$;

\item $\mathcal D^a / \mathcal D'$ is the union of the sets $\mathcal
D(\alpha)$ over all $\alpha \in \Pi \cap (\Sigma / \mathcal D')$,
and the map $\varkappa / \mathcal D'$ is the restriction of
$\varkappa$ to $\mathcal D^a / \mathcal D'$ followed by the
projection $\Hom_\ZZ(\Lambda, \ZZ) \to \Hom_\ZZ(\Lambda / \mathcal
D', \ZZ)$.
\end{itemize}

\begin{remark}
Using the condition $\mathcal C^\circ_{\mathcal D'} \cap (-\mathcal
V^\circ_{\mathcal D'}) \ne \varnothing$, one can show that the
semigroup $\ZZ^+ \Sigma \cap \Lambda / \mathcal D'$ can be expressed
as
$$
\ZZ^+ \Sigma \cap \Lambda / \mathcal D' = \lbrace \lambda \in \ZZ^+
\Sigma \mid \langle \varkappa(D), \lambda \rangle = 0 \text{ for all
} D \in \mathcal D' \rbrace,
$$
which can be useful in computations.
\end{remark}

Recently Bravi proved that the semigroup $\ZZ^+\Sigma \cap \Lambda /
\mathcal D'$ is always free and generates the lattice $\Lambda /
\mathcal D'$; see~\cite[Theorem~3.1]{Bra2}. So $\Sigma / \mathcal
D'$ is a basis of~$\Lambda / \mathcal D'$.

A $G$-equivariant morphism $X \to X'$ between two wonderful
$G$-varieties is said to be \textit{wonderful} if it is dominant
(and thereby surjective) and has connected fibers. We note that, in
case where $X' = G / P$ for a parabolic subgroup~$P \subset G$,
every $G$-equivariant morphism $X \to G / P$ is wonderful.

Let $\phi \colon X \to X'$ be a wonderful morphism between two
wonderful varieties. Let $\mathcal D'_X(\phi) \subset\nobreak
\mathcal D_X$ denote the set of colors that map dominantly (and
thereby surjectively) onto~$X'$. Bravi's result together
with~\cite[Proposition~3.3.2]{Lu01} imply the following proposition.

\begin{proposition} \label{prop_wonderful_morphisms}
The following assertions hold:
\begin{enumerate}[label=\textup{(\alph*)},ref=\textup{\alph*}]
\item
the map $\phi \mapsto \mathcal D'_X(\phi)$ is a bijection between
wonderful morphisms $\phi \colon X \to X'$ and distinguished subsets
of\,~$\mathcal D_X$;

\item
for the wonderful morphism $X \to X'$ corresponding to a
distinguished subset $\mathcal D' \subset\nobreak \mathcal D_X$, the
spherical system of $X'$ is given by $(\Pi^p_X, \Sigma_X, \mathcal
D^a_X) / \mathcal D'$. In particular, $(\Pi^p_X, \Sigma_X, \mathcal
D^a_X) / \mathcal D'$ is a spherical system.
\end{enumerate}
\end{proposition}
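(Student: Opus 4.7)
The plan is to combine Luna's original result on wonderful morphisms with Bravi's later freeness theorem, using the first to set up the bijection and the second to certify that the quotient datum really is a spherical system. I would handle part~(a) first and then deduce part~(b) as a byproduct of the construction.

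For the forward direction of part~(a), I start from a wonderful morphism $\phi \colon X \to X'$. Since $\phi$ is dominant with connected fibers, the open $G$-orbit $G/H \subset X$ surjects onto the open $G$-orbit $G/H' \subset X'$, yielding an inclusion $H \subset H'$ up to conjugation and hence a natural map of the associated combinatorial data. Following~\cite[Proposition~3.3.2]{Lu01}, I would show that the set $\mathcal D'_X(\phi)$ of colors mapping dominantly onto $X'$ is distinguished: a color $D$ fails to dominate iff its image is a $B$-stable divisor in $X'$, and the vanishing of its pullback valuation on $\CC(X')^{(B)}$ forces $\varkappa(D)$ to lie in the ``kernel'' subspace $V_{\mathcal D'_X(\phi)}$. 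Dually, $\mathcal V_{X'}$ is the projection of $\mathcal V_X$ modulo this subspace, which is exactly the condition $\mathcal C^\circ_{\mathcal D'_X(\phi)} \cap (-\mathcal V_X) \neq \varnothing$ from Definition~\ref{def_distinguished}.

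For the converse direction of part~(a), given a distinguished subset $\mathcal D' \subset \mathcal D_X$, I would form the candidate quotient triple $(\Pi^p/\mathcal D', \Sigma/\mathcal D', \mathcal D^a/\mathcal D')$ and verify the spherical system axioms (\ref{A1})--(\ref{label_S}). The principal obstacle lies precisely here: the axioms require that $\Sigma/\mathcal D'$ be a basis of the ambient lattice, i.e.\ that $\ZZ^+\Sigma \cap \Lambda/\mathcal D'$ be a free monoid generating $\Lambda/\mathcal D'$. This is the content of Bravi's \cite[Theorem~3.1]{Bra2}. Once this is granted, the remaining axioms transfer formally from the original spherical system via the projection $\Hom_\ZZ(\Lambda,\ZZ) \to \Hom_\ZZ(\Lambda/\mathcal D', \ZZ)$. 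Applying Luna's conjecture (Theorem~\ref{thm_classification}(a)), I obtain a unique wonderful $G$-variety $X'$ realizing the quotient spherical system, and the Luna--Vust--Knop classification of simple embeddings (Proposition~\ref{prop_simple_embeddings}) produces the required wonderful morphism $\phi \colon X \to X'$ by matching the colored cone of $X$ with that of the pullback of~$X'$.

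Part~(b) is then immediate, since the inverse construction in part~(a) was defined so as to produce $X'$ with the quotient spherical system. The main difficulty throughout is Bravi's freeness theorem: without it, one has only a ``potential'' spherical system, and both the surjectivity of the map $\phi \mapsto \mathcal D'_X(\phi)$ and the identification $\mathscr S_{X'} = \mathscr S_X / \mathcal D'$ could break. Granted that result, the remainder of the argument reduces to routine bookkeeping---checking that $\Pi^p/\mathcal D' = \lbrace \alpha \in \Pi \mid \mathcal D(\alpha) \subset \mathcal D' \rbrace$ describes the stabilizer of the open $B$-orbit in $X'$, and that $\varkappa/\mathcal D'$ records the colors of $X'$ via the natural surjection $\mathcal D_X \setminus \mathcal D' \to \mathcal D_{X'}$ induced by $\phi$.
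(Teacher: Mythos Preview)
Your proposal is correct and matches the paper's own approach: the paper does not give a proof at all but simply states, just before the proposition, that ``Bravi's result together with~\cite[Proposition~3.3.2]{Lu01} imply the following proposition,'' which is exactly the combination you have spelled out. One minor remark: invoking the full classification (Theorem~\ref{thm_classification}) in your converse direction is heavier than necessary, since Luna's \cite[Proposition~3.3.2]{Lu01} already establishes the bijection in part~(a) directly (under a hypothesis that Bravi's theorem renders automatic); but this does not affect correctness.
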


\subsection{Characterization of strongly solvable spherical
subgroups} \label{subsec_charact_str_solv}

The main goal of this subsection is to obtain a characterization of
strongly solvable spherical (resp. wonderful) subgroups in~$G$ in
terms of their homogeneous spherical data (resp. spherical systems).

We retain all the notation introduced
in~\S\,\ref{subsec_dist_subsets}.

\begin{definition}
A homogeneous spherical datum (resp. spherical system) is said to be
\textit{strongly solvable} if the corresponding spherical (resp.
wonderful) subgroup of~$G$ is strongly solvable.
\end{definition}

\begin{proposition}[{see~\cite[\S\,1]{Lu93}}]
\label{prop_strongly_solvable} A spherical system $\mathscr S =
(\Pi^p, \Sigma, \mathcal D^a)$ is strongly solvable if and only if
there exists a subset $\mathcal D' \subset \mathcal D^a$ having the
following properties:
\begin{enumerate}[label=\textup{(\arabic*)},ref=\textup{\arabic*}]
\item \label{prop_strongly_solvable1}
the cone generated by the set $\mathcal V \cup \varkappa(\mathcal
D')$ coincides with $\mathcal Q$ or, equivalently, the set $\mathcal
C^\circ_{\mathcal D'}$ meets $-\mathcal V^\circ$ or, equivalently,
there exists an element $\delta = \sum \limits_{D \in \mathcal D'}
n_D \varkappa(D)$, where $n_D > 0$ for all $D \in \mathcal D'$, such
that $\langle \delta, \sigma \rangle > 0$ for all $\sigma \in
\Sigma$;

\item \label{prop_strongly_solvable2}
$|\mathcal D \backslash \mathcal D'| = |\Pi|$, where $\mathcal D$ is
the set of colors associated with~$\mathscr S$.
\end{enumerate}
\end{proposition}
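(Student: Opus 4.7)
The plan is to recognize conditions~(\ref{prop_strongly_solvable1}) and~(\ref{prop_strongly_solvable2}), together with the assumption $\mathcal D' \subset \mathcal D^a$, as the combinatorial encoding of the existence of a wonderful morphism from the wonderful $G$-variety $X$ corresponding to $\mathscr S$ onto the flag variety~$G/B$. By Proposition~\ref{prop_wonderful_morphisms} such morphisms are parametrized by distinguished subsets of $\mathcal D$, with the target's spherical system being the quotient system. Since the spherical system of $G/B$ is $(\varnothing, \varnothing, \varnothing)$ by Example~\ref{example_hom_wond}, the task reduces to translating ``$\mathscr S / \mathcal D' = (\varnothing, \varnothing, \varnothing)$'' into combinatorial conditions on~$\mathcal D'$.

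For the implication $(\Leftarrow)$, I would first observe that condition~(\ref{prop_strongly_solvable1}) makes $\mathcal D'$ distinguished with $\mathcal V_{\mathcal D'} = \mathcal V$, so that $\Sigma_{\mathcal D'} = \varnothing$ and therefore $\Lambda / \mathcal D' = 0$, yielding $\Sigma / \mathcal D' = \varnothing$ and $\mathcal D^a / \mathcal D' = \varnothing$. The quotient then has the form $(\Pi', \varnothing, \varnothing)$ with $\Pi' = \Pi^p / \mathcal D' = \lbrace \alpha \in \Pi : \mathcal D(\alpha) \subset \mathcal D' \rbrace$, corresponding to $G / P_{\Pi'}$ whose color set has size $|\Pi \setminus \Pi'|$. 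Since $\mathcal D \setminus \mathcal D'$ is in natural bijection with the colors of the quotient (Proposition~\ref{prop_wonderful_morphisms}), condition~(\ref{prop_strongly_solvable2}) forces $\Pi' = \varnothing$, so the target is $G/B$. The resulting wonderful morphism $\phi : X \to G / B$ is $G$-equivariant, which forces the stabilizer $H$ of a generic point $y_0 \in G/H \subset X$ to fix $\phi(y_0) \in G/B$, hence to lie in a Borel subgroup.

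For the converse, I would assume $H \subset B$ (after conjugation) and construct $\phi : X \to G / B$ as the second projection of $Y := \overline{G \cdot (eH, eB)} \subset X \times G/B$: this closure is well defined because $H$ fixes $eB$, and the first projection $Y \to X$ is proper, $G$-equivariant, and birational on the open orbit, hence an isomorphism by normality of $X$ and Zariski's main theorem. Since $G/B$ has rank zero, $\phi$ is automatically wonderful, so Proposition~\ref{prop_wonderful_morphisms} associates to it a distinguished $\mathcal D' \subset \mathcal D$ with quotient system $(\varnothing, \varnothing, \varnothing)$. Translating this back reverses the computation of the $(\Leftarrow)$ direction and yields~(\ref{prop_strongly_solvable1}), (\ref{prop_strongly_solvable2}), and the inclusion $\mathcal D' \subset \mathcal D^a$.

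The main technical obstacle is the bookkeeping that extracts both $\Pi^p / \mathcal D' = \varnothing$ and $\mathcal D' \subset \mathcal D^a$ from the numerical identity $|\mathcal D \setminus \mathcal D'| = |\Pi|$. This requires the type classification of Proposition~\ref{prop_alternative} and the resulting count $|\mathcal D| = 2|\Pi^a| + |\Pi^{a'}| + |\Pi^b / \sim|$: since $\Sigma / \mathcal D' = \varnothing$ places every $\alpha \in \Pi^{a'} \cup \Pi^b$ into $\Pi^b / \mathcal D' \cup \Pi^p / \mathcal D'$, and since $\mathcal D(\alpha) \subset \mathcal D'$ would push $\alpha$ into $\Pi^p / \mathcal D' = \varnothing$, the unique color in $\mathcal D(\alpha)$ must survive outside $\mathcal D'$, forcing $\mathcal D' \subset \mathcal D^a$.
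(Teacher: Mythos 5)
Your overall route is the same as the paper's: you identify subsets $\mathcal D'\subset\mathcal D^a$ satisfying (\ref{prop_strongly_solvable1})--(\ref{prop_strongly_solvable2}) with distinguished subsets of colors whose quotient system is that of $G/B$, using Proposition~\ref{prop_wonderful_morphisms} and Example~\ref{example_hom_wond}, and you use that $H$ is strongly solvable if and only if there is a $G$-equivariant morphism $G/H\to G/B$. Your backward direction and the color bookkeeping (the target $G/P_{\Pi'}$ has $|\Pi\setminus\Pi'|$ colors, identified with $\mathcal D\setminus\mathcal D'$ via preimage, and $\mathcal D'\subset\mathcal D^a$ because for $\alpha$ of type $a'$ or $b$ the unique color in $\mathcal D(\alpha)$ must lie outside $\mathcal D'$, by Proposition~\ref{prop_alternative}) coincide with the paper's argument.

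The genuine gap is in the forward direction, at the step where you extend $G/H\to G/B$ to a morphism $X\to G/B$. Your graph-closure construction only yields a proper, $G$-equivariant morphism $p_1\colon Y=\overline{G\cdot(eH,eB)}\to X$ that is bijective over the open orbit; ``proper $+$ birational $+$ $X$ normal'' does not give an isomorphism, since Zariski's main theorem requires quasi-finiteness (a blow-up is proper, birational, with smooth target, and is not an isomorphism). In the present situation, finiteness of the fibers of $p_1$ is precisely equivalent to the rational map $X\dashrightarrow G/B$ being defined everywhere, which is the nontrivial content you are trying to prove, so the appeal to ZMT is circular. The paper closes exactly this step by citing Knop's extension theorem \cite[Theorem~4.1]{Kn91}: since the wonderful embedding $X$ is toroidal (its colored cones carry no colors) and $\mathcal Q_{G/B}=0$, the extension criterion is vacuous and the morphism $G/H\to G/B$ extends to $X$. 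Either insert that citation (or an argument that $p_1$ contracts no divisor, which amounts to reproving that theorem in this case); with it, your proof is complete and agrees with the paper's.
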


\begin{remark}
Condition~(\ref{prop_strongly_solvable1}) guarantees that the set
$\mathcal D'$ is distinguished.
\end{remark}

\begin{proof}[Proof of Proposition~\textup{\ref{prop_strongly_solvable}}]
Let $H \subset G$ be a wonderful subgroup with $\mathscr S_{G / H} =
\mathscr S$ and let $X$ be the wonderful embedding of $G / H$. In
what follows, we identify $\mathcal D$ with the set of colors
of~$X$. Consider a wonderful morphism $X \to X'$ and denote by
$\mathcal D'$ the corresponding distinguished subset of colors. By
the definition of a quotient system,
condition~(\ref{prop_strongly_solvable1}) holds for $\mathcal D'$ if
and only if $\Sigma / \mathcal D' = \varnothing$, which in turn is
equivalent to $X'$ being homogeneous (see
Example~\ref{example_hom_wond}), that is, $X' = G / P$ for a
parabolic subgroup $P \subset G$. Now assume $X'$ to be homogeneous
so that condition~(\ref{prop_strongly_solvable1}) holds
for~$\mathcal D'$. Then the colors in $\mathcal D \backslash
\mathcal D'$ are exactly the preimages of colors in~$X'$. So
condition~(\ref{prop_strongly_solvable2}) holds for $\mathcal D'$ if
and only if $X'$ contains exactly $|\Pi|$ colors. It is well known
that the latter is equivalent to $X' = G / B$. In this case, for
every $\alpha \in \Pi$ one has $|(\mathcal D \backslash \mathcal D')
\cap \mathcal D(\alpha)| = 1$, whence $\mathcal D' \subset \mathcal
D^a$ by Proposition~\ref{prop_alternative}.

Now let us prove the assertion. The subgroup $H$ is strongly
solvable if and only if there exists a $G$-equivariant morphism $G /
H \to G / B$. By~\cite[Theorem~4.1]{Kn91}, such a morphism always
extends to a $G$-equivariant morphism $X \to G / B$, and we may
apply the above reasoning.
\end{proof}

\begin{corollary} \label{crl_strongly_solvable_SS}
If $\mathscr S = (\Pi^p, \Sigma, \mathcal D^a)$ is a strongly
solvable spherical system, then $\Pi^p = \varnothing$ and $\Sigma
\subset \Pi$.
\end{corollary}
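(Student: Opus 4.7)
My plan is to apply Proposition~\ref{prop_strongly_solvable} to produce a distinguished subset $\mathcal{D}' \subset \mathcal{D}^a$ with the two listed properties, and then derive the two conclusions independently. From condition~(\ref{prop_strongly_solvable1}) I retain an element $\delta = \sum_{D \in \mathcal{D}'} n_D\,\varkappa(D)$ with all $n_D > 0$ and $\langle \delta, \sigma \rangle > 0$ for every $\sigma \in \Sigma$; from condition~(\ref{prop_strongly_solvable2}) I retain the cardinality identity $|\mathcal{D} \setminus \mathcal{D}'| = |\Pi|$. The first will drive $\Sigma \subset \Pi$, the second $\Pi^p = \varnothing$.

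For the inclusion $\Sigma \subset \Pi$ I would argue by contradiction. Suppose $\sigma \in \Sigma$ with $\sigma \notin \Pi$. Axiom~(\ref{A1}) gives $\langle \varkappa(D), \sigma \rangle \le 1$ for every $D \in \mathcal{D}^a$, and its equality clause would force $\sigma \in \Pi$, so the inequality is strict for every $D \in \mathcal{D}' \subset \mathcal{D}^a$. As $\varkappa(D) \in \Hom_{\ZZ}(\Lambda, \ZZ)$ and $\sigma \in \Lambda$, the pairing is an integer, and strictness upgrades to $\langle \varkappa(D), \sigma \rangle \le 0$. Since all $n_D > 0$, summation yields $\langle \delta, \sigma \rangle \le 0$, contradicting the defining property of $\delta$.

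For $\Pi^p = \varnothing$, the plan is to extract from the proof of Proposition~\ref{prop_strongly_solvable} the intermediate assertion $|(\mathcal{D} \setminus \mathcal{D}') \cap \mathcal{D}(\alpha)| = 1$ for every $\alpha \in \Pi$. That assertion comes from identifying the target of the wonderful morphism associated with $\mathcal{D}'$ as $G/B$ (forced by~(\ref{prop_strongly_solvable1}) and~(\ref{prop_strongly_solvable2})) and noting that the colors of the source in $\mathcal{D} \setminus \mathcal{D}'$ match bijectively, $|\Pi|$ to $|\Pi|$, with the colors of $G/B$; the preimage of the unique color of $G/B$ indexed by $\alpha$ then lies in $\mathcal{D}(\alpha)$. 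In particular $\mathcal{D}(\alpha) \ne \varnothing$ for every $\alpha \in \Pi$, and Proposition~\ref{prop_alternative} forbids this for $\alpha \in \Pi^p$, yielding $\Pi^p = \varnothing$.

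The harder half will be the $\Pi^p = \varnothing$ step, where a purely combinatorial count using just the axioms is muddied by the possibility that a single type-$a$ color belongs to several sets $\mathcal{D}(\alpha)$ at once; the cleanest workaround is to ride on the geometric identification of the target of the wonderful morphism with $G/B$ that is already contained in the proof of Proposition~\ref{prop_strongly_solvable}. The inclusion $\Sigma \subset \Pi$, by contrast, is a short integrality consequence of axiom~(\ref{A1}).
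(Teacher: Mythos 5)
Your proposal is correct and takes essentially the same route as the paper: the paper likewise extracts $|(\mathcal D \backslash \mathcal D') \cap \mathcal D(\alpha)| = 1$ for all $\alpha \in \Pi$ from the proof of Proposition~\ref{prop_strongly_solvable} to get $\Pi^p = \varnothing$, and derives $\Sigma \subset \Pi$ from axiom~(\ref{A1}) by contradiction with condition~(\ref{prop_strongly_solvable1}). The only cosmetic difference is that you phrase the contradiction via the element $\delta$ while the paper phrases it via the cone generated by $\mathcal V \cup \varkappa(\mathcal D')$ lying in a half-space; these are the equivalent formulations of the same condition.
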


\begin{proof}
Let $\mathcal D$ be the set of colors associated with~$\mathscr S$
and let $\mathcal D' \subset \mathcal D^a$ be a subset satisfying
the conditions of Proposition~\ref{prop_strongly_solvable}. In the
proof of the proposition it was shown that $|(\mathcal D \backslash
\mathcal D') \cap \mathcal D(\alpha)| = 1$ for every $\alpha \in
\Pi$, whence $\Pi^p = \varnothing$. Assume that $\sigma \in \Sigma
\backslash \Pi$. Then axiom~(\ref{A1}) yields $\langle \varkappa(D),
\sigma \rangle \le 0$ for all $D \in \mathcal D^a$, hence the cone
generated by $\mathcal V \cup \mathcal D'$ is contained in the
half-space $\lbrace q \in \mathcal Q \mid \langle q, \sigma \rangle
\le 0 \rbrace$ of~$\mathcal Q$. The latter contradicts
condition~(\ref{prop_strongly_solvable1}) of
Proposition~\ref{prop_strongly_solvable}, thus $\Sigma \subset \Pi$.
\end{proof}

\begin{corollary} \label{crl_str_solv_wond_sph_clos}
Every strongly solvable wonderful subgroup of $G$ is spherically
closed.
\end{corollary}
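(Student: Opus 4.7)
The plan is to show that $H$ and its spherical closure $\overline H$ have identical principal combinatorial invariants, so that Theorem~\ref{thm_uniqueness} forces them to be conjugate, and then to upgrade conjugacy plus inclusion to equality by a short dimension/component-count argument.

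First I would use Corollary~\ref{crl_strongly_solvable_SS} to conclude that $\Pi^p_{G/H}=\varnothing$ and $\Sigma_{G/H}\subset\Pi$. By Corollary~\ref{crl_sph_clos_str_solv}, the spherical closure $\overline H$ is again strongly solvable, and by Theorem~\ref{thm_sph_closed_wonderful} it is wonderful. Proposition~\ref{prop_sph_syst_of_sph_closure} then yields $\Pi^p_{G/\overline H}=\Pi^p_{G/H}$, $\Sigma_{G/\overline H}=\overline\Sigma_{G/H}$, and a natural bijection $\mathcal D_{G/H}\to\mathcal D_{G/\overline H}$ compatible with the maps $\varkappa$.

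The crucial observation is that $\overline\sigma=\sigma$ for every $\sigma\in\Sigma_{G/H}$. Indeed, the definition of $\overline\sigma$ produces $2\sigma$ only in the case $\sigma\notin\Pi$, which is excluded by Corollary~\ref{crl_strongly_solvable_SS}. Hence $\Sigma_{G/\overline H}=\Sigma_{G/H}$, and since both $H$ and $\overline H$ are wonderful, Proposition~\ref{prop_Sigma_generates_Lambda} gives $\Lambda_{G/\overline H}=\mathbb Z\Sigma_{G/\overline H}=\mathbb Z\Sigma_{G/H}=\Lambda_{G/H}$. Together with the already matched sets $\Pi^p$, $\Sigma$, and $\mathcal D$, all four invariants in the quadruple $(\Lambda,\Pi^p,\Sigma,\mathcal D)$ coincide for $G/H$ and $G/\overline H$, so Theorem~\ref{thm_uniqueness} implies that $H$ and $\overline H$ are conjugate in~$G$.

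The last step, which I expect to be the main (and only) obstacle, is to pass from this conjugacy together with the inclusion $H\subset\overline H$ to the equality $H=\overline H$. Since conjugation is an isomorphism of algebraic groups, $H$ and $\overline H$ have the same dimension, so $\overline H/H$ is a finite group; in particular $H^0\subset\overline H^0$ forces $H^0=\overline H^0$. Conjugation also preserves the number of connected components, so the finite component groups $H/H^0$ and $\overline H/\overline H^0=\overline H/H^0$ have equal cardinality, while the first embeds into the second; equality of cardinalities then yields $H/H^0=\overline H/H^0$, hence $H=\overline H$, as required.
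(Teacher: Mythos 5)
Your proof is correct. It follows the route that the paper only mentions in passing (the parenthetical remark that one may invoke Theorem~\ref{thm_uniqueness} or~\ref{thm_classification}), whereas the paper's own argument is shorter: it observes, exactly as you do, that $\Sigma_{G/H}\subset\Pi$ forces the spherical systems of $G/H$ and $G/\overline H$ to coincide via Proposition~\ref{prop_sph_syst_of_sph_closure}, and then appeals directly to Luna's description of the spherical subgroups having a prescribed spherical closure (Proposition~6.3 of~\cite{Lu01}), which yields the equality $H=\overline H$ outright, with no conjugacy detour. Your version replaces that external reference by matching the full homogeneous spherical datum: you correctly note that $\overline\sigma=\sigma$ because $\Sigma_{G/H}\subset\Pi$, and that wonderfulness of both $H$ and $\overline H$ (the latter via spherical closedness and Theorem~\ref{thm_sph_closed_wonderful}) gives $\Lambda_{G/\overline H}=\ZZ\Sigma_{G/\overline H}=\ZZ\Sigma_{G/H}=\Lambda_{G/H}$, after which the identification of the colors together with their $\varkappa$-maps from Proposition~\ref{prop_sph_syst_of_sph_closure}(c) becomes an honest equality $\mathcal D_{G/H}=\mathcal D_{G/\overline H}$ (you rightly establish the lattice equality before using it). Theorem~\ref{thm_uniqueness} then only gives conjugacy, and your closing step --- same dimension forces $H^0=\overline H^0$, same (finite) number of components plus the inclusion forces equality of component groups --- is exactly the elementary fact needed to upgrade conjugacy together with $H\subset\overline H$ to $H=\overline H$; the paper never spells this out because its primary route avoids it. So your argument is self-contained within the results quoted in the paper at the cost of this extra (correct) group-theoretic lemma, while the paper's argument is more economical but leans on an external result of Luna. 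The mention of Corollary~\ref{crl_sph_clos_str_solv} in your second paragraph is harmless but not actually used.
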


\begin{proof}
Let $H \subset G$ be a strongly solvable wonderful subgroup and let
$\overline H$ be the spherical closure of~$H$. Since $\Sigma_{G/H}
\subset \Pi$, by Proposition~\ref{prop_sph_syst_of_sph_closure} the
spherical systems of $G / H$ and $G / \overline H$ coincide. The
description of spherical subgroups of~$G$ with a given spherical
closure (see~\cite[Proposition~6.3]{Lu01}) yields $H = \overline H$.
(Of course, here one may also refer to Theorems~\ref{thm_uniqueness}
or~\ref{thm_classification} as more general results.)
\end{proof}

\begin{corollary} \label{crl_HSD_str_solv}
A homogeneous spherical datum $\mathscr H = (\Lambda, \Pi^p, \Sigma,
\mathcal D^a)$ is strongly solvable if and only if the spherical
system $\mathscr S = (\Pi^p, \Sigma, \mathcal D^a)$ \textup(where
$\varkappa$ is restricted to~$\ZZ \Sigma$\textup) is so.
\end{corollary}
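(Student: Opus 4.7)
The plan is to prove both directions by routing through the spherical closure. Fix a spherical subgroup $H \subset G$ realizing $\mathscr H$. Since $H \subset \overline H$, strong solvability of $H$ is equivalent to strong solvability of $\overline H$: one direction is trivial, the other is Corollary~\ref{crl_sph_clos_str_solv}. Since $\overline H$ is wonderful by Theorem~\ref{thm_sph_closed_wonderful}, this is in turn equivalent to strong solvability of the spherical system $\mathscr S_{G/\overline H}$ by definition. Thus the whole problem reduces to comparing $\mathscr S_{G/\overline H}$ with $\mathscr S$, and it suffices to show they agree under whichever hypothesis is being assumed.

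By Proposition~\ref{prop_sph_syst_of_sph_closure} one has $\mathscr S_{G/\overline H} = (\Pi^p, \overline\Sigma, \mathcal D^a)$ with $\varkappa$ restricted to $\ZZ\overline\Sigma$, so agreement with $\mathscr S = (\Pi^p, \Sigma, \mathcal D^a)$ reduces to proving $\overline\Sigma = \Sigma$. For the direction $(\Leftarrow)$ this is easy: if $\mathscr S$ is strongly solvable, then Corollary~\ref{crl_strongly_solvable_SS} gives $\Sigma \subset \Pi$, and the definition of $\overline\sigma$ forces $\overline\sigma = \sigma$ whenever $\sigma \in \Pi$. Hence $\overline\Sigma = \Sigma$, so $\mathscr S_{G/\overline H} = \mathscr S$ is strongly solvable, so $\overline H$ (and therefore $H$) is strongly solvable.

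For $(\Rightarrow)$ the same corollary applied to $\mathscr S_{G/\overline H}$ yields $\overline\Sigma \subset \Pi$, and the main obstacle is to deduce from this that $\overline\Sigma = \Sigma$, i.e., to exclude the case $\overline\sigma = 2\sigma \in \Pi$ for some $\sigma \in \Sigma$. By the definition of $\overline\sigma$ such a situation requires $\sigma \notin \Pi$, $2\sigma \in \Sigma_G$, and $2\sigma$ a simple root, so that $\sigma = \alpha/2$ for some $\alpha \in \Pi$ with $\sigma \in \Sigma_G \setminus \ZZ\Delta$. Consulting Table~\ref{table_spherical_roots} together with the description of spherical roots outside $\ZZ\Delta$ as halves of the starred entries, one sees that none of the starred entries is a simple root (each has support of rank at least two), so no such $\sigma$ exists. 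This rules out the doubling and yields $\overline\Sigma = \Sigma$, completing the proof.
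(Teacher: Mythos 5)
Your proof is correct and follows essentially the same route as the paper: pass to the spherical closure $\overline H$ via Corollary~\ref{crl_sph_clos_str_solv}, use Proposition~\ref{prop_sph_syst_of_sph_closure} to compare $\mathscr S$ with $\mathscr S_{G/\overline H}$, and invoke Corollary~\ref{crl_strongly_solvable_SS}. The only difference is that you make explicit the step the paper leaves implicit in the forward direction, namely that $\overline\Sigma\subset\Pi$ forces $\overline\Sigma=\Sigma$ because half a simple root is never a spherical root of $G$ (no starred entry of Table~\ref{table_spherical_roots} is simple) -- a worthwhile clarification, not a deviation.
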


\begin{proof}
Let $H$ be a spherical subgroup with $\mathscr H_{G / H} = \mathscr
H$ and let $\overline H$ be the spherical closure of~$H$. If $H$ is
strongly solvable, then $\overline H$ is also strongly solvable by
Corollary~\ref{crl_sph_clos_str_solv}. Therefore $\Sigma_{G /
\overline H} \subset \Pi$ and
Proposition~\ref{prop_sph_syst_of_sph_closure} implies $\mathscr S =
\mathscr S_{G / \overline H}$. Conversely, if $\mathscr S$ is
strongly solvable, then $\Sigma \subset \Pi$ and $\mathscr S_{G /
\overline H} = \mathscr S$ by
Proposition~\ref{prop_sph_syst_of_sph_closure}. Hence $\mathscr H$
is strongly solvable.
\end{proof}

\begin{corollary} \label{crl_strongly_solvable_HSD}
If $\mathscr H = (\Lambda, \Pi^p, \Sigma, \mathcal D^a)$ is a
strongly solvable homogeneous spherical datum, then $\Pi^p =
\varnothing$ and $\Sigma \subset \Pi$.
\end{corollary}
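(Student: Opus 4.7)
The statement is an immediate combination of two results just established, so the plan is essentially to chain them together. First I would invoke Corollary \ref{crl_HSD_str_solv}, which asserts that the homogeneous spherical datum $\mathscr H = (\Lambda, \Pi^p, \Sigma, \mathcal D^a)$ is strongly solvable if and only if the associated spherical system $\mathscr S = (\Pi^p, \Sigma, \mathcal D^a)$ (with $\varkappa$ restricted to~$\ZZ \Sigma$) is strongly solvable. In particular, under the hypothesis of the corollary, $\mathscr S$ is a strongly solvable spherical system.

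Then I would apply Corollary \ref{crl_strongly_solvable_SS} to~$\mathscr S$, which directly gives $\Pi^p = \varnothing$ and $\Sigma \subset \Pi$. Since the sets $\Pi^p$ and $\Sigma$ of the homogeneous spherical datum $\mathscr H$ are literally the same as those of the spherical system $\mathscr S$, this yields the desired conclusion for $\mathscr H$.

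There is no real obstacle: the work has already been done in
Corollary \ref{crl_HSD_str_solv} (which reduces solvability of the datum to solvability of the underlying spherical system via the spherical closure and Proposition \ref{prop_sph_syst_of_sph_closure}) and in Corollary \ref{crl_strongly_solvable_SS} (which extracts the constraints $\Pi^p = \varnothing$ and $\Sigma \subset \Pi$ from the distinguished subset $\mathcal D' \subset \mathcal D^a$ provided by Proposition \ref{prop_strongly_solvable}). The present corollary is just the bookkeeping step that transports those constraints from the spherical system back to the homogeneous spherical datum.
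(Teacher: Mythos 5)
Your proposal is correct and is exactly the argument the paper intends: the corollary is stated there without proof precisely because it follows by chaining Corollary~\ref{crl_HSD_str_solv} (the datum is strongly solvable iff its underlying spherical system $(\Pi^p,\Sigma,\mathcal D^a)$ is) with Corollary~\ref{crl_strongly_solvable_SS} (a strongly solvable spherical system has $\Pi^p=\varnothing$ and $\Sigma\subset\Pi$). Nothing is missing.
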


\begin{remark}
The converse to Corollary~\ref{crl_strongly_solvable_SS} is not
true. For example, consider the group $G = \SL_2 \times \SL_2 \times
\SL_2$, take the subgroup $H_0 \simeq \SL_2$ diagonally embedded
in~$G$, and set $H = N_G(H_0) = Z(G)H_0$. It is well known that
$H_0$ is spherical in~$G$, hence so is~$H$. Clearly, $N_G(H) = H$,
therefore $H$ is a wonderful subgroup of~$G$ by
Theorem~\ref{thm_sph_closed_wonderful}. Let $\varpi_i$ be the
fundamental weight of the $i$th factor of~$G$, $i = 1,2,3$. One has
$\mathfrak X(H) \simeq (\ZZ / 2\ZZ) \oplus (\ZZ / 2\ZZ)$, and the
semigroup $\widehat \Lambda^+_{G/H}$ is freely generated by the
elements $(\varpi_1 + \varpi_2, a)$, $(\varpi_2 + \varpi_3, b)$,
$(\varpi_3 + \varpi_1, c)$, where $a,b,c$ are some pairwise
different order~$2$ elements in $\mathfrak X(H)$. By
Proposition~\ref{prop_Sigma_via_supports} one has $\Pi^p_{G/H} =
\varnothing$ and $\Sigma_{G/H} = \Pi$, however $H$ is not strongly
solvable in~$G$.
\end{remark}

\section{Luna's 1993 approach for classifying\\
strongly solvable wonderful subgroups} \label{sect_Luna_1993}

\subsection{Spherical and wonderful $B^-$-varieties}
\label{subsec_spher&wond_B-var} In this subsection we introduce
spherical and wonderful $B^-$-varieties and reduce the
classification of wonderful strongly solvable subgroups of~$G$ to
that of wonderful $B^-$-varieties.

\begin{definition} \label{def_spher_B-var}
A normal irreducible $B^-$-variety $Z$ is said to be
\textit{spherical} if $T$ has an open orbit in~$Z$.
\end{definition}

\begin{definition} \label{def_wond_B-var}
A spherical $B^-$-variety $Z$ is said to be \textit{wonderful} if it
possesses the following properties:
\begin{enumerate}[label=\textup{(WB\arabic*)},ref=\textup{WB\arabic*}]
\item \label{def_wond_B-var1}
$Z$ is smooth and complete;

\item \label{def_wond_B-var2}
$Z$ contains exactly one closed $B^-$-orbit (which is necessarily a
fixed point~$z_0$);

\item \label{def_wond_B-var3}
every irreducible $T$-stable closed subvariety $Z' \subset Z$
containing $z_0$ is actually $B^-$-stable.
\end{enumerate}
\end{definition}

\begin{proposition} \label{prop_wonderful_B&G}
Let $Z$ be a $B^-$-variety and consider the $G$-variety $X = G
*_{B^-} Z$.
\begin{enumerate}[label=\textup{(\alph*)},ref=\textup{\alph*}]
\item \label{prop_wonderful_B&G_a}
$Z$ is a spherical $B^-$-variety if and only if $X$ is a spherical
$G$-variety.

\item \label{prop_wonderful_B&G_b}
$Z$ is a wonderful $B^-$-variety if and only if $X$ is a wonderful
$G$-variety.
\end{enumerate}
\end{proposition}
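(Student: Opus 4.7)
My approach to both parts goes through the natural projection $p \colon X = G *_{B^-} Z \to G/B^-$, which presents $X$ as a homogeneous fibre bundle with fibre $Z$. The open Bruhat cell $BB^- \subset G$ pulls back to an open $B$-stable subset $X^\circ := BB^- *_{B^-} Z \subset X$. First I would check that the natural map $B \times Z \to X^\circ$, $(b,z) \mapsto [b,z]$, is a geometric quotient for the $T$-action $t \cdot (b,z) = (bt^{-1}, tz)$ (using $B \cap B^- = T$), giving a $B$-equivariant isomorphism $X^\circ \cong B \times_T Z$. Under this identification, $B$-orbits on $X^\circ$ are in bijection with $T$-orbits on $Z$; since $X^\circ$ is dense open in $X$, $B$ has an open orbit on $X$ if and only if $T$ has one on $Z$. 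Normality and irreducibility transfer between $X$ and $Z$ because $p$ is a locally trivial fibration, so (a) follows.

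For (b), suppose first that $Z$ is wonderful. Smoothness and completeness of $X$ follow from $p$ being a locally trivial fibration with smooth complete base $G/B^-$ and smooth complete fibre~$Z$. Closed $G$-orbits in $X$ correspond bijectively to closed $B^-$-orbits in $Z$ via $Y \mapsto G *_{B^-} Y$, and any closed $B^-$-orbit in $Z$ is a single fixed point by the Borel fixed point theorem. Hence (WB2) for $Z$ translates to $X$ having a unique closed $G$-orbit $G\cdot[e,z_0] \cong G/B^-$, so $X$ is simple.

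The core of the argument is matching the toroidal condition on $X$ with (WB3). I would argue that a $B$-stable prime divisor $D \subset X$ containing $G\cdot[e,z_0]$ must meet $X^\circ$: otherwise $D$ would be contained in $p^{-1}(G/B^- \setminus BB^-/B^-)$ and could not pass through $[e, z_0]$. Via $X^\circ \cong B \times_T Z$, such $B$-stable divisors correspond bijectively to $T$-stable prime divisors of $Z$ through $z_0$, and this correspondence sends $G$-stable divisors of $X$ to $B^-$-stable divisors of $Z$. Hence the toroidal condition on $X$ (every such $B$-stable divisor is $G$-stable) is equivalent to (WB3) for $Z$. Combined with part~(a), this shows $X$ is a smooth simple complete toroidal embedding of a spherical homogeneous space, hence a wonderful $G$-variety. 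The converse direction runs through the same equivalences in reverse: $Z$ arises as a fibre of the smooth complete fibration $X \to G/B^-$, inherits sphericity as a $B^-$-variety by part~(a), has a unique closed $B^-$-orbit coming from the unique closed $G$-orbit of $X$, and satisfies (WB3) because $X$ is toroidal.

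The main obstacle I anticipate is the careful bookkeeping in the divisor correspondence: one must verify that the isomorphism $X^\circ \cong B \times_T Z$ identifies $B$-stable (resp. $G$-stable) prime divisors of $X^\circ$ with $T$-stable (resp. $B^-$-stable) prime divisors of $Z$, and that every $B$-stable prime divisor of $X$ containing the closed $G$-orbit actually meets $X^\circ$. These are ultimately consequences of the fibration structure and the big-cell decomposition, but they must be spelled out cleanly to ensure that the toroidal/(WB3) equivalence is watertight.
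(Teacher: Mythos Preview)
Your approach is essentially the paper's: both use the projection $p\colon X\to G/B^-$, identify the preimage of the big cell as $X_0\simeq B*_T Z\simeq U\times Z$, and then match the relevant stability conditions across this isomorphism. The only notable difference is that the paper works throughout with arbitrary irreducible $B$-stable (resp.\ $T$-stable) closed subvarieties rather than divisors, which is more directly aligned with the formulation of~(WB3).

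This difference exposes a small gap in your argument. Your divisor correspondence shows that the toroidal condition on $X$ is equivalent to the statement ``every $T$-stable prime \emph{divisor} of $Z$ through $z_0$ is $B^-$-stable'', but~(WB3) demands this for every irreducible $T$-stable closed \emph{subvariety} through $z_0$. To close the gap you must observe that, since $Z$ is a smooth complete toric $T$-variety (by part~(a)), every irreducible $T$-stable closed subvariety through $z_0$ is an orbit closure and hence an intersection of $T$-stable prime divisors through~$z_0$; so the divisor version of~(WB3) implies the full version. The paper sidesteps this entirely by taking an arbitrary irreducible $B$-stable closed $X'\supset X^c$, noting $X'\cap X_0 = U\times Z'$ with $Z'=X'\cap Z$ irreducible $T$-stable, applying~(WB3) directly, and concluding $X'=GZ'$ is $G$-stable. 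You may find it cleaner to do the same.
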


To prove this proposition, we need an additional consideration.
Namely, we consider the natural $G$-equivariant morphism $\phi
\colon X \to G / B^-$. Note that $Z$ is naturally identified with
the subset $\phi^{-1}(o) \subset X$. Since the subset $B o = U o
\subset G / B^-$ is open, the subset $X_0 = \phi^{-1}(B o) \subset
X$ is open and $B$-stable. Applying
Proposition~\ref{prop_morphism_to_L/K} to the $B$-equivariant
morphism $X_0 \to B o \simeq B / T$, we get $X_0 \simeq B *_T Z
\simeq U \times Z$, where the latter variety is acted on by $B$ by
the formula
\begin{equation} \label{eqn_action_on_X0}
tv \cdot (u, z) = (tvut^{-1}, tz) \quad (t \in T, v,u \in U, z \in
Z).
\end{equation}

\begin{proof}[Proof of Proposition~\textup{\ref{prop_wonderful_B&G}}]
(\ref{prop_wonderful_B&G_a}) Evidently, $X$ is spherical if and only
if $B$ has an open orbit in~$X_0$. Formula~(\ref{eqn_action_on_X0})
shows that the latter holds if and only if $T$ has an open orbit
in~$Z$.

(\ref{prop_wonderful_B&G_b}) We shall use the interpretation of
wonderful $G$-varieties as wonderful completions of spherical
homogeneous spaces, see Theorem~\ref{thm_criterion_wonderful}.

First suppose that $X$ is a wonderful $G$-variety and let $X^c
\subset X$ be the closed $G$-orbit. Since $X$ is smooth, $Z$ is also
smooth by Proposition~\ref{prop_smoothness}. Clearly, $Z$ is
complete. Next, for every $G$-orbit $O \subset X$, the intersection
$O \cap Z$ is a $B^-$-orbit, and $O$ is closed if and only if $O
\cap Z$ is so. This proves that $X^c \cap Z$ is a unique closed
$B^-$-orbit in~$Z$, which is necessarily a fixed point~$z_0$. Let
$Z' \subset Z$ be an irreducible $T$-stable closed subvariety
containing~$z_0$. Then $BZ'$ is a closed subvariety in~$X_0$
containing $X^c \cap X_0$. Let $X'$ be the closure of $BZ'$ in~$X$.
Clearly, $X'$ is an irreducible $B$-stable closed subvariety
containing~$X^c$. As $X$ is toroidal, we obtain that $X'$ is
$G$-stable, whence $Z' = X' \cap Z$ is $B^-$-stable, so that $Z$ is
a wonderful $B^-$-variety.

Now suppose that $Z$ is a wonderful $B^-$-variety. Then $X$ is
smooth and complete by Propositions~\ref{prop_smoothness}
and~\ref{prop_completeness}. If $z_0$ is the (unique) point in $Z$
fixed by~$B^-$, then $X^c = Gz_0$ is a unique closed $G$-orbit
in~$X$. Let $X' \subset X$ be an irreducible $B$-stable closed
subvariety containing $X^c$. Then $Z' = X' \cap Z$ is an irreducible
$T$-stable closed subvariety containing~$z_0$, whence $Z'$ is
$B^-$-stable. The latter implies that $X' = GZ'$. Thus $X$ is a
wonderful $G$-variety.
\end{proof}

For every wonderful $B^-$-variety~$Z$, let $H_Z$ be the stabilizer
of a point of the open $B^-$-orbit in~$Z$.

\begin{theorem} \label{thm_bij_wond_B-var_SSWS}
The map $Z \mapsto H_Z$ is a bijection between wonderful
$B^-$-varieties \textup(considered up to $B^-$-equivariant
isomorphism\textup) and conjugacy classes in $B^-$ of strongly
solvable wonderful subgroups of $G$ contained in~$B^-$.
\end{theorem}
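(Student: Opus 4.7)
The plan is to use Proposition~\ref{prop_wonderful_B&G} as the main bridge, together with the uniqueness of wonderful completions.

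\textbf{Well-definedness.} For a wonderful $B^-$-variety $Z$, pick a point $z$ in the open $B^-$-orbit and set $H_Z = B^-_z$; this is defined up to $B^-$-conjugacy. By Proposition~\ref{prop_wonderful_B&G}(\ref{prop_wonderful_B&G_b}), the $G$-variety $X = G *_{B^-} Z$ is wonderful, and its open $G$-orbit equals $G *_{B^-} (B^-/H_Z) \simeq G/H_Z$. Hence $H_Z$ is a wonderful subgroup of~$G$, and it is strongly solvable since $H_Z \subset B^-$. A $B^-$-equivariant isomorphism between two wonderful $B^-$-varieties must send open $B^-$-orbit to open $B^-$-orbit, yielding $B^-$-conjugate stabilizers; so the map $Z \mapsto H_Z$ descends to conjugacy classes in~$B^-$.

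\textbf{Injectivity.} Suppose $H_{Z_1}$ and $H_{Z_2}$ are $B^-$-conjugate. After possibly replacing the base point of the open $B^-$-orbit of $Z_2$ by a $B^-$-translate, one may assume $H_{Z_1} = H_{Z_2} =: H$. Then $X_i := G *_{B^-} Z_i$ are two wonderful completions of~$G/H$, and the uniqueness of wonderful completions (Proposition~\ref{prop_standard_completion}) provides a canonical $G$-equivariant isomorphism $\psi \colon X_1 \to X_2$ extending the identity on~$G/H$. The bundle projections $\phi_i \colon X_i \to G/B^-$ both extend the canonical morphism $G/H \to G/B^-$; as $G/H$ is dense in~$X_1$ and $G/B^-$ is separated, the equality $\phi_2 \circ \psi = \phi_1$ holds on~$G/H$ and thus on all of~$X_1$. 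Therefore $\psi$ restricts to a $B^-$-equivariant isomorphism $Z_1 = \phi_1^{-1}(o) \to \phi_2^{-1}(o) = Z_2$.

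\textbf{Surjectivity.} Let $H \subset B^-$ be a strongly solvable wonderful subgroup and $X$ the wonderful completion of~$G/H$. The canonical projection $G/H \to G/B^-$ extends to a $G$-equivariant morphism $\phi \colon X \to G/B^-$ by~\cite[Theorem~4.1]{Kn91}, the target $G/B^-$ being $G$-homogeneous and hence posing no combinatorial obstruction. Setting $Z = \phi^{-1}(o)$, the general theory of homogeneous bundles recalled in Appendix~\ref{sect_homogeneous_bundles} identifies $X$ with $G *_{B^-} Z$. Proposition~\ref{prop_wonderful_B&G}(\ref{prop_wonderful_B&G_b}) then ensures that $Z$ is a wonderful $B^-$-variety, and $H_Z = H$ by construction.

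The main technical point arises in the injectivity argument: one must check that the canonical $G$-equivariant isomorphism between two wonderful completions of~$G/H$ automatically intertwines the morphisms to~$G/B^-$. This is handled by the elementary fact that $G$-equivariant morphisms from a $G$-variety to a separated target are determined by their restriction to any dense open subset, so that no further combinatorial compatibility needs to be checked by hand.
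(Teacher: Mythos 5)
Your proposal is correct and follows essentially the same route as the paper: the criterion that $G *_{B^-} Z$ is wonderful exactly when $Z$ is a wonderful $B^-$-variety gives well-definedness, and Knop's extension theorem together with Proposition~\ref{prop_morphism_to_L/K} gives surjectivity. The only point you add is the explicit injectivity argument via uniqueness of the wonderful completion and density of the open orbit, which the paper leaves implicit; that addition is correct.
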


\begin{proof}
Let $Z$ be a wonderful $B^-$-variety.
Proposition~\ref{prop_wonderful_B&G}(\ref{prop_wonderful_B&G_b})
yields that $X = G *_{B^-} Z$ is a wonderful $G$-variety and its
open $G$-orbit is isomorphic to $G / H_Z$, so that $H_Z$ is
wonderful in~$G$.

Conversely, let $H \subset G$ be a wonderful subgroup contained
in~$B^-$ and let $X$ be the wonderful embedding of the homogeneous
space $G / H$. By~\cite[Theorem~4.1]{Kn91} the natural morphism $G /
H \to G / B^-$ extends to a $G$-equivariant morphism $\phi \colon X
\to G / B^-$. Therefore $X = G *_{B^-} Z$, where $Z = \phi^{-1}(o)$
(see Proposition~\ref{prop_morphism_to_L/K}).
Proposition~\ref{prop_wonderful_B&G}(\ref{prop_wonderful_B&G_b})
implies that $Z$ is a wonderful $B^-$-variety.
\end{proof}

\subsection{Connected automorphism groups of smooth complete toric
varieties} \label{subsec_tor_var_aut}

In this subsection we present a description of the connected
automorphism group of a smooth complete toric variety, which goes
back to Demazure~\cite{Dem} (see also~\cite[\S\,3.4]{Oda}). In our
exposition we follow a more modern viewpoint on this description,
which is due to Cox~\cite{Cox}.

In this paper, we adopt the following definition of a toric variety.

\begin{definition} \label{def_toric_var}
A normal irreducible $T$-variety $Z$ is said to be \textit{toric} if
it possesses an open $T$-orbit.
\end{definition}

We note that the toric $T$-varieties (in the sense of
Definition~\ref{def_toric_var}) are exactly the spherical
$T$-varieties.

Let $Z$ be a toric $T$-variety and let $T_0$ denote the quotient of
$T$ by the kernel of its action on~$Z$. Following the notation
widely used in the theory of toric varieties, we denote by~$M$ the
weight lattice of $Z$ and put $N = \Hom_\ZZ (M, \ZZ)$. Clearly, $M
\simeq \mathfrak X(T_0)$. Put also $N_\QQ = N \otimes_\ZZ \QQ \simeq
\Hom_{\ZZ}(M, \QQ)$. It is well known that $Z$ determines a strictly
convex fan $\mathcal F$ in $N_\QQ$ and the map $Z \mapsto (M,
\mathcal F)$ is a bijection between toric $T$-varieties (considered
up to $T$-equivariant isomorphism) and all pairs $(M, \mathcal F)$
with $M$ a sublattice of $\mathfrak X(T)$ and $\mathcal F$ a
strictly convex fan in~$\Hom_\ZZ (M, \QQ)$.

Let $Z$ be a toric $T$-variety and let $\mathcal F$ be the
corresponding fan in~$N_\QQ$. The following two well-known facts
will be of particular importance for us:

\begin{enumerate}[label=\textup{(F\arabic*)},ref=\textup{F\arabic*}]
\item
$Z$ is complete if and only if $\mathcal F$ is complete;

\item
$Z$ is smooth if and only if $\mathcal F$ is regular.
\end{enumerate}

We recall that the set $\mathcal F^1$ is in bijection with the set
of $T$-stable prime divisors in~$Z$. For every $\varrho \in \mathcal
F^1$, let $D_\varrho$ denote the corresponding $T$-stable prime
divisor in~$Z$.

To each $\varrho \in \mathcal F^1$ we assign a variable~$x_\varrho$.
Let $\CR = \CR(Z)$ denote the polynomial ring in
variables~$x_\varrho$, where $\varrho$ runs over the set~$\mathcal
F^1$. Put $f = |\mathcal F^1|$.

\begin{definition}
The ring $\CR$ is said to be the \textit{Cox ring} of the toric
variety~$Z$.
\end{definition}

The ring $\CR$ is naturally acted on by a torus $\mathbf T \simeq
(\CC^\times)^f$. For every $\varrho \in \mathcal F^1$, let
$\chi_\varrho$ be the character by which $\mathbf T$ acts
on~$x_\varrho$. Clearly, the map $\chi_\varrho \mapsto D_\varrho$
extends to an isomorphism between $\mathfrak X(\mathbf T)$ and the
group of $T$-stable Weil divisors in~$Z$. We denote the latter group
by~$\ZZ^f$.

Every monomial $\prod \limits_{\varrho \in \mathcal F^1}
x_\varrho^{a_\varrho}$ determines a $T$-stable Weil divisor $D =
\sum \limits_{\varrho \in \mathcal F^1} a_\varrho D_\varrho$. We
shall also write this monomial as~$x^D$. The ring $\CR$ has a
natural grading by the divisor class group $\Cl Z$. By definition,
the degree of a monomial $x^D$ is $[D] \in \Cl Z$. For every $c \in
\Cl Z$, let $\CR_c$ denote the linear span of all monomials $x^D$
with $[D] = c$, so that $\CR = \bigoplus \limits_{c \in \Cl Z}
\CR_c$. We note that for complete $Z$ each of the subspaces $\CR_c$
is finite-dimensional (see~\cite[Corollary~1.2(i)]{Cox}).

It is known that the group $\Cl Z$ is finitely generated, whence it
may be identified with the character group of a uniquely determined
quasitorus $\mathbf S$. Fix an isomorphism $\Cl Z \to \mathfrak
X(\mathbf S)$, $c \mapsto \chi_c$. The group $\mathbf S$ acts
naturally on $\CR$ preserving the grading: each component $\CR_c$ is
multiplied by the character~$\chi_c$.

Set $\widetilde Z = \Spec \CR$.

For every cone $\mathcal C \in \mathcal F$, we define the monomial
$r(\mathcal C) = \prod \limits_{\varrho \in \mathcal F^1 \backslash
\mathcal C} x_\varrho$ and let $I$ be the ideal of $\CR$ generated
by all the monomials $r(\mathcal C)$, $\mathcal C \in \mathcal F$.
Let $E$ be the subvariety of $\widetilde Z$ defined by the vanishing
of all polynomials in~$I$. As $I$ is $\mathbf S$-stable, $E \subset
\widetilde Z$ is $\mathbf S$-stable as well. Besides,
by~\cite[Lemma~1.4]{Cox} the set $E$ has codimension at least two
in~$\widetilde Z$.

\begin{theorem}[{see~\cite[Theorem~2.1(iii)]{Cox}}]
\label{thm_tor_var_geom_quot} If $Z$ is smooth, then $Z$ is
naturally isomorphic to the geometric quotient of $\widetilde Z
\backslash E$ by the action of~$\mathbf S$.
\end{theorem}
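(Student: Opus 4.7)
The plan is to construct an explicit $\mathbf S$-invariant morphism $\pi \colon \widetilde Z \setminus E \to Z$ and then verify the geometric-quotient conditions locally on the $T$-stable affine charts of~$Z$. The first step is to use smoothness to obtain the exact sequence of abelian groups
\begin{equation*}
0 \to M \to \ZZ^{\mathcal F^1} \to \Cl Z \to 0,
\end{equation*}
in which the left-hand arrow sends $m \in M$ to the principal Weil divisor $\sum_{\varrho \in \mathcal F^1} \langle \varrho, m \rangle D_\varrho = \Div(\chi^m)$ and the right-hand arrow sends $D_\varrho$ to its class. Smoothness ensures that Weil divisors are Cartier and every class admits a $T$-stable representative, giving exactness. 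Applying $\Hom_\ZZ(\cdot, \CC^\times)$ dualises this to a short exact sequence $1 \to \mathbf S \to \mathbf T \to T_0 \to 1$ of quasitori, identifying $\mathbf S$ as the kernel of the natural projection $\mathbf T \to T_0$.

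Next, I would write $\widetilde Z \setminus E = \bigcup_{\mathcal C \in \mathcal F} \widetilde Z_\mathcal{C}$ using the principal open subsets $\widetilde Z_\mathcal{C} = \Spec \CR[r(\mathcal C)^{-1}]$, and for each $\mathcal C$ define a morphism $\pi_\mathcal{C} \colon \widetilde Z_\mathcal{C} \to Z_\mathcal{C}$ onto the corresponding $T$-stable affine chart $Z_\mathcal{C} = \Spec \CC[\mathcal C^\vee \cap M]$ by the pullback rule $\chi^m \mapsto \prod_{\varrho \in \mathcal F^1} x_\varrho^{\langle \varrho, m \rangle}$. This is well-defined because for $m \in \mathcal C^\vee$ the exponents on variables $x_\varrho$ with $\varrho \in \mathcal C^1$ are non-negative, while the remaining variables have been inverted. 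Compatibility on overlaps $\widetilde Z_{\mathcal C_1} \cap \widetilde Z_{\mathcal C_2} = \widetilde Z_{\mathcal C_1 \cap \mathcal C_2}$ mirrors the standard gluing of the charts~$Z_\mathcal{C}$ of~$Z$, so the $\pi_\mathcal{C}$ glue into a global morphism $\pi \colon \widetilde Z \setminus E \to Z$. The $\mathbf S$-invariance of~$\pi$ is built in: each pullback $\prod_\varrho x_\varrho^{\langle \varrho, m \rangle}$ has $\mathbf T$-weight in the image of $M \to \ZZ^{\mathcal F^1}$, hence trivial $\mathbf S$-weight.

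To conclude, I would check the geometric-quotient conditions locally over each~$Z_\mathcal{C}$ with $\mathcal C$ a maximal cone. Here the regularity of~$\mathcal C$ is decisive: its primitive generators $\mathcal C^1 = \{\varrho_1, \ldots, \varrho_k\}$ extend to a $\ZZ$-basis of~$N$, yielding dual coordinates on $\widetilde Z_\mathcal{C}$ in which $\widetilde Z_\mathcal{C} \simeq \CC^k \times (\CC^\times)^{f-k}$ and in which $\pi_\mathcal{C}$ is realised as a trivial $\mathbf S$-torsor over $Z_\mathcal{C}$. Concretely, after restricting to these coordinates the subgroup $\mathbf S \hookrightarrow \mathbf T$ acts trivially on the first $k$ factors (those associated with $\mathcal C^1$) and freely on the torus factor, with orbits equal to the fibres of $\pi_\mathcal{C}$ and $\pi_\mathcal{C}^* \CC[Z_\mathcal{C}] = \CC[\widetilde Z_\mathcal{C}]^{\mathbf S}$. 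Since the charts $Z_\mathcal{C}$ cover~$Z$, the map $\pi$ is a geometric quotient globally, as required.

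I expect the main obstacle to lie in the careful verification of the divisor exact sequence -- in particular the surjectivity onto $\Cl Z$ (requiring every divisor class to possess a $T$-stable representative) and the explicit identification of the local $\mathbf S$-action in the regular coordinates. The remaining steps are essentially a combinatorial translation of the fan data into the algebra of~$\CR$.
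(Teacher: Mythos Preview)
The paper does not give its own proof of this theorem; it is quoted directly from Cox~\cite[Theorem~2.1(iii)]{Cox} and stated without argument. Your proposal is a reasonable outline of the standard proof (essentially Cox's own), so there is nothing in the paper to compare it against.

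Two small corrections to your sketch. First, the exact sequence $0 \to M \to \ZZ^{\mathcal F^1} \to \Cl Z \to 0$ does not require smoothness; it holds whenever the rays span~$N_\QQ$ (guaranteed here by completeness), and surjectivity follows because the complement of the open torus orbit is exactly $\bigcup_\varrho D_\varrho$, so every Weil divisor is linearly equivalent to a $T$-stable one. Smoothness enters only where you say it does, in the local trivialisation over a maximal cone. Second, the statement that ``$\mathbf S$ acts trivially on the first $k$ factors'' is not true in the original coordinates~$x_\varrho$: the character by which $\mathbf S$ acts on $x_{\varrho_j}$ is $[D_{\varrho_j}] \in \Cl Z$, which is generally nonzero. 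What is true is that after the change of coordinates $y_j = x_{\varrho_j}\prod_{\varrho \notin \mathcal C^1} x_\varrho^{\langle \varrho, m_j\rangle}$ (with $\{m_j\}$ the basis of~$M$ dual to~$\mathcal C^1$) the new $y_j$ have trivial $\mathbf S$-weight, and $\mathbf S$ then acts freely and transitively on the remaining torus factor. You allude to this with ``dual coordinates'', but the phrasing should be tightened.
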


We note that for every $\varrho \in \mathcal F^1$ the preimage of
the divisor $D_\varrho$ under the morphism $\widetilde Z \backslash
E \to Z$ is defined by the equation $x_\varrho = 0$.

For every $\alpha \in M$, let $f_\alpha \in \mathbb C(Z)$ be a
$T$-semi-invariant rational function of weight~$\alpha$, which is
unique up to proportionality. Its Weil divisor is given by $\Div
f_\alpha = \sum \limits_{\varrho \in \mathcal F^1} \langle \varrho,
\alpha \rangle D_\varrho$. By~\cite[\S\,3.4]{Ful}, the map $M \to
\ZZ^f$ defined by $\alpha \mapsto \Div f_\alpha$ is included into
the following exact sequence:
$$
0 \to M \to \ZZ^f \to \Cl Z \to 0.
$$
The identifications $M \simeq \mathfrak X(T_0)$, $\ZZ^f \simeq
\mathfrak X(\mathbf T)$, and $\Cl Z \simeq \mathfrak X(\mathbf S)$
yield the exact sequence
$$
0 \to \mathfrak X(T_0) \to \mathfrak X(\mathbf T) \to \mathfrak
X(\mathbf S) \to 0,
$$
where the map $\mathfrak X(T_0) \to \mathfrak X(\mathbf T)$ is
defined by $\alpha \mapsto \sum \limits_{\varrho \in \mathcal F^1}
\langle \varrho, \alpha \rangle \chi_\varrho$. In particular, $T_0
\simeq \mathbf T / \mathbf S$.

We now turn to the problem of determining the connected automorphism
group of~$Z$. In what follows, $Z$ is assumed to be smooth and
complete.

Let $\Aut_g(\CR)$ be the group of grading-preserving $\CC$-algebra
automorphisms of~$\CR$. By~\cite[Proposition~4.3(i)]{Cox},
$\Aut_g(\CR)$ is a connected affine algebraic group. Clearly,
$\mathbf T \subset \Aut_g(\CR)$ and the subgroup $\mathbf S$ is
identified with a central subgroup of $\Aut_g(\CR)$. Therefore every
element of $\Aut_g(\CR)$, regarded as an automorphism of $\widetilde
Z$, preserves $\mathbf S$-orbits and by
Theorem~\ref{thm_tor_var_geom_quot} descends to an automorphism
of~$Z$, so that there is a homomorphism
$$
d \colon \Aut_g(\CR) \to \Aut Z.
$$

\begin{definition}[{see~{\cite[\S\,4.5]{Dem}}}] \label{def_root}
An element $\alpha \in M$ is said to be a \textit{root} of the fan
$\mathcal F$ if there exists an element $\varrho_\alpha \in \mathcal
F^1$ with $\langle \varrho_\alpha, \alpha \rangle = 1$ and $\langle
\varrho, \alpha \rangle \le 0$ for all $\varrho \in \mathcal F^1
\backslash \lbrace \varrho_\alpha \rbrace$.
\end{definition}

We note that the element $\varrho_\alpha$ is unique if exists.

Let $R(\mathcal F)$ denote the set of roots of the fan~$\mathcal F$.
Since $\mathcal F$ is complete, it can be easily shown that
$R(\mathcal F)$ is finite.

For every $\alpha \in R(\mathcal F)$, let $y_{-\alpha}$ be the
derivation of the ring $\CR$ defined on the generators as follows:
\begin{equation*}
\begin{split}
y_{-\alpha} (x_{\varrho_\alpha}) &= \prod \limits_{\varrho \in
\mathcal F^1 \backslash \lbrace \varrho_\alpha \rbrace}
x_\varrho^{- \langle \varrho, \alpha \rangle}; \\
y_{-\alpha} (x_\varrho) &= 0 \text{\;\;for\;\;} \varrho \in \mathcal
F^1 \backslash \lbrace \varrho_\alpha \rbrace.
\end{split}
\end{equation*}
For every $\zeta \in \CC$, we put $Y_{-\alpha}(\zeta) = \exp(\zeta
y_{-\alpha})$ and introduce the one-parameter subgroup
$$
Y_{-\alpha} = \lbrace Y_{-\alpha}(\zeta) \mid \zeta \in \CC \rbrace
\subset \Aut (\CR).
$$
The action of $Y_{-\alpha}$ on $\CR$ is described as follows:
\begin{equation} \label{eqn_group_y_alpha}
\begin{split}
[Y_{-\alpha}(\zeta)] x_{\varrho_\alpha} &= x_{\varrho_\alpha} +
\zeta \prod \limits_{\varrho \in \mathcal F^1 \backslash \lbrace
\varrho_\alpha \rbrace} x_\varrho^{- \langle
\varrho, \alpha \rangle}; \\
[Y_{-\alpha}(\zeta)] x_\varrho &= x_\varrho \text{ for } \varrho \in
\mathcal F^1 \backslash \lbrace \varrho_\alpha \rbrace.
\end{split}
\end{equation}
Clearly, $y_{-\alpha}$ preserves the grading, and so $Y_{-\alpha}
\subset \Aut_g(\CR)$. Moreover, $Y_{-\alpha}$ maps isomorphically
onto~$d(Y_{-\alpha})$.

The minus sign in the notation $y_{-\alpha}$ and $Y_{-\alpha}$ is
justified by the following proposition.

\begin{proposition} \label{prop_normalized-alpha}
The subgroup $d(Y_{-\alpha}) \subset \Aut Z$ is normalized by~$T$
with weight $-\alpha$.
\end{proposition}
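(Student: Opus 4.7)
The strategy is to lift the conjugation computation to the Cox ring $\CR$, where everything is explicit on generators, and then descend via the quotient map $d \colon \Aut_g(\CR) \to \Aut Z$. Recall that $T$ acts on $Z$ through its quotient $T_0 \simeq \mathbf T / \mathbf S$, and that $M \simeq \mathfrak X(T_0)$ embeds into $\mathfrak X(\mathbf T)$ via $\alpha \mapsto \sum_{\varrho \in \mathcal F^1} \langle \varrho, \alpha \rangle \chi_\varrho$. I will therefore fix a lift $\widetilde t \in \mathbf T$ of any $t \in T$ and carry out the computation inside $\Aut_g(\CR)$; the final answer will be a character of $\mathbf T$ lying in the image of $M$, so it descends unambiguously to a character of $T_0$ and hence of~$T$.

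Next, I would compute the weight of the derivation $y_{-\alpha} \in \Der \CR$ under the conjugation action of $\mathbf T$. For $\widetilde t \in \mathbf T$ and $f \in \CR$, $(\widetilde t \, y_{-\alpha}\, \widetilde t^{\,-1})(f) = \widetilde t\bigl(y_{-\alpha}(\widetilde t^{\,-1} f)\bigr)$, so it suffices to evaluate on the generator $x_{\varrho_\alpha}$, since $y_{-\alpha}$ annihilates the remaining $x_\varrho$. Using $\widetilde t^{\,-1} x_{\varrho_\alpha} = \chi_{\varrho_\alpha}(\widetilde t)^{-1} x_{\varrho_\alpha}$ and the fact that the monomial $m = \prod_{\varrho \ne \varrho_\alpha} x_\varrho^{-\langle \varrho, \alpha \rangle}$ is a $\mathbf T$-eigenvector of weight $-\sum_{\varrho \ne \varrho_\alpha} \langle \varrho, \alpha \rangle \chi_\varrho$, one obtains
\[
\widetilde t \, y_{-\alpha}\, \widetilde t^{\,-1} \;=\; \Bigl(-\chi_{\varrho_\alpha} - \sum_{\varrho \ne \varrho_\alpha} \langle \varrho, \alpha \rangle \chi_\varrho\Bigr)(\widetilde t)\cdot y_{-\alpha}.
\]
Using $\langle \varrho_\alpha, \alpha \rangle = 1$, the character in parentheses equals $-\sum_{\varrho \in \mathcal F^1}\langle \varrho, \alpha \rangle \chi_\varrho$, which under the embedding $M \hookrightarrow \mathfrak X(\mathbf T)$ is exactly the image of $-\alpha \in M$.

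Exponentiating via formula~(\ref{eqn_group_y_alpha}) then gives $\widetilde t\, Y_{-\alpha}(\zeta)\, \widetilde t^{\,-1} = Y_{-\alpha}\bigl((-\alpha)(t)\,\zeta\bigr)$ for every $\zeta \in \CC$, where $(-\alpha)(t)$ makes sense because the computed $\mathbf T$-weight lies in the image of $M \simeq \mathfrak X(T_0)$ and therefore depends only on the class of $\widetilde t$ modulo $\mathbf S$. Applying the homomorphism $d$ and using that $d$ restricts to an isomorphism $Y_{-\alpha} \xrightarrow{\sim} d(Y_{-\alpha})$ yields $t \, d(Y_{-\alpha}(\zeta))\, t^{-1} = d(Y_{-\alpha}((-\alpha)(t)\zeta))$, which is precisely the assertion that $d(Y_{-\alpha})$ is normalized by $T$ with weight~$-\alpha$.

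The only delicate point is the bookkeeping of characters and the verification that the resulting $\mathbf T$-weight actually descends to a character of $T_0$; this is where the identity $\langle \varrho_\alpha, \alpha \rangle = 1$ from Definition~\ref{def_root} plays its crucial role, combining the ``missing'' factor $-\chi_{\varrho_\alpha}$ with the weight of $m$ to produce exactly $-\alpha \in M$. Once this identification is in place, the rest of the argument is a routine exponentiation.
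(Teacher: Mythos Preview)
Your proof is correct and follows essentially the same approach as the paper: lift to $\Aut_g(\CR)$, compute the conjugation explicitly on the generator $x_{\varrho_\alpha}$, identify the resulting $\mathbf T$-character $-\sum_{\varrho}\langle \varrho,\alpha\rangle\chi_\varrho$ as the image of $-\alpha$ under $M\hookrightarrow\mathfrak X(\mathbf T)$, and descend via~$d$. The only cosmetic difference is that you compute the weight at the level of the derivation $y_{-\alpha}$ and then exponentiate, whereas the paper works directly with the one-parameter group $Y_{-\alpha}(\zeta)$; the content is identical.
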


\begin{proof}
Let $t \in \mathbf T$ and $\zeta \in \CC$. Clearly, $[t \cdot
Y_{-\alpha}(\zeta) \cdot t^{-1}]x_\varrho = x_\varrho$ for all
$\varrho \in \mathcal F^1 \backslash \lbrace \varrho_\alpha
\rbrace$. Further,
\begin{multline}
[t \cdot Y_{-\alpha}(\zeta) \cdot t^{-1}]x_{\varrho_\alpha} =%
[t \cdot Y_{-\alpha}(\zeta)] \chi_{\varrho_\alpha}(t)^{-1}
x_{\varrho_\alpha} =\\
t(\chi_{\varrho_\alpha}(t)^{-1} x_{\varrho_\alpha} +
\chi_{\varrho_\alpha}(t)^{-1} \zeta \prod \limits_{\varrho \in
\mathcal F^1 \backslash \lbrace \varrho_\alpha \rbrace}
x_\varrho^{- \langle \varrho, \alpha \rangle}) =\\
x_{\varrho_\alpha} + \zeta \prod \limits_{\varrho \in \mathcal F^1}
\chi_\varrho(t)^{-\langle \varrho, \alpha \rangle} \prod
\limits_{\varrho \in \mathcal F^1 \backslash \lbrace \varrho_\alpha
\rbrace} x_\varrho^{- \langle \varrho, \alpha \rangle}.
\end{multline}
We have obtained the equality $t \cdot Y_{-\alpha}(\zeta) \cdot
t^{-1} = Y_{-\alpha}(\chi(t)\zeta)$ in the group~$\Aut_g(\CR)$,
where $\chi = -\sum \limits_{\varrho \in \mathcal F^1} \langle
\varrho, \alpha \rangle \chi_\varrho$. Since $\chi$ is nothing else
than the image of $-\alpha$ in $\mathfrak X(\mathbf T)$, the group
$d(Y_{-\alpha})$ is normalized by $d(\mathbf T) \simeq T_0$ with
weight $-\alpha$.
\end{proof}

\begin{theorem}[{see~\cite[Corollary~4.7]{Cox}}]
\label{thm_tor_var_aut_group} The following assertions hold:
\begin{enumerate}[label=\textup{(\alph*)},ref=\textup{\alph*}]
\item \label{thm_tor_var_aut_group_a}
the group $\Aut Z$ is an affine algebraic group, and the group $T_0
= d(\mathbf T)$ is a maximal torus of $\Aut Z$;

\item \label{thm_tor_var_aut_group_b}
the group $(\Aut Z)^0$ is generated by $T_0$ and the groups
$d(Y_{-\alpha})$ for all $\alpha \in R(\mathcal F)$;

\item \label{thm_tor_var_aut_group_c}
the sequence of homomorphisms
$$
1 \to \mathbf S \to \Aut_g(\CR) \xrightarrow{d} (\Aut Z)^0 \to 1
$$
is exact; in particular, $(\Aut Z)^0 \simeq \Aut_g(\CR) / \mathbf
S$.
\end{enumerate}
\end{theorem}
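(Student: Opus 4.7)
The plan is to establish all three assertions simultaneously by studying the homomorphism $d \colon \Aut_g(\CR) \to \Aut Z$, relying on the geometric quotient description $Z \simeq (\widetilde Z \setminus E)/\mathbf S$ from Theorem~\ref{thm_tor_var_geom_quot}, on formulas~(\ref{eqn_group_y_alpha}) for the action of $Y_{-\alpha}$, and on Demazure's classification of vector fields on~$Z$.

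First I would compute $\Ker d$. Let $\phi \in \Aut_g(\CR)$ descend to the identity on~$Z$. For every $\varrho \in \mathcal F^1$, the preimage of $D_\varrho$ in $\widetilde Z \setminus E$ is the irreducible hypersurface cut out by $x_\varrho = 0$; since $\phi$ must send this preimage to itself, and $x_\varrho$ is a prime element of~$\CR$ up to units, one has $\phi(x_\varrho) = c_\varrho x_\varrho$ with $c_\varrho \in \CC^\times$. Thus $\phi$ acts diagonally on the generators and hence belongs to $\mathbf T$. The condition that $\phi$ preserves the grading by $\Cl Z \simeq \mathfrak X(\mathbf S)$ means precisely that $(c_\varrho)_\varrho$ acts trivially on every graded component, i.e.\ lies in the kernel of $\mathbf T \to T_0$, which is~$\mathbf S$. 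This yields $\Ker d = \mathbf S$.

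Next I would show that $d$ surjects onto $(\Aut Z)^0$. By Cox's result (\cite[Proposition~4.3(i)]{Cox}) $\Aut_g(\CR)$ is a connected affine algebraic group (it embeds into the finite-dimensional group $\GL(\CR_c)$ once one fixes a grading-generating class), hence its image under~$d$ is a connected subgroup of $\Aut Z$ contained in~$(\Aut Z)^0$. For the reverse inclusion I would invoke Demazure's analysis of $\Lie \Aut Z$ as a $T_0$-module: the zero-weight subspace coincides with $\Lie T_0$ (so $T_0$ is a maximal torus of $\Aut Z$, giving part~(a)), while the nonzero weight subspaces are one-dimensional and indexed precisely by the roots $\alpha \in R(\mathcal F)$ of Definition~\ref{def_root}, the weight $-\alpha$ space being spanned by the vector field descended from $y_{-\alpha}$ (cf.\ Proposition~\ref{prop_normalized-alpha}). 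Thus $\Lie (\Aut Z)^0 = \Lie T_0 \oplus \bigoplus_{\alpha \in R(\mathcal F)} \Lie d(Y_{-\alpha})$, all summands lying in $d(\Aut_g(\CR))$, so $d$ is surjective onto $(\Aut Z)^0$ and assertion~(b) follows.

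Combining these two steps gives the exact sequence of part~(c), and hence the isomorphism $(\Aut Z)^0 \simeq \Aut_g(\CR)/\mathbf S$ of algebraic groups (with $\mathbf S$ central and diagonalizable in~$\Aut_g(\CR)$, so the quotient is again an affine algebraic group). This quotient structure transported to $(\Aut Z)^0$ makes it affine algebraic, and $\Aut Z$ itself is then an affine algebraic group whose identity component is $(\Aut Z)^0$, completing~(a). The main obstacle is the Demazure step: proving that every $T_0$-weight occurring in $\Lie \Aut Z$ is a root of~$\mathcal F$ and that each weight space is one-dimensional. This requires a local computation on the affine charts corresponding to maximal cones of~$\mathcal F$, identifying regular vector fields of a given weight as derivations of the Cox ring of the form~$y_{-\alpha}$, and checking using completeness and regularity of~$\mathcal F$ that no additional weights appear when these local descriptions are glued together.
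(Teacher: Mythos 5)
First, a point of reference: the paper does not prove Theorem~\ref{thm_tor_var_aut_group} at all --- it is imported from \cite[Corollary~4.7]{Cox} --- so your sketch can only be measured against the Cox--Demazure argument. Your overall strategy does follow that route: compute $\Ker d = \mathbf S$, then get surjectivity of $d$ onto $(\Aut Z)^0$ and the maximal-torus statement by comparing the Lie algebra of $\Aut Z$ (the space of global vector fields on~$Z$) with the weights realized by $\Lie \mathbf T$ and the derivations $y_{-\alpha}$. The difficulty is that the step you defer --- that the nonzero $T_0$-weights occurring among global vector fields are exactly the roots of $\mathcal F$ in the sense of Definition~\ref{def_root}, with the weight space of $-\alpha$ spanned by the field descended from $y_{-\alpha}$ --- is not a side verification to be ``invoked from Demazure'': it is precisely the mathematical content of assertions (\ref{thm_tor_var_aut_group_a}) and (\ref{thm_tor_var_aut_group_b}). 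In Cox's treatment this is where the work happens (by relating global vector fields on $Z$ to homogeneous derivations of $\CR$ modulo $\Lie \mathbf S$), and in Demazure's original approach \cite{Dem} it is the chart-by-chart computation you mention in your final sentence but do not carry out. So as a self-contained proof the proposal has a genuine gap at its center; as a reduction to the literature it is essentially the same move the paper itself makes by citing Cox.

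Two further points need repair. Your justification of $\Ker d = \mathbf S$ is garbled: after correctly showing that any $\phi \in \Ker d$ acts diagonally, $\phi = (c_\varrho) \in \mathbf T$, the clause ``preserving the grading means $(c_\varrho)$ acts trivially on every graded component'' is false --- every element of $\mathbf T$ preserves the $\Cl Z$-grading, and an element of $\mathbf S$ acts on $\CR_c$ by the scalar $\chi_c$, not trivially. The correct finish uses faithfulness of the $T_0$-action on~$Z$: by Theorem~\ref{thm_tor_var_geom_quot}, $d\bigl((c_\varrho)\bigr)$ restricts on the open orbit to translation by the image of $(c_\varrho)$ under $\mathbf T \to \mathbf T/\mathbf S \simeq T_0$, so it is the identity if and only if $(c_\varrho) \in \mathbf S$. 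Finally, affineness of the full group $\Aut Z$ in (\ref{thm_tor_var_aut_group_a}) requires finiteness of the component group, which your argument never addresses: exhibiting $(\Aut Z)^0 \simeq \Aut_g(\CR)/\mathbf S$ handles only the identity component, while Cox obtains the rest by showing that, after composing with an element of $(\Aut Z)^0$, every automorphism normalizes $T_0$ and hence induces one of the finitely many lattice symmetries of the fan.
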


In the remaining part of this subsection we state and prove several
lemmas that will be needed in the following subsections.

\begin{lemma} \label{lemma_pair_of_roots}
Let $\alpha, \beta \in R(\mathcal F)$.
\begin{enumerate}[label=\textup{(\alph*)},ref=\textup{\alph*}]
\item \label{lemma_pair_of_roots_a}
If $\langle \varrho_\beta, \alpha \rangle < 0$ and $\langle
\varrho_\alpha, \beta \rangle < 0$, then $\alpha + \beta = 0$.

\item \label{lemma_pair_of_roots_b}
If $\langle \varrho_\beta, \alpha \rangle = 0$ and $\langle
\varrho_\alpha, \beta \rangle = - p < 0$, then $\alpha + \beta \in
R(\mathcal F)$, $\varrho_{\alpha + \beta} = \varrho_\beta$, and
$\langle \varrho_\alpha, \alpha + \beta \rangle = - p + 1$.
\end{enumerate}
\end{lemma}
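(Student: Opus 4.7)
The plan is to derive both parts directly from the defining inequalities of a root, combined with the fact that, for a complete fan $\mathcal F$, the set $\mathcal F^1$ positively spans $N_\QQ$, so any $\lambda \in M$ with $\langle \varrho, \lambda\rangle \le 0$ for all $\varrho \in \mathcal F^1$ must vanish. Throughout I will exploit integrality of the pairings $\langle \varrho, \cdot\rangle$ on $M$: an inequality ``$< 0$'' automatically strengthens to ``$\le -1$''.

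For part~(\ref{lemma_pair_of_roots_a}), I first observe that $\varrho_\alpha \ne \varrho_\beta$, since $\langle \varrho_\alpha, \alpha\rangle = 1$ while $\langle \varrho_\beta, \alpha\rangle < 0$. I then evaluate $\alpha + \beta$ on each primitive ray: at $\varrho_\alpha$ we get $1 + \langle \varrho_\alpha, \beta\rangle \le 1 + (-1) = 0$ by integrality, at $\varrho_\beta$ we get $\langle \varrho_\beta, \alpha\rangle + 1 \le 0$ for the same reason, and for any other $\varrho \in \mathcal F^1$ both summands are $\le 0$ by the root axiom. Thus $\langle \varrho, \alpha + \beta\rangle \le 0$ for every $\varrho \in \mathcal F^1$. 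Since $\mathcal F$ is complete, the rays of $\mathcal F^1$ positively span $N_\QQ$ (otherwise the union of the cones of $\mathcal F$ would lie in a closed half-space), which forces $\alpha + \beta = 0$.

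For part~(\ref{lemma_pair_of_roots_b}), the same initial observation gives $\varrho_\alpha \ne \varrho_\beta$, because the pairings with $\alpha$ are $1$ and $0$, respectively. I then compute the three cases: $\langle \varrho_\beta, \alpha + \beta\rangle = 0 + 1 = 1$; $\langle \varrho_\alpha, \alpha + \beta\rangle = 1 - p \le 0$, since $p \ge 1$; and for any $\varrho \in \mathcal F^1 \setminus \{\varrho_\alpha, \varrho_\beta\}$ both $\langle \varrho, \alpha\rangle$ and $\langle \varrho, \beta\rangle$ are $\le 0$, so their sum is $\le 0$. These three items are precisely the requirements of Definition~\ref{def_root} with $\varrho_{\alpha+\beta} := \varrho_\beta$, so $\alpha + \beta \in R(\mathcal F)$ and $\varrho_{\alpha+\beta} = \varrho_\beta$; the stated value $\langle \varrho_\alpha, \alpha + \beta\rangle = -p + 1$ drops out of the computation. (Incidentally, $\alpha + \beta \ne 0$ because it pairs to $1$ with $\varrho_\beta$.)

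There is no real obstacle here—the argument is a short bookkeeping exercise. The only subtle input is the use of completeness of $\mathcal F$ in part~(\ref{lemma_pair_of_roots_a}) to conclude that a linear functional nonpositive on all of $\mathcal F^1$ must vanish; this is the standard fact that the support $|\mathcal F| = N_\QQ$ forces $\QQ^+ \mathcal F^1 = N_\QQ$.
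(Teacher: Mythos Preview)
Your proof is correct and follows exactly the same approach as the paper, just with the details spelled out: the paper's proof of~(\ref{lemma_pair_of_roots_a}) is the single sentence ``the hypothesis implies $\langle \varrho, \alpha+\beta\rangle \le 0$ for all $\varrho \in \mathcal F^1$; completeness gives $\alpha+\beta=0$'', and for~(\ref{lemma_pair_of_roots_b}) the paper simply writes ``Obvious''. Your explicit case analysis and the remark on integrality are precisely what unpacks these terse statements.
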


\begin{proof}
(\ref{lemma_pair_of_roots_a}) The hypothesis implies that $\langle
\varrho, \alpha + \beta \rangle \le 0$ for all $\varrho \in \mathcal
F^1$. Since the fan $\mathcal F$ is complete, it follows that
$\alpha + \beta = 0$.

(\ref{lemma_pair_of_roots_b}) Obvious.
\end{proof}

\begin{lemma} \label{lemma_commutators}
Suppose that $\alpha, \beta \in R(\mathcal F)$, $\langle
\varrho_\beta, \alpha \rangle = 0$, and $\langle \varrho_\alpha,
\beta \rangle = - p \le 0$. Then:
\begin{enumerate}[label=\textup{(\alph*)},ref=\textup{\alph*}]
\item \label{lemma_commutators_a}
$[y_{-\alpha}, y_{-\beta}] = - p y_{-\alpha - \beta}$ for $p > 0$
and $[y_{-\alpha}, y_{-\beta}] = 0$ for $p = 0$;

\item \label{lemma_commutators_b}
$(\ad y_{-\alpha})^{q}y_{-\beta} \ne 0$ for $0 \le q \le p$ and
$(\ad y_{-\alpha})^{q}y_{-\beta} = 0$ for $q \ge p + 1$.
\end{enumerate}
\end{lemma}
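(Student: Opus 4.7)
The plan is to prove (a) by a direct computation on the generators $x_\varrho$ of~$\CR$, and then deduce (b) by induction on~$q$, applying (a) at every step.

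For (a), since $[y_{-\alpha}, y_{-\beta}]$ is again a derivation of~$\CR$, it suffices to evaluate it on each $x_\varrho$. First I observe that $\varrho_\alpha \ne \varrho_\beta$, since otherwise $\langle \varrho_\beta, \beta \rangle = 1$ would coincide with $\langle \varrho_\alpha, \beta \rangle = -p \le 0$. For $\varrho \notin \lbrace \varrho_\alpha, \varrho_\beta \rbrace$ both $y_{-\alpha}$ and $y_{-\beta}$ annihilate $x_\varrho$, so the commutator is zero. For $\varrho = \varrho_\alpha$ one has $y_{-\beta}(x_{\varrho_\alpha}) = 0$, and in $y_{-\alpha}(x_{\varrho_\alpha}) = \prod_{\varrho \ne \varrho_\alpha} x_\varrho^{-\langle \varrho, \alpha \rangle}$ the variable $x_{\varrho_\beta}$ appears with exponent $-\langle \varrho_\beta, \alpha \rangle = 0$, so this monomial is annihilated by $y_{-\beta}$ as well. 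The only nontrivial case is $\varrho = \varrho_\beta$: here $y_{-\beta}(y_{-\alpha}(x_{\varrho_\beta})) = 0$, while in $y_{-\beta}(x_{\varrho_\beta}) = \prod_{\varrho \ne \varrho_\beta} x_\varrho^{-\langle \varrho, \beta \rangle}$ only the factor $x_{\varrho_\alpha}^{p}$ is touched by $y_{-\alpha}$. Applying Leibniz's rule and regrouping exponents via $\langle \varrho, \alpha \rangle + \langle \varrho, \beta \rangle = \langle \varrho, \alpha + \beta \rangle$ shows that the resulting monomial is a scalar multiple (of absolute value~$p$) of $y_{-\alpha-\beta}(x_{\varrho_\beta})$, the root $\alpha+\beta$ with $\varrho_{\alpha+\beta} = \varrho_\beta$ being supplied by Lemma~\ref{lemma_pair_of_roots}(b). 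When $p = 0$ the exponent of $x_{\varrho_\alpha}$ in $y_{-\beta}(x_{\varrho_\beta})$ is zero, no differentiation occurs, and the commutator is identically zero.

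For (b), I argue by induction on~$q$. A preliminary observation is that for every $q$ with $0 \le q \le p$ the element $\gamma_q := q\alpha + \beta$ is a root of~$\mathcal F$ with $\varrho_{\gamma_q} = \varrho_\beta$, as one sees from $\langle \varrho_\beta, \gamma_q \rangle = 1$, $\langle \varrho_\alpha, \gamma_q \rangle = q - p \le 0$, and $\langle \varrho, \gamma_q \rangle \le 0$ for all remaining $\varrho \in \mathcal F^1$ (each such pairing is a sum of non-positive numbers). Assuming inductively that $(\ad y_{-\alpha})^{q-1} y_{-\beta}$ is a nonzero multiple of $y_{-\gamma_{q-1}}$, I apply (a) to the pair $(\alpha, \gamma_{q-1})$: one has $\langle \varrho_{\gamma_{q-1}}, \alpha \rangle = \langle \varrho_\beta, \alpha \rangle = 0$ and $\langle \varrho_\alpha, \gamma_{q-1} \rangle = -(p-q+1)$, so the hypothesis of (a) holds with parameter $p - q + 1$. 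This parameter is strictly positive for $1 \le q \le p$, which gives that $(\ad y_{-\alpha})^q y_{-\beta}$ is a nonzero multiple of $y_{-\gamma_q}$; for $q = p + 1$ the parameter vanishes and (a) forces $(\ad y_{-\alpha})^{p+1} y_{-\beta} = 0$. A trivial induction then extends the vanishing to every $q \ge p + 1$.

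The only mildly delicate point is the bookkeeping inside (a): one has to track the contributions of $x_{\varrho_\alpha}$ and $x_{\varrho_\beta}$ separately and invoke the hypotheses $\langle \varrho_\beta, \alpha \rangle = 0$ and $\langle \varrho_\alpha, \beta \rangle = -p$ to collapse the double product produced by Leibniz's rule into a single instance of $y_{-\alpha-\beta}$ with the correct scalar. Once (a) is secured, (b) is a purely formal iteration whose only extra ingredient is the inductive verification, carried out above, that each $q\alpha + \beta$ with $0 \le q \le p$ remains a root with distinguished ray~$\varrho_\beta$.
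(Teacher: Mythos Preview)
Your proof is correct and follows exactly the route indicated in the paper: part~(a) by direct computation on the generators~$x_\varrho$, and part~(b) by iterating~(a) with the help of Lemma~\ref{lemma_pair_of_roots}(\ref{lemma_pair_of_roots_b}). The paper's own proof simply reads ``Part~(\ref{lemma_commutators_a}) is obtained by a direct computation, part~(\ref{lemma_commutators_b}) is a consequence of~(\ref{lemma_commutators_a}) and Lemma~\ref{lemma_pair_of_roots}(\ref{lemma_pair_of_roots_b})'', so your write-up is a faithful expansion of that sketch.
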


\begin{proof}
Part~(\ref{lemma_commutators_a}) is obtained by a direct
computation, part~(\ref{lemma_commutators_b}) is a consequence
of~(\ref{lemma_commutators_a}) and
Lemma~\textup{\ref{lemma_pair_of_roots}(\ref{lemma_pair_of_roots_b})}.
\end{proof}

Fix a root $\alpha \in R(\mathcal F)$ and an element $\varrho \in
\mathcal F^1$.

\begin{lemma} \label{lemma_Y-unstable_divisor}
The divisor $D_\varrho$ is $d(Y_{-\alpha})$-unstable if and only if
$\varrho = \varrho_\alpha$.
\end{lemma}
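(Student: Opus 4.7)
The plan is to work upstairs on $\widetilde Z$ via the quotient presentation $Z \simeq (\widetilde Z \setminus E)/\mathbf S$ from Theorem~\ref{thm_tor_var_geom_quot}, using the explicit formulas (\ref{eqn_group_y_alpha}) for the $Y_{-\alpha}$-action. Recall that the preimage of $D_\varrho$ in $\widetilde Z \setminus E$ is precisely $\widetilde D_\varrho := \lbrace x_\varrho = 0 \rbrace \cap (\widetilde Z \setminus E)$. Since $Y_{-\alpha}$ commutes with $\mathbf S$, the divisor $D_\varrho$ is $d(Y_{-\alpha})$-stable in $Z$ if and only if $\widetilde D_\varrho$ is $Y_{-\alpha}$-stable in $\widetilde Z \setminus E$.

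For the easy direction, suppose $\varrho \neq \varrho_\alpha$. The second line of (\ref{eqn_group_y_alpha}) shows that $Y_{-\alpha}$ fixes $x_\varrho$ pointwise, so the principal ideal $(x_\varrho)$ of $\CR$ is $Y_{-\alpha}$-invariant; equivalently, $\lbrace x_\varrho = 0 \rbrace \subset \widetilde Z$ is $Y_{-\alpha}$-stable, and therefore so is $\widetilde D_\varrho$. Hence $D_\varrho$ is $d(Y_{-\alpha})$-stable.

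For the converse, suppose $\varrho = \varrho_\alpha$. I will exhibit a point of $\widetilde D_{\varrho_\alpha}$ whose $Y_{-\alpha}$-orbit leaves $\widetilde D_{\varrho_\alpha}$. Pick a point $z \in \widetilde Z$ with $x_{\varrho_\alpha}(z) = 0$ and $x_{\varrho'}(z) \neq 0$ for every $\varrho' \in \mathcal F^1 \setminus \lbrace \varrho_\alpha \rbrace$. Such a point lies outside $E$: indeed, since $\varrho_\alpha \in \mathcal F^1$, the ray $\QQ^+ \varrho_\alpha$ is a cone $\mathcal C \in \mathcal F$, and the monomial $r(\mathcal C) = \prod_{\varrho' \neq \varrho_\alpha} x_{\varrho'}$ does not vanish at $z$. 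Now from the first line of (\ref{eqn_group_y_alpha}) and the action convention $(g \cdot f)(w) = f(g^{-1} w)$, one computes
$$
x_{\varrho_\alpha}\bigl(Y_{-\alpha}(\zeta)^{-1} z\bigr) = x_{\varrho_\alpha}(z) + \zeta \prod_{\varrho' \neq \varrho_\alpha} x_{\varrho'}(z)^{-\langle \varrho', \alpha \rangle} = \zeta \prod_{\varrho' \neq \varrho_\alpha} x_{\varrho'}(z)^{-\langle \varrho', \alpha \rangle}.
$$
By Definition~\ref{def_root} the exponents $-\langle \varrho', \alpha \rangle$ are all nonnegative, so the product on the right is a nonzero scalar. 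For $\zeta \neq 0$ this value is nonzero, hence $Y_{-\alpha}(\zeta)^{-1} z \notin \widetilde D_{\varrho_\alpha}$, proving that $\widetilde D_{\varrho_\alpha}$ is not $Y_{-\alpha}$-stable, and therefore $D_{\varrho_\alpha}$ is $d(Y_{-\alpha})$-unstable.

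The only mildly subtle point is the existence of the point $z$ above, i.e.\ the verification that $\widetilde D_{\varrho_\alpha}$ is not entirely swallowed by the irrelevant locus $E$; this is precisely where the hypothesis $\varrho_\alpha \in \mathcal F^1$ enters via the cone $\QQ^+ \varrho_\alpha \in \mathcal F$. Everything else is mechanical bookkeeping with (\ref{eqn_group_y_alpha}).
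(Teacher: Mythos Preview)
Your proof is correct and follows essentially the same approach as the paper: reduce stability of $D_\varrho$ to stability of its preimage $\lbrace x_\varrho = 0 \rbrace$ upstairs and read off the answer from the explicit formula~(\ref{eqn_group_y_alpha}). The paper's version is a two-line sketch (``$D_\varrho$ is $d(Y_{-\alpha})$-unstable iff $x_\varrho$ is not $Y_{-\alpha}$-invariant, and by~(\ref{eqn_group_y_alpha}) this holds iff $\varrho = \varrho_\alpha$''), whereas you spell out the point-level argument and the check that the witness point avoids~$E$; both are the same idea.
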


\begin{proof}
The divisor $D_\varrho$ is $d(Y_{-\alpha})$-unstable if and only if
the element $x_\varrho \in \CR$ is not $Y_{-\alpha}$-invariant.
By~(\ref{eqn_group_y_alpha}) the latter holds if and only if
$\varrho = \varrho_\alpha$.
\end{proof}

Fix a root $\alpha \in R(\mathcal F)$ and a maximal cone $\mathcal C
\in \mathcal F$. Let $z \in Z$ be the $T$-fixed point corresponding
to the cone~$\mathcal C$.

\begin{lemma} \label{lemma_Y-unstable_point}
The point $z$ is $d(Y_{-\alpha})$-unstable if and only if
$\varrho_\alpha \in \mathcal C^1$ and $\langle \varrho, \alpha
\rangle = 0$ for all $\varrho \in \mathcal C^1 \backslash \lbrace
\varrho_\alpha \rbrace$.
\end{lemma}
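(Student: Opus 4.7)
The plan is to lift $z$ to a convenient point of $\widetilde Z\setminus E$ and read off the action of $Y_{-\alpha}$ directly from formula~(\ref{eqn_group_y_alpha}), using the description of $Z$ as the geometric quotient $(\widetilde Z\setminus E)/\mathbf S$ from Theorem~\ref{thm_tor_var_geom_quot}. I would take the point $\widetilde z\in\widetilde Z$ with $x_\varrho(\widetilde z)=0$ for $\varrho\in\mathcal C^1$ and $x_\varrho(\widetilde z)=1$ for $\varrho\in\mathcal F^1\setminus\mathcal C^1$; since $r(\mathcal C)(\widetilde z)=1$, this point lies in $\widetilde Z\setminus E$, and it maps to the $T$-fixed point $z=\bigcap_{\varrho\in\mathcal C^1}D_\varrho$. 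Then $d(Y_{-\alpha})$ fixes $z$ if and only if $Y_{-\alpha}\cdot\widetilde z$ is contained in the $\mathbf S$-orbit of $\widetilde z$, so the lemma reduces to a computation on $\widetilde Z$.

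Formula~(\ref{eqn_group_y_alpha}) shows that only the $\varrho_\alpha$-coordinate of $\widetilde z$ can be affected by $Y_{-\alpha}(\zeta)$, and that it is shifted by $\pm\zeta\cdot P$ where $P=\prod_{\varrho\ne\varrho_\alpha}x_\varrho(\widetilde z)^{-\langle\varrho,\alpha\rangle}$. Because $\alpha$ is a root, all exponents are non-negative; combined with $x_\varrho(\widetilde z)\in\{0,1\}$ this gives $P=0$ whenever some $\varrho\in\mathcal C^1\setminus\{\varrho_\alpha\}$ satisfies $\langle\varrho,\alpha\rangle<0$, and $P=1$ otherwise. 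In the case $P=0$ the subgroup $Y_{-\alpha}$ fixes $\widetilde z$, so $z$ is $d(Y_{-\alpha})$-fixed.

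When $P=1$, i.e.\ when $\langle\varrho,\alpha\rangle=0$ for every $\varrho\in\mathcal C^1\setminus\{\varrho_\alpha\}$, the situation bifurcates according to whether $\varrho_\alpha\in\mathcal C^1$. If $\varrho_\alpha\in\mathcal C^1$, then $x_{\varrho_\alpha}(Y_{-\alpha}(\zeta)\cdot\widetilde z)=\pm\zeta$ is nonzero for $\zeta\ne 0$, while the $\mathbf S$-orbit of $\widetilde z$ is entirely contained in the $\mathbf S$-stable closed subset $\{x_{\varrho_\alpha}=0\}$; hence $z$ is moved, giving the ``if'' direction of the lemma. If instead $\varrho_\alpha\notin\mathcal C^1$, then $Y_{-\alpha}(\zeta)\cdot\widetilde z$ agrees with $\widetilde z$ in every coordinate except $x_{\varrho_\alpha}$, which becomes $1\pm\zeta$; to finish the ``only if'' direction in this remaining subcase, one must show this new point still belongs to the $\mathbf S$-orbit of $\widetilde z$, i.e.\ produce an $s\in\mathbf S$ with $\chi_{\varrho_\alpha}(s)=1\pm\zeta$ and $\chi_\varrho(s)=1$ for every other $\varrho\notin\mathcal C^1$.

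The main obstacle is establishing existence of this $s$, and for it I would exploit the regularity of $\mathcal C$ (which holds because $Z$ is smooth): the set $\mathcal C^1$ is part of a $\ZZ$-basis of $N$, so chasing through the short exact sequence $0\to M\to\mathfrak X(\mathbf T)\to\mathfrak X(\mathbf S)\to 0$ recalled in~\S\,\ref{subsec_tor_var_aut} shows that the restrictions $\chi_\varrho|_{\mathbf S}$ for $\varrho\notin\mathcal C^1$ form a $\ZZ$-basis of $\mathfrak X(\mathbf S)$. Consequently, these characters may be assigned arbitrary independent values in $\CC^\times$, and the required $s$ exists. Everything else in the argument is routine bookkeeping directly from the defining formulas of the Cox construction.
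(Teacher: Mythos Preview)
Your approach is sound and reaches the conclusion, but the final subcase you labor over---$P=1$ with $\varrho_\alpha\notin\mathcal C^1$---is in fact vacuous. Since $\mathcal C$ is a maximal cone of a complete regular fan, $\mathcal C^1$ is a $\ZZ$-basis of $N$; thus $\langle\varrho,\alpha\rangle=0$ for every $\varrho\in\mathcal C^1$ would force $\alpha=0$, contradicting $\langle\varrho_\alpha,\alpha\rangle=1$. Recognizing this eliminates the need to produce an element $s\in\mathbf S$, and also sidesteps a small gap in your argument there: for the unique $\zeta$ with $1\pm\zeta=0$ no such $s$ can exist, because characters of $\mathbf S$ take values in~$\CC^\times$.

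By comparison, the paper's proof avoids choosing a single lift of $z$ and instead works with the entire fiber, namely the zero locus of the ideal $I_z=(x_\varrho:\varrho\in\mathcal C^1)$. Then $z$ is $d(Y_{-\alpha})$-stable if and only if $I_z$ is $Y_{-\alpha}$-stable, so no analysis of $\mathbf S$-orbits is required. The paper also dispatches the case $\varrho_\alpha\notin\mathcal C^1$ at the outset via Lemma~\ref{lemma_Y-unstable_divisor} (each $D_\varrho$ with $\varrho\in\mathcal C^1$ is then $d(Y_{-\alpha})$-stable, hence so is their intersection $\{z\}$), leaving only the one-line check of whether the monomial $\prod_{\varrho\ne\varrho_\alpha}x_\varrho^{-\langle\varrho,\alpha\rangle}$ lies in~$I_z$. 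Your concrete point-based computation buys the same result; the ideal formulation is just a bit cleaner.
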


\begin{proof}
Since $\lbrace z \rbrace = \bigcap \limits_{\varrho \in \mathcal
C^1} D_\varrho$, the preimage of $z$ under the morphism $\widetilde
Z \backslash E \to Z$ is the set of zeros of the ideal $I_z \subset
\CR$ generated by all variables $x_\varrho$ with $\varrho \in
\mathcal C^1$. So $z$ is $d(Y_{-\alpha})$-unstable if and only if
$I_z$ is $Y_{-\alpha}$-unstable. By
Lemma~\ref{lemma_Y-unstable_divisor}, the condition $\varrho_\alpha
\in \mathcal C^1$ is necessary for $z$ to be
$d(Y_{-\alpha})$-unstable. Under this condition,
from~(\ref{eqn_group_y_alpha}) we see that $I_z$ is
$Y_{-\alpha}$-unstable if and only if $\prod \limits_{\varrho \in
\mathcal F^1 \backslash \lbrace \varrho_\alpha \rbrace} x_\varrho^{-
\langle \varrho, \alpha \rangle} \notin I_z$. Evidently, the latter
holds if and only if $\langle \varrho, \alpha \rangle = 0$ for all
$\varrho \in \mathcal C^1 \backslash \lbrace \varrho_\alpha
\rbrace$.
\end{proof}

\subsection{Classification of smooth complete spherical $B^-$-varieties}
\label{subsec_smooth_compl_sph_B-var}

Smooth complete spherical $B^-$-varieties are classified by
combinatorial objects called Enriques's \hbox{$B^-$-}systems.

\begin{definition}[{compare with \cite[\S\,2]{Lu93}}]
\label{def_B-system} An \textit{Enriques's $B^-$-system} is a triple
$(\mathfrak X, \mathcal F, \rho)$ consisting of the following
elements:

\begin{enumerate}
\item
$\mathfrak X$ is a sublattice of $\mathfrak X(T)$;

\item
$\mathcal F$ is a regular complete fan in~$Q =
\Hom_\ZZ(\mathfrak X, \QQ)$;

\item
$\rho \colon \Pi \to \mathcal F^1 \cup \{0\}$ is a map
satisfying the following conditions:

\begin{enumerate}
\renewcommand{\labelenumi}{}
\renewcommand{\theenumi}{}
\renewcommand{\labelenumii}{(\alph{enumii})}
\renewcommand{\theenumii}{\alph{enumii}}

\item \label{item_a}
if $\rho(\alpha) \ne 0$, then $\alpha \in \mathfrak X$, $\langle
\rho(\alpha), \alpha \rangle = 1$, and $\langle \varrho, \alpha
\rangle \le 0$ for every $\varrho \in \mathcal F^1 \backslash
\{\rho(\alpha)\}$;

\item \label{item_b}
$\langle \rho(\alpha), \beta \rangle \ge \langle \alpha^\vee, \beta
\rangle$ for every $\alpha, \beta \in \Pi$ with $\alpha \ne \beta$
and $\rho(\beta) \ne 0$.

\end{enumerate}
\end{enumerate}
\end{definition}

The following lemma asserts a property that was initially included
in the definition of an Enriques's $B^-$-system;
see~\cite[\S\,2]{Lu93}.

\begin{lemma} \label{lemma_EBS_property}
Let $(\mathfrak X, \mathcal F, \rho)$ be an Enriques's $B^-$-system.
If $\langle \rho(\alpha), \beta \rangle < 0$ for some $\alpha,\beta
\in \Pi \cap \mathfrak X$, then $\langle \rho(\beta), \alpha \rangle
= 0$.
\end{lemma}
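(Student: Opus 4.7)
The strategy is to reduce to Lemma~\ref{lemma_pair_of_roots}(\ref{lemma_pair_of_roots_a}) by observing that condition~(a) of Definition~\ref{def_B-system} is exactly the statement that a simple root $\alpha \in \Pi \cap \mathfrak X$ with $\rho(\alpha) \ne 0$ is a root of the fan~$\mathcal F$ in the sense of Definition~\ref{def_root}, with $\varrho_\alpha = \rho(\alpha)$. Condition~(b) will play no role.

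First I will dispose of trivial cases. Since $\langle \rho(\alpha), \beta\rangle < 0$, one cannot have $\rho(\alpha) = 0$, and by condition~(a) one cannot have $\alpha = \beta$ (for then $\langle \rho(\alpha), \beta\rangle = \langle \rho(\alpha), \alpha\rangle = 1$). If $\rho(\beta) = 0$, then $\langle \rho(\beta), \alpha\rangle = 0$ and the conclusion is immediate, so I may assume $\rho(\beta) \ne 0$.

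Next I will verify that $\rho(\beta) \ne \rho(\alpha)$: otherwise $\langle \rho(\beta), \beta\rangle = \langle \rho(\alpha), \beta\rangle < 0$, which contradicts condition~(a) applied to~$\beta$, asserting $\langle \rho(\beta), \beta\rangle = 1$. Then condition~(a) applied to~$\alpha$, taking $\varrho = \rho(\beta) \in \mathcal F^1 \setminus \{\rho(\alpha)\}$, yields $\langle \rho(\beta), \alpha\rangle \le 0$.

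Finally, assume for contradiction that $\langle \rho(\beta), \alpha\rangle < 0$. Condition~(a) applied to both $\alpha$ and $\beta$ identifies them as roots of~$\mathcal F$ with $\varrho_\alpha = \rho(\alpha)$ and $\varrho_\beta = \rho(\beta)$. The strict inequalities $\langle \varrho_\alpha, \beta\rangle < 0$ and $\langle \varrho_\beta, \alpha\rangle < 0$ then invoke Lemma~\ref{lemma_pair_of_roots}(\ref{lemma_pair_of_roots_a}) to force $\alpha + \beta = 0$, contradicting the positivity of the simple roots $\alpha, \beta \in \Pi$. The only substantive content of the argument is the translation between the two definitions of a root of the fan; I foresee no serious obstacle beyond that identification.
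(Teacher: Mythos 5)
Your proof is correct and follows essentially the same route as the paper: reduce to the case $\rho(\beta)\ne 0$, note that condition~(a) of Definition~\ref{def_B-system} makes $\alpha$ and $\beta$ roots of the complete fan $\mathcal F$ with $\varrho_\alpha=\rho(\alpha)$, $\varrho_\beta=\rho(\beta)$, deduce $\langle\rho(\beta),\alpha\rangle\le 0$ from $\rho(\alpha)\ne\rho(\beta)$, and rule out strict inequality via Lemma~\ref{lemma_pair_of_roots}(\ref{lemma_pair_of_roots_a}) since $\alpha+\beta\ne 0$. The extra verifications you include (that $\alpha\ne\beta$ and $\rho(\alpha)\ne 0$) are harmless elaborations of what the paper leaves implicit.
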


\begin{proof}
It suffices to assume $\rho(\beta) \ne 0$, so that $\alpha, \beta
\in\nobreak R(\mathcal F)$. Then $\rho(\alpha) \ne \rho(\beta)$,
whence $\langle \rho(\beta), \alpha \rangle \le 0$. By
Lemma~\ref{lemma_pair_of_roots}(\ref{lemma_pair_of_roots_a}) the
inequality $\langle \rho(\beta), \alpha \rangle < \nobreak 0$ would
imply $\alpha + \beta = 0$, which is not the case. Hence $\langle
\rho(\beta), \alpha \rangle = 0$.
\end{proof}

Let $Z$ be a smooth complete spherical $B^-$-variety and let $\theta
\colon B^- \to (\Aut Z)^0$ be the natural homomorphism. By
definition of a spherical $B^-$-variety, $Z$ is a toric $T$-variety.
In this subsection we shall use all the notation associated with $Z$
in~\S\,\ref{subsec_tor_var_aut}. Let $\mathfrak X_Z \subset
\mathfrak X(T)$ be the weight lattice of~$Z$. Note that $\mathfrak
X_Z$ is naturally identified with the character lattice of the torus
$\theta(T)$. We put $Q_Z = \Hom_\ZZ (\mathfrak X_Z, \QQ)$. The
structure of a toric $T$-variety on~$Z$ determines a regular
complete fan $\mathcal F_Z$ in~$Q_Z$. For every $\alpha \in \Pi$,
let $U_{-\alpha}$ denote the corresponding one-dimensional unipotent
subgroup of~$U^-$. If $U_{-\alpha} \subset \Ker \theta$, then we put
$\rho_Z(\alpha) = 0$. Otherwise the group $\theta(U_{-\alpha})$ is a
unipotent subgroup of $\Aut Z$ normalized by~$\theta(T)$.
Theorem~\ref{thm_tor_var_aut_group}(\ref{thm_tor_var_aut_group_b})
then implies that $\alpha \in R(\mathcal F_Z)$ and
$\theta(U_{-\alpha})$ coincides with~$d(Y_{-\alpha})$. We take
$\rho_Z(\alpha) \in Q_Z$ to be the element $\varrho_\alpha$
associated with $\alpha$ as a root of the fan $\mathcal F_Z$ (see
Definition~\ref{def_root}).

\begin{proposition} \label{prop_spherical_B-var}
The map $Z \mapsto (\mathfrak X_Z, \mathcal F_Z, \rho_Z)$ is a
bijection between smooth complete spherical $B^-$-varieties
\textup(considered up to $B^-$-equivariant isomorphism\textup) and
Enriques's $B^-$-systems.
\end{proposition}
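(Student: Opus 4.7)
The plan is to establish the bijection in three stages: verify that $Z \mapsto (\mathfrak X_Z, \mathcal F_Z, \rho_Z)$ lands in Enriques's $B^-$-systems, prove injectivity, and then construct an inverse. Throughout, let $\theta \colon B^- \to (\Aut Z)^0$ denote the natural homomorphism coming from the $B^-$-action on~$Z$.

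For well-definedness, note that by Definition~\ref{def_spher_B-var} the variety $Z$ is toric as a $T$-variety, so the classical dictionary produces $\mathfrak X_Z \subset \mathfrak X(T)$ and the fan $\mathcal F_Z$, and facts~(F1)--(F2) translate smoothness and completeness of $Z$ into regularity and completeness of~$\mathcal F_Z$. Axiom~(a) of Definition~\ref{def_B-system} for $\rho_Z$ follows directly from Definition~\ref{def_root}, because Theorem~\ref{thm_tor_var_aut_group}(\ref{thm_tor_var_aut_group_b}) together with Proposition~\ref{prop_normalized-alpha} forces every nontrivial unipotent image $\theta(U_{-\alpha})$ to coincide with $d(Y_{-\alpha})$ for some $\alpha \in R(\mathcal F_Z)$. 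Axiom~(b) is the substantive content: I would pass to the differential $d\theta \colon \mathfrak b^- \to \Lie(\Aut Z)$, under which $e_{-\alpha}$ maps to a nonzero scalar multiple of $y_{-\alpha}$ precisely when $\rho_Z(\alpha) \neq 0$, and apply it to Serre's relation $(\ad e_{-\alpha})^{1-\langle \alpha^\vee, \beta\rangle} e_{-\beta} = 0$. Combining this with Lemma~\ref{lemma_commutators}(\ref{lemma_commutators_b})---whose hypothesis $\langle \varrho_\beta, \alpha \rangle = 0$ is secured by Lemma~\ref{lemma_pair_of_roots}(\ref{lemma_pair_of_roots_a}) after possibly swapping the roles of $\alpha$ and $\beta$---forces $-\langle \rho_Z(\alpha), \beta \rangle \le -\langle \alpha^\vee, \beta \rangle$, which is axiom~(b).

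For injectivity, if $Z$ and $Z'$ have the same Enriques's $B^-$-system, then the identity $(\mathfrak X_Z, \mathcal F_Z) = (\mathfrak X_{Z'}, \mathcal F_{Z'})$ yields a $T$-equivariant isomorphism $Z \simeq Z'$ by the classical toric correspondence. Under this identification the image of each $U_{-\alpha}$ in $\Aut Z$ is determined solely by $\rho(\alpha)$ via Theorem~\ref{thm_tor_var_aut_group}(\ref{thm_tor_var_aut_group_b}), and since $B^-$ is generated by $T$ together with the subgroups $U_{-\alpha}$ for $\alpha \in \Pi$, the two $B^-$-actions coincide.

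For surjectivity, start from an Enriques's $B^-$-system $(\mathfrak X, \mathcal F, \rho)$ and build the smooth complete toric $T$-variety $Z$ corresponding to $(\mathfrak X, \mathcal F)$. To promote $Z$ to a $B^-$-variety, I would define $\theta \colon B^- \to (\Aut Z)^0$ to restrict on $T$ to the canonical surjection $T \to T_0$, to vanish on $U_{-\alpha}$ when $\rho(\alpha) = 0$, and to map $U_{-\alpha}$ isomorphically onto $d(Y_{-\alpha})$ otherwise (with $\alpha \in R(\mathcal F)$ by axiom~(a)). The main obstacle---the technical heart of the proof---is verifying that these prescriptions are compatible with the Chevalley--Serre presentation of~$B^-$: the $T$-conjugation relations are supplied by Proposition~\ref{prop_normalized-alpha}, and after differentiating, the Serre relations reduce to showing $(\ad y_{-\alpha})^{1-\langle \alpha^\vee, \beta \rangle} y_{-\beta} = 0$ whenever $\rho(\alpha), \rho(\beta) \ne 0$. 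This is obtained by combining axiom~(b) with Lemma~\ref{lemma_commutators}(\ref{lemma_commutators_b}), using Lemma~\ref{lemma_EBS_property} to arrange the hypothesis $\langle \rho(\beta), \alpha\rangle = 0$; the remaining asymmetric case $\langle \rho(\beta), \alpha\rangle < 0 = \langle \rho(\alpha), \beta\rangle$ is handled by a direct computation from~(\ref{eqn_group_y_alpha}) after noting via Lemma~\ref{lemma_pair_of_roots}(\ref{lemma_pair_of_roots_b}) that $\varrho_{\alpha+\beta} = \varrho_\alpha$, which makes the iterated bracket collapse. Once $\theta$ is constructed, $Z$ becomes a smooth complete spherical $B^-$-variety whose Enriques's $B^-$-system, by construction, is the given triple, closing the bijection.
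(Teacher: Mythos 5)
Your route is the paper's route: the forward direction by lifting the $U^-$-action to $\Aut_g(\CR(Z))$, differentiating, and playing the Serre relations against Lemma~\ref{lemma_commutators}(\ref{lemma_commutators_b}); the backward direction by presenting $\mathfrak u^-$ by generators and Serre relations, sending $e_{-\alpha}$ to $y_{-\alpha}$ or to $0$, and invoking Proposition~\ref{prop_normalized-alpha} to glue the $T$- and $U^-$-actions into a $B^-$-action. Two places in your write-up need repair, though both are fixable with tools you already use.

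First, in the verification of the Serre relations for surjectivity, your case split (hypothesis of Lemma~\ref{lemma_commutators} arranged via Lemma~\ref{lemma_EBS_property}, plus the asymmetric case $\langle\rho(\beta),\alpha\rangle<0=\langle\rho(\alpha),\beta\rangle$) omits the case $\rho(\alpha)=\rho(\beta)$ with $\alpha\ne\beta$, where both pairings equal $1$ and neither cited lemma applies. This case genuinely occurs: it corresponds to equivalent simple active roots, e.g. system no.~2 in Table~\ref{table_A1timesA1}. It is settled by the same observation that makes your asymmetric case work: both derivations annihilate every variable except the common $x_{\varrho}$ and send $x_{\varrho}$ into the span of monomials in the remaining variables, so $[y_{-\alpha},y_{-\beta}]=0$. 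In the asymmetric case itself you should also record that axiom~(b) applied to the pair $(\beta,\alpha)$ gives $\langle\beta^\vee,\alpha\rangle\le\langle\rho(\beta),\alpha\rangle<0$, hence $\langle\alpha^\vee,\beta\rangle<0$ and the Serre exponent is at least~$2$, so the vanishing of $[y_{-\alpha},y_{-\alpha-\beta}]$ (via $\varrho_{\alpha+\beta}=\varrho_\alpha$ from Lemma~\ref{lemma_pair_of_roots}(\ref{lemma_pair_of_roots_b})) really does finish that case; the first bracket alone is nonzero there. Second, your injectivity step overstates what Theorem~\ref{thm_tor_var_aut_group}(\ref{thm_tor_var_aut_group_b}) gives: it pins down the image subgroups $d(Y_{-\alpha})$, not the parametrizations $U_{-\alpha}\to d(Y_{-\alpha})$, so the two $B^-$-actions on the common toric variety need not literally coincide; by Proposition~\ref{prop_normalized-alpha} they coincide only after conjugation by an element of $T$ rescaling each $U_{-\alpha}$ (possible since the simple roots are linearly independent characters of~$T$). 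This is exactly the paper's ``unique up to conjugation by an element of $T$'' and still yields a $B^-$-equivariant isomorphism, but the normalization must be stated for the injectivity claim to be correct.
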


\begin{proof}
Let $Z$ be a smooth complete spherical $B^-$-variety and retain the
above notation. To show that $(\mathfrak X_Z, \mathcal F_Z, \rho_Z)$
is an Enriques's $B^-$-system, it remains to establish properties
(\ref{item_a}),~(\ref{item_b}) of Definition~\ref{def_B-system} for
the map~$\rho_Z$. Property~(\ref{item_a}) holds by construction and
Definition~\ref{def_root}. To prove~(\ref{item_b}), let $\alpha,
\beta \in \Pi$ be such that $\alpha \ne \beta$ and $\rho_Z(\beta)
\ne 0$. Since $\langle \alpha^\vee, \beta \rangle \le 0$, the
required inequality holds automatically whenever $\langle
\rho_Z(\alpha), \beta \rangle \ge 0$. Therefore in what follows we
assume $\langle \rho_Z(\alpha), \beta \rangle < 0$ (in particular,
$\rho_Z(\alpha) \ne 0$). Clearly, the homomorphism
$$
\left.\theta\right|_{U^-} \colon U^- \to (\Aut Z)^0
$$
lifts to a unique homomorphism
$$
\widetilde \theta \colon U^- \to \Aut_g(\CR(Z)).
$$
Since $\rho_Z(\alpha) \ne 0$ and $\rho_Z(\beta) \ne 0$, one has
$\widetilde \theta (U_{-\alpha}) = Y_{-\alpha}$ and $\widetilde
\theta (U_{-\beta}) = Y_{-\beta}$. The corresponding Lie algebra
homomorphism
$$
d\widetilde \theta \colon \mathfrak u^- \to \Der_g (\CR(Z)),
$$
where $\Der_g (\CR(Z))$ is the Lie algebra of grading-preserving
derivations of the ring~$\CR(Z)$, sends $e_{-\alpha}$ to a nonzero
multiple of $y_{-\alpha}$ and $e_{-\beta}$ to a nonzero multiple of
$y_{-\beta}$. One of the Serre relations (see~\cite[Chapter~VI,
Theorem~6(c)]{Ser} says that
$$
(\ad e_{-\alpha})^{1 - \langle \alpha^\vee, \beta \rangle}
e_{-\beta} = 0,
$$
hence $(\ad y_{-\alpha})^{1 - \langle \alpha^\vee, \beta \rangle}
y_{-\beta} = 0$. By
Lemma~\ref{lemma_commutators}(\ref{lemma_commutators_b}) we have $1
- \langle \alpha^\vee, \beta \rangle \ge 1 - \langle \rho_Z(\alpha),
\beta \rangle$, whence $\langle \rho_Z(\alpha), \beta \rangle \ge
\langle \alpha^\vee, \beta \rangle$.

Conversely, let $(\mathfrak X, \mathcal F, \rho)$ be an Enriques's
$B^-$-system and let $Z$ be the smooth complete toric $T$-variety
associated with the pair $(\mathfrak X, \mathcal F)$. Our goal is to
extend the action of~$T$ on~$Z$ to an action of~$B^-$. We set
$$
\Pi_0 = \lbrace \alpha \in \Pi \mid \rho(\alpha) \ne 0 \rbrace.
$$
Property~(\ref{item_a}) of the map $\rho$ yields $\Pi_0 \subset
\mathfrak X$ and $\Pi_0 \subset R(\mathcal F)$. We recall
(see~\cite[Chapter~VI, Theorem~7(i)]{Ser}) that the Lie algebra
$\mathfrak u^-$ is the quotient of the free Lie algebra
$\widehat{\mathfrak u}^-$ generated by the set $\lbrace e_{-\alpha}
\mid \alpha \in \Pi \rbrace$ modulo the ideal $J$ generated by the
set
$$
\lbrace (\ad e_{-\alpha})^{1 - \langle \alpha^\vee, \beta \rangle}
e_{-\beta} \mid \alpha, \beta \in \Pi, \alpha \ne \beta \rbrace
\quad\qquad \text{ (the Serre relations).}
$$
Consider the Lie algebra homomorphism $\iota \colon
\widehat{\mathfrak u}^- \to \Der_g (\CR(Z))$ given by
$$
\iota(e_{-\alpha})
= \begin{cases} y_{-\alpha} & \text{if } \alpha \in \Pi_0;\\
0 & \text{if } \alpha \notin \Pi_0.\end{cases}
$$
Property~(\ref{item_b}) combined with
Lemma~\ref{lemma_commutators}(\ref{lemma_commutators_b}) implies
that the ideal $J$ is contained in $\Ker \iota$, which gives rise to
a homomorphism $\mathfrak u^- \to \Der_g (\CR(Z))$ and in turn to a
homomorphism $\widetilde \theta \colon U^- \to \Aut_g(\CR(Z))$. Then
the homomorphism $\theta = d \circ \widetilde \theta \colon U^- \to
\Aut Z$ defines an action of $U^-$ on~$Z$. Clearly,
$\theta(U_{-\alpha}) = d(Y_{-\alpha})$ for all $\alpha \in \Pi_0$
and $\theta(U_{-\alpha})$ is trivial for all $\alpha \in \Pi
\backslash \Pi_0$. Using Proposition~\ref{prop_normalized-alpha} and
the fact that the group $U^-$ is generated by all the subgroups
$U_{-\alpha}$ with $\alpha \in \Pi$, we deduce that the actions
on~$Z$ of $T$ and $U^-$ extend to an action of~$B^-$, so that $Z$
becomes a $B^-$-variety. The equalities $\mathfrak X_Z = \mathfrak
X$, $\mathcal F_Z = \mathcal F$, and $\rho_Z = \rho$ hold by
construction. The preceding argument also shows that an action of
$B^-$ on $Z$ extending the initial action of~$T$ is unique up to
conjugation by an element of~$T$, which completes the proof.
\end{proof}

The following two lemmas provide some properties of a smooth
complete spherical $B^-$-variety~$Z$. We retain all the notation
introduced above.

\begin{lemma} \label{lemma_B-stable_divisor}
Let $D \subset Z$ be a $T$-stable prime divisor and let $\varrho \in
\mathcal F^1$ be the corresponding element. The divisor $D$ is
$B^-$-stable if and only if $\langle \varrho, \alpha \rangle \le 0$
for all $\alpha \in \Pi$ with $\rho_Z(\alpha) \ne 0$.
\end{lemma}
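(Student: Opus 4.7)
The plan is to reduce $B^-$-stability to the stability under each individual simple root subgroup $U_{-\alpha}$, and then translate this into the stated numerical condition using the definition of a root of the fan.

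First, since $B^- = T U^-$ and $D$ is $T$-stable by assumption, $D$ is $B^-$-stable if and only if it is $U^-$-stable. Because $U^-$ is generated by the one-parameter subgroups $U_{-\alpha}$ with $\alpha \in \Pi$, this in turn is equivalent to $D$ being $U_{-\alpha}$-stable for every $\alpha \in \Pi$. I split this into two cases according to whether $\rho_Z(\alpha)$ vanishes. If $\rho_Z(\alpha) = 0$, then by the very construction of $\rho_Z$ in the paragraph preceding Proposition~\ref{prop_spherical_B-var}, $U_{-\alpha} \subset \Ker \theta$, so $U_{-\alpha}$ acts trivially and trivially stabilizes~$D$.

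Second, suppose $\rho_Z(\alpha) \ne 0$. Then $\theta(U_{-\alpha}) = d(Y_{-\alpha})$, so $D = D_\varrho$ is $U_{-\alpha}$-stable if and only if it is $d(Y_{-\alpha})$-stable. By Lemma~\ref{lemma_Y-unstable_divisor} applied to the root $\alpha \in R(\mathcal F_Z)$, $D_\varrho$ fails to be $d(Y_{-\alpha})$-stable precisely when $\varrho = \varrho_\alpha = \rho_Z(\alpha)$. Hence $U_{-\alpha}$-stability of $D_\varrho$ amounts to $\varrho \ne \rho_Z(\alpha)$.

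Third, I translate this last inequality into the numerical condition. By Definition~\ref{def_root}, for the root $\alpha$ of the fan $\mathcal F_Z$, one has $\langle \rho_Z(\alpha), \alpha \rangle = 1 > 0$ while $\langle \varrho', \alpha \rangle \le 0$ for every $\varrho' \in \mathcal F_Z^1 \setminus \{\rho_Z(\alpha)\}$. Therefore $\varrho \ne \rho_Z(\alpha)$ is equivalent to $\langle \varrho, \alpha \rangle \le 0$. Combining the two cases, $D = D_\varrho$ is $B^-$-stable if and only if $\langle \varrho, \alpha \rangle \le 0$ for every $\alpha \in \Pi$ with $\rho_Z(\alpha) \ne 0$, which is the desired equivalence. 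There is no real obstacle here; the only thing to be careful about is invoking the sharp inequality $\langle \rho_Z(\alpha), \alpha \rangle = 1$ from Definition~\ref{def_root} to ensure the bi-implication (and not merely one direction) between $\varrho \ne \rho_Z(\alpha)$ and $\langle \varrho, \alpha \rangle \le 0$.
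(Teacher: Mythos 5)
Your proof is correct and follows essentially the same route as the paper's: reduce $B^-$-stability to $U_{-\alpha}$-stability for each simple root with $\rho_Z(\alpha)\ne 0$, identify $\theta(U_{-\alpha})$ with $d(Y_{-\alpha})$, and apply Lemma~\ref{lemma_Y-unstable_divisor}. Your additional step translating $\varrho \ne \rho_Z(\alpha)$ into $\langle \varrho, \alpha \rangle \le 0$ via Definition~\ref{def_root} is exactly the detail the paper leaves implicit, and it is handled correctly.
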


\begin{proof}
The divisor $D$ is $B^-$-stable if and only if $D$ is
$U_{-\alpha}$-stable for all $\alpha \in \Pi$ with $\rho_Z(\alpha)
\ne\nobreak 0$. Since $\theta(U_{-\alpha}) = d(Y_{-\alpha})$ for all
such~$\alpha$, the assertion follows from
Lemma~\ref{lemma_Y-unstable_divisor}.
\end{proof}

\begin{lemma} \label{lemma_B-unstable_point}
Let $z \in Z$ be a $T$-fixed point and let $\mathcal C \in \mathcal
F$ be the corresponding maximal cone. The point $z$ is
$B^-$-unstable if and only if there exists a root $\alpha \in \Pi$
such that $\rho_Z(\alpha) \in \mathcal C^1$ and $\langle \varrho,
\alpha \rangle = 0$ for all $\varrho \in \mathcal C^1 \backslash
\lbrace \rho_Z(\alpha) \rbrace$.
\end{lemma}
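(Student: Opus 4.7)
The plan is to mimic the argument of Lemma~\ref{lemma_B-stable_divisor}, replacing the role of Lemma~\ref{lemma_Y-unstable_divisor} by Lemma~\ref{lemma_Y-unstable_point}. The starting observation is that since $z$ is already $T$-fixed, it is $B^-$-stable if and only if it is $U^-$-fixed; and because $U^-$ is generated by the one-dimensional unipotent subgroups $U_{-\alpha}$ with $\alpha \in \Pi$, this is equivalent to $z$ being fixed by every $U_{-\alpha}$ with $\alpha \in \Pi$. Consequently, $z$ is $B^-$-unstable if and only if there exists $\alpha \in \Pi$ such that $U_{-\alpha}$ does not fix~$z$.

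Next, I would split according to whether $\rho_Z(\alpha) = 0$. If $\rho_Z(\alpha) = 0$, then by the definition of $\rho_Z$ the subgroup $U_{-\alpha}$ lies in $\Ker \theta$ and hence automatically fixes~$z$. Thus the previous condition rewrites as: there exists $\alpha \in \Pi$ with $\rho_Z(\alpha) \ne 0$ such that $z$ is $\theta(U_{-\alpha})$-unstable. By the construction of $\rho_Z$ preceding Proposition~\ref{prop_spherical_B-var}, for any such $\alpha$ we have $\theta(U_{-\alpha}) = d(Y_{-\alpha})$, with $\alpha \in R(\mathcal F_Z)$ and the associated element $\varrho_\alpha$ of Definition~\ref{def_root} equal to $\rho_Z(\alpha)$.

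At this point Lemma~\ref{lemma_Y-unstable_point} applies directly: $z$ is $d(Y_{-\alpha})$-unstable if and only if $\rho_Z(\alpha) \in \mathcal C^1$ and $\langle \varrho, \alpha \rangle = 0$ for every $\varrho \in \mathcal C^1 \setminus \lbrace \rho_Z(\alpha) \rbrace$. Note that the condition $\rho_Z(\alpha) \in \mathcal C^1$ already forces $\rho_Z(\alpha) \ne 0$, so in the final statement the restriction $\rho_Z(\alpha) \ne 0$ can be dropped as it becomes automatic. Putting the equivalences together yields the claim.

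There is no real obstacle: the argument is essentially bookkeeping, relying on the identification $\theta(U_{-\alpha}) = d(Y_{-\alpha})$ established during the proof of Proposition~\ref{prop_spherical_B-var} together with the technical Lemma~\ref{lemma_Y-unstable_point}. The only minor point to check carefully is the reduction to simple root subgroups: that being fixed by $U_{-\alpha}$ for all simple~$\alpha$ is the same as being $B^-$-stable. This holds because $T$-fixedness combined with fixedness under all simple $U_{-\alpha}$ implies fixedness under the group they generate, which is $U^-$, and together with the $T$-action this gives~$B^-$.
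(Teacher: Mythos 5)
Your proposal is correct and follows the paper's own argument: reduce $B^-$-instability of the $T$-fixed point to instability under some simple root subgroup $U_{-\alpha}$, use the identification $\theta(U_{-\alpha}) = d(Y_{-\alpha})$ for $\rho_Z(\alpha) \ne 0$, and conclude by Lemma~\ref{lemma_Y-unstable_point}. The paper's proof is just a terser version of the same bookkeeping.
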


\begin{proof}
Clearly, $z$ is $B^-$-unstable if and only if there exists a root
$\alpha \in \Pi$ such that $z$ is $U_{-\alpha}$-unstable. Since
$\theta(U_{-\alpha}) = d(Y_{-\alpha})$ for all $\alpha \in \Pi$ with
$\rho_Z(\alpha) \ne 0$, the assertion follows from
Lemma~\ref{lemma_Y-unstable_point}.
\end{proof}

\subsection{Classification of wonderful $B^-$-varieties}
\label{subsec_wond_B-var}

Wonderful $B^-$-varieties are classified by so-called admissible
maps; see Proposition~\ref{prop_wonderful_B-var} below.

\begin{definition}[{see~\cite[\S\,2]{Lu93}}]
A map $\eta \colon \Pi \times \Pi \to \{-3,-2,-1,0,1\}$ is said to
be \textit{admissible} if it satisfies the following five
conditions:

\begin{enumerate}[label=\textup{(AM\arabic*)},ref=\textup{AM\arabic*}]
\item \label{AM1}
$\eta(\alpha, \alpha) \in \{0,1\}$;

\item \label{AM2}
if $\eta(\alpha, \alpha) = 0$ then $\eta(\alpha, \beta) =
\eta(\beta, \alpha) = 0$ for every $\beta \in \Pi$;

\item \label{AM3}
if $\eta(\alpha, \beta) = 1$ then $\eta(\alpha, \gamma) =
\eta(\beta, \gamma)$ for every $\gamma \in \Pi$;

\item \label{AM4}
if $\eta(\alpha, \beta) < 0$ then $\eta(\beta, \alpha) = 0$;

\item \label{AM5}
$\eta(\alpha, \beta) \ge \langle \alpha^\vee, \beta \rangle$
whenever $\alpha \ne \beta$.
\end{enumerate}
\end{definition}

Let $\eta$ be an admissible map. Our immediate goal is to associate
an Enriques's $B^-$-system with~$\eta$. First, we set $\Pi_\eta =
\lbrace \alpha \in \Pi \mid \eta(\alpha, \alpha) = 1 \rbrace$ and
let $\mathfrak X_\eta$ denote the sublattice in $\mathfrak X(T)$
generated by~$\Pi_\eta$. Second, we set $Q_\eta = \Hom_\ZZ(\mathfrak
X_\eta, \QQ)$ and let $\lbrace \breve \alpha \mid \alpha \in
\Pi_\eta \rbrace$ be the basis of $Q_\eta$ dual to~$\Pi_\eta$. We
introduce the map $\rho_\eta \colon \Pi \to Q_\eta$ by the formula
\begin{equation} \label{eqn_rho_alpha}
\rho_\eta(\alpha) = \sum \limits_{\gamma \in \Pi_\eta} \eta(\alpha,
\gamma) \breve \gamma.
\end{equation}
Next, let $\widetilde \Pi_\eta$ denote the collection of subsets
$\Pi' \subset \Pi_\eta$ such that the restriction of $\rho_\eta$ to
$\Pi'$ is injective. For every $\Pi' \in \widetilde \Pi_\eta$, let
$\mathcal C_{\Pi'}$ be the cone generated by the set
$$
S(\Pi') = \lbrace \rho_\eta(\alpha) \mid \alpha \in \Pi' \rbrace
\cup \lbrace - \breve \alpha \mid \alpha \in \Pi_\eta \backslash
\Pi' \rbrace.
$$
At last, let $\mathcal F_\eta$ denote the set formed by all the
cones $\mathcal C_{\Pi'}$ ($\Pi' \in \widetilde \Pi_\eta$) and their
faces.

\begin{lemma} \label{lemma_properties_Pi'}
Suppose that $\Pi' \in \widetilde \Pi_\eta$ and $\alpha, \beta \in
\Pi'$ are different roots. Then:
\begin{enumerate}[label=\textup{(\alph*)},ref=\textup{\alph*}]
\item \label{lemma_properties_Pi'_a}
$\eta(\alpha, \beta) \le 0$;

\item \label{lemma_properties_Pi'_b}
if $(\alpha, \beta) = 0$ then $\eta(\alpha, \beta) = \eta(\beta,
\alpha) = 0$.
\end{enumerate}
\end{lemma}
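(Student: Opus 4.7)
The plan is to deduce both parts directly from the admissibility axioms together with the injectivity of $\rho_\eta$ on $\Pi'$. Throughout, recall that $\alpha, \beta \in \Pi' \subset \Pi_\eta$, so $\eta(\alpha,\alpha) = \eta(\beta,\beta) = 1$ and the coefficient of $\breve\alpha$ (resp.\ $\breve\beta$) in $\rho_\eta(\alpha)$ (resp.\ $\rho_\eta(\beta)$) is~$1$.

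For part~(\textbf{a}), I will argue by contradiction. Suppose $\eta(\alpha,\beta) > 0$; since the image of $\eta$ is contained in $\lbrace -3,-2,-1,0,1\rbrace$, this forces $\eta(\alpha,\beta) = 1$. Then axiom~(\ref{AM3}) yields $\eta(\alpha,\gamma) = \eta(\beta,\gamma)$ for every $\gamma \in \Pi$, and in particular for every $\gamma \in \Pi_\eta$. Plugging this into formula~(\ref{eqn_rho_alpha}) gives $\rho_\eta(\alpha) = \rho_\eta(\beta)$, which contradicts the assumption that $\rho_\eta|_{\Pi'}$ is injective. Hence $\eta(\alpha,\beta) \le 0$.

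For part~(\textbf{b}), assume $(\alpha,\beta) = 0$, so that $\langle\alpha^\vee,\beta\rangle = \langle\beta^\vee,\alpha\rangle = 0$. Axiom~(\ref{AM5}) applied to the pairs $(\alpha,\beta)$ and $(\beta,\alpha)$ gives $\eta(\alpha,\beta) \ge 0$ and $\eta(\beta,\alpha) \ge 0$. Combining with part~(\textbf{a}) (applied to both orderings, since the roles of $\alpha$ and $\beta$ are symmetric in the hypothesis) yields $\eta(\alpha,\beta) = \eta(\beta,\alpha) = 0$.

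There is no real obstacle here; the only subtlety is recognizing that (\textbf{a}) should be proved via the uniqueness-style argument using~(\ref{AM3}) and the injectivity of $\rho_\eta|_{\Pi'}$, rather than by a direct sign analysis of the definition~(\ref{eqn_rho_alpha}). Once (\textbf{a}) is in hand, (\textbf{b}) is an immediate sandwich between the lower bound from~(\ref{AM5}) and the upper bound from~(\textbf{a}).
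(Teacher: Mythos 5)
Your proof is correct and follows essentially the same route as the paper: part~(a) via axiom~(\ref{AM3}) forcing $\rho_\eta(\alpha)=\rho_\eta(\beta)$ against the injectivity defining $\widetilde\Pi_\eta$, and part~(b) as an immediate combination of~(a) with axiom~(\ref{AM5}). No issues.
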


\begin{proof}
To prove part~(\ref{lemma_properties_Pi'_a}), we note that
by~(\ref{AM3}) the condition $\eta(\alpha, \beta) = 1$ would imply
$\rho_\eta(\alpha) = \rho_\eta(\beta)$ contradicting the definition
of~$\widetilde \Pi_\eta$. Part~(\ref{lemma_properties_Pi'_b}) is a
direct consequence of~(\ref{lemma_properties_Pi'_a})
and~(\ref{AM5}).
\end{proof}

\begin{lemma} \label{lemma_cone_is_regular}
Let $\Pi' \in \widetilde \Pi_\eta$.
\begin{enumerate}[label=\textup{(\alph*)},ref=\textup{\alph*}]
\item \label{lemma_cone_is_regular_a}
The set $S(\Pi')$ is linearly independent. In particular, $S(\Pi') =
\mathcal C_{\Pi'}^1$ and the cone $\mathcal C_{\Pi'}$ is simplicial.

\item \label{lemma_cone_is_regular_b}
The cone $\mathcal C_{\Pi'}$ is regular.
\end{enumerate}
\end{lemma}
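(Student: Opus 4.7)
The plan is to prove both parts simultaneously by analysing the change-of-basis matrix from the $\ZZ$-basis $\{\breve\gamma : \gamma \in \Pi_\eta\}$ of $\Hom_\ZZ(\mathfrak X_\eta, \ZZ)$ to $S(\Pi')$. Expressing each element of $S(\Pi')$ in the $\breve\gamma$'s and ordering rows and columns so that $\Pi'$ precedes $\Pi_\eta \setminus \Pi'$, I arrive at a square block matrix of the form
\[
\begin{pmatrix} M & 0 \\ \ast & -I \end{pmatrix},
\]
where $M = (\eta(\alpha,\gamma))_{\gamma \in \Pi',\, \alpha \in \Pi'}$ records the coefficients of $\breve\gamma$ ($\gamma \in \Pi'$) in $\rho_\eta(\alpha)$. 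Its determinant is $\pm \det M$, and both conclusions of the lemma will follow once I establish $\det M = \pm 1$: the linear independence of $S(\Pi')$ then forces $\mathcal C_{\Pi'}^1 = S(\Pi')$ and the simpliciality of $\mathcal C_{\Pi'}$, while the unimodularity of the full matrix means that $S(\Pi')$ is a $\ZZ$-basis of $\Hom_\ZZ(\mathfrak X_\eta, \ZZ)$, which is exactly the regularity of~$\mathcal C_{\Pi'}$. Primitivity of the individual elements of $S(\Pi')$ comes for free: it is clear for $-\breve\beta$, and $\rho_\eta(\alpha)$ has coefficient $\eta(\alpha,\alpha) = 1$ at~$\breve\alpha$.

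The key step is to show that, after a suitable reordering of~$\Pi'$, the matrix $M$ becomes triangular with $1$'s on the diagonal. By~(\ref{AM1}) all diagonal entries of $M$ equal~$1$, and by Lemma~\ref{lemma_properties_Pi'}(\ref{lemma_properties_Pi'_a}) all off-diagonal entries are non-positive. Introduce the binary relation $\prec$ on $\Pi'$ defined by $\gamma \prec \alpha$ iff $\eta(\alpha, \gamma) < 0$. If $\prec$ is acyclic, then a topological sort of $\Pi'$ makes $M$ upper triangular with unit diagonal, so that $\det M = 1$.

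The main obstacle is thus the acyclicity of~$\prec$. Suppose for contradiction that $\alpha_1 \prec \alpha_2 \prec \cdots \prec \alpha_k \prec \alpha_1$ with distinct $\alpha_i \in \Pi'$. The case $k = 1$ is excluded by $\eta(\alpha_1, \alpha_1) = 1 > 0$, and $k = 2$ is excluded by~(\ref{AM4}), which turns $\eta(\alpha_2, \alpha_1) < 0$ into $\eta(\alpha_1, \alpha_2) = 0$. For $k \ge 3$, each strict inequality $\eta(\alpha_{i+1}, \alpha_i) < 0$ combined with~(\ref{AM5}) yields $\langle \alpha_{i+1}^\vee, \alpha_i \rangle < 0$, so $\alpha_i$ and $\alpha_{i+1}$ are joined by an edge in the Dynkin diagram of~$G$. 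This produces a graph-theoretic cycle $\alpha_1 - \alpha_2 - \cdots - \alpha_k - \alpha_1$ in that diagram, contradicting the fact that the Dynkin diagram of a connected reductive group is a disjoint union of Dynkin diagrams of simple types, all of which are trees. Hence $\prec$ is acyclic, $\det M = 1$, and the lemma follows.
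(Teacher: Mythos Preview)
Your proof is correct and follows essentially the same strategy as the paper's: both reduce parts~(\ref{lemma_cone_is_regular_a}) and~(\ref{lemma_cone_is_regular_b}) to showing that the $|\Pi'|\times|\Pi'|$ matrix $(\eta(\alpha,\gamma))_{\alpha,\gamma\in\Pi'}$ has determinant $\pm 1$, and both extract this from the axioms (\ref{AM4}),~(\ref{AM5}) together with the fact that the Dynkin diagram of~$\Pi$ is a forest. The only difference is packaging: the paper inductively peels off a leaf~$\alpha'$ of the Dynkin diagram of~$\Pi'$ (using Lemma~\ref{lemma_properties_Pi'}(\ref{lemma_properties_Pi'_b}) and~(\ref{AM4}) to see that the row or the column of~$\alpha'$ is the corresponding unit vector), whereas you argue directly that the relation~$\prec$ has no directed cycles and invoke a topological sort --- two standard equivalent ways of exploiting the tree structure.
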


\begin{proof}
To prove (\ref{lemma_cone_is_regular_a}) it suffices to show that
the set
$$
\lbrace \sum \limits_{\gamma \in \Pi'} \eta(\alpha, \gamma) \breve
\gamma \mid \alpha \in \Pi'\rbrace
$$
is linearly independent. (Note that the sum is taken over the set
$\Pi'$ instead of~$\Pi_\eta$.) Let $\alpha' \in \Pi'$ be such that
the corresponding node of the Dynkin diagram of $\Pi'$ is incident
to at most one edge. By
Lemma~\ref{lemma_properties_Pi'}(\ref{lemma_properties_Pi'_b}) this
means that there is at most one root $\gamma' \in \Pi' \backslash
\lbrace \alpha' \rbrace$ with $\eta(\alpha', \gamma') \ne 0$ (which
implies $\eta(\alpha', \gamma') < 0$ by
Lemma~\ref{lemma_properties_Pi'}(\ref{lemma_properties_Pi'_a})).
Condition~(\ref{AM4}) yields that either $\eta(\alpha', \gamma) = 0$
for all $\gamma \in \Pi' \backslash \lbrace \alpha' \rbrace$ or
$\eta(\gamma, \alpha') = 0$ for all $\gamma \in \Pi' \backslash
\lbrace \alpha' \rbrace$. In any case, the problem reduces to the
linear independence of the set
$$
\lbrace \sum \limits_{\gamma \in \Pi' \backslash \lbrace \alpha'
\rbrace} \eta(\alpha, \gamma) \breve \gamma \mid \alpha \in \Pi'
\backslash \lbrace \alpha' \rbrace \rbrace.
$$
The proof of~(\ref{lemma_cone_is_regular_a}) is completed by
induction. From the above argument it also follows that the
determinant of the transformation matrix from $\Pi'$ to $S(\Pi')$
equals $\pm 1$, which implies~(\ref{lemma_cone_is_regular_b}).
\end{proof}

\begin{lemma} \label{lemma_eta_nonnegative}
For every subset $\Pi' \subset \Pi_\eta$, there is a root $\beta \in
\Pi'$ such that $\eta(\beta, \gamma) \ge 0$ for all $\gamma \in
\Pi'$.
\end{lemma}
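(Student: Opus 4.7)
The plan is to reformulate the statement as the existence of a sink in a certain directed graph on $\Pi'$ and then rule out directed cycles using properties~(\ref{AM4}) and~(\ref{AM5}) together with the fact that Dynkin diagrams are forests.

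Consider the directed graph $\Gamma$ on the vertex set $\Pi'$ where we draw an edge $\alpha \to \gamma$ (with $\alpha \ne \gamma$) whenever $\eta(\alpha, \gamma) < 0$. Since every $\beta \in \Pi' \subset \Pi_\eta$ satisfies $\eta(\beta,\beta) = 1 \ge 0$, the conclusion of the lemma is equivalent to $\Gamma$ having a vertex of out-degree zero. Every finite directed graph without directed cycles has such a sink, so it suffices to show that $\Gamma$ is acyclic.

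First, I would observe that every edge $\alpha \to \gamma$ of $\Gamma$ joins two roots which are adjacent in the Dynkin diagram of $G$: indeed, by~(\ref{AM5}) one has $\langle \alpha^\vee, \gamma \rangle \le \eta(\alpha, \gamma) < 0$, and for distinct simple roots this is equivalent to $(\alpha, \gamma) < 0$. Now suppose for contradiction that $\alpha_1 \to \alpha_2 \to \ldots \to \alpha_k \to \alpha_1$ is a directed cycle in $\Gamma$, with the $\alpha_i$ pairwise distinct. If $k = 2$ then both $\eta(\alpha_1, \alpha_2) < 0$ and $\eta(\alpha_2, \alpha_1) < 0$, contradicting~(\ref{AM4}). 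If $k \ge 3$, then $\alpha_1 - \alpha_2 - \ldots - \alpha_k - \alpha_1$ is a cycle in the Dynkin diagram of $G$, which is impossible because Dynkin diagrams of reductive groups are forests.

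Hence $\Gamma$ is acyclic and admits a sink $\beta \in \Pi'$. Then $\eta(\beta, \gamma) \ge 0$ for every $\gamma \in \Pi' \setminus \{\beta\}$ by the definition of $\Gamma$, while $\eta(\beta, \beta) = 1 \ge 0$ since $\beta \in \Pi_\eta$; this is exactly the desired property. The only step requiring any care is the passage from $\eta(\alpha,\gamma) < 0$ to adjacency of $\alpha$ and $\gamma$ in the Dynkin diagram, which is where~(\ref{AM5}) is used essentially; the rest is a straightforward application of the forest property.
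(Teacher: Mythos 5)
Your proof is correct and follows essentially the same route as the paper: the paper also argues by contradiction, using~(\ref{AM5}) to force adjacency in the Dynkin diagram, (\ref{AM4}) to exclude immediate backtracking, and the forest property of Dynkin diagrams to rule out the resulting cycle. Your reformulation via a sink in an acyclic directed graph is just a slightly more explicit packaging of the same argument.
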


\begin{proof}
Assume the converse. Then there exists an infinite sequence
$\beta_1, \beta_2, \beta_3, \ldots$ of roots in $\Pi'$ with
$\eta(\beta_i, \beta_{i+1}) < 0$ for all~$i$. Applying (\ref{AM5})
we obtain $(\beta_i, \beta_{i+1}) < 0$. Next, condition~(\ref{AM4})
implies $\eta(\beta_{i+1}, \beta_i) = 0$, whence $\beta_{i+2} \ne
\beta_i$ for all~$i$. Consequently, the Dynkin diagram of $\Pi'$
contains a cycle, a contradiction.
\end{proof}

According to Lemma~\ref{lemma_eta_nonnegative}, we choose
successively subsets $\Pi_1, \ldots, \Pi_s \subset \Pi_\eta$, where
$s = s(\eta) = |\rho_\eta(\Pi_\eta)|$, in the following way. At
first, we take a root $\beta_1 \in \Pi_\eta$ such that
${\eta(\beta_1, \gamma) \ge 0}$ for all $\gamma \in \Pi_\eta$ and
set $\Pi_1 = \Pi_1(\eta) = \lbrace \gamma \in \Pi_\eta \mid
\eta(\beta_1, \gamma) = 1 \rbrace$. Next, assume that $i \in \lbrace
2, \ldots, s \rbrace$ and the sets $\Pi_1, \ldots, \Pi_{i-1}$ have
been chosen. We put $\overline \Pi_i = \Pi_\eta \backslash (\Pi_1
\cup \ldots \cup \Pi_{i-1})$ and choose a root $\beta_i \in
\overline \Pi_i$ such that $\eta(\beta_i, \gamma) \ge 0$ for all
$\gamma \in \overline \Pi_i$. We set
$$
\Pi_i = \Pi_i(\eta) = \lbrace \gamma \in \overline \Pi_i \mid
\eta(\beta_i, \gamma) = 1 \rbrace.
$$
Note that $\beta_i \in \Pi_i$. By~(\ref{AM3}), for every $i = 1,
\ldots, s$ the set $\rho_\eta(\Pi_i)$ contains exactly one element;
we denote it by $\varrho_i = \varrho_i(\eta)$.

\begin{remark} \label{rem_number_of_max_cones}
For every $\Pi' \in \widetilde \Pi_\eta$ and every $i = 1, \ldots,
s$, there is exactly one element $\sigma_i$ in the set $ \lbrace -
\breve \alpha \mid \alpha \in \Pi_i \rbrace \cup \lbrace \varrho_i
\rbrace $ that is not contained in~$S(\Pi')$. Moreover, all possible
$s$-tuples $(\sigma_1, \ldots, \sigma_s)$ are obtained in this way,
so that
$$
|\widetilde \Pi_\eta| = (|\Pi_1| + 1) \cdot \ldots \cdot (|\Pi_s| +
1).
$$
\end{remark}

For every $i = 1, \ldots, s$, let $Q_i$ be the subspace of $Q_\eta$
generated by the set
$$
\lbrace \breve \beta \mid \beta \in \Pi_1 \cup \ldots \cup \Pi_s
\rbrace.
$$
Set also $Q_0 = \lbrace 0 \rbrace$. Then by construction one has
$\varrho_i \in Q_i$ for every $i = 1, \ldots, s$.

For a fixed $i \in \lbrace 1, \ldots, s \rbrace$, let $q \mapsto
q^\diamond$ be the natural epimorphism from $Q_i$ to $Q_i /
Q_{i-1}$. Then $\varrho_i^\diamond = \sum \limits_{\gamma \in \Pi_i}
\breve \gamma^\diamond$. A key observation is that the cones
generated by all proper subsets of the set $\lbrace -\breve
\gamma^\diamond \mid \gamma \in \Pi_i \rbrace \cup \lbrace
\varrho_i^\diamond \rbrace$ form a complete fan in $Q_i / Q_{i-1}$;
we denote this fan by $\mathcal F_i$.

\begin{lemma} \label{lemma_complete_fan}
The set $\mathcal F_\eta$ is a regular complete fan in $\mathfrak
X_\eta$ and
$$
\mathcal F_\eta^1 = \rho_\eta(\Pi_\eta) \cup \lbrace - \breve \alpha
\mid \alpha \in \Pi_\eta\rbrace.
$$
\end{lemma}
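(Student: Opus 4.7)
The plan is to prove all three assertions---regularity, completeness, and the fan axioms---by induction on $s = s(\eta)$, the base case $s = 0$ being trivial. For the inductive step, let $\eta'$ denote the restriction of $\eta$ to $(\Pi \backslash \Pi_s) \times (\Pi \backslash \Pi_s)$; it is readily seen to be admissible with $s(\eta') = s-1$ and blocks $\Pi_1, \ldots, \Pi_{s-1}$. By induction, $\mathcal F_{\eta'}$ is a regular complete fan in $Q_{s-1}$, while the fan $\mathcal F_s$ is a regular complete fan in $Q_s / Q_{s-1}$ (its single-block structure making regularity a direct check on the $|\Pi_s| + 1$ rays $\lbrace -\breve\gamma^\diamond \mid \gamma \in \Pi_s \rbrace \cup \lbrace \varrho_s^\diamond \rbrace$).

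I would first record two preliminaries. Let $V_s$ denote the span of $\lbrace \breve\gamma \mid \gamma \in \Pi_s \rbrace$, so that $Q_\eta = Q_{s-1} \oplus V_s$, and write $\varrho_s = \varrho_s' + w_s$ with $\varrho_s' = \sum_{\gamma \in \Pi_s} \breve\gamma \in V_s$ and $w_s \in Q_{s-1}$. Further, $\rho_\eta(\alpha) = \rho_{\eta'}(\alpha)$ for every $\alpha \in \Pi_{\eta'}$: if $\alpha \in \Pi_j$ with $j < s$ and $\gamma \in \Pi_s$, then $\gamma \in \overline\Pi_j$ forces $\eta(\beta_j, \gamma) \ge 0$ while $\gamma \notin \Pi_j$ forces $\eta(\beta_j, \gamma) \ne 1$, so $\eta(\beta_j, \gamma) = 0$, and axiom~(\ref{AM3}) upgrades this to $\eta(\alpha, \gamma) = 0$. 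Consequently, for any $\Pi_0 \subset \Pi_{\eta'}$ the cone $\mathcal C_{\Pi_0}$ built from $\eta'$ agrees with its counterpart built from $\eta$. Each cone $\mathcal C_{\Pi'}$ is regular by Lemma~\ref{lemma_cone_is_regular}(\ref{lemma_cone_is_regular_b}), so regularity of $\mathcal F_\eta$ is automatic once the fan property is proved. The description $\mathcal F_\eta^1 = \rho_\eta(\Pi_\eta) \cup \lbrace -\breve\alpha \mid \alpha \in \Pi_\eta \rbrace$ is immediate, as every ray of every $\mathcal C_{\Pi'}$ lies in this set and the choices $\Pi' = \varnothing$ and $\Pi' = \lbrace \alpha \rbrace$ realise each listed ray.

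For completeness, given $q \in Q_\eta$, project it to $\overline q \in Q_s / Q_{s-1}$; by the completeness of $\mathcal F_s$ there is a subset $\Pi_0 \subset \Pi_s$ with $|\Pi_0| \le 1$ such that $\overline q$ is a non-negative combination of the rays $\lbrace -\breve\gamma^\diamond \mid \gamma \in \Pi_s \backslash \Pi_0 \rbrace$ together with $\varrho_s^\diamond$ (the latter included only if $\Pi_0 \ne \varnothing$). Lifting via $\varrho_s^\diamond \mapsto \varrho_s$ and $-\breve\gamma^\diamond \mapsto -\breve\gamma$ yields $q = \tilde q + r$ with $r \in Q_{s-1}$ and $\tilde q$ a non-negative combination of these lifts. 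By the inductive hypothesis, $r \in \mathcal C_{\Pi_0'}$ for some $\Pi_0' \in \widetilde\Pi_{\eta'}$, and then $q$ lies in $\mathcal C_{\Pi_0 \cup \Pi_0'}$, which is a cone of $\mathcal F_\eta$.

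The main obstacle is the fan axiom: for $\Pi', \Pi'' \in \widetilde\Pi_\eta$ one must show that $\mathcal C_{\Pi'} \cap \mathcal C_{\Pi''}$ equals the cone generated by $S(\Pi') \cap S(\Pi'')$, only the inclusion $\subseteq$ being non-trivial. For $q$ in the intersection, the projection $\overline q$ lies in two maximal cones of $\mathcal F_s$ indexed by $\Pi_s' = \Pi' \cap \Pi_s$ and $\Pi_s'' = \Pi'' \cap \Pi_s$, hence in their common face. A short case analysis on the possibilities (both empty; one empty and one a singleton; equal singletons; two distinct singletons) shows that every ray of this common face lifts, via $\varrho_s^\diamond \mapsto \varrho_s$ and $-\breve\gamma^\diamond \mapsto -\breve\gamma$, to an element of $S(\Pi') \cap S(\Pi'')$; this produces $\tilde q$ in the cone generated by $S(\Pi') \cap S(\Pi'')$ such that $r := q - \tilde q \in Q_{s-1}$. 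Matching the $V_s$-components of the two expressions for $q$ forces crucial cancellations (in the distinct-singleton case $\Pi_s' = \lbrace \alpha' \rbrace$, $\Pi_s'' = \lbrace \alpha'' \rbrace$, one extracts $a_{\alpha'} = a'_{\alpha''}$ and $b_{\alpha''} = b'_{\alpha'} = 0$), after which a direct computation shows that $r$ belongs to $\mathcal C_{\Pi' \cap \Pi_{\eta'}} \cap \mathcal C_{\Pi'' \cap \Pi_{\eta'}}$ computed with respect to $\eta'$. The inductive hypothesis together with the obvious inclusion of $S(\Pi \cap \Pi_{\eta'})$ computed for $\eta'$ into $S(\Pi)$ computed for $\eta$ then places $r$, and hence $q$, inside the cone generated by $S(\Pi') \cap S(\Pi'')$, completing the induction.
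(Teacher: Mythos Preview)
Your proof is correct and follows essentially the same approach as the paper: both peel off the last block $\Pi_s$ by projecting to $Q_s/Q_{s-1}$, use the elementary structure of the fan $\mathcal F_s$ there, and then recurse on the filtration $Q_s \supset Q_{s-1} \supset \cdots$. The only cosmetic difference is that you set up the induction by formally passing to the restricted admissible map $\eta'$, whereas the paper inducts ``inline'' on the coefficient identities~(\ref{eqn_cones_intersection}) without naming~$\eta'$.
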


\begin{proof}
The second assertion becomes obvious as soon as the first one has
been proved.

We first show that $\mathcal F_\eta$ is a fan. To this end, it
suffices to check that the intersection of any two cones in
$\mathcal F_\eta$ is a face of each. Let $\mathcal C', C'' \in
\mathcal F_\eta$. Choose subsets $\Pi', \Pi'' \in \widetilde
\Pi_\eta$ such that $\mathcal C'$ (resp.~$\mathcal C''$) is a face
of the cone $\mathcal C_{\Pi'}$ (resp.~$\mathcal C_{\Pi''}$). For
every $x \in \mathcal C' \cap \mathcal C''$, one has
$$
x = \sum \limits_{\alpha \in \Pi'} a_\alpha \rho_\eta(\alpha) - \sum
\limits_{\alpha \in \Pi_\eta \backslash \Pi'} b_\alpha \breve \alpha
= \sum \limits_{\alpha \in \Pi''} k_\alpha \rho_\eta(\alpha) - \sum
\limits_{\alpha \in \Pi_\eta \backslash \Pi''} l_\alpha \breve
\alpha,
$$
where $a_\alpha, b_\alpha, k_\alpha, l_\alpha \in \QQ^+$. We note
that $a_\alpha = 0$ for all $\alpha \in \Pi'$ with
$\rho_\eta(\alpha) \notin \mathcal C'$ and $b_\alpha = 0$ for all
$\alpha \in \Pi_\eta \backslash \Pi'$ with $- \breve \alpha \notin
\mathcal C'$; similarly, $k_\alpha = 0$ for all $\alpha \in \Pi''$
with $\rho_\eta(\alpha) \notin \mathcal C''$ and $l_\alpha = 0$ for
all $\alpha \in \Pi_\eta \backslash \Pi''$ with $- \breve \alpha
\notin \mathcal C''$.

We rewrite the two expressions for $x$ in a slightly different form
as follows:
\begin{equation} \label{eqn_cones_intersection}
x = \sum \limits_{i = 1}^s a_i \varrho_i - \sum \limits_{\alpha \in
\Pi_\eta} b_\alpha \breve \alpha = \sum \limits_{i = 1}^s k_i
\varrho_i - \sum \limits_{\alpha \in \Pi_\eta} l_\alpha \breve
\alpha.
\end{equation}
Here the coefficients $a_i, b_\alpha, k_i, l_\alpha \in \QQ^+$ are
determined by the following rule:

\begin{itemize}
\item
if the set $\Pi_i \cap \Pi'$ is empty, then $a_i = 0$; otherwise
this set contains exactly one root $\gamma$, in which situation $a_i
= a_\gamma$;

\item
the coefficients $b_\alpha$ are already defined for $\alpha \in
\Pi_\eta \backslash \Pi'$, all the others are zero;

\item
if the set $\Pi_i \cap \Pi''$ is empty, then $k_i = 0$; otherwise
this set contains exactly one root $\gamma$, in which situation $k_i
= k_\gamma$;

\item
the coefficients $l_\alpha$ are already defined for $\alpha \in
\Pi_\eta \backslash \Pi''$, all the others are zero.
\end{itemize}

Consider the images in $Q_s / Q_{s-1}$ of all parts of
equality~(\ref{eqn_cones_intersection}):
$$
x^\diamond = a_s \varrho_s^\diamond - \sum \limits_{\alpha \in
\Pi_s} b_\alpha \breve \alpha^\diamond = k_s \varrho_s^\diamond -
\sum \limits_{\alpha \in \Pi_s} l_\alpha \breve \alpha^\diamond.
$$
Clearly, for each of the cones $\mathcal C', \mathcal C''$ its image
in $Q_s / Q_{s-1}$ is a cone of the fan $\mathcal F_s$, and so the
intersection of these images is again a cone in~$\mathcal F_s$. This
immediately implies $a_s = k_s$ and $b_\alpha = l_\alpha$ for all
$\alpha \in \Pi_s$. Therefore,
$$
\sum \limits_{i = 1}^{s-1} a_i \varrho_i - \sum \limits_{\alpha \in
\Pi_\eta \backslash \Pi_s} b_\alpha \breve \alpha = \sum \limits_{i
= 1}^{s-1} k_i \varrho_i - \sum \limits_{\alpha \in \Pi_\eta
\backslash \Pi_s} l_\alpha \breve \alpha,
$$
with both sides lying in the space~$Q_{s-1}$. Applying induction, we
obtain $a_i = k_i$ for all $i = 1, \ldots, s$ and $b_\alpha =
l_\alpha$ for all $\alpha \in \Pi_\eta$. In particular, $a_i = 0$
for all $i = 1, \ldots, s$ with $\varrho_i \notin \mathcal C''$ and
$b_\alpha = 0$ for all $\alpha \in \Pi_\eta$ with $- \breve \alpha
\notin \mathcal C''$; similarly, $k_i = 0$ for all $i = 1, \ldots,
s$ with $\varrho_i \notin \mathcal C'$ and $l_\alpha = 0$ for all
$\alpha \in \Pi_\eta$ with $- \breve \alpha \notin \mathcal C'$.
Hence $\mathcal C' \cap \mathcal C''$ is a common face of $\mathcal
C', \mathcal C''$.

Now let us show that the fan $\mathcal F_\eta$ is complete. Let $x
\in Q_\eta$ be an arbitrary element. Clearly, the element
$x^\diamond \in Q_s / Q_{s-1}$ turns out to lie in a cone of the fan
$\mathcal F_s$, and so there is a unique expression $x^\diamond =
a_s \varrho_s^\diamond - \sum \limits_{\alpha \in \Pi_s} b_\alpha
\breve \alpha^\diamond$ with $a_s, b_\alpha \in \QQ^+$ and at least
one of the coefficients $a_s, b_\alpha$ being zero. Next, the
element $x - a_s \varrho_s + \sum \limits_{\alpha \in \Pi_s}
b_\alpha \breve \alpha$ lies in the space~$Q_{s-1}$, and by
induction we get an expression $x = \sum \limits_{i = 1}^s a_i
\varrho_i - \sum \limits_{\alpha \in \Pi_\eta} b_\alpha \breve
\alpha$, where $a_i, b_\alpha \in \QQ^+$ and for every $i = 1,
\ldots, s$ at least one of the coefficients $a_i, b_\alpha$ ($\alpha
\in \Pi_i$) is zero. For each $i = 1, \ldots, s$, we look at the
coefficient~$a_i$. If it is nonzero, then we choose any root $\alpha
\in \Pi_i$ with $b_\alpha = 0$ and set $\Pi_i(x) = \lbrace \alpha
\rbrace$. Otherwise we set $\Pi_i(x) = \varnothing$. At last, we set
$\Pi(x) = \Pi_1(x) \cup \ldots \cup \Pi_s(x)$. By construction,
$\Pi(x) \in \widetilde \Pi_\eta$ and $x \in \mathcal C_{\Pi(x)}$.

The proof is completed by observing that the fan $\mathcal F_\eta$
is regular by
Lemma~\ref{lemma_cone_is_regular}(\ref{lemma_cone_is_regular_b}).
\end{proof}

\begin{proposition}[{see~\cite[\S\,2]{Lu93}}]
The triple $(\mathfrak X_\eta, \mathcal F_\eta, \rho_\eta)$ is an
Enriques's $B^-$-system.
\end{proposition}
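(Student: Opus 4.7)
The plan is to verify in turn each component of Definition~\ref{def_B-system}, drawing on the preparatory lemmas so that most of the combinatorial work has already been done. The containment $\mathfrak{X}_\eta \subset \mathfrak{X}(T)$ is built into the construction, and Lemma~\ref{lemma_complete_fan} has already shown that $\mathcal{F}_\eta$ is a regular complete fan in $Q_\eta$ with $\mathcal{F}_\eta^1 = \rho_\eta(\Pi_\eta) \cup \{-\breve{\alpha} \mid \alpha \in \Pi_\eta\}$. Thus the remaining task reduces to checking that $\rho_\eta$ takes values in $\mathcal{F}_\eta^1 \cup \{0\}$ and satisfies conditions (a) and (b).

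First I would dispose of the codomain claim by a case split. For $\alpha \notin \Pi_\eta$ one has $\eta(\alpha,\alpha) = 0$, and then axiom (AM2) forces every $\eta(\alpha,\gamma)$ to vanish, so $\rho_\eta(\alpha) = 0$; for $\alpha \in \Pi_\eta$, by definition $\rho_\eta(\alpha) \in \rho_\eta(\Pi_\eta) \subset \mathcal{F}_\eta^1$. Next I would verify condition (a). Assuming $\rho_\eta(\alpha) \ne 0$, we have $\alpha \in \Pi_\eta \subset \mathfrak{X}_\eta$ automatically, and since $\{\breve{\gamma}\}_{\gamma \in \Pi_\eta}$ is dual to $\Pi_\eta$, formula~(\ref{eqn_rho_alpha}) immediately yields $\langle \rho_\eta(\alpha), \alpha\rangle = \eta(\alpha,\alpha) = 1$.

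For the inequality $\langle \varrho, \alpha\rangle \le 0$ on the remaining rays, I would split according to the type of ray. A ray of the form $-\breve{\gamma}$ pairs with $\alpha$ to give $-\delta_{\gamma,\alpha} \le 0$. A ray of the form $\rho_\eta(\gamma)$ with $\rho_\eta(\gamma) \ne \rho_\eta(\alpha)$ forces $\gamma \ne \alpha$, and the value $\eta(\gamma,\alpha) = 1$ would by axiom (AM3) imply $\rho_\eta(\gamma) = \rho_\eta(\alpha)$, contradicting the choice of $\gamma$; hence $\langle \rho_\eta(\gamma), \alpha\rangle = \eta(\gamma,\alpha) \le 0$.

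Condition (b) would again be settled by whether $\rho_\eta(\alpha)$ vanishes. If $\rho_\eta(\alpha) = 0$, the claim reduces to $0 \ge \langle \alpha^\vee, \beta\rangle$, which holds since $\alpha \ne \beta$ are distinct simple roots. Otherwise the hypothesis $\rho_\eta(\beta) \ne 0$ together with $\rho_\eta(\alpha) \ne 0$ places both $\alpha$ and $\beta$ in $\Pi_\eta$, so $\langle \rho_\eta(\alpha), \beta\rangle = \eta(\alpha,\beta)$, and axiom (AM5) supplies the desired bound. All the genuinely combinatorial difficulty has been absorbed into Lemma~\ref{lemma_complete_fan}, so I do not anticipate a serious obstacle; the one step meriting mild care is the two-type case distinction in (a), where the (AM3)-driven exclusion of $\eta(\gamma,\alpha) = 1$ is the essential ingredient.
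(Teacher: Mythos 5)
Your proof is correct and follows the same route as the paper: both reduce everything to Lemma~\ref{lemma_complete_fan} plus the identity $\langle \rho_\eta(\alpha), \beta \rangle = \eta(\alpha,\beta)$ for $\beta \in \Pi_\eta$, after which conditions (a) and (b) of Definition~\ref{def_B-system} are read off from axioms (AM1)--(AM5). The paper states this in two sentences; your version merely spells out the case analysis (in particular the (AM2) argument that $\rho_\eta(\alpha)=0$ off $\Pi_\eta$ and the (AM3) exclusion of $\eta(\gamma,\alpha)=1$), all of which is accurate.
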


\begin{proof}
By construction and Lemma~\ref{lemma_complete_fan}, it suffices to
check conditions (\ref{item_a}),~(\ref{item_b}) of
Definition~\ref{def_B-system}. But these follow directly from
properties (\ref{AM1})--(\ref{AM5}) since by~(\ref{eqn_rho_alpha})
one has $\langle \rho_\eta(\alpha), \beta \rangle = \eta(\alpha,
\beta)$ for every $\alpha \in \Pi$ and $\beta \in \Pi \cap \mathfrak
X_Z$.
\end{proof}

For an admissible map~$\eta$, we denote by $Z_\eta$ the smooth
complete spherical $B^-$-variety corresponding to the Enriques's
$B^-$-system $(\mathfrak X_\eta, \mathcal F_\eta, \rho_\eta)$.

Given a wonderful $B^-$-variety~$Z$, we let $(\mathfrak X_Z,
\mathcal F_Z, \rho_Z)$ be the corresponding Enriques's $B^-$-system
and introduce the map $\eta_Z \colon \Pi \times \Pi \to \ZZ$ as
follows:
\begin{equation} \label{eqn_admissible_map_via_EBs}
\eta_Z(\alpha, \beta) =
\begin{cases} \langle \rho_Z(\alpha), \beta \rangle & \text{if }
\beta \in \mathfrak X_Z; \\
0 & otherwise.
\end{cases}
\end{equation}

\begin{proposition}[{see~\cite[Proposition~1]{Lu93}}]
\label{prop_wonderful_B-var} The map $Z \mapsto \eta_Z$ is a
bijection between wonderful $B^-$-varieties \textup(considered up to
$B^-$-equivariant isomorphism\textup) and admissible maps.
\end{proposition}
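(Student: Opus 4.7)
The plan is to verify that the assignments $\eta \mapsto Z_\eta$ and $Z \mapsto \eta_Z$ are mutual inverses.

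\textbf{From admissible maps to wonderful varieties.} Given an admissible~$\eta$, Proposition~\ref{prop_spherical_B-var} attaches to the Enriques's $B^-$-system $(\mathfrak X_\eta, \mathcal F_\eta, \rho_\eta)$ a smooth complete spherical $B^-$-variety~$Z_\eta$; I would verify that $Z_\eta$ is wonderful. The unique $B^-$-fixed point should correspond to the maximal cone $\mathcal C_{\varnothing}$ obtained by taking $\Pi' = \varnothing$: the coefficient of $\breve\alpha$ in $\rho_\eta(\alpha)$ equals $\eta(\alpha, \alpha) \in \{0, 1\}$, so $\rho_\eta(\alpha)$ is never of the form $-\breve\beta$, and Lemma~\ref{lemma_B-unstable_point} gives the $B^-$-fixed property. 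For any non-empty $\Pi' \in \widetilde\Pi_\eta$, I would form the directed graph on $\Pi'$ with an edge $\beta \to \alpha'$ whenever $\eta(\beta, \alpha') < 0$; axiom~(\ref{AM4}) forbids $2$-cycles while axiom~(\ref{AM5}) combined with Lemma~\ref{lemma_properties_Pi'}(\ref{lemma_properties_Pi'_a}) forbids longer cycles (they would produce cycles in the Dynkin diagram of~$\Pi'$), so a source $\alpha' \in \Pi'$ exists; for this $\alpha'$ one has $\rho_\eta(\alpha') \in \mathcal C_{\Pi'}^1$ and $\langle \varrho, \alpha' \rangle = 0$ for every other $\varrho \in \mathcal C_{\Pi'}^1$, so Lemma~\ref{lemma_B-unstable_point} yields $B^-$-instability. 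The toroidality condition~(\ref{def_wond_B-var3}) then follows because every face of $\mathcal C_{\varnothing}$ has rays in $\{-\breve\alpha\}$, which is disjoint from $\rho_\eta(\Pi)$, and so by~(\ref{eqn_group_y_alpha}) each $d(Y_{-\alpha})$ preserves the ideal defining the corresponding subvariety. The equality $\eta_{Z_\eta} = \eta$ is immediate from $\langle \rho_\eta(\alpha), \beta \rangle = \eta(\alpha, \beta)$ for $\beta \in \Pi_\eta$, with axiom~(\ref{AM2}) handling the case $\beta \notin \Pi_\eta$.

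\textbf{From wonderful varieties to admissible maps.} Conversely, let $Z$ be a wonderful $B^-$-variety with Enriques's $B^-$-system $(\mathfrak X_Z, \mathcal F_Z, \rho_Z)$, let $\mathcal C_0$ be the maximal cone corresponding to the $B^-$-fixed point~$z_0$, and put $\Pi_0 = \{\alpha \in \Pi \mid \rho_Z(\alpha) \ne 0\}$. The structural result I would establish is that $\mathfrak X_Z = \ZZ\Pi_0$, that $\mathcal C_0^1 = \{-\breve\alpha \mid \alpha \in \Pi_0\}$ for the basis $\{\breve\alpha\}$ dual to $\Pi_0$, and that the remaining maximal cones of $\mathcal F_Z$ are precisely the cones $\mathcal C_{\Pi'}$ with $\Pi' \in \widetilde\Pi_{\eta_Z} \setminus \{\varnothing\}$. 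Applying condition~(\ref{def_wond_B-var3}) to the $T$-stable curves through~$z_0$ (the rays of~$\mathcal C_0$), together with Lemma~\ref{lemma_Y-unstable_divisor} and the Cox-ring action formula~(\ref{eqn_group_y_alpha}), forces $\mathcal C_0^1 \cap \rho_Z(\Pi) = \varnothing$; regularity and completeness of~$\mathcal F_Z$ then make $\mathcal C_0^1$ a basis of $\Hom_\ZZ(\mathfrak X_Z, \ZZ)$, and the uniqueness of~$z_0$ combined with Lemma~\ref{lemma_B-unstable_point} applied to every other maximal cone pins down the required combinatorial shape. Once this structural result is secured, admissibility of $\eta_Z$ follows routinely: (\ref{AM1}) from EBS property~(\ref{item_a}); (\ref{AM5}) from~(\ref{item_b}); (\ref{AM2}) because $\alpha \in \mathfrak X_Z \cap \Pi = \Pi_0$ then forces $\rho_Z(\alpha) \ne 0$; and (\ref{AM3}), (\ref{AM4}) from the requirement that the sets $S(\Pi')$ be linearly independent and that the $\mathcal C_{\Pi'}$ fit together as a fan (cf.~Lemmas~\ref{lemma_cone_is_regular} and~\ref{lemma_complete_fan}). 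Finally, $Z \cong Z_{\eta_Z}$ follows from Proposition~\ref{prop_spherical_B-var} via the matching of Enriques's $B^-$-systems.

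\textbf{Main obstacle.} The hard part is the structural claim in the second paragraph: while the toroidality condition easily places $\mathcal C_0^1$ away from $\rho_Z(\Pi)$, ruling out additional generators of~$\mathfrak X_Z$ outside $\ZZ\Pi_0$ and showing that every non-$B^-$-fixed maximal cone is of the form $\mathcal C_{\Pi'}$ for some $\Pi' \in \widetilde\Pi_{\eta_Z}$ requires exploiting the uniqueness of~$z_0$, the regularity of~$\mathcal F_Z$, and the compatibility of all the local combinatorial constraints from Lemma~\ref{lemma_B-unstable_point} in a coordinated way. I expect this combinatorial enumeration to be the technical core of the proof.
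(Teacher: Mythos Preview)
Your overall strategy matches the paper's, and you have correctly located the crux. Two points of comparison are worth knowing.

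First, the paper resolves your ``main obstacle'' by a short cone argument rather than a case-by-case enumeration. Set $\widetilde{\mathcal C}_0 = \{q \in Q_Z \mid \langle q,\alpha\rangle \le 0 \text{ for all } \alpha \in \Pi_0\}$. Toroidality~(\ref{def_wond_B-var3}) together with Lemma~\ref{lemma_B-stable_divisor} gives $\mathcal C_0 \subset \widetilde{\mathcal C}_0$. For every other maximal cone $\mathcal C$, uniqueness of $z_0$ plus Lemma~\ref{lemma_B-unstable_point} produces $\alpha \in \Pi_0$ with $\langle \cdot,\alpha\rangle \ge 0$ on $\mathcal C$, so $\mathcal C \cap (\widetilde{\mathcal C}_0)^\circ = \varnothing$. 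Completeness of the fan then forces $\mathcal C_0 = \widetilde{\mathcal C}_0$; since $\mathcal C_0$ is simplicial of full dimension and $\widetilde{\mathcal C}_0$ has at most $|\Pi_0|$ facets, $\rk\mathfrak X_Z = |\Pi_0|$, and regularity plus primitivity of each $\alpha \in \Pi_0$ gives $\mathfrak X_Z = \ZZ\Pi_0$ and $\mathcal C_0^1 = \{-\breve\alpha\}$. This replaces the ``coordinated combinatorial enumeration'' you anticipate with a three-line geometric argument.

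Second, the paper establishes admissibility of $\eta_Z$ \emph{before} matching the fans, and gets all five axioms directly from the Enriques's $B^-$-system properties: (\ref{AM3}) because property~(\ref{item_a}) makes $\rho_Z(\beta)$ the \emph{unique} ray with value $1$ on $\beta$, so $\eta_Z(\alpha,\beta)=1$ forces $\rho_Z(\alpha)=\rho_Z(\beta)$; and (\ref{AM4}) is exactly Lemma~\ref{lemma_EBS_property}. Your plan to extract (\ref{AM3}), (\ref{AM4}) from the fan structure is workable but roundabout, and risks circularity since the description of $\mathcal F_{\eta_Z}$ via Lemma~\ref{lemma_complete_fan} and Remark~\ref{rem_number_of_max_cones} is developed under the standing hypothesis that $\eta$ is admissible. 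Once admissibility is in hand, the fan match proceeds in two steps: any ray outside $\mathcal C_0^1$ corresponds to a $B^-$-unstable divisor, hence equals some $\rho_Z(\alpha)$ by Lemma~\ref{lemma_Y-unstable_divisor}, giving $\mathcal F_Z^1 = \mathcal F_{\eta_Z}^1$; then a maximal cone not of the form $\mathcal C_{\Pi'}$ would, by Remark~\ref{rem_number_of_max_cones}, contain $\{\varrho_i\}\cup\{-\breve\alpha : \alpha\in\Pi_i\}$ for some $i$, and the element $\varrho_i - \sum_{\alpha\in\Pi_i}\breve\alpha$ lies in $\mathcal C_0$, forcing $\varrho_i \in \mathcal C_0^1$, a contradiction.
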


\begin{proof}
Let $Z$ be a wonderful $B^-$-variety. We first show that the lattice
$\mathfrak X_Z$ is generated by the set $\Pi_Z = \lbrace \alpha \in
\Pi \mid \rho_Z(\alpha) \ne 0 \rbrace$. Observe that $\Pi_Z \subset
\mathfrak X_Z$ by Definition~\ref{def_B-system} and consider the
cone
$$
\widetilde{\mathcal C}_0 = \lbrace q \in Q_Z \mid \langle q, \alpha
\rangle \le 0 \text{ for all } \alpha \in \Pi_Z \rbrace.
$$
By~(\ref{def_wond_B-var2}) there is a unique $B^-$-fixed point $z_0$
in~$Z$. Let $\mathcal C_0 \subset Q_Z$ be the maximal cone
in~$\mathcal F_Z$ corresponding to~$z_0$. By~(\ref{def_wond_B-var3})
every $T$-stable prime divisor in $Z$ containing $z_0$ is
$B^-$-stable. Applying Lemma~\ref{lemma_B-stable_divisor} we obtain
$\langle \varrho, \alpha \rangle \le 0$ for all $\varrho \in
\mathcal C_0^1$ and $\alpha \in \Pi_Z$, which implies $\mathcal C_0
\subset \widetilde{\mathcal C}_0$. On the other hand, let $\mathcal
C \ne \mathcal C_0$ be an arbitrary maximal cone of the fan
$\mathcal F_Z$ and let $z \in Z$ be the corresponding $T$-stable
point. Since $Z$ has only one $B^-$-stable point, $z$ is
$B^-$-unstable. By Lemma~\ref{lemma_B-unstable_point} the latter
implies that there is a root $\alpha \in \Pi_Z$ with $\langle c,
\alpha \rangle \ge 0$ for all $c \in \mathcal C$. Therefore
$\mathcal C \cap (\widetilde{\mathcal C}_0)^\circ = \varnothing$.
Since the fan $\mathcal F_Z$ is complete, we obtain
$(\widetilde{\mathcal C}_0)^\circ \subset \mathcal C_0$, hence
$\widetilde{\mathcal C}_0 = \mathcal C_0$. As the cone $\mathcal
C_0$ is simplicial, we have $\rk \mathfrak X_Z = |\Pi_Z|$. Since
$\langle \rho_Z(\alpha), \alpha \rangle = 1$ for all $\alpha \in
\Pi_0$, it follows that each element of $\Pi_0$ is primitive
in~$\mathfrak X$. Combining the latter with regularity of $\mathcal
C_0$, we obtain $\mathfrak X_Z = \ZZ \Pi_Z$.

The result of the previous paragraph along
with~(\ref{eqn_admissible_map_via_EBs}) implies that for a root
$\alpha \in \Pi$ the following three conditions are equivalent:
\begin{itemize}
\item
$\alpha \in \mathfrak X_Z$;

\item
$\rho_Z(\alpha) \ne 0$;

\item
$\eta_Z(\alpha, \alpha) \ne 0$.
\end{itemize}
Taking this into account and using Definition~\ref{def_B-system}
together with Lemma~\ref{lemma_EBS_property}, one easily checks that
$\eta_Z$ satisfies all the conditions (\ref{AM1})--(\ref{AM5}), so
that $\eta_Z$ is an admissible map.

Now let us prove that $Z = Z_{\eta_Z}$. It follows from the above
considerations that $\Pi_Z = \Pi_{\eta_Z}$ and $\mathfrak X_Z =
\mathfrak X_{\eta_Z}$. Next, for every $\alpha \in \Pi_{\eta_Z}$ we
have
$$
\rho_Z(\alpha) = \sum \limits_{\gamma \in \Pi_{\eta_Z}} \langle
\rho_Z(\alpha), \gamma \rangle \breve \gamma = \sum \limits_{\gamma
\in \Pi_{\eta_Z}} \eta(\alpha, \gamma) \breve \gamma =
\rho_{\eta_Z}(\alpha),
$$
hence $\rho_Z = \rho_{\eta_Z}$ and $\mathcal F_Z^1 \supset \mathcal
F_{\eta_Z}^1$. On the other hand, if $\varrho \in \mathcal F_Z^1
\backslash \mathcal C_0^1$, then the corresponding $T$-stable prime
divisor in $Z$ does not contain~$z_0$, and so it is not $B^-$-stable
and is moved by the subgroup $U_{-\alpha}$ for some $\alpha \in
\Pi_Z$. In view of Lemma~\ref{lemma_Y-unstable_divisor} the latter
means $\varrho = \rho_Z(\alpha) = \rho_{\eta_Z}(\alpha) \in \mathcal
F_{\eta_Z}^1$, hence $\mathcal F_Z^1 = \mathcal F_{\eta_Z}^1$. At
last, let us show that $\mathcal F_Z = \mathcal F_{\eta_Z}$. As in
the paragraph following Lemma~\ref{lemma_eta_nonnegative}, we
introduce the number $s = s(\eta_Z)$ and put $\Pi_i =
\Pi_i(\eta_Z)$, $\varrho_i = \varrho_i(\eta_Z)$ for every $i = 1,
\ldots, s$. Now let $\mathcal C$ be a maximal cone of the
fan~$\mathcal F_Z$ and assume that $\mathcal C \ne \mathcal
C_{\Pi'}$ for every $\Pi' \in \widetilde \Pi_{\eta_Z}$. Taking into
account Remark~\ref{rem_number_of_max_cones}, we conclude that there
is $i \in \lbrace 1, \ldots, s \rbrace$ such that $\mathcal C^1
\supset \lbrace - \breve \alpha \mid \alpha \in \Pi_i \rbrace \cup
\lbrace \varrho_i \rbrace$. Since $\langle \varrho_i, \alpha \rangle
= 1$ for all $\alpha \in \Pi_i$ and $\langle \varrho_i, \alpha
\rangle \le 0$ for all $\alpha \notin \Pi_i$, the element $\varrho_i
- \sum \limits_{\alpha \in \Pi_i} \breve \alpha$ is contained in
$\mathcal C_0 \cap \mathcal C$, the latter being a common face of
$\mathcal C_0$ and~$\mathcal C$. It follows that $\varrho_i \in
\mathcal C_0^1$, which is not the case. Hence $\mathcal F_Z =
\mathcal F_{\eta_Z}$.

Finally, let $\eta$ be an admissible map and let $Z_\eta$ be the
corresponding smooth complete spherical $B^-$-variety. Let $z_0 \in
Z_\eta$ be the point corresponding to the cone
\begin{equation} \label{eqn_cone_C_empty}
\mathcal C_\varnothing = \lbrace q \in Q_\eta \mid \langle q, \alpha
\rangle \le 0 \text{ for all } \alpha \in \Pi_\eta \rbrace.
\end{equation}
Since $\mathcal C_\varnothing^1 \cap \rho_\eta(\Pi_\eta) =
\varnothing$, the point $z_0$ is $B^-$-fixed (see
Lemma~\ref{lemma_B-unstable_point}) and every $T$-stable irreducible
closed subvariety of $Z$ containing $z_0$ is $B^-$-stable (see
Lemma~\ref{lemma_B-stable_divisor}). Let $z \in Z_\eta$ be any
$T$-fixed point different from $z_0$ and let $\mathcal C \in
\mathcal F_\eta$ be the corresponding maximal cone. As $\mathcal C
\ne \mathcal C_\varnothing$, there exists a root $\alpha \in
\Pi_\eta$ such that $\langle \varrho, \alpha \rangle \ge 0$ for all
$\varrho \in \mathcal C^1$. The latter implies $- \breve \alpha
\notin \mathcal C^1$, hence $\rho_\eta(\alpha) \in \mathcal C^1$
(see Remark~\ref{rem_number_of_max_cones}) and $\langle \varrho,
\alpha \rangle = 0$ for all $\varrho \in \mathcal C^1 \backslash
\lbrace \rho_\eta(\alpha) \rbrace$. By
Lemma~\ref{lemma_B-unstable_point} the point $z$ is $B^-$-unstable.
Thus $Z_\eta$ is a wonderful $B^-$-variety and $\eta_{Z_\eta} =
\eta$.
\end{proof}

\subsection{Relationship with Luna's general classification}
\label{subsec_1993&general}

For every admissible map~$\eta$, we choose $H_\eta \subset B^-$ to
be the stabilizer of a point of the open $B^-$-orbit in the
wonderful $B^-$-variety~$Z_\eta$.
Theorem~\ref{thm_bij_wond_B-var_SSWS} and
Proposition~\ref{prop_wonderful_B-var} imply the following result.

\begin{theorem} \label{thm_AM_SSWS}
The map $\eta \mapsto H_\eta$ induces a bijection between admissible
maps and conjugacy classes in~$B^-$ of strongly solvable wonderful
subgroups of~$G$ contained in~$B^-$.
\end{theorem}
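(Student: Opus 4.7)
The plan is essentially to compose the two bijections that have been established in the preceding subsections, namely Theorem~\ref{thm_bij_wond_B-var_SSWS} and Proposition~\ref{prop_wonderful_B-var}. No new structural argument is needed; once the correspondences have been set up, the statement reduces to a routine verification that the composition is the map described in the theorem.

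More precisely, I would proceed as follows. First, by Proposition~\ref{prop_wonderful_B-var}, the assignment $\eta \mapsto Z_\eta$ is a bijection between the set of admissible maps and the set of $B^-$-equivariant isomorphism classes of wonderful $B^-$-varieties, with inverse $Z \mapsto \eta_Z$. Second, by Theorem~\ref{thm_bij_wond_B-var_SSWS}, the assignment $Z \mapsto H_Z$ gives a bijection between the set of $B^-$-equivariant isomorphism classes of wonderful $B^-$-varieties and the set of conjugacy classes in~$B^-$ of strongly solvable wonderful subgroups of~$G$ contained in~$B^-$. Composing these two bijections yields a bijection $\eta \mapsto H_{Z_\eta}$ from admissible maps to the required set of conjugacy classes.

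Finally, to identify this composed bijection with the map $\eta \mapsto H_\eta$ appearing in the statement, one only has to recall the definition of $H_\eta$ given in the opening sentence of~\S\ref{subsec_1993&general}: $H_\eta$ is chosen to be the stabilizer in $B^-$ of a point of the open $B^-$-orbit in~$Z_\eta$, which is exactly $H_{Z_\eta}$ in the notation of Theorem~\ref{thm_bij_wond_B-var_SSWS}. Since the choice of a point in the open $B^-$-orbit is well-defined only up to conjugation by an element of~$B^-$, the map is well-defined on conjugacy classes and the identification with the composition is immediate. There is no serious obstacle in this proof; the substantive work has already been carried out in establishing the two bijections being composed, in particular in the long proof of Proposition~\ref{prop_wonderful_B-var}.
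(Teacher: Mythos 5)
Your proposal is correct and is exactly the paper's argument: the paper states that Theorem~\ref{thm_AM_SSWS} follows immediately from Theorem~\ref{thm_bij_wond_B-var_SSWS} and Proposition~\ref{prop_wonderful_B-var}, with $H_\eta$ defined as the stabilizer of a point of the open $B^-$-orbit in $Z_\eta$, i.e.\ $H_{Z_\eta}$. Nothing further is needed.
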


Let $\eta$ be an admissible map and let $Z_\eta$ be the
corresponding wonderful $B^-$-variety.
Proposition~\ref{prop_wonderful_B&G}(\ref{prop_wonderful_B&G_b})
yields that $X_\eta = G *_{B^-} Z_\eta$ is a wonderful $G$-variety
whose open $G$-orbit is isomorphic to $G / H_\eta$. The main goal of
this subsection is to compute the spherical system of~$X_\eta$ in
terms of~$\eta$.

Let $\phi \colon X_\eta \to G / B^-$ be the natural $G$-equivariant
morphism. We identify $Z_\eta$ with $\phi^{-1}(o)$. Let $z_0$ denote
the unique $B^-$-fixed point in~$Z_\eta$. We recall the notation
$\Pi_\eta = \lbrace \alpha \in\nobreak \Pi \mid \eta(\alpha, \alpha)
= 1 \rbrace$ and the map $\rho_\eta$ given by~(\ref{eqn_rho_alpha}).
Let $\mathscr S_\eta = (\Pi^p_\eta, \Sigma_\eta, \mathcal D^a_\eta)$
be the spherical system of $X_\eta$ and denote by $\varkappa_\eta$
the corresponding map $\mathcal D^a_\eta \to
\Hom_\ZZ(\ZZ\Sigma_\eta, \ZZ)$.

\begin{proposition} \label{prop_invariants_via_AM}
The spherical system $\mathscr S_\eta$ is determined as follows:
\begin{enumerate}[label=\textup{(\alph*)},ref=\textup{\alph*}]
\item \label{prop_invariants_via_AM_a}
$\Pi^p_\eta = \varnothing$.

\item \label{prop_invariants_via_AM_b}
$\Sigma_\eta = \Pi_\eta$.

\item \label{prop_invariants_via_AM_c}
The set $\mathcal D^a_\eta$ is in bijection with the set $\Pi_\eta
\cup \rho_\eta(\Pi_\eta)$. For every $\alpha \in \Pi_\eta$, let
$D_\alpha^-$ \textup(resp.~$D_\alpha^+$\textup) be the color
corresponding to $\alpha$ \textup(resp. $\rho_\eta(\alpha)$\textup).
Then $\mathcal D(\alpha) = \lbrace D_\alpha^-, D_\alpha^+ \rbrace$
for all $\alpha \in \Pi_\eta$, and one has $\langle
\varkappa_\eta(D_\alpha^+), \beta \rangle = \eta(\alpha, \beta)$,
$\langle \varkappa_\eta(D_\alpha^-), \beta \rangle = \langle
\alpha^\vee, \beta \rangle - \eta(\alpha, \beta)$ for all $\beta \in
\Pi_\eta$.
\end{enumerate}
\end{proposition}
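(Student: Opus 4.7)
Part~(a) is immediate from Corollary~\ref{crl_strongly_solvable_SS} since $H_\eta \subset B^-$ is strongly solvable. The rest of the argument I would carry out inside the open $B$-stable subset $X_0 = \phi^{-1}(Bo) \simeq U \times Z_\eta$, on which the $B$-action is described by~(\ref{eqn_action_on_X0}) and for which the projection $X_0 \to Z_\eta$ is a geometric quotient by the left $U$-action. Pulling back along this projection identifies each $T$-semi-invariant rational function $g_\mu \in \CC(Z_\eta)^{(T)}_\mu$ with a $B$-semi-invariant rational function on $X_0$ of weight~$\mu$, and conversely any element of $\CC(X_0)^{(B)}_\mu$ is $U$-invariant and so descends from $Z_\eta$. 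Since $X_0$ is open in~$X_\eta$, this yields an isomorphism $\CC(X_\eta)^{(B)}_\mu \simeq \CC(Z_\eta)^{(T)}_\mu$, whence $\Lambda_{G/H_\eta} = \mathfrak X_\eta = \ZZ \Pi_\eta$.

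For part~(b) I would apply Proposition~\ref{prop_B-chart_spherical_roots}. The unique closed $B^-$-orbit $\{z_0\}$ in $Z_\eta$ produces the unique closed $G$-orbit $Gz_0$ in $X_\eta$, and a direct inspection shows that the canonical $B$-chart $X_B$ equals $U \times (Z_\eta \setminus \bigcup_{\alpha \in \Pi_\eta} D_{\rho_\eta(\alpha)})$, whose second factor is the canonical affine $T$-neighborhood of~$z_0$, namely the affine toric variety associated with the maximal cone $\mathcal C_\varnothing$ of~$\mathcal F_\eta$ defined in~(\ref{eqn_cone_C_empty}). By the standard theory of affine toric varieties, the weight cone of its regular $T$-semi-invariants is $\mathcal C_\varnothing^\vee$, whose primitive ray generators are precisely $-\Pi_\eta$. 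Combining this with the isomorphism of the first paragraph yields $\mathcal C_B^1 = -\Pi_\eta$, and Proposition~\ref{prop_B-chart_spherical_roots} gives $\Sigma_\eta = \Pi_\eta$.

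For part~(c) I would first enumerate the $B$-stable prime divisors of~$X_\eta$. Those meeting $X_0$ are the closures of $U \times D_\varrho$ for $\varrho \in \mathcal F_\eta^1 = \rho_\eta(\Pi_\eta) \cup \lbrace -\breve\alpha \mid \alpha \in \Pi_\eta\rbrace$; by Lemma~\ref{lemma_B-stable_divisor} the rays $-\breve\alpha$ give $B^-$-stable, hence $G$-stable, divisors, while the rays $\rho_\eta(\alpha)$ give colors, each moved by~$U_{-\alpha}$ and so belonging to~$\mathcal D(\alpha)$. The remaining $B$-stable prime divisors are the Schubert preimages $\phi^{-1}(D_\alpha)$ for $\alpha \in \Pi$, each lying in~$\mathcal D(\alpha)$ since $D_\alpha \subset G/B^-$ is moved by~$P_{\lbrace \alpha \rbrace}$. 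For $\alpha \in \Pi_\eta$ I would set $D_\alpha^+$ to be the color arising from $\rho_\eta(\alpha)$ and $D_\alpha^-$ to be $\phi^{-1}(D_\alpha)$; axiom~(\ref{A2}) then ensures that this pair exhausts~$\mathcal D(\alpha)$. The value $\langle \varkappa_\eta(D_\alpha^+), \beta \rangle$ equals the order of $f_\beta^{X_\eta}$ along $D_\alpha^+$; since on $X_0$ the function $f_\beta^{X_\eta}$ is a scalar multiple of~$g_\beta$, the standard formula for the divisor of a toric $T$-semi-invariant yields $\langle \rho_\eta(\alpha), \beta \rangle = \eta(\alpha, \beta)$ in view of~(\ref{eqn_rho_alpha}). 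The formula for $\langle \varkappa_\eta(D_\alpha^-), \beta \rangle$ is then forced by axiom~(\ref{A2}).

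The main technical obstacle I expect lies in step~(b): one must carefully identify the canonical $B$-chart $X_B$ as the $U$-bundle over the affine toric chart of~$z_0$ in $Z_\eta$, so that the weight cone of $B$-semi-invariant regular functions on $X_B$ matches $\mathcal C_\varnothing^\vee$ without loss or gain, which is what turns the a priori inclusion $\Sigma_\eta \subset \Pi$ from Corollary~\ref{crl_strongly_solvable_SS} into the equality $\Sigma_\eta = \Pi_\eta$.
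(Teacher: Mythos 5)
Your proposal is correct and follows essentially the same route as the paper's proof: both work in the open chart $\phi^{-1}(Bo)\simeq U\times Z_\eta$, identify the canonical $B$-chart with $U\times Z_0$ where $Z_0$ is the affine toric chart of the cone $\mathcal C_\varnothing$ so that Proposition~\ref{prop_B-chart_spherical_roots} gives $\Sigma_\eta=\Pi_\eta$, and then obtain $\varkappa_\eta(D_\alpha^+)$ from the toric divisor formula for the rays $\rho_\eta(\alpha)$, with $\varkappa_\eta(D_\alpha^-)$ forced by axiom~(\ref{A2}). The only divergence is part~(\ref{prop_invariants_via_AM_a}), where you invoke Corollary~\ref{crl_strongly_solvable_SS} while the paper argues directly that the open $B$-orbit of $X_\eta$ surjects onto the open $B$-orbit of $G/B^-$; this is not circular, but the paper deliberately avoids the corollary here so that the proposition can serve as an independent proof of it.
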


\begin{proof}
(\ref{prop_invariants_via_AM_a}) The open $B$-orbit in $X_\eta$ maps
onto the open $B$-orbit in $G / B^-$, whose stabilizer is well known
to be~$B$. Therefore $\Pi^p_\eta = \varnothing$.

(\ref{prop_invariants_via_AM_b}) Evidently, the closed $G$-orbit in
$X_\eta$ is just $Gz_0$. Let us find the canonical $B$-chart
$(X_\eta)_B$ (see~\S\,\ref{subsec_standard_completions}). By
definition, $(X_\eta)_B$ consists of $B$-orbits in $X_\eta$ whose
closure contains~$Gz_0$. Since $Gz_0$ maps (isomorphically) onto $G
/ B^-$, every such $B$-orbit maps necessarily onto $B o \simeq B /
T$, which is the unique open $B$-orbit in $G / B^-$. Therefore
$(X_\eta)_B \subset \phi^{-1}(B o)$. As we have already seen in the
proof of Proposition~\ref{prop_wonderful_B&G}, there are
$B$-equivariant isomorphisms $\phi^{-1}(B o) \simeq B *_T Z_\eta
\simeq U \times Z_\eta$, where the $B$-action on the latter variety
is given by formula~(\ref{eqn_action_on_X0}). We note that every
$T$-semi-invariant rational function on $Z_\eta$ naturally extends
to a $B$-semi-invariant rational function on $U \times Z_\eta$ of
the same weight, and every $B$-semi-invariant rational function on
$U \times Z_\eta$ is obtained in this way.

Since the set $B z_0$ is open in $Gz_0$, a $B$-orbit $O \subset B
*_T Z_\eta$ is contained in $(X_\eta)_B$ if and only if the
$T$-orbit $O \cap Z_\eta \subset Z_\eta$ contains $z_0$ in its
closure. It follows that $(X_\eta)_B \simeq B *_T Z_0 \simeq U
\times \nobreak Z_0$, where $Z_0 \subset Z_\eta$ is the $B$-stable
affine open subset corresponding to the cone $\mathcal C_\varnothing
\in\nobreak \mathcal F_\eta$ given by~(\ref{eqn_cone_C_empty}). The
explicit description of the cone $\mathcal C_\varnothing$ yields
that the weight semigroup of $T$-semi-invariant functions in
$\CC[Z_0]$, as well as the weight semigroup of $B$-semi-invariant
functions in $\CC[(X_\eta)_B]$, is generated by the set $-
\Pi_\eta$. Applying Proposition~\ref{prop_B-chart_spherical_roots},
we obtain $\Sigma_\eta = \Pi_\eta$.

(\ref{prop_invariants_via_AM_c}) We first note that the open
$G$-orbit $X^0_\eta \subset X_\eta$ is isomorphic to the homogeneous
bundle $G *_{B^-} Z^0_\eta$, where $Z^0_\eta$ is the open
$B^-$-orbit in $Z_\eta$. We also note that the fan $\mathcal
F^0_\eta$ corresponding to $Z^0_\eta$ as a toric $T$-variety
consists of the cones $\mathcal C \in \mathcal F_\eta$ such that
$\mathcal C \cap \mathcal C_\varnothing = \lbrace 0 \rbrace$.

Let $\mathcal D' \subset \mathcal D$ be the distinguished subset of
colors corresponding to $\phi$ (see
Proposition~\ref{prop_wonderful_morphisms}) and let $D \in \mathcal
D'$. Since $\phi(D)$ is dense in $G / B^-$, it follows that $D \cap
\phi^{-1}(B o)$ is a $B$-stable prime divisor in $\phi^{-1}(B o)
\simeq B *_T Z^0_\eta \simeq U \times Z^0_\eta$, where the
$B$-action on the latter variety is given
by~(\ref{eqn_action_on_X0}). It is easy to deduce that the set
$\mathcal D'$ is in bijection with the set of $T$-stable prime
divisors in $Z^0_\eta$ or, equivalently, with the set $(\mathcal
F^0_\eta)^1$ or, equivalently, with the set $\rho_\eta(\Pi_\eta)$.
It follows that the set $\mathcal D'$ is formed by the colors
$D_\alpha^+$ ($\alpha \in \Pi_\eta$) indicated in the hypothesis.
Next, since $\langle \varkappa_\eta(D), \alpha \rangle = 1$ for all
$\alpha \in \Sigma \cap \Pi$ and $D \in \mathcal D(\alpha)$, it is
easily verified that
$$
\mathcal D(\alpha) \cap \lbrace D_\beta^+ \mid \beta \in \Pi_\eta
\rbrace = \lbrace D_\alpha^+ \rbrace
$$
for every $\alpha \in \Pi_\eta$. In view of
part~(\ref{prop_invariants_via_AM_b}) and
Proposition~\ref{prop_alternative}, for every $\alpha \in \Pi_\eta$
the set $\mathcal D(\alpha) \backslash \lbrace D_\alpha^+ \rbrace$
contains the divisor $D_\alpha^-$ indicated in the hypothesis. It
remains to notice that the axioms of an admissible map imply
$\varkappa_\eta(D_\alpha^-) \ne \varkappa_\eta(D_\beta^\pm)$ for any
$\alpha, \beta \in \Pi_\eta$ with $\alpha \ne \beta$.
\end{proof}

\begin{remark} \label{rem_cone_of_B/H}
In fact, the fan $\mathcal F^0_\eta$ considered in the proof of
part~(\ref{prop_invariants_via_AM_c}) contains a unique maximal
cone, which is generated by the set $(\mathcal F^0_\eta)^1 = \lbrace
\rho_\eta(\alpha) \mid \alpha \in \Pi_\eta \rbrace$.
\end{remark}

\begin{remark}
In his preprint~\cite{Lu93} Luna also proved that, for two
admissible maps $\eta, \eta'$, the corresponding strongly solvable
wonderful subgroups $H_\eta, H_{\eta'}$ are conjugate in $G$ if and
only if $\Pi_\eta = \Pi_{\eta'}$ and there is a bijection $i \colon
\mathcal D^a_\eta \to \mathcal D^a_{\eta'}$ such that
$\varkappa_\eta = \varkappa_{\eta'} \circ i$. In other words,
$H_\eta, H_{\eta'}$ are conjugate in $G$ if and only if $\mathscr
S_{\eta} = \mathscr S_{\eta'}$, which is a particular case of
Theorem~\ref{thm_classification}.
\end{remark}

\begin{remark}
Proposition~\ref{prop_invariants_via_AM} provides an independent
proof of Corollary~\ref{crl_strongly_solvable_SS}.
\end{remark}

Let $\mathscr S = (\Pi^p, \Sigma, \mathcal D^a)$ be a strongly
solvable spherical system. Recall that $\Pi^p = \varnothing$ and
$\Sigma \subset \Pi$, see Corollary~\ref{crl_strongly_solvable_SS}
or Proposition~\ref{prop_invariants_via_AM}. Let $H \subset B^-$ be
a strongly solvable wonderful subgroup corresponding to~$\mathscr S$
and let $X$ be the wonderful embedding of~$G/H$. We fix a subset
$\mathcal D' \subset \mathcal D^a$ satisfying the conditions of
Proposition~\ref{prop_strongly_solvable}. This subset gives rise to
a $G$-equivariant morphism $\phi \colon X \to G / B^-$, which
provides a wonderful $B^-$-variety $Z = \phi^{-1}(o)$ (see
Proposition~\ref{prop_wonderful_B&G}). Our last goal in this
subsection is to find the admissible map $\eta$ corresponding
to~$Z$.

Let $\mathcal D$ be the set of colors of~$X$. Recall from the proof
of Proposition~\ref{prop_strongly_solvable} that $|(\mathcal D
\backslash \mathcal D') \cap\nobreak \mathcal D(\alpha)| = 1$ for
every $\alpha \in \Pi$. Then for every $\alpha \in \Sigma$ the set
$$
\mathcal D' \cap \mathcal D(\alpha) = \lbrace D \in \mathcal D' \mid
\langle \varkappa(D), \alpha \rangle = 1 \rbrace
$$
contains exactly one element, we denote it by $D_\alpha^+$. The
following proposition is a direct consequence of
Proposition~\ref{prop_invariants_via_AM}(\ref{prop_invariants_via_AM_c}).

\begin{proposition} \label{prop_AM_via_SSSS}
The admissible map $\eta$ is determined as follows:
\begin{equation} \label{eqn_AM_via_general}
\eta(\alpha, \beta) =
\begin{cases}
\langle \varkappa(D_\alpha^+), \beta \rangle & \text{if }
\alpha, \beta \in \Sigma; \\
0 & \text{otherwise}.
\end{cases}
\end{equation}
\end{proposition}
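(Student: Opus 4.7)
The plan is to derive Proposition~\ref{prop_AM_via_SSSS} as an essentially immediate reversal of Proposition~\ref{prop_invariants_via_AM}(\ref{prop_invariants_via_AM_c}). The setup gives us a wonderful morphism $\phi \colon X \to G/B^-$ coming from the distinguished subset~$\mathcal{D}'$, so by Proposition~\ref{prop_wonderful_B&G}(\ref{prop_wonderful_B&G_b}) the variety $Z = \phi^{-1}(o)$ is a wonderful $B^-$-variety, and by Proposition~\ref{prop_wonderful_B-var} it corresponds to a unique admissible map~$\eta = \eta_Z$. Applying Proposition~\ref{prop_invariants_via_AM}(\ref{prop_invariants_via_AM_b}) to this $\eta$ and to $X_\eta = X$ yields $\Sigma = \Sigma_\eta = \Pi_\eta$, so the set of roots $\alpha$ with $\eta(\alpha,\alpha) = 1$ is exactly~$\Sigma$.

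Next I dispose of the ``otherwise'' branch of~(\ref{eqn_AM_via_general}). If $\alpha \notin \Sigma$, then $\eta(\alpha,\alpha) = 0$, and axiom~(\ref{AM2}) gives $\eta(\alpha,\beta) = 0$ for every~$\beta \in \Pi$; symmetrically, if $\beta \notin \Sigma$, then $\eta(\beta,\beta) = 0$ and (\ref{AM2}) gives $\eta(\alpha,\beta) = 0$. Thus the only content of the proposition is the formula $\eta(\alpha,\beta) = \langle \varkappa(D_\alpha^+), \beta \rangle$ for $\alpha,\beta \in \Sigma$.

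For this main case, the key step is to identify the color denoted $D_\alpha^+$ in the paragraph preceding the proposition (the unique element of $\mathcal{D}' \cap \mathcal{D}(\alpha)$) with the color $D_\alpha^+$ from Proposition~\ref{prop_invariants_via_AM}(\ref{prop_invariants_via_AM_c}) (the one corresponding to $\rho_\eta(\alpha) \in \mathcal{F}_\eta^1$). The proof of part~(\ref{prop_invariants_via_AM_c}) of that proposition explicitly identifies the distinguished subset attached to the morphism $\phi$ with $\{D_\alpha^+ : \alpha \in \Pi_\eta\}$, and shows that $\mathcal{D}(\alpha) \cap \mathcal{D}' = \{D_\alpha^+\}$ for every $\alpha \in \Pi_\eta$. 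Uniqueness of the element of $\mathcal{D}' \cap \mathcal{D}(\alpha)$ (guaranteed by condition~(\ref{prop_strongly_solvable2}) of Proposition~\ref{prop_strongly_solvable} together with axiom~(\ref{A2})) forces the two conventions to assign the same color.

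Once the two $D_\alpha^+$'s are identified, Proposition~\ref{prop_invariants_via_AM}(\ref{prop_invariants_via_AM_c}) gives $\langle \varkappa_\eta(D_\alpha^+), \beta \rangle = \eta(\alpha,\beta)$ for all $\alpha,\beta \in \Pi_\eta = \Sigma$; since $\varkappa_\eta$ is nothing but $\varkappa$ viewed on $\ZZ\Sigma_\eta = \ZZ\Sigma$, this is exactly~(\ref{eqn_AM_via_general}). The only delicate point (and the main potential obstacle) is the identification of $D_\alpha^+$ across the two definitions; but because $\mathcal{D}' \cap \mathcal{D}(\alpha)$ is a singleton for each $\alpha \in \Sigma$, this reduces to checking that the distinguished subset produced by $\phi$ coincides with the one read off from the admissible map, which is precisely what was established inside the proof of Proposition~\ref{prop_invariants_via_AM}(\ref{prop_invariants_via_AM_c}).
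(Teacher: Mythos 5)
Your proposal is correct and follows essentially the same route as the paper, which simply declares the proposition a direct consequence of Proposition~\ref{prop_invariants_via_AM}(\ref{prop_invariants_via_AM_c}); your write-up merely makes explicit the two points the paper leaves implicit, namely that axiom~(\ref{AM2}) together with $\Pi_\eta=\Sigma$ forces the ``otherwise'' branch, and that the singleton $\mathcal D'\cap\mathcal D(\alpha)$ matches the color called $D_\alpha^+$ inside the proof of Proposition~\ref{prop_invariants_via_AM}(\ref{prop_invariants_via_AM_c}).
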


\begin{remark} \label{rem_AM_axioms_via_gen}
Using the axioms of spherical systems and
Proposition~\ref{prop_strongly_solvable}, one can easily prove that
the map~$\eta$ defined by~(\ref{eqn_AM_via_general}) is indeed an
admissible map; see~\cite[Example~30.23]{Tim}.
\end{remark}

\section{The explicit classification of strongly solvable spherical
subgroups} \label{sect_explicit_classification}

In this section we present the explicit classification of strongly
solvable spherical subgroups and its relationships with Luna's 1993
classification and Luna's general classification. The explicit
classification was obtained in the paper~\cite{Avd_solv} in the case
where $G$ is semisimple and all subgroups under consideration are
connected. However all arguments in~\cite{Avd_solv} hold for any
connected reductive group~$G$ and replacing connected solvable
subgroups by strongly solvable ones causes only minor changes in the
formulation of some statements. All statements in this section taken
from~\cite{Avd_solv} are valid for arbitrary (not necessarily
connected) strongly solvable subgroups of~$G$.

\subsection{Description of the approach}
\label{subsec_solv_description}

Let $H \subset B^-$ be a strongly solvable subgroup of~$G$ and let
$N \subset U^-$ be the unipotent radical of~$H$. We say that $H$ is
\textit{standardly embedded in~$B^-$} (with respect to~$T$) if $S =
H \cap T$ is a Levi subgroup of~$H$, so that $H = S \rightthreetimes
N$. General results on Levi decompositions (see, for instance,
\cite[\S\,6.4]{OV}) yield that every subgroup $H \subset B^-$ is
conjugate in $B^-$ to a subgroup standardly embedded in~$B^-$.

In what follows, we assume that $H$ is standardly embedded in~$B^-$
and keep the decomposition $H = S \rightthreetimes N$. Let $\tau =
\tau_S \colon \mathfrak X(T) \to \mathfrak X(S)$ be the character
restriction map. Consider the natural action of~$S$ on $\mathfrak
u^-$. This action yields a decomposition $\mathfrak u^- = \bigoplus
\limits_{\varphi \in \tau(\Delta^+)} \mathfrak u^-_{-\varphi}$,
where $\mathfrak u^-_{-\varphi}$ is the weight subspace of weight
$-\varphi$ with respect to~$S$. For every $\varphi \in
\tau(\Delta^+)$, we put $\mathfrak n_{-\varphi} = \mathfrak
u^-_{-\varphi} \cap \mathfrak n$ and denote by $c_\varphi$ the
codimension of $\mathfrak n_{-\varphi}$ in~$\mathfrak
u^-_{-\varphi}$.

\begin{proposition}[{\cite[Theorem~1]{Avd_solv}}]
\label{prop_sphericity_criterion} In the above notation, the
following conditions are equivalent:
\begin{enumerate}[label=\textup{(\arabic*)},ref=\textup{\arabic*}]
\item $H$ is spherical in~$G$;

\item $c_\varphi \le 1$ for every $\varphi \in \Phi$, and all weights
$\varphi$ with $c_\varphi = 1$ are linearly independent
in~$\mathfrak X(S)$.
\end{enumerate}
\end{proposition}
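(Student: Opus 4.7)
My plan is to translate the sphericity of $H$ into an open-orbit problem for a linear $S$-action, and then to apply the standard open-orbit criterion for a torus acting on a weight-graded vector space.

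\textbf{Step 1 (reduction to an $S$-orbit problem).} I would begin from property~(\ref{S1}): $H$ is spherical in $G$ if and only if $B$ has an open orbit in $G/H$. Since the big Bruhat cell $UB^- \subset G$ is open and contains $H$, any such orbit meets the open subset $UB^-/H \subset G/H$. Using the product isomorphism $U \times T \times U^- \xrightarrow{\sim} UB^-$, I would compute the action $(b,h) \cdot g = bgh^{-1}$ of $B \times H$ in the coordinates $g = utv$. Writing $b = u_1 t_1$, $h = sn$, rearranging, and using that $N$ is $S$-stable, the $(B \times H)$-orbit of $utv$ intersected with $UTU^-$ becomes $U \times T \times \bigl((S \cdot v) N\bigr)$, where $S$ acts on $v \in U^-$ by conjugation. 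Thus the $U$- and $T$-coordinates are absorbed freely, and $H$ is spherical if and only if $S$ has an open orbit on the $S$-variety $U^-/N$.

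\textbf{Step 2 (linearization).} Next I would replace $U^-/N$ by the $S$-module $\mathfrak u^-/\mathfrak n$, whose weight-space decomposition $\bigoplus_{\varphi} \mathfrak u^-_{-\varphi}/\mathfrak n_{-\varphi}$ has $\varphi$-component of dimension $c_\varphi$. To justify the equivalence of the two open-orbit problems, I would construct an $S$-equivariant isomorphism $U^-/N \simeq \mathfrak u^-/\mathfrak n$ by induction along a refinement of the lower central series of $U^-$ that is compatible with $\mathfrak n$: at each step, complete reducibility of $S$-modules yields an $S$-stable complement which can be lifted to an $S$-stable closed subvariety of $U^-$, and these assemble into the required isomorphism.

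\textbf{Step 3 (torus open-orbit criterion).} Finally I would invoke the elementary fact that a torus $S$ acting on a finite-dimensional vector space $V = \bigoplus_\varphi V_\varphi$ has an open orbit if and only if $\dim V_\varphi \le 1$ for every $\varphi$ and the weights $\varphi$ with $V_\varphi \ne 0$ are linearly independent in $\mathfrak X(S) \otimes_\ZZ \QQ$. This follows by computing the tangent space to the orbit at $v = \sum v_\varphi$: the differential $\mathfrak s \to V$ sends $\xi$ to $\sum \varphi(\xi) v_\varphi$, and its image fills $V$ precisely under those two conditions. Specializing to $V_\varphi = \mathfrak u^-_{-\varphi}/\mathfrak n_{-\varphi}$ with $\dim V_\varphi = c_\varphi$ recovers the stated criterion.

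\textbf{Main obstacle.} The delicate point is Step~2: since $N$ need not be normal in $U^-$, the quotient $U^-/N$ is not naturally a vector space, and the exponential map does not descend to it. Carrying out the $S$-equivariant linearization cleanly is where I would expect most of the work; the inductive splitting sketched above should succeed, but requires care to ensure that at each stage the $S$-stable complement really lifts to a transverse subvariety of $U^-$.
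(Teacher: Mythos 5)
Your proof is correct and follows essentially the same route as the cited source \cite{Avd_solv} (this paper does not reprove the proposition): pass to the big cell to reduce sphericity of $H$ to the existence of an open $S$-orbit on $U^-/N$, linearize, and apply the open-orbit criterion for a diagonalizable group acting on a weight-graded vector space. The linearization you single out as the main obstacle is precisely the $S$-equivariant isomorphism $U^-/N \simeq \mathfrak u^-/\mathfrak n$ of \cite[Lemma~1.4]{Mon}, which the paper itself invokes in the proof of Lemma~\ref{lemma_q_varphi}, so no new argument is needed there; just note that $S = H \cap T$ need not be connected, so the tangent-space computation should be carried out for $S^0$, observing that linear independence in $\mathfrak X(S)$ is equivalent to linear independence of the restricted weights in $\mathfrak X(S^0)$.
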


Until the end of this subsection we assume in addition that $H$ is
spherical.

According to Proposition~\ref{prop_sphericity_criterion}, we
introduce the set
$$
\Phi = \lbrace \varphi \in \mathfrak X(S) \mid c_\varphi = 1
\rbrace.
$$
For every $\varphi \in \Phi$, we put $\Psi_\varphi = \lbrace \alpha
\in \Delta^+ \mid \tau(\alpha) = \varphi \text{ and } \mathfrak
g_{-\alpha} \not\subset \mathfrak n \rbrace$ and
\begin{equation} \label{eqn_disjoint_union}
\Psi = \bigcup \limits_{\varphi \in \Phi} \Psi_\varphi.
\end{equation}
(Note that the union is disjoint.) Clearly,
$$
\Psi = \lbrace \alpha \in \Delta \mid \mathfrak g_{-\alpha}
\not\subset \mathfrak n \rbrace.
$$

\begin{definition}[{\cite[Definition~1]{Avd_solv}}]
Roots in $\Psi$ are said to be \textit{active}.
\end{definition}

The disjoint union given by~(\ref{eqn_disjoint_union}) naturally
determines an equivalence relation on the set~$\Psi$, which will be
denoted by $\sim$. We note that for every $\alpha, \beta \in \Psi$
one has $\alpha \sim \beta$ if and only if $\tau(\alpha) =
\tau(\beta)$.

Active roots have the following property
(see~\cite[Lemma~4]{Avd_solv}): if $\alpha$ is an active root and
$\alpha = \beta + \gamma$ for some roots $\beta, \gamma \in
\Delta^+$, then exactly one of the roots $\beta, \gamma$ is active.
Taking this property into account, we say that an active root
$\beta$ is \textit{subordinate} to an active root $\alpha$ if
$\alpha = \beta + \gamma$ for some $\gamma \in \Delta^+$. For every
active root $\alpha$, we denote by $F(\alpha)$ the set consisting of
$\alpha$ and all active roots subordinate to~$\alpha$. An active
root $\alpha$ is said to be \textit{maximal} if it is subordinate to
no other active root. We denote by $\mathrm M$ the set of maximal
active roots.

The following proposition plays an important role in the structure
theory of strongly solvable spherical subgroups standardly embedded
in~$B^-$.

\begin{proposition}[{\cite[Proposition~1]{Avd_solv}}]
\label{prop_crucial}
Let $\varphi, \varphi' \in \Phi$. Suppose that roots $\alpha \in
\Psi_\varphi$ and $\beta \in \Psi_{\varphi'}$ are different and
$\gamma = \beta - \alpha \in \Delta^+$. Then $\Psi_\varphi + \gamma
\subset \Psi_{\varphi'}$.
\end{proposition}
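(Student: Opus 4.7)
The plan is to find an element of $\mathfrak n$ whose adjoint action transports the ``active'' direction of $\mathfrak u^-_{-\varphi}$ onto that of $\mathfrak u^-_{-\varphi'}$. The natural candidate is $e_{-\gamma}$, and the first step is to show $e_{-\gamma} \in \mathfrak n$. Since $\tau(\gamma) = \varphi' - \varphi$, this reduces to proving $\varphi' - \varphi \notin \Phi$, for then $c_{\varphi'-\varphi} = 0$ and $\mathfrak n_{-(\varphi'-\varphi)} = \mathfrak u^-_{-(\varphi'-\varphi)}$. An inclusion $\varphi' - \varphi \in \Phi$ would produce three elements $\varphi, \varphi', \varphi'-\varphi$ of $\Phi$ satisfying the nontrivial relation $\varphi + (\varphi'-\varphi) - \varphi' = 0$, contradicting the linear independence of $\Phi$ given by Proposition~\ref{prop_sphericity_criterion}; the degenerate subcases in which these three fail to be distinct (namely $\varphi = \varphi'$, which gives $\varphi'-\varphi = 0 \notin \Phi$; $\varphi' = 2\varphi$; and $\varphi = 0$) are each either excluded directly or themselves contradict linear independence.

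Having secured $e_{-\gamma} \in \mathfrak n_{-(\varphi'-\varphi)}$, the fact that $\mathfrak n$ is a Lie subalgebra yields $\ad(e_{-\gamma})(\mathfrak n_{-\varphi}) \subset \mathfrak n_{-\varphi'}$, while the $S$-weight $\varphi - \varphi'$ of $\ad(e_{-\gamma})$ shows that this operator carries $\mathfrak u^-_{-\varphi}$ into $\mathfrak u^-_{-\varphi'}$. We thus obtain an induced linear map
\[
\overline{\ad}(e_{-\gamma}) \colon \mathfrak u^-_{-\varphi} / \mathfrak n_{-\varphi} \longrightarrow \mathfrak u^-_{-\varphi'} / \mathfrak n_{-\varphi'}
\]
between one-dimensional spaces (since $\varphi, \varphi' \in \Phi$). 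The identity $[e_{-\gamma}, e_{-\alpha}] = N \cdot e_{-\beta}$, with nonzero structure constant $N$ arising from $\alpha + \gamma = \beta \in \Delta^+$, sends the nonzero class $[e_{-\alpha}]$ to the nonzero class $N[e_{-\beta}]$ (nonzero because $\beta \in \Psi_{\varphi'}$), so $\overline{\ad}(e_{-\gamma})$ is an isomorphism.

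Now take any $\alpha' \in \Psi_\varphi$. The class $[e_{-\alpha'}]$ is nonzero, so $\overline{\ad}(e_{-\gamma})([e_{-\alpha'}])$ is nonzero; equivalently, $[e_{-\gamma}, e_{-\alpha'}] \notin \mathfrak n_{-\varphi'}$. In particular this bracket is nonzero, which forces $\alpha' + \gamma$ to lie in $\Delta^+$, and then $e_{-(\alpha' + \gamma)} \notin \mathfrak n_{-\varphi'}$ delivers $\alpha' + \gamma \in \Psi_{\varphi'}$, as required.

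The step needing the most care is the case analysis establishing $\varphi' - \varphi \notin \Phi$; once that is in place, the subalgebra condition together with the codimension-one structure of $\mathfrak n_{-\varphi}$ reduces everything to a single one-dimensional quotient computation, which simultaneously witnesses that $\alpha' + \gamma$ is a positive root and that the corresponding root space escapes $\mathfrak n$.
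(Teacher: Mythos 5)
Your proof is correct. The paper does not reproduce an argument here (the proposition is quoted from \cite[Proposition~1]{Avd_solv}), but your route --- deducing $\mathfrak g_{-\gamma} \subset \mathfrak n$ from the linear independence of $\Phi$ (including the degenerate cases $\varphi = \varphi'$, $\varphi' = 2\varphi$, $\varphi = 0$), and then observing that $\ad(e_{-\gamma})$ induces an isomorphism of the one-dimensional quotients $\mathfrak u^-_{-\varphi}/\mathfrak n_{-\varphi} \to \mathfrak u^-_{-\varphi'}/\mathfrak n_{-\varphi'}$ because it carries $[e_{-\alpha}]$ to a nonzero multiple of $[e_{-\beta}]$ --- is exactly the natural argument and matches the one in the cited source.
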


\begin{corollary} \label{crl_maximal_active_roots}
For every $\varphi \in \Phi$, one has either $\Psi_\varphi \subset
\mathrm M$ or $\Psi_\varphi \cap \mathrm M = \varnothing$.
\end{corollary}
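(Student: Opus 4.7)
The plan is a short contradiction argument using Proposition~\ref{prop_crucial} as the only non-trivial input. Assume the corollary fails for some $\varphi \in \Phi$: there exist a maximal active root $\alpha \in \Psi_\varphi \cap \mathrm M$ and a non-maximal active root $\beta \in \Psi_\varphi \setminus \mathrm M$. Since $\beta$ is not maximal, by definition there is an active root $\alpha' \ne \beta$ with $\gamma := \alpha' - \beta \in \Delta^+$; set $\varphi' = \tau(\alpha') \in \Phi$, so $\alpha' \in \Psi_{\varphi'}$.

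The key step is to apply Proposition~\ref{prop_crucial} to the pair $(\beta, \alpha')$ (playing the roles of $(\alpha, \beta)$ in the statement of that proposition): the hypotheses $\beta \in \Psi_\varphi$, $\alpha' \in \Psi_{\varphi'}$, $\beta \ne \alpha'$, and $\alpha' - \beta = \gamma \in \Delta^+$ are all satisfied. The proposition then yields $\Psi_\varphi + \gamma \subset \Psi_{\varphi'}$. Applying this to the element $\alpha \in \Psi_\varphi$, we conclude that $\alpha + \gamma \in \Psi_{\varphi'}$; in particular, $\alpha + \gamma$ is an active root.

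But $(\alpha + \gamma) - \alpha = \gamma \in \Delta^+$, so by the definition of subordination $\alpha$ is subordinate to $\alpha + \gamma$, contradicting $\alpha \in \mathrm M$. This contradiction completes the proof.

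There is essentially no obstacle here beyond matching the indices in Proposition~\ref{prop_crucial} correctly; the content of the corollary is just that the relation $\sim$ is compatible with maximality, and this compatibility is built into the translation-invariance statement $\Psi_\varphi + \gamma \subset \Psi_{\varphi'}$.
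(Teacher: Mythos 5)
Your proof is correct and is essentially the argument the paper intends: the corollary is stated without proof as an immediate consequence of Proposition~\ref{prop_crucial}, and your contradiction argument (translating the maximal root $\alpha$ by $\gamma = \alpha' - \beta$ to produce an active root $\alpha + \gamma$ to which $\alpha$ is subordinate) is the natural derivation. The role-matching in Proposition~\ref{prop_crucial} is handled correctly, and the implicit point that $\alpha + \gamma \in \Psi_{\varphi'} \subset \Delta^+$ is indeed a root comes for free from the proposition's conclusion.
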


\begin{corollary}[{\cite[Lemma~3]{Avd_solv}}] \label{crl_non-acute}
The angles between the roots in $\mathrm M$ are pairwise non-acute.
In particular, the roots in $\mathrm M$ are linearly independent.
\end{corollary}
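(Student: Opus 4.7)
The plan is to exploit the already-stated structural property of active roots, namely: whenever $\alpha$ is an active root that decomposes as $\alpha = \beta + \gamma$ with $\beta, \gamma \in \Delta^+$, exactly one of $\beta, \gamma$ is active. Maximality in $\mathrm M$ is defined in terms of a subordination relation built from this decomposition, so the natural route is to translate an acute-angle hypothesis into such a decomposition and derive a contradiction with maximality.

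More precisely, suppose for contradiction that $\alpha, \beta \in \mathrm M$ are distinct and $(\alpha, \beta) > 0$. Since $\alpha, \beta$ are positive roots with $\alpha \ne \pm \beta$, a standard root-system fact guarantees that $\alpha - \beta \in \Delta$. After possibly interchanging $\alpha$ and $\beta$ we may assume $\gamma := \alpha - \beta \in \Delta^+$, so that $\alpha = \beta + \gamma$ with $\beta \in \Delta^+$ and $\gamma \in \Delta^+$. Because $\alpha$ is active and $\beta$ is active, the dichotomy on decompositions of active roots forces $\gamma$ to be non-active, and $\beta$ becomes subordinate to $\alpha$ by definition. But this contradicts $\beta \in \mathrm M$. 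Hence $(\alpha, \beta) \le 0$ for any two distinct elements of $\mathrm M$. (I do not expect to need the full strength of Proposition~\ref{prop_crucial} here, only the subordination dichotomy that precedes it.)

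For the linear independence assertion, I would invoke the classical observation that a finite family of nonzero vectors in a Euclidean space with pairwise non-positive inner products that all lie in a common open half-space is automatically linearly independent. The first hypothesis is provided by the non-acute angle claim just proved, and the second is immediate because $\mathrm M \subset \Delta^+$: taking $\rho = \tfrac12 \sum_{\alpha \in \Delta^+}\alpha$ (or any strictly dominant weight), one has $(\alpha, \rho) > 0$ for every $\alpha \in \mathrm M$. If $\sum_{\alpha \in \mathrm M} c_\alpha \alpha = 0$, split the sum into pieces with positive and negative coefficients, $w = \sum_{i \in I} c_i \alpha_i = -\sum_{j \in J} c_j \alpha_j$ with $c_i > 0$ and $-c_j > 0$; then $(w,w) = -\sum_{i,j} c_i c_j (\alpha_i, \alpha_j) \le 0$, so $w = 0$, and pairing $w$ with $\rho$ forces all $c_i = 0$, and symmetrically all $c_j = 0$.

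The only place where care is needed is the transition from $(\alpha, \beta) > 0$ to the existence of a positive root $\gamma$ realizing subordination; the root-system fact used there is entirely standard and not an obstacle. So I do not anticipate a serious difficulty — the corollary really is a quick consequence of the subordination structure and elementary Euclidean geometry.
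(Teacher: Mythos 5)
Your argument is correct and is exactly the intended derivation (the paper only cites \cite[Lemma~3]{Avd_solv}, but the placement right after the subordination discussion makes the route clear): an acute angle between two distinct roots of $\mathrm M$ forces $\alpha-\beta\in\Delta$, hence after relabelling one root is subordinate to the other, contradicting maximality, and linear independence then follows from the classical half-space argument since $\mathrm M\subset\Delta^+$. One cosmetic remark: you do not actually need the active/non-active dichotomy for decompositions — the definition of subordination alone applies once $\alpha-\beta\in\Delta^+$ is established, since both roots are already known to be active.
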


For every $\varphi \in \Phi$, the subspace $\mathfrak n_{-\varphi}
\subset \mathfrak u^-_{-\varphi}$ is the kernel of a linear function
$\xi_\varphi \in (\mathfrak u^-_{-\varphi})^*$, which is uniquely
determined up to proportionality. We note that
$\xi_\varphi(e_{-\alpha}) \ne 0$ for all $\alpha \in \Psi_\varphi$
and $\xi_\varphi(e_{-\alpha}) = 0$ for all $\alpha \in \Delta^+
\backslash \Psi_\varphi$ with $\tau(\alpha) = \varphi$.

\begin{proposition}[{\cite[Proposition~2]{Avd_solv}}]
Suppose that $\varphi, \varphi' \in \Phi$, $\varphi \ne \varphi'$,
and $\Psi_\varphi + \gamma \subset\nobreak \Psi_{\varphi'}$ for some
$\gamma \in \Delta^+$. Then there is a constant $c \ne 0$ such that
$\xi_\varphi(e_{-\alpha}) = c\xi_{\varphi'}([e_{-\alpha},
e_{-\gamma}])$ for all $\alpha \in \Psi_\varphi$. In particular, up
to proportionality, $\xi_\varphi$ is uniquely determined
by~$\xi_{\varphi'}$.
\end{proposition}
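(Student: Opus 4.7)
The plan is to exploit the key property cited from \cite[Lemma~4]{Avd_solv}: if an active root decomposes as a sum of two positive roots, exactly one summand is active. Pick any $\alpha \in \Psi_\varphi$. By hypothesis $\alpha + \gamma \in \Psi_{\varphi'}$ is active, and the decomposition $\alpha + \gamma = \alpha + \gamma$ forces exactly one of $\alpha, \gamma$ to be active. Since $\alpha$ is active, $\gamma$ must not be active, which by the definition of active roots means $\mathfrak g_{-\gamma} \subset \mathfrak n$, i.e., $e_{-\gamma} \in \mathfrak n$. This is the crucial structural input; the rest is a routine linear algebra calculation exploiting that $\mathfrak n$ is a Lie subalgebra.

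Once $e_{-\gamma} \in \mathfrak n$, I introduce the auxiliary linear functional $\xi'_\varphi \in (\mathfrak u^-_{-\varphi})^*$ defined by $\xi'_\varphi(x) = \xi_{\varphi'}([x, e_{-\gamma}])$. Note this is well defined since $[\mathfrak u^-_{-\varphi}, e_{-\gamma}] \subset \mathfrak u^-_{-\varphi - \tau(\gamma)} = \mathfrak u^-_{-\varphi'}$ (the last equality uses $\varphi' = \varphi + \tau(\gamma)$, which follows from $\Psi_\varphi + \gamma \subset \Psi_{\varphi'}$). For any $x \in \mathfrak n_{-\varphi}$, both $x$ and $e_{-\gamma}$ lie in~$\mathfrak n$, so $[x, e_{-\gamma}] \in \mathfrak n$; combined with $[x, e_{-\gamma}] \in \mathfrak u^-_{-\varphi'}$ this gives $[x, e_{-\gamma}] \in \mathfrak n_{-\varphi'}$, whence $\xi'_\varphi(x) = 0$. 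Hence $\xi'_\varphi$ vanishes on $\mathfrak n_{-\varphi}$, a hyperplane in $\mathfrak u^-_{-\varphi}$ (since $\varphi \in \Phi$, $c_\varphi = 1$), which forces $\xi'_\varphi = c^{-1} \xi_\varphi$ for a uniquely determined scalar $c^{-1} \in \CC$, equivalently $\xi_\varphi(x) = c \, \xi_{\varphi'}([x, e_{-\gamma}])$ for all $x \in \mathfrak u^-_{-\varphi}$, in particular for all $x = e_{-\alpha}$ with $\alpha \in \Psi_\varphi$.

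It remains to verify $c \ne 0$, i.e., that $\xi'_\varphi$ is not identically zero. Testing on $e_{-\alpha}$ for any $\alpha \in \Psi_\varphi$, we compute $[e_{-\alpha}, e_{-\gamma}] = N_{\alpha, \gamma} \, e_{-\alpha - \gamma}$ with structure constant $N_{\alpha, \gamma} \ne 0$ because $\alpha + \gamma \in \Psi_{\varphi'} \subset \Delta$. Then $\xi'_\varphi(e_{-\alpha}) = N_{\alpha, \gamma} \xi_{\varphi'}(e_{-\alpha - \gamma})$, and $\xi_{\varphi'}(e_{-\alpha - \gamma}) \ne 0$ precisely because $\alpha + \gamma \in \Psi_{\varphi'}$. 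The last assertion of the proposition is immediate from the displayed formula: once $\xi_{\varphi'}$ and $\gamma$ are known, the values $\xi_\varphi(e_{-\alpha})$ are fixed up to the common scalar~$c$, and on roots $\alpha$ with $\tau(\alpha) = \varphi$ but $\alpha \notin \Psi_\varphi$ the functional $\xi_\varphi$ vanishes by definition, so $\xi_\varphi$ is determined up to proportionality.

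The main (and only non-routine) obstacle is the parity statement about active roots—namely, knowing that $\gamma$ cannot itself be active. Once that is in hand from \cite[Lemma~4]{Avd_solv}, the argument reduces to the ``subalgebra + codimension one'' yoga outlined above. No further case analysis on whether $\tau(\gamma) \in \Phi$ is needed, since the active-root dichotomy handles both possibilities uniformly.
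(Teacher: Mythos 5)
Your proof is correct: the reduction to showing $e_{-\gamma}\in\mathfrak n$ via the active-root dichotomy of \cite[Lemma~4]{Avd_solv}, followed by the observation that $x\mapsto\xi_{\varphi'}([x,e_{-\gamma}])$ kills the hyperplane $\mathfrak n_{-\varphi}$ and is nonzero on $e_{-\alpha}$ (since $N_{\alpha,\gamma}\ne 0$ and $\alpha+\gamma\in\Psi_{\varphi'}$), is exactly the intended argument. The paper only cites \cite[Proposition~2]{Avd_solv} without reproducing a proof, and your argument coincides with the standard one given there.
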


\begin{corollary} \label{crl_maximal_linear_functions}
The subspace $\mathfrak n \subset \mathfrak u^-$ is uniquely
determined by linear functions\, $\xi_\varphi$, where $\varphi$ runs
over the set $\tau(\mathrm M) \subset \Phi$.
\end{corollary}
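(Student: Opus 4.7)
The plan is to recover $\mathfrak n$ from its weight components. Since $\mathfrak u^- = \bigoplus_{\varphi \in \tau(\Delta^+)} \mathfrak u^-_{-\varphi}$ and $\mathfrak n = \bigoplus \mathfrak n_{-\varphi}$, it suffices to determine each $\mathfrak n_{-\varphi}$. For $\varphi \notin \Phi$ we have $c_\varphi = 0$, i.e.\ $\mathfrak n_{-\varphi} = \mathfrak u^-_{-\varphi}$, which is determined by $G$, $T$, and $S$ alone; for $\varphi \in \Phi$ we have $\mathfrak n_{-\varphi} = \Ker \xi_\varphi$, and since a hyperplane depends on its defining functional only up to proportionality, it is enough to recover $\xi_\varphi$ up to scalar from the data $\{\xi_{\varphi'} : \varphi' \in \tau(\mathrm M)\}$.

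The key tool is the immediately preceding proposition, which says that whenever $\varphi \ne \varphi'$ lie in $\Phi$ and $\Psi_\varphi + \gamma \subset \Psi_{\varphi'}$ for some $\gamma \in \Delta^+$, the functional $\xi_\varphi$ is determined up to proportionality by $\xi_{\varphi'}$. My strategy is to iterate this passage, chaining from an arbitrary $\varphi \in \Phi$ up to some element of $\tau(\mathrm M)$. To this end I would define a relation $\varphi \prec \varphi'$ on $\Phi$ by requiring the existence of $\alpha \in \Psi_\varphi$ and $\beta \in \Psi_{\varphi'}$ with $\beta - \alpha \in \Delta^+$. By Corollary~\ref{crl_maximal_active_roots}, the maximal elements of $\Phi$ under this relation are precisely those $\varphi$ with $\Psi_\varphi \subset \mathrm M$, i.e.\ the elements of $\tau(\mathrm M)$.

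The inductive step is the crux. Given $\varphi \in \Phi \setminus \tau(\mathrm M)$, Corollary~\ref{crl_maximal_active_roots} yields $\Psi_\varphi \cap \mathrm M = \varnothing$. I would pick $\alpha \in \Psi_\varphi$ of maximal height; since $\alpha$ is not maximal in $\Psi$, there is an active root of the form $\beta = \alpha + \gamma$ with $\gamma \in \Delta^+$, and maximality of $\hgt \alpha$ in $\Psi_\varphi$ forces $\varphi' := \tau(\beta) \ne \varphi$. Then Proposition~\ref{prop_crucial} supplies $\Psi_\varphi + \gamma \subset \Psi_{\varphi'}$, so the preceding proposition determines $\xi_\varphi$ up to proportionality from~$\xi_{\varphi'}$. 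Termination of the chain is guaranteed because $\hgt \beta > \hgt \alpha$ forces the maximum of $\hgt$ on $\Psi_{\varphi_i}$ to strictly increase along the successive $\varphi_i$, while heights of roots are bounded.

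I do not expect a serious obstacle; the work is essentially bookkeeping on top of Proposition~\ref{prop_crucial} and the preceding proposition. The only point that requires some attention is the verification that the chain actually terminates inside $\tau(\mathrm M)$ rather than cycling within $\Phi \setminus \tau(\mathrm M)$, which is handled by the strict height monotonicity just described.
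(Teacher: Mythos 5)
Your argument is correct and is essentially the intended one: the paper treats the corollary as an immediate consequence of the preceding proposition together with Proposition~\ref{prop_crucial}, the implicit content being exactly the chaining you carry out from an arbitrary $\varphi \in \Phi$ up into $\tau(\mathrm M)$. The only cosmetic difference is that your height induction re-proves the reachability of $\tau(\mathrm M)$, whereas the source \cite{Avd_solv} gets this in a single step from the fact that every active root is subordinate to a maximal active root, which (together with Corollary~\ref{crl_maximal_active_roots}) lands you in $\tau(\mathrm M)$ after one application of Proposition~\ref{prop_crucial}.
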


Corollaries~\ref{crl_non-acute}
and~\ref{crl_maximal_linear_functions} yield the following result.

\begin{theorem}[{\cite[Theorem~2]{Avd_solv}}]
\label{thm_first_approx}
Up to conjugation by an element of\,~$T$, a strongly solvable
spherical subgroup $H$ standardly embedded in $B^-$ is uniquely
determined by the pair $(S, \Psi)$. Moreover, $H$ is explicitly
recovered from $(S, \Psi)$.
\end{theorem}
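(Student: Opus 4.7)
The plan is to reduce the statement to a normalization problem for the unipotent radical. Since $H = S \rightthreetimes N$ with $S$ prescribed, and since conjugation by $t \in T$ preserves $B^-$, centralizes $S$, and fixes the set $\Psi$ (because $\operatorname{Ad}(t)$ acts on each $e_{-\alpha}$ by a scalar and hence preserves membership in $\mathfrak n$), I only need to show that the subspace $\mathfrak n \subset \mathfrak u^-$ is recovered from $\Psi$ up to the action of $T$. First I would invoke the $S$-weight decomposition $\mathfrak u^- = \bigoplus_\varphi \mathfrak u^-_{-\varphi}$, under which $\mathfrak n = \bigoplus_\varphi \mathfrak n_{-\varphi}$. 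By Proposition \ref{prop_sphericity_criterion} we have $c_\varphi \le 1$, so each piece is either $\mathfrak u^-_{-\varphi}$ itself (precisely when $\varphi \notin \tau(\Psi)$) or the kernel of a nonzero form $\xi_\varphi$ whose zero-pattern on the basis $\{e_{-\alpha} : \tau(\alpha) = \varphi\}$ is read off directly from $\Psi_\varphi$. The first alternative is already visibly determined by $\Psi$.

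For the second alternative, I would then apply Corollary \ref{crl_maximal_linear_functions}, which states that $\mathfrak n$ is determined by the forms $\xi_\varphi$ for $\varphi \in \tau(\mathrm M)$, the remaining $\xi_\varphi$ being recovered inductively from these via Proposition \ref{prop_crucial}. Corollary \ref{crl_maximal_active_roots} further ensures that for every such $\varphi$ the subset $\Psi_\varphi$ lies entirely inside $\mathrm M$. At this stage the problem reduces to normalizing, up to overall scaling in each $\varphi$, the finite family of tuples
\[
\bigl(\xi_\varphi(e_{-\alpha})\bigr)_{\alpha \in \Psi_\varphi}, \qquad \varphi \in \tau(\mathrm M),
\]
to a canonical value depending only on $(S,\Psi)$.

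Finally I would carry out the normalization using $T$-conjugation. A short computation shows that conjugating $H$ by $t \in T$ replaces $\xi_\varphi(e_{-\alpha})$ by $\alpha(t)\,\xi_\varphi(e_{-\alpha})$, so the available rescalings are precisely the image of the evaluation map $T \to (\CC^\times)^{\mathrm M}$, $t \mapsto (\alpha(t))_{\alpha \in \mathrm M}$. Corollary \ref{crl_non-acute} asserts that $\mathrm M$ is linearly independent in $\mathfrak X(T)$, and by the duality between algebraic tori and their character lattices this is equivalent to surjectivity of the evaluation map. Hence a suitable $t \in T$ yields $\xi_\varphi(e_{-\alpha}) = 1$ for every $\alpha \in \Psi_\varphi$ and $\varphi \in \tau(\mathrm M)$, producing a canonical representative of $\mathfrak n$ that depends only on $(S,\Psi)$; the remaining $\xi_\varphi$ are then written down explicitly from Proposition \ref{prop_crucial}. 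This simultaneously proves uniqueness up to $T$-conjugation and gives an explicit recipe for $H$. The main technical point is the surjectivity step in the last paragraph; once Corollary \ref{crl_non-acute} on the non-acuteness of angles in $\mathrm M$ is in hand, the rest is bookkeeping with the earlier structural results.
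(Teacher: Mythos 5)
Your proposal is correct and follows essentially the same route as the paper, which derives Theorem~\ref{thm_first_approx} precisely from Corollary~\ref{crl_maximal_linear_functions} (reduction to the forms $\xi_\varphi$ with $\varphi \in \tau(\mathrm M)$) together with Corollary~\ref{crl_non-acute} (linear independence of $\mathrm M$, giving surjectivity of the evaluation map $T \to (\CC^\times)^{\mathrm M}$ and hence the normalization of all $\xi_\varphi(e_{-\alpha})$ to~$1$). Your write-up just makes the bookkeeping explicit.
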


\begin{proposition}[{\cite[Proposition~3]{Avd_solv}}]
\label{prop_associated_root} Let $\alpha$ be an active root. Then
there is a unique simple root $\pi(\alpha) \in \Supp \alpha$ with
the following property: if $\alpha = \beta + \gamma$ for some roots
$\beta, \gamma \in \Delta^+$, then $\beta$
\textup(resp.~$\gamma$\textup) is active if and only if $\pi(\alpha)
\notin \Supp \beta$ \textup(resp. $\pi(\alpha) \notin \Supp
\gamma$\textup).
\end{proposition}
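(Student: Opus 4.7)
The plan is to proceed by strong induction on the height of $\alpha$. The base case $\alpha \in \Pi$ is immediate: $\Supp\alpha = \{\alpha\}$, so $\pi(\alpha) := \alpha$ is forced, and the property holds vacuously because $\alpha$ admits no decomposition into two positive roots.

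For the inductive step, assume $\hgt\alpha \ge 2$ and that the claim holds for all active roots of smaller height. I reformulate the desired property by observing that, for any decomposition $\alpha = \beta + \gamma$ with $\beta, \gamma \in \Delta^+$, one has $\Supp\alpha \subseteq \Supp\beta \cup \Supp\gamma$. Hence the condition ``$\beta$ active iff $\pi(\alpha) \notin \Supp\beta$'' is equivalent to
$$\pi(\alpha) \in \Supp\alpha \setminus B(\alpha), \qquad B(\alpha) := \bigcup\{\Supp\beta : \alpha = \beta + \gamma,\ \beta,\gamma \in \Delta^+,\ \beta\text{ active}\}.$$
Thus the statement reduces to proving $|\Supp\alpha \setminus B(\alpha)| = 1$.

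For existence, I fix an active subordinate $\beta_0$ of $\alpha$ (one exists because $\hgt\alpha \ge 2$ forces $\alpha$ to admit a decomposition, and the already-established ``exactly one active summand'' property guarantees one active summand). The induction hypothesis supplies $\pi(\beta_0) \in \Supp\beta_0 \subset \Supp\alpha$. A careful analysis, using Proposition~\ref{prop_crucial} to transport active roots along the positive-root shift $\gamma_0 = \alpha - \beta_0$, identifies a simple root in $\Supp\alpha \setminus B(\alpha)$ as either $\pi(\beta_0)$ itself or a root closely related to it. For uniqueness, I suppose two distinct simple roots $\sigma_1, \sigma_2 \in \Supp\alpha \setminus B(\alpha)$ and derive a contradiction by constructing, via iterative applications of Proposition~\ref{prop_crucial} starting from a simple active root reached by descending through active subordinates of $\alpha$, an active subordinate of $\alpha$ whose support contains one of $\sigma_1, \sigma_2$.

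The main anticipated obstacle is the combinatorial bookkeeping required to transport activity along positive-root shifts. Proposition~\ref{prop_crucial} says $\Psi_{\tau(\beta_0)} + \gamma \subset \Psi_{\tau(\beta)}$ whenever $\gamma = \beta - \beta_0 \in \Delta^+$, but one must simultaneously keep track of which summand of a given decomposition of $\alpha$ is active---information not visible from the root system alone, and extractable only by invoking the induction hypothesis on smaller active roots. Once these chains of shifts are set up, the identification of $\pi(\alpha)$ in terms of $\pi(\beta_0)$ becomes straightforward bookkeeping, but producing the shifts with the correct activity status is the heart of the argument.
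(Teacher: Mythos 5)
This proposition is quoted from \cite[Proposition~3]{Avd_solv}; the present paper gives no proof of it, so your attempt can only be judged on its own terms. Your opening reduction is correct: setting $B(\alpha)=\bigcup_{\beta}\Supp\beta$, the union taken over all active summands $\beta$ of $\alpha$, the ``exactly one active summand'' property quoted just before the proposition (from \cite[Lemma~4]{Avd_solv}) shows that a simple root has the required property if and only if it lies in $\Supp\alpha\setminus B(\alpha)$, so the claim is indeed equivalent to $|\Supp\alpha\setminus B(\alpha)|=1$; the base case $\alpha\in\Pi$ is also fine.

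From that point on, however, nothing is actually proved. Both the existence and the uniqueness of the element of $\Supp\alpha\setminus B(\alpha)$ --- which together constitute the entire content of the proposition --- are replaced by the phrases ``a careful analysis \dots identifies a simple root'' and ``derive a contradiction by constructing \dots an active subordinate,'' and you yourself list ``producing the shifts with the correct activity status'' as an unresolved obstacle; that obstacle \emph{is} the proof. Worse, the one concrete candidate you offer in the existence step cannot work: if $\beta_0$ is an active subordinate of $\alpha$, then $\Supp\beta_0\subseteq B(\alpha)$ by the very definition of $B(\alpha)$, so $\pi(\beta_0)\in\Supp\beta_0$ can never lie in $\Supp\alpha\setminus B(\alpha)$ (consistently, $\pi$ is injective on $F(\alpha)$ by \cite[Corollary~6]{Avd_solv}, so $\pi(\alpha)\ne\pi(\beta_0)$ always); the escape clause ``or a root closely related to it'' is therefore carrying the whole argument. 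Induction on $\hgt\alpha$ combined with Proposition~\ref{prop_crucial} is a plausible strategy in outline, but as written this is a plan for a proof, not a proof.
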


This proposition provides a map $\pi \colon \Psi \to \Pi$.

\begin{proposition} \label{prop_amounts}
The set $(\Psi, \sim)$ amounts to the set $(\mathrm M, \pi, \sim)$
with $\pi$ and $\sim$ restricted to~$\mathrm M$.
\end{proposition}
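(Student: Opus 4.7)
The plan is to establish that each of the data $(\Psi, \sim)$ and $(\mathrm M, \pi|_{\mathrm M}, \sim|_{\mathrm M})$ determines the other. The easy direction is from $(\Psi, \sim)$ to $(\mathrm M, \pi|_{\mathrm M}, \sim|_{\mathrm M})$: the subset $\mathrm M \subset \Psi$ is characterized intrinsically as those $\alpha \in \Psi$ with $\alpha + \gamma \notin \Psi$ for all $\gamma \in \Delta^+$, the map $\pi|_{\mathrm M}$ is recovered from $\Psi$ via Proposition~\ref{prop_associated_root}, and $\sim|_{\mathrm M}$ is simply the restriction. So the work lies in the opposite direction.

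To recover $\Psi$ from $(\mathrm M, \pi|_{\mathrm M})$, I use that Proposition~\ref{prop_associated_root} yields, for each $\alpha \in \mathrm M$, the explicit description
\[
F(\alpha) = \{\alpha\} \cup \{\beta \in \Delta^+ \mid \alpha - \beta \in \Delta^+ \text{ and } \pi(\alpha) \notin \Supp \beta\}.
\]
Thus each $F(\alpha)$ is determined by the data $(\alpha, \pi(\alpha))$, and it remains to prove $\Psi = \bigcup_{\alpha \in \mathrm M} F(\alpha)$. The inclusion $\supseteq$ is immediate. For $\subseteq$, I argue by downward induction on the height of $\beta \in \Psi$: if $\beta$ is not maximal, I pick any $\beta_1 \in \Psi$ with $\beta_1 - \beta \in \Delta^+$, and use Proposition~\ref{prop_crucial} (applied to $\beta$ and $\beta_1$) to lift the whole class $\Psi_{\tau(\beta)}$ inside $\Psi_{\tau(\beta_1)}$; iterating this lifting and invoking finiteness of $\Delta^+$, I reach a maximal active root $\alpha$ with $\alpha - \beta \in \Delta^+$, so that $\beta \in F(\alpha)$.

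To recover $\sim$ on $\Psi$ from $(\mathrm M, \sim|_{\mathrm M})$, I use Corollary~\ref{crl_maximal_active_roots}, which splits the equivalence classes into two types: those entirely contained in $\mathrm M$ and those disjoint from $\mathrm M$. Classes of the first type are directly given by $\sim|_{\mathrm M}$. For the second type I would prove the characterization that, for $\alpha, \beta \in \Psi \setminus \mathrm M$, one has $\alpha \sim \beta$ if and only if there exists $\gamma \in \Delta^+$ with $\alpha + \gamma, \beta + \gamma \in \mathrm M$ and $\alpha + \gamma \sim \beta + \gamma$ in $\mathrm M$. The ``$\Leftarrow$'' direction is formal since $\tau(\alpha + \gamma) = \tau(\beta + \gamma)$ forces $\tau(\alpha) = \tau(\beta)$. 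For ``$\Rightarrow$'', I pick $\beta_0 \in \mathrm M$ with $\gamma = \beta_0 - \beta \in \Delta^+$ (which exists by the previous paragraph) and apply Proposition~\ref{prop_crucial} to obtain $\Psi_{\tau(\beta)} + \gamma \subset \Psi_{\tau(\beta_0)}$, so that $\alpha + \gamma \in \mathrm M$ and $\alpha + \gamma \sim \beta_0 = \beta + \gamma$.

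I expect the main obstacle to be the lifting step in the recovery of $\Psi$, i.e.\ showing that every non-maximal active root $\beta$ is subordinate (in the strict sense $\alpha - \beta \in \Delta^+$, not merely $\alpha - \beta \in \ZZ^+ \Delta^+$) to some element of~$\mathrm M$; a naive iteration of subordinacy breaks down because the sum of two positive roots need not be a root. Once this lifting is established via Proposition~\ref{prop_crucial}, the remaining assertions are formal consequences of Propositions~\ref{prop_associated_root} and~\ref{prop_crucial} together with Corollary~\ref{crl_maximal_active_roots}.
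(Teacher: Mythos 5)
Your overall strategy coincides with the paper's: both directions are reduced to (i) recovering $\Psi$ as $\bigcup_{\alpha\in\mathrm M}F(\alpha)$, with $F(\alpha)$ determined by $(\alpha,\pi(\alpha))$ via Proposition~\ref{prop_associated_root}, and (ii) recovering $\sim$ on $\Psi$ from $\sim$ on $\mathrm M$ by the shift characterization, using Proposition~\ref{prop_crucial} together with Corollary~\ref{crl_maximal_active_roots}. Your explicit formula for $F(\alpha)$ and your treatment of the equivalence relation are correct as far as they go.

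The gap sits exactly at the step you yourself flag as the main obstacle, and Proposition~\ref{prop_crucial} does not close it. What is needed is that every $\beta\in\Psi$ is subordinate to some $\alpha\in\mathrm M$, i.e.\ that $\alpha-\beta$ is a \emph{single} positive root (or zero) --- this is what membership in $F(\alpha)$ requires. Your lifting produces a chain $\beta,\ \beta_1=\beta+\gamma_1,\ \beta_2=\beta_1+\gamma_2,\ \ldots,\ \alpha=\beta_k$ with each $\gamma_i\in\Delta^+$, and iterating Proposition~\ref{prop_crucial} along it only yields $\Psi_{\tau(\beta)}+\gamma_1+\cdots+\gamma_k\subset\Psi_{\tau(\alpha)}$; this gives information about the other members of the class of $\beta$, but says nothing about whether $\gamma_1+\cdots+\gamma_k$ is itself a root, which is precisely the point at issue (already $\gamma_1+\gamma_2$ need not be a root, as you note). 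So the conclusion ``I reach a maximal active root $\alpha$ with $\alpha-\beta\in\Delta^+$'' is asserted rather than proved. The statement is true, but it is~\cite[Corollary~3(b)]{Avd_solv} --- which is exactly what the paper cites at this point --- and its proof there rests on the finer structure of active roots (essentially Proposition~\ref{prop_active_roots}/Table~\ref{table_active_roots} and the bijectivity of $\pi\colon F(\alpha)\to\Supp\alpha$), not on Proposition~\ref{prop_crucial} alone. Either cite that corollary or supply the required case analysis; as written, the recovery of $\Psi$ --- and with it the existence of the root $\beta_0\in\mathrm M$ used in your recovery of $\sim$ --- is not established.
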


\begin{proof}
Clearly, $(\mathrm M, \pi, \sim)$ is determined by $(\Psi, \sim)$.
Let us show the converse. Proposition~\ref{prop_associated_root}
implies that for every active root $\alpha$ the set $F(\alpha)$ is
uniquely determined by the simple root $\pi(\alpha)$. Since every
active root is subordinate to a maximal active root
(see~\cite[Corollary 3(b)]{Avd_solv}), it follows that the whole set
$\Psi$ is uniquely determined by~$\mathrm M$ and the restriction of
$\pi$ to~$\mathrm M$. At last, Proposition~\ref{prop_crucial} and
Corollary~\ref{crl_maximal_active_roots} show that for $\alpha,
\beta \in \Psi$ one has $\alpha \sim \beta$ if and only if there is
an element $\gamma \in \Delta^+ \cup \lbrace 0 \rbrace$ such that
$\alpha + \gamma, \beta + \gamma \in \mathrm M$ and $\alpha + \gamma
\sim \beta + \gamma$.
\end{proof}

The following lemma together with its corollary will be useful
in~\S\,\ref{subsec_solvabl_invariants}.

\begin{lemma}
Let $\alpha \in \Delta^+$ and consider the set $I_\alpha = \lbrace
\alpha \rbrace \cup \lbrace \beta \in \Delta^+ \mid \alpha - \beta
\in \Delta^+ \rbrace$. Then the sublattice $\ZZ I_\alpha \subset \ZZ
\Delta$ contains $\Supp \alpha$.
\end{lemma}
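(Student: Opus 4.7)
The plan is to prove the lemma by induction on $\hgt \alpha$. The base case $\alpha \in \Pi$ is immediate, since then $I_\alpha = \{\alpha\} = \Supp \alpha$. For the inductive step, I use the standard fact that any non-simple $\alpha \in \Delta^+$ admits a decomposition $\alpha = \pi + \beta$ with $\pi \in \Pi$ and $\beta \in \Delta^+$. With such $\pi$ fixed, one has $\pi, \beta \in I_\alpha$ and $\Supp \alpha = \{\pi\} \cup \Supp \beta$, so it suffices to show $\Supp \beta \subset \ZZ I_\alpha$. The inductive hypothesis yields $\Supp \beta \subset \ZZ I_\beta$, so the task reduces to proving the inclusion $I_\beta \subset \ZZ I_\alpha$.

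To obtain this inclusion, take $\gamma \in I_\beta$. If $\gamma = \beta$, then $\gamma = \alpha - \pi \in \ZZ I_\alpha$. Otherwise $\epsilon := \beta - \gamma \in \Delta^+$, and the crucial point is to show that at least one of $\pi + \gamma$, $\pi + \epsilon$ lies in $\Delta^+$. Granted this, if $\pi + \gamma \in \Delta^+$ then $\alpha - \epsilon = \pi + \gamma \in \Delta^+$, so $\epsilon \in I_\alpha$ and thus $\gamma = \alpha - \pi - \epsilon \in \ZZ I_\alpha$; symmetrically, if $\pi + \epsilon \in \Delta^+$, then $\gamma \in I_\alpha$ directly.

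The crucial point is the main obstacle, and I plan to establish it by contradiction using the standard sign criterion from root-system theory, namely that for any roots $\delta_1, \delta_2$ with $\delta_1 + \delta_2 \notin \Delta \cup \{0\}$ one has $(\delta_1, \delta_2) \ge 0$. If $\gamma = \pi$ or $\epsilon = \pi$, then $\pi + \epsilon = \beta$ or $\pi + \gamma = \beta$ lies in $\Delta^+$ trivially, so I may assume $\gamma, \epsilon \ne \pi$, which ensures $\pi + \gamma, \pi + \epsilon \ne 0$. Suppose neither $\pi + \gamma$ nor $\pi + \epsilon$ is a root. Applying the sign criterion to these sums yields $(\pi, \gamma), (\pi, \epsilon) \ge 0$, hence $(\pi, \beta) \ge 0$. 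On the other hand, $\alpha - \gamma = \pi + \epsilon$ and $\alpha - \epsilon = \pi + \gamma$ are both nonzero non-roots by assumption, so the sign criterion applied to the pairs $\alpha, -\gamma$ and $\alpha, -\epsilon$ forces $(\alpha, \gamma), (\alpha, \epsilon) \le 0$; summing these and using $\alpha = \pi + \beta$, one obtains $(\pi, \beta) + (\beta, \beta) = (\alpha, \beta) \le 0$, so $(\pi, \beta) \le -(\beta, \beta) < 0$, contradicting the previous inequality.
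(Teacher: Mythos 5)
Your proof is correct. It shares the paper's skeleton --- induction on $\hgt \alpha$ via a simple root --- but executes the inductive step by a genuinely different mechanism. The paper chooses $\alpha_0 \in \Pi$ with $(\alpha, \alpha_0) > 0$ and passes to the reflected root $\beta = r_{\alpha_0}(\alpha)$ (which can lower the height by more than one); it then transports $I_\beta$ into $I_\alpha$ by applying the reflection, checking that every image except possibly $-\alpha_0$ and $\alpha + \alpha_0$ lands in $I_\alpha$, and finishes by working modulo $\ZZ \alpha_0$. You instead take $\beta = \alpha - \pi$ with $\pi$ simple and prove the inclusion $I_\beta \subset \ZZ I_\alpha$ outright; the engine is your claim that for any splitting $\beta = \gamma + \epsilon$ into positive roots at least one of $\pi + \gamma$, $\pi + \epsilon$ is a root, which you derive correctly from the criterion that two roots whose sum is neither a root nor zero have non-negative inner product. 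I verified the contradiction: the hypotheses give $(\pi, \beta) \ge 0$ and $(\alpha, \gamma), (\alpha, \epsilon) \le 0$, while $(\alpha, \beta) = (\pi, \beta) + (\beta, \beta) > 0$; the non-degeneracy conditions needed for each application of the criterion hold because all roots involved are positive and $\gamma, \epsilon$ are strictly below $\alpha$ in the root order. The paper's reflection trick avoids any inner-product case analysis in the transfer step; your route avoids the bookkeeping of exceptional elements under the reflection and isolates the clean intermediate fact $I_{\alpha - \pi} \subset \ZZ I_\alpha$. Both arguments are of comparable length and depth.
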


\begin{proof}
We use the same idea as in the proof of~\cite[Lemma~6(b)]{Avd_solv}.
The proof is by induction on~$\hgt \alpha$. If $\hgt \alpha = 1$
then the assertion is true. Assume that $\hgt \alpha = k$ and the
assertion is proved for all $\alpha' \in \Delta^+$ with $\hgt
\alpha' < k$. By a well-known lemma from linear algebra
(see~\cite[Lemma~1]{Avd_solv}) there is a root $\alpha_0 \in \Pi$
such that $(\alpha, \alpha_0) > 0$. Then $\alpha - \alpha_0 \in
\Delta^+$, whence $\alpha_0 \in I_\alpha$. Let $r_0$ be the
reflection corresponding to $\alpha_0$. Note that for every $\gamma
\in \Delta$ one has $r_0(\gamma) = \gamma - \langle \alpha_0^\vee,
\gamma \rangle \alpha_0 \in \gamma + \ZZ\alpha_0$. Consider the root
$\beta = r_0(\alpha) \in \Delta^+$. Clearly, $\hgt \beta < \hgt
\alpha$ and $\Supp \alpha \supset \Supp \beta \supset \Supp \alpha
\backslash \lbrace \alpha_0 \rbrace$. By the induction hypothesis,
$\ZZ I_\beta \supset \Supp \beta$. Since for every $\beta' \in
I_\beta$ one has either $r_0(\beta') \in I_\alpha$ or $r_0(\beta')
\in \lbrace - \alpha_0, \alpha + \alpha_0 \rbrace$, we conclude that
$I_\alpha \supset \lbrace \alpha_0 \rbrace \cup (r_0(I_\beta)
\backslash \lbrace - \alpha_0, \alpha + \alpha_0 \rbrace)$. Hence
$$
\ZZ I_\alpha \supset \ZZ (r_0(I_\beta) \cup \lbrace \alpha_0
\rbrace) \supset \ZZ(r_0(\Supp \beta) \cup \lbrace \alpha_0 \rbrace)
\supset \Supp \beta \cup \lbrace \alpha_0 \rbrace = \Supp \alpha,
$$
where the relation $r_0(\Supp \beta) \subset \Supp \beta + \ZZ
\alpha_0$ is taken into account.
\end{proof}

To state the corollary, we need to introduce the set $\Pi_0 =
\bigcup\limits_{\beta \in \mathrm M} \Supp \beta \subset \Pi$. Since
for every $\alpha \in \Psi$ the map $\pi \colon F(\alpha) \to \Supp
\alpha$ is bijective (see~\cite[Corollary~6]{Avd_solv}), we have
$\Pi_0 = \pi(\Psi)$.

\begin{corollary} \label{crl_contains_support}
The following assertions hold:
\begin{enumerate}[label=\textup{(\alph*)},ref=\textup{\alph*}]
\item \label{crl_contains_support_a}
for every $\alpha \in \Psi$, the sublattice $\ZZ F(\alpha) \subset
\mathfrak X(T)$ contains $\Supp \alpha$;

\item \label{crl_contains_support_b}
the sublattice $\ZZ \Psi \subset \mathfrak X(T)$ coincides with $\ZZ
\Pi_0$;

\item \label{crl_contains_support_c}
the sublattice $\ZZ \Phi \subset \mathfrak X(S)$ coincides with
$\tau(\ZZ \Pi_0)$.
\end{enumerate}
\end{corollary}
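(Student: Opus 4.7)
The plan is to reduce everything to the preceding lemma and the structural properties of active roots established in~\cite{Avd_solv}. The key observation for part~(\ref{crl_contains_support_a}) is that the set $I_\alpha = \lbrace \alpha \rbrace \cup \lbrace \beta \in \Delta^+ \mid \alpha - \beta \in \Delta^+ \rbrace$ from the lemma is already contained in $\ZZ F(\alpha)$. Indeed, for every $\beta \in I_\alpha \setminus \lbrace \alpha \rbrace$, one has $\alpha = \beta + (\alpha - \beta)$ with both summands in $\Delta^+$; by the dichotomy for active roots stated after Definition~1, exactly one of $\beta, \alpha - \beta$ is active, and that one lies in $F(\alpha)$ by definition. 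Hence either $\beta \in F(\alpha)$, or $\alpha - \beta \in F(\alpha)$, whence $\beta = \alpha - (\alpha - \beta) \in \ZZ F(\alpha)$. Thus $I_\alpha \subset \ZZ F(\alpha)$, and applying the lemma yields $\Supp \alpha \subset \ZZ I_\alpha \subset \ZZ F(\alpha)$.

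For part~(\ref{crl_contains_support_b}), I would prove the two inclusions separately. The inclusion $\ZZ \Psi \subset \ZZ \Pi_0$: by~\cite[Corollary~3(b)]{Avd_solv}, every $\alpha \in \Psi$ is subordinate to some maximal active root $\beta \in \mathrm M$, so $\alpha = \beta - \gamma$ for some $\gamma \in \Delta^+ \cup \lbrace 0 \rbrace$ with $\Supp \gamma \subset \Supp \beta$; consequently $\Supp \alpha \subset \Supp \beta \subset \Pi_0$, whence $\alpha \in \ZZ \Pi_0$. The reverse inclusion $\ZZ \Pi_0 \subset \ZZ \Psi$ follows from part~(\ref{crl_contains_support_a}) applied to each $\beta \in \mathrm M$: one has $\Supp \beta \subset \ZZ F(\beta) \subset \ZZ \Psi$, and taking the union over $\mathrm M$ gives $\Pi_0 \subset \ZZ \Psi$.

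Part~(\ref{crl_contains_support_c}) is then immediate by applying the character restriction map $\tau$ to both sides of~(\ref{crl_contains_support_b}), noting that $\Phi = \tau(\Psi)$ by the very definition of the decomposition $\Psi = \bigcup_{\varphi \in \Phi} \Psi_\varphi$ (each $\Psi_\varphi$ is non-empty as $c_\varphi = 1$). Indeed, $\ZZ \Phi = \tau(\ZZ \Psi) = \tau(\ZZ \Pi_0)$.

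No step presents a genuine obstacle: the combinatorial heart of the argument is encapsulated in the preceding lemma, and the rest is a matter of chaining together the known properties of active roots, namely the dichotomy for decompositions $\alpha = \beta + \gamma$, the subordination structure linking arbitrary active roots to maximal ones, and the surjectivity of $\tau \colon \Psi \to \Phi$.
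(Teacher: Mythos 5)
Your proof is correct and follows the route the paper intends: the lemma on $I_\alpha$ is stated precisely so that part~(a) follows once one checks $I_\alpha \subset \ZZ F(\alpha)$ via the dichotomy for decompositions of active roots, and parts~(b) and~(c) then follow by the subordination structure and by applying $\tau$. The paper leaves the corollary without an explicit proof, and your deduction supplies exactly the missing chain of inclusions.
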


\subsection{Classification} \label{subsec_explicit_class}

In this subsection we present the classification of strongly
solvable spherical subgroups standardly embedded in $B^-$, up to
conjugation by elements of~$T$, and then explain when two such
subgroups are conjugate in~$G$.

Let $H \subset G$ be a strongly solvable spherical subgroup
standardly embedded in~$B^-$. We retain all the notation introduced
in~\S\,\ref{subsec_solv_description}.

The following proposition lists all possibilities for a pair
$(\alpha, \pi(\alpha))$ with $\alpha \in \Psi$.

\begin{proposition}[{\rm \cite[Theorem~3]{Avd_solv}}] \label{prop_active_roots}
For every active root $\alpha$, the pair $(\alpha, \pi(\alpha))$ is
contained in Table~\textup{\ref{table_active_roots}}.
\end{proposition}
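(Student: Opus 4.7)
The plan is to derive Table~\ref{table_active_roots} (implicit in the statement) by extracting combinatorial constraints from Proposition~\ref{prop_associated_root} and performing a case-by-case analysis on the connected Dynkin subdiagram $\Supp\alpha \subset \Pi$. Since the properties of the map~$\pi$ defined in Proposition~\ref{prop_associated_root} refer only to decompositions of~$\alpha$ into positive roots and to their supports, one may replace $G$ by the semisimple subgroup whose simple roots are~$\Supp\alpha$, and thereby assume $\Supp\alpha = \Pi$ with $\Pi$ connected. For the purposes of classifying the pair $(\alpha,\pi(\alpha))$ this reduction is harmless.

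Let $\alpha \in \Psi$ and set $\pi_0 = \pi(\alpha)$. Rewriting Proposition~\ref{prop_associated_root}, the condition on the pair $(\alpha,\pi_0)$ is purely combinatorial: for every positive root $\beta$ with $\beta < \alpha$ (that is, $\alpha - \beta \in \Delta^+$) and $\pi_0 \in \Supp\beta$, one must have $\pi_0 \notin \Supp(\alpha - \beta)$. Equivalently, $\pi_0$ belongs to the support of exactly one summand in every decomposition $\alpha = \beta + \gamma$ with $\beta,\gamma\in\Delta^+$. I would then observe that this condition is quite rigid: it forces $\pi_0$ to occupy a distinguished position in the Dynkin diagram of~$\Supp\alpha$, and it severely limits the coefficients of $\alpha$ written as an integer combination of simple roots.

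The bulk of the proof is Step three, the enumeration. For each irreducible Dynkin type $\mathsf A_r, \mathsf B_r, \mathsf C_r, \mathsf D_r, \mathsf E_{6,7,8}, \mathsf F_4, \mathsf G_2$ I would list all positive roots $\alpha$ whose support equals the full set of simple roots (this is a finite, explicit list), and for each such $\alpha$ and each candidate simple root $\pi_0 \in \Supp\alpha$, test the combinatorial condition above. Most candidates are eliminated by exhibiting a single decomposition $\alpha = \beta + \gamma$ that violates the condition; the survivors are precisely the entries of Table~\ref{table_active_roots}. Finally, to confirm that every listed pair genuinely occurs as $(\alpha,\pi(\alpha))$ for some standardly embedded strongly solvable spherical subgroup, I would appeal to Theorem~\ref{thm_first_approx} and the sphericity criterion of Proposition~\ref{prop_sphericity_criterion}: it suffices to construct an explicit pair $(S,\Psi)$ in each case and verify that the resulting set of active roots contains $\alpha$ with the prescribed~$\pi(\alpha)$.

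The main obstacle is the case analysis in Step three, and in particular the exceptional types $\mathsf E_7$ and $\mathsf E_8$, where the list of positive roots with full support is long. However, the combinatorial constraint is restrictive enough that for most $\alpha$ the correct $\pi_0$ is essentially unique, and a single explicit decomposition suffices to eliminate each of the remaining candidates. This makes the enumeration tedious but mechanical, rather than conceptually difficult.
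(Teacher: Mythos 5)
The paper does not prove this proposition; it quotes it from \cite[Theorem~3]{Avd_solv}, so your proposal has to be judged as a reconstruction of that argument. As such it has a genuine gap: the combinatorial condition you extract from Proposition~\ref{prop_associated_root} --- that in every decomposition $\alpha = \beta + \gamma$ with $\beta, \gamma \in \Delta^+$ exactly one of $\Supp \beta$, $\Supp \gamma$ contains $\pi_0 = \pi(\alpha)$ --- is far too weak to cut the list down to Table~\ref{table_active_roots}. Whenever the coefficient of $\pi_0$ in $\alpha$ equals $1$, the condition holds automatically, because the $\pi_0$-coefficients of $\beta$ and $\gamma$ are non-negative integers summing to $1$. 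Concretely, in type $\mathsf D_4$ take $\alpha = \alpha_1 + 2\alpha_2 + \alpha_3 + \alpha_4$ (the highest root, $\alpha_2$ the branch node) and $\pi_0 = \alpha_1$: your sieve retains this pair, yet no root with support of type $\mathsf D$ occurs in the table. The same happens for every end node of the highest root of $\mathsf D_r$ and for many roots in types $\mathsf E_6$--$\mathsf E_8$. Your Step three would therefore output a strictly larger table, and no amount of care with the enumeration repairs this, because the defect is in the criterion, not in the bookkeeping.

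What your reduction discards is precisely the content of Proposition~\ref{prop_associated_root} beyond the consequence ``exactly one summand is active'': the biconditional there forces \emph{every} $\beta \in \Delta^+$ with $\alpha - \beta \in \Delta^+$ and $\pi_0 \notin \Supp \beta$ to be active, and every such $\beta$ with $\pi_0 \in \Supp \beta$ to be non-active. The first half places the whole set $F(\alpha)$ under the global constraints that sphericity imposes on $\Psi$ (Proposition~\ref{prop_sphericity_criterion}: the codimension-one conditions and the linear independence of $\Phi$, hence Proposition~\ref{prop_crucial} and, for instance, the fact quoted before Corollary~\ref{crl_contains_support} that $\pi$ maps $F(\alpha)$ bijectively onto $\Supp \alpha$). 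The second half forces $\mathfrak g_{-\beta} \subset \mathfrak n$ for those $\beta$, and the requirement that $\mathfrak n$ be a Lie subalgebra meeting each weight space $\mathfrak u^-_{-\varphi}$ in codimension at most one yields the bracket relations that actually eliminate the spurious candidates. In the $\mathsf D_4$ example, $F(\alpha)$ would consist of the five roots $\alpha$, $\alpha_2$, $\alpha_2 + \alpha_3$, $\alpha_2 + \alpha_4$, $\alpha_2 + \alpha_3 + \alpha_4$, which is already incompatible with an injection of $F(\alpha)$ into the four-element set $\Supp \alpha$ --- but that injectivity is itself a nontrivial consequence of the sphericity hypothesis, not of your support condition. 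A correct proof must run the case analysis with these stronger structural inputs, which is what makes \cite[Theorem~3]{Avd_solv} substantially harder than the ``tedious but mechanical'' enumeration you describe.
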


\begin{table}[h]

\caption{Active roots}\label{table_active_roots}

\begin{center}

\begin{tabular}{|c|c|c|c|}

\hline

No. & Type of $\Supp \alpha$ & $\alpha$ & $\pi(\alpha)$ \\

\hline

1 & any of rank $r$ & $\alpha_1 + \alpha_2 + \ldots + \alpha_r$ &
$\alpha_1, \alpha_2, \ldots, \alpha_r$\\

\hline

2 & $\mathsf B_r$ & $\alpha_1 + \alpha_2 + \ldots + \alpha_{r-1} +
2\alpha_r$ & $\alpha_1, \alpha_2, \ldots, \alpha_{r-1}$\\

\hline

3 & $\mathsf C_r$ & $2\alpha_1 + 2\alpha_2 + \ldots + 2\alpha_{r-1}
+ \alpha_r$ & $\alpha_r$\\

\hline

4 & $\mathsf F_4$ & $2\alpha_1 + 2\alpha_2 + \alpha_3 + \alpha_4$ &
$\alpha_3, \alpha_4$\\

\hline

5 & $\mathsf G_2$ & $2\alpha_1 + \alpha_2$ & $\alpha_2$\\

\hline

6 & $\mathsf G_2$ & $3\alpha_1 + \alpha_2$ & $\alpha_2$\\

\hline
\end{tabular}

\end{center}

\end{table}

The notation in Table~\ref{table_active_roots} is as follows. We
denote by $\alpha_i$ the $i$th simple root in $\Supp \alpha$. In the
column ``$\pi(\alpha)$'' we list all possibilities for $\pi(\alpha)$
for a given active root~$\alpha$.

Let $\alpha \in \Delta^+$. A simple root $\delta \in \Supp \alpha$
is said to be \textit{terminal with respect to $\Supp \alpha$} if in
the Dynkin diagram of $\Supp \alpha$ the node corresponding to
$\delta$ is joined by an edge (possibly multiple) with exactly one
node.

We now list several conditions on a pair $\alpha, \beta$ of active
roots. These conditions will be used below in
Definition~\ref{def_ARS}.

\begin{figure}[h]

\begin{picture}(150,115)
\put(70,75){\circle{4}} \put(65,83){$\gamma_0$}

\put(70,73){\line(0,-1){18}} \put(70,53){\circle{4}}
\put(76,51){$\gamma_1$} \put(70,51){\line(0,-1){11}}
\put(70,36){\circle*{0.5}} \put(70,32){\circle*{0.5}}
\put(70,28){\circle*{0.5}} \put(70,25){\line(0,-1){11}}
\put(70,12){\circle{4}} \put(76,10){$\gamma_s$}

\put(72,76){\line(2,1){16}} \put(68,76){\line(-2,1){16}}

\put(90,85){\circle{4}} \put(88,74){$\beta_1$}
\put(92,86){\line(2,1){10}} \put(106,93){\circle*{0.5}}
\put(110,95){\circle*{0.5}} \put(114,97){\circle*{0.5}}
\put(118,99){\line(2,1){10}} \put(130,105){\circle{4}}
\put(128,94){$\beta_q$}

\put(50,85){\circle{4}} \put(45,74){$\alpha_1$}
\put(48,86){\line(-2,1){10}} \put(34,93){\circle*{0.5}}
\put(30,95){\circle*{0.5}} \put(26,97){\circle*{0.5}}
\put(22,99){\line(-2,1){10}} \put(10,105){\circle{4}}
\put(05,94){$\alpha_p$}

\end{picture}

{ \makeatletter

\renewcommand{\@makecaption}[2]{
\vspace{\abovecaptionskip}%
\sbox{\@tempboxa}{#1 #2}%
\global\@minipagefalse \hbox to \hsize {{\scshape \hfil #1 #2\hfil}}
\vspace{\belowcaptionskip}}

\makeatother

\caption{}\label{diagram_difficult}

}
\end{figure}
\begin{enumerate}
\setcounter{enumi}{-1}
\renewcommand{\labelenumi}{(D\arabic{enumi})}
\renewcommand{\theenumi}{D\arabic{enumi}}

\item \label{D0}
$\Supp \alpha \cap \Supp \beta = \varnothing$;

\item \label{D1}
$\Supp \alpha \cap \Supp \beta = \{\delta\}$, where $\pi(\alpha) \ne
\delta$, $\pi(\beta) \ne \delta$, and $\delta$ is terminal with
respect to both $\Supp \alpha$ and $\Supp \beta$;

\setcounter{enumi}{0}
\renewcommand{\labelenumi}{(E\arabic{enumi})}
\renewcommand{\theenumi}{E\arabic{enumi}}

\item \label{E1}
$\Supp \alpha \cap \Supp \beta = \{\delta\}$, where $\delta =
\pi(\alpha) = \pi(\beta)$, $\alpha - \delta \in \Delta^+$, $\beta -
\delta \in \Delta^+$, and $\delta$ is terminal with respect to both
$\Supp \alpha$ and $\Supp \beta$;

\setcounter{enumi}{1}
\renewcommand{\labelenumi}{(D\arabic{enumi})}
\renewcommand{\theenumi}{D\arabic{enumi}}

\item \label{D2}
the Dynkin diagram of $\Supp \alpha \cup \Supp \beta$ has the form
shown in Figure~\ref{diagram_difficult} (for some $p, q, s\ge 1$),
$\alpha = \alpha_1 + \ldots + \alpha_p + \gamma_0 + \gamma_1 +
\ldots + \gamma_s$, $\beta = \beta_1 + \ldots + \beta_q + \gamma_0 +
\gamma_1 + \ldots + \gamma_s$, $\pi(\alpha) \notin \Supp \alpha \cap
\Supp \beta$, and $\pi(\beta) \notin \Supp \alpha \cap \Supp \beta$;

\setcounter{enumi}{1}
\renewcommand{\labelenumi}{(E\arabic{enumi})}
\renewcommand{\theenumi}{E\arabic{enumi}}

\item \label{E2}
the Dynkin diagram of $\Supp \alpha \cup \Supp \beta$ has the form
shown in Figure~\ref{diagram_difficult} (for some $p,q,s\ge 1$),
$\alpha = \alpha_1 + \ldots + \alpha_p + \gamma_0 + \gamma_1 +
\ldots + \gamma_s$, $\beta = \beta_1 + \ldots + \beta_q + \gamma_0 +
\gamma_1 + \ldots + \gamma_s$, and $\pi(\alpha) = \pi(\beta) \in
\Supp \alpha \cap \Supp \beta$.
\end{enumerate}

\begin{definition} \label{def_ARS}
Suppose that $S \subset T$ is a subgroup, $\mathrm M \subset
\Delta^+$ is a subset, $\pi \colon \mathrm M \to \Pi$ is a map, and
$\sim$ is an equivalence relation on~$\mathrm M$. Let $\tau \colon
\mathfrak X(T) \to \mathfrak X(S)$ be the character restriction map
and put $\Pi_0 = \bigcup \limits_{\beta \in \mathrm M} \Supp \beta$.

The triple $(\mathrm M, \pi, \sim)$ is said to be an
\textit{ARS-set}\footnote{``ARS'' is an abbreviation for ``active
root system''.} if it satisfies the following conditions:
\begin{enumerate}
\renewcommand{\labelenumi}{(A)}
\renewcommand{\theenumi}{A}

\item \label{AA}
if $\alpha \in \mathrm M$, then $\pi(\alpha) \in \Supp \alpha$ and
the pair $(\alpha, \pi(\alpha))$ is contained in
Table~\textup{\ref{table_active_roots}};

\renewcommand{\labelenumi}{(D)}
\renewcommand{\theenumi}{D}

\item \label{DD}
if $\alpha, \beta \in \mathrm M$ and $\alpha \nsim \beta$, then one
of conditions (\ref{D0}), (\ref{D1}), (\ref{D2}) holds;

\renewcommand{\labelenumi}{(E)}
\renewcommand{\theenumi}{E}

\item \label{EE}
if $\alpha, \beta \in \mathrm M$ and $\alpha \sim \beta$, then one
of conditions (\ref{D0}), (\ref{D1}), (\ref{E1}), (\ref{D2}),
(\ref{E2}) holds;

\renewcommand{\labelenumi}{(C)}
\renewcommand{\theenumi}{C}

\item \label{CC} if $\alpha \in \mathrm M$, then $\Supp \alpha
\not\subset \bigcup\limits_{\delta \in \mathrm M \backslash
\{\alpha\}}\Supp \delta$.
\end{enumerate}

The quadruple $(S, \mathrm M, \pi, \sim)$ is said to be an
\textit{extended ARS-set} if $(\mathrm M, \pi, \sim)$ is an ARS-set
and condition~(\ref{TT}) below is satisfied:
\begin{enumerate}
\renewcommand{\labelenumi}{(T)}
\renewcommand{\theenumi}{T}
\item \label{TT}
$\Ker \tau \cap \ZZ \Pi_0 = \ZZ \lbrace \alpha - \beta \mid \alpha,
\beta \in \mathrm M, \alpha \sim \beta \rbrace$.
\end{enumerate}
\end{definition}

\begin{remark}
Thanks to
Corollary~\ref{crl_contains_support}(\ref{crl_contains_support_b}),
the original form of condition~(\ref{TT}) from~\cite{Avd_solv} is
equivalent to that presented in the definition.
\end{remark}

We now return to our subgroup~$H$. With $H$ we associate the
invariants $\Upsilon(H) = (S, \mathrm M, \pi, \sim)$ and
$\Upsilon_0(H) = (\mathrm M, \pi, \sim)$, where $\pi$ and $\sim$ are
restricted to~$\mathrm M$.

The following theorem provides a classification of all strongly
solvable spherical subgroups in~$G$ standardly embedded in~$B^-$,
see~\cite[Theorems~4 and~5]{Avd_solv}.

\begin{theorem} \label{thm_classification_ARS}
The map $H \mapsto \Upsilon(H)$ is a bijection between strongly
solvable spherical subgroups standardly embedded in~$B^-$, up to
conjugation by elements of\,~$T$, and extended ARS-sets.
\end{theorem}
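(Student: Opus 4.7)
The plan is to establish three assertions in turn: (i) $\Upsilon(H)$ is always an extended ARS-set; (ii) $\Upsilon$ is injective up to $T$-conjugation; (iii) every extended ARS-set lies in the image of $\Upsilon$. Injectivity~(ii) is essentially free: Proposition~\ref{prop_amounts} reconstructs $(\Psi, \sim)$ from $(\mathrm M, \pi, \sim)$, and Theorem~\ref{thm_first_approx} then recovers $H$ up to $T$-conjugation from $(S, \Psi)$.

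For well-definedness~(i), axiom~(\ref{AA}) is Proposition~\ref{prop_active_roots}. Axiom~(\ref{CC}) follows from maximality of roots in $\mathrm M$ combined with Corollary~\ref{crl_contains_support}(\ref{crl_contains_support_a}): if $\Supp \alpha$ were contained in $\bigcup_{\delta \in \mathrm M\setminus\{\alpha\}} \Supp \delta$, then using $\ZZ F(\delta) \supset \Supp \delta$ one could exhibit an active root strictly dominating $\alpha$. Axiom~(\ref{TT}) is read off the fact that the $\sim$-classes on $\Psi$ are precisely the fibers of $\tau|_\Psi$, together with parts~(\ref{crl_contains_support_b}),(\ref{crl_contains_support_c}) of Corollary~\ref{crl_contains_support}. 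The heart of the verification is axioms~(\ref{DD}) and~(\ref{EE}): I would treat the cases $\alpha \sim \beta$ and $\alpha \nsim \beta$ separately, and in each enumerate how $\Supp \alpha$ and $\Supp \beta$ can overlap. The decisive inputs are Proposition~\ref{prop_crucial} (forcing any shift $\gamma = \beta - \alpha$ between equivalent actives to interact rigidly with $\pi$), Corollary~\ref{crl_non-acute}, and a direct inspection of the entries of Table~\ref{table_active_roots}.

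For surjectivity~(iii), starting from an extended ARS-set $(S, \mathrm M, \pi, \sim)$, I would first reconstruct a candidate set of active roots by the recipe in the proof of Proposition~\ref{prop_amounts}: set $\Psi = \bigcup_{\alpha \in \mathrm M} F(\alpha)$, where $F(\alpha)$ is read from Table~\ref{table_active_roots}, and declare $\beta \sim \beta'$ iff suitable common shifts land in $\mathrm M$ with equivalent images there. Axiom~(\ref{TT}) ensures that the $\sim$-classes on $\Psi$ coincide with the fibers of $\tau$ and that the weights $\tau(\alpha)$, $\alpha \in \mathrm M$, are linearly independent in $\mathfrak X(S)$, matching exactly the hypothesis of Proposition~\ref{prop_sphericity_criterion}. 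Next I would choose, for each $\varphi \in \tau(\mathrm M)$, a linear function $\xi_\varphi$ on $\mathfrak u^-_{-\varphi}$ that is nonzero on $e_{-\alpha}$ for $\alpha \in \Psi_\varphi$ and vanishes on the other weight-$\varphi$ root vectors; the remaining $\xi_\varphi$ are then propagated by the recipe $\xi_\varphi(e_{-\alpha}) = c\,\xi_{\varphi'}([e_{-\alpha}, e_{-\gamma}])$ recalled just before Corollary~\ref{crl_maximal_linear_functions}. Setting $\mathfrak n_{-\varphi} = \Ker \xi_\varphi$, $\mathfrak n = \bigoplus_\varphi \mathfrak n_{-\varphi}$, and $H = S \rightthreetimes \exp \mathfrak n$, Proposition~\ref{prop_sphericity_criterion} certifies sphericity and by construction $\Upsilon(H) = (S, \mathrm M, \pi, \sim)$.

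The main obstacle is surjectivity: one must check that the iterated propagation of the $\xi_\varphi$'s across $\Phi$ is internally consistent and that the resulting $\mathfrak n$ is actually a Lie subalgebra of $\mathfrak u^-$ (so that $H$ is a genuine algebraic subgroup). Both issues reduce to controlling pairs of maximal active roots with overlapping supports, which is precisely why the five configurations (\ref{D0})--(\ref{E2}) in axioms~(\ref{DD}),~(\ref{EE}) have been tailored to exhaust all admissible shapes of $\Supp \alpha \cup \Supp \beta$. A case-by-case verification organized along these configurations, most notably the chain pattern of Figure~\ref{diagram_difficult}, should show that the consistency conditions for the $\xi_\varphi$'s and the closedness of $\mathfrak n$ under brackets are exactly the conditions encoded by the axioms of an extended ARS-set.
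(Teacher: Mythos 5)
Note first that the paper does not actually prove this theorem: it is quoted from \cite[Theorems~4 and~5]{Avd_solv}. Your three-part strategy (well-definedness, injectivity via Proposition~\ref{prop_amounts} plus Theorem~\ref{thm_first_approx}, surjectivity by reconstructing $\Psi$ from $\mathrm M$ and building $\mathfrak n$ from the linear functions $\xi_\varphi$) is the right one and is essentially the strategy of the cited source, and your injectivity argument is complete as stated.

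However, as written the proposal is a roadmap rather than a proof: the two places where all the work lies --- checking axioms~(\ref{DD}) and~(\ref{EE}) for an actual subgroup, and, in the converse direction, verifying that the propagated $\xi_\varphi$'s are consistent and that $\mathfrak n$ is closed under brackets --- are both deferred to an unexecuted case analysis over the configurations (\ref{D0})--(\ref{E2}) and over Table~\ref{table_active_roots}. These are not routine; they are the content of the theorem. There is also one concrete overstatement in the surjectivity step: axiom~(\ref{TT}) alone does not yield linear independence of $\Phi=\tau(\Psi)$ in $\mathfrak X(S)$, which is what Proposition~\ref{prop_sphericity_criterion} requires. One needs in addition that $\mathrm M$ is linearly independent in $\mathfrak X(T)$ and that the lattice of differences in~(\ref{TT}) has the correct rank relative to the number of equivalence classes; for an abstract extended ARS-set the linear independence of $\mathrm M$ must be re-derived purely from axioms (\ref{AA}), (\ref{DD}), (\ref{EE}), (\ref{CC}) (for instance via the private simple roots guaranteed by~(\ref{CC})), since Corollary~\ref{crl_non-acute} is only available once a subgroup $H$ is already in hand. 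Without these points the sphericity certification in your last step does not yet go through.
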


To complete the presentation of the explicit classification, in the
remaining part of this subsection we explain when two strongly
solvable spherical subgroups of $G$ standardly embedded in $B^-$ are
conjugate in~$G$.

\begin{definition}[{\cite[Definition~7]{Avd_solv}}]
A root $\delta \in \Psi$ is said to be \textit{regular} if the
projection of the subspace $\mathfrak n \subset \mathfrak u^-$ to
the root subspace $\mathfrak g_{-\delta}$ along the sum of the other
root subspaces is zero.
\end{definition}

Let $\Psi^{\mathrm{reg}}$ denote the set of regular active roots.

\begin{definition}[{\cite[Definition~8]{Avd_solv}}]
Suppose that $\delta \in \Psi^{\mathrm{reg}} \cap \Pi$. An
\textit{elementary transformation with center~$\delta$} (or simply
an \textit{elementary transformation}) is a transformation of the
form $H \mapsto \sigma_\delta H \sigma_\delta^{-1}$, where
$\sigma_\delta \in N_G(T)$ is any element whose image in the Weyl
group $N_G(T) / T$ coincides with the simple reflection associated
with~$\delta$.
\end{definition}

Evidently, in this definition the group $\sigma_\delta H
\sigma_\delta^{-1}$ is also standardly embedded in~$B^-$.

\begin{theorem}[{\cite[Theorem~6]{Avd_solv}}]
Let $H, H' \subset G$ be strongly solvable spherical subgroups
standardly embedded in~$B^-$. The subgroups $H, H'$ are conjugate in
$G$ if and only if there is a chain of elementary transformations
taking $H$ to~$H'$.
\end{theorem}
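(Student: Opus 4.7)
The \emph{if} direction is immediate: each elementary transformation $H \mapsto \sigma_\delta H \sigma_\delta^{-1}$ is conjugation by $\sigma_\delta \in N_G(T) \subset G$, so a chain of such transformations preserves the $G$-conjugacy class.

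For the \emph{only if} direction, the plan is first to reduce to a conjugation by an element of $N_G(T)$, and then induct on the length of the corresponding Weyl element. Assume $H' = gHg^{-1}$ with $g \in G$, and write $H = S \rightthreetimes N$, $H' = S' \rightthreetimes N'$ with $S, S' \subset T$ and $N, N' \subset U^-$. Since $gSg^{-1}$ is a Levi subgroup of $H'$ and any two Levi complements of $N'$ in $H'$ are $N'$-conjugate, we may assume $gSg^{-1} = S'$ after modifying $g$ on the left by an element of $N'$. Then $T$ and $gTg^{-1}$ are both maximal tori of the connected reductive group $Z_G(S')$, hence conjugate by some $z \in Z_G(S')^0$, giving $zg \in N_G(T)$. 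A further adjustment (again using that $Z_G(S')$ acts transitively on suitable Levi decompositions, bringing $zH'z^{-1}$ back to $H'$) produces $\sigma_w \in N_G(T)$ with $\sigma_w H \sigma_w^{-1} = H'$, where $w \in W = N_G(T)/T$.

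The induction is on $\ell(w)$. The base case $w = e$ is trivial, with any residual $T$-conjugation absorbed in the freedom of choosing lifts $\sigma_\delta$. For $\ell(w) > 0$, by a standard property of Weyl groups the set $J(w) = \{\delta \in \Pi \mid w(\delta) \in -\Delta^+\}$ is nonempty. The key observation is that every $\delta \in J(w)$ is automatically a regular simple active root of $H$: the condition $\sigma_w H \sigma_w^{-1} \subset B^-$ forces $w \cdot \mathfrak n \subset \mathfrak u^-$, which in turn forces $w$ to send to $\Delta^+$ every positive root $\alpha$ whose root subspace $\mathfrak g_{-\alpha}$ receives nonzero projection from $\mathfrak n$; by the very definition of $\Psi$ and of regularity, the set of such $\alpha$ equals $\Delta^+ \setminus \Psi^{\mathrm{reg}}$, so $J(w) \subset \Psi^{\mathrm{reg}} \cap \Pi$.

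Pick any $\delta \in J(w)$ and form $\tilde H = \sigma_\delta H \sigma_\delta^{-1}$. Regularity of $\delta$ means the $\mathfrak g_{-\delta}$-component of $\mathfrak n$ vanishes, so $s_\delta \cdot \mathfrak n$ lies in $\bigoplus_{\alpha \in \Delta^+ \setminus \{\delta\}} \mathfrak g_{-\alpha} \subset \mathfrak u^-$; consequently $\tilde H$ remains standardly embedded in $B^-$. A short computation with lifts gives $H' = \sigma_{ws_\delta} \tilde H \sigma_{ws_\delta}^{-1}$ up to an element of $T$, and $\ell(ws_\delta) = \ell(w) - 1$, so the induction hypothesis applied to $(\tilde H, H')$ produces the required chain. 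The main technical point of the whole argument is therefore the reduction step: replacing a general $G$-conjugating element by one in $N_G(T)$ while still taking $H$ to $H'$, which rests on Levi decomposition theory and the conjugacy of maximal tori in the reductive group $Z_G(S')$.
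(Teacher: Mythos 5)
The paper itself contains no proof of this statement---it is imported verbatim from \cite[Theorem~6]{Avd_solv}---so your argument can only be judged on its own terms. The ``if'' direction and the inductive engine of the ``only if'' direction are sound and surely close to the original: the observation that $\sigma_w H \sigma_w^{-1} \subset B^-$ forces $w$ to send to $\Delta^+$ every positive root onto which $\mathfrak n$ projects nontrivially, i.e.\ every root of $\Delta^+ \setminus \Psi^{\mathrm{reg}}$, so that the inversion set of $w$ lies in $\Psi^{\mathrm{reg}}$ and hence $J(w) \subset \Psi^{\mathrm{reg}} \cap \Pi$, is correct and is exactly the right lever; the descent from $\ell(w)$ to $\ell(w s_\delta)$ then works, since $\sigma_\delta H \sigma_\delta^{-1}$ stays standardly embedded when $\delta$ is regular.

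The genuine gaps are in the reduction step and in the base case. (1)~Reduction to $N_G(T)$: after arranging $gSg^{-1} = S'$ and choosing $z \in Z_G(S')^0$ with $zg \in N_G(T)$, you obtain $(zg)H(zg)^{-1} = zH'z^{-1}$, which in general is neither $H'$ nor even a subgroup of~$B^-$: the element $z$ centralizes $S'$ but has no reason to normalize $N'$, and the maximal torus $gTg^{-1}$ of $Z_G(S')^0$ need not lie in $B^-$, so $z$ cannot be taken in $B^-$ either. The ``further adjustment\dots bringing $zH'z^{-1}$ back to $H'$'' is precisely the hard content (it is essentially \cite[Proposition~13]{Avd_solv}) and does not follow from transitivity on Levi decompositions; you would need $z \in N_G(H')$, which conjugacy of maximal tori in $Z_G(S')^0$ does not provide. (2)~Base case $w = e$: then $H' = tHt^{-1}$ with $t \in T$, and absorbing the residual $T$-conjugation into the lifts $\sigma_\delta$ only works if the chain is nonempty. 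When $\Psi^{\mathrm{reg}} \cap \Pi = \varnothing$ no elementary transformation exists at all, yet $tHt^{-1} \ne H$ can occur (take $G = \SL_2 \times \SL_2$, $\mathfrak n = \langle e_{-\alpha_1} + e_{-\alpha_2}\rangle$, $S = \Ker(\alpha_1 - \alpha_2)$: both simple roots are active but neither is regular, while generic $t$ moves the line~$\mathfrak n$). So the statement has to be read modulo conjugation by~$T$, consistently with Theorem~\ref{thm_classification_ARS}, which classifies standardly embedded subgroups only up to $T$-conjugacy; your induction should be run on $T$-conjugacy classes and this convention should be made explicit from the outset.
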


\begin{remark}
For an elementary transformation $H \mapsto H'$, the extended
ARS-set of $H'$ is explicitly expressed in terms of that of~$H$,
see~\cite[Proposition~14]{Avd_solv}.
\end{remark}

\begin{remark}
In the following subsection we shall compute the homogeneous
spherical datum corresponding to a strongly solvable spherical
subgroup of $G$ standardly embedded in~$B^-$, see
Theorem~\ref{thm_PCI_via_EWS}. By Theorems~\ref{thm_uniqueness}
or~\ref{thm_classification} this will provide an alternative
approach for determining when two strongly solvable spherical
subgroups $H,H' \subset G$ standardly embedded in $B^-$ are
conjugate in~$G$.
\end{remark}

\subsection{Computation of the invariants}
\label{subsec_solvabl_invariants}

Let $H \subset G$ be a strongly solvable spherical subgroup
standardly embedded in~$B^-$. We retain all the notation introduced
in~\S\,\ref{subsec_solv_description}.

Let $\mathscr L \subset \mathfrak X(T)$ be the sublattice generated
by all elements of the form $\alpha - \beta$, where $\alpha, \beta
\in \Psi$ and $\alpha \sim \beta$. Clearly, $\mathscr L \subset \Ker
\tau$.

For every $\alpha \in \Pi$ we introduce the element $\Omega_\alpha =
(\varpi_\alpha, - \tau(\varpi_\alpha)) \in \mathfrak X_+(B) \oplus
\mathfrak X(H)$. For every $\varphi \in \Phi$, we put
$\lambda_\varphi = \sum \limits_{\alpha \in \pi(\Psi_\varphi)}
\varpi_\alpha$ and introduce the element $\Omega_\varphi =
(\lambda_\varphi, - \tau(\lambda_\varphi) + \varphi) \in \mathfrak
X_+(B) \oplus \mathfrak X(H)$.

\begin{proposition} \label{prop_solv_ews}
Suppose that $G$ is simply connected. Then the semigroup $\widehat
\Lambda^+_{G / H}$ is freely generated by all the
elements\,~$\Omega_\alpha$, $\alpha \in \Pi$, and all the
elements\,~$\Omega_\varphi$, $\varphi \in \Phi$.
\end{proposition}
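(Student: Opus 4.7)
The plan is to reduce first to the case where $G$ is semisimple simply connected (Proposition~\ref{prop_ews_in_general} splits off any central torus contribution) and then invoke the structural result from \S\,\ref{subsec_ews}: for such $G$ the monoid $\widehat\Lambda^+_{G/H}$ is \emph{free}, its indecomposable generators being in bijection with the colors $D \in \mathcal D_{G/H}$ via $D \mapsto (\lambda_D, \chi_D)$. It thus suffices to enumerate the colors of $G/H$ and compute the associated invariants. I will classify the colors according to the $B$-equivariant projection $\pi\colon G/H \to G/B^-$: those of ``horizontal'' type (preimages of Schubert divisors, to be indexed by $\Pi$) and those of ``vertical'' type ($B$-orbit closures dominating $G/B^-$, to be indexed by $\Phi$).

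For each $\alpha \in \Pi$, the divisor $D_\alpha := \pi^{-1}(\overline{B s_\alpha B^-}/B^-)$ is a $B$-stable prime divisor in $G/H$, irreducibility following from connectedness of the fibre $B^-/H$. Its defining section is the generalised minor $g \mapsto \langle v_{\varpi_\alpha}, g \cdot w_{\varpi_\alpha^*}\rangle$, built from the highest-weight vector $v_{\varpi_\alpha} \in V(\varpi_\alpha)$ and the lowest-weight vector $w_{\varpi_\alpha^*} \in V(\varpi_\alpha^*)$. Since $w_{\varpi_\alpha^*}$ is $U^-$-fixed (hence $N$-fixed, because $N \subset U^-$) and has $T$-weight $-\varpi_\alpha$, this section is $(B \times H)$-semi-invariant of weight $(\varpi_\alpha, -\tau(\varpi_\alpha)) = \Omega_\alpha$, so $(\lambda_{D_\alpha}, \chi_{D_\alpha}) = \Omega_\alpha$; this handles the horizontal colors.

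The $B$-orbits in $G/H$ dominating $G/B^-$ correspond bijectively to $T$-orbits on $B^-/H$ (as in the proof of Proposition~\ref{prop_invariants_via_AM}), so vertical colors correspond to codimension-one $T$-orbit closures on $B^-/H$. For each $\varphi \in \Phi$, I would use the $S$-covariant linear form $\xi_\varphi \in (\mathfrak u^-_{-\varphi})^*$ defining $\mathfrak n_{-\varphi}$ (see \S\,\ref{subsec_solv_description}). A $(B \times H)$-semi-invariant function on $G$, when restricted to the big Bruhat cell $UTU^-$, is determined by a regular function $g$ on $U^-$ that is right $N$-invariant and $S$-semi-invariant of weight $\varphi$ under conjugation; such $g$ can be built from $\xi_\varphi$ via the exponential, with Baker--Campbell--Hausdorff corrections absorbed to enforce the $N$-invariance. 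Extending $g$ regularly across each Schubert divisor $\overline{B s_\beta B^-}$ in $G$ forces multiplication by the generalised minor attached to $\varpi_\beta$ exactly for those $\beta$ along which the pre-extension develops a pole, and these $\beta$ are---by Proposition~\ref{prop_associated_root} together with Corollary~\ref{crl_contains_support}\,(\ref{crl_contains_support_a})---precisely the simple roots in $\pi(\Psi_\varphi)$. The resulting regular $(B \times H)$-semi-invariant has weight $(\lambda_\varphi, -\tau(\lambda_\varphi) + \varphi) = \Omega_\varphi$, and its vanishing locus on $G/H$ is the desired vertical color.

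The principal technical hurdle is the pole-order analysis in the previous paragraph: verifying that each $\beta \in \pi(\Psi_\varphi)$ contributes with multiplicity exactly one, and showing that the $|\Phi|$ constructed divisors exhaust all vertical colors. This rests on the combinatorics of active roots from \S\,\ref{subsec_solv_description}, especially on the bijection $F(\alpha) \to \Supp\alpha$ induced by $\pi$ and on the linear independence of the $\xi_\varphi$'s guaranteed by sphericity. Granting this, linear independence of $\{\Omega_\alpha\}_{\alpha\in\Pi} \cup \{\Omega_\varphi\}_{\varphi\in\Phi}$ in $\mathfrak X(T) \oplus \mathfrak X(S)$ is immediate: projection onto the first coordinate and the basis $\{\varpi_\alpha\}$ of $\mathfrak X(T)$ reduces the question to linear independence of $\Phi \subset \mathfrak X(S)$, which is exactly the sphericity criterion of Proposition~\ref{prop_sphericity_criterion}. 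Combined with the freeness of $\widehat\Lambda^+_{G/H}$ and the color bijection recalled at the outset, the $|\Pi|+|\Phi|$ exhibited elements form a complete set of free generators.
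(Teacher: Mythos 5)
Your overall architecture is sound and parallels the paper's: reduce via Proposition~\ref{prop_ews_in_general} to semisimple simply connected $G$, use the freeness of $\widehat\Lambda^+_{G/H}$ with indecomposable generators in bijection with colors, split the colors by the projection $G/H \to G/B^-$ into the $|\Pi|$ preimages of Schubert divisors and the $|\Phi|$ divisors dominating $G/B^-$ (the latter matched with $T$-stable prime divisors of $B^-/H \simeq T *_S (\mathfrak u^-/\mathfrak n)$), and compute the weights. The horizontal half is complete and correct: the generalised minor built from $v_{\varpi_\alpha}$ and the lowest-weight vector $w_{\varpi_\alpha^*}$ does give $(\lambda_{D_\alpha},\chi_{D_\alpha})=\Omega_\alpha$, exactly as in the paper.

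The genuine gap is the one you flag yourself: the determination of $\lambda_{D_\varphi}$, i.e.\ the claim that extending your $\xi_\varphi$-built function across the Schubert divisors produces a pole of order exactly one along $\overline{Bs_\beta B^-}$ for $\beta \in \pi(\Psi_\varphi)$ and no pole elsewhere. This is not a technical afterthought to be ``granted''; it is the entire content of the proposition for the vertical generators, and Proposition~\ref{prop_associated_root} and Corollary~\ref{crl_contains_support}(\ref{crl_contains_support_a}) are combinatorial statements about active roots that do not by themselves control pole orders of rational functions on $G$. The paper sidesteps this by citing \cite[Theorem~4]{AG}, where the section of $D_\varphi$ is realised dually as the matrix coefficient of the $H$-semi-invariant vector $\bigl(\sum_{\beta\in\Psi_\varphi} b_\beta e_\beta\bigr)\cdot w_{\lambda^*_\varphi}$ in $V(\lambda^*_\varphi)$ (all $b_\beta\neq 0$); checking that this vector is a nonzero $H$-eigenvector of weight $-\tau(\lambda_\varphi)+\varphi$ and that $\lambda_\varphi$ is the minimal dominant weight supporting such a vector is a finite, module-theoretic computation that replaces your Baker--Campbell--Hausdorff and pole-order bookkeeping. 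If you want a self-contained argument, that dual realisation is the route to take. Two smaller remarks: your framework, because it works with the $S$-covariant $\xi_\varphi$ rather than with $\mathfrak s$, automatically absorbs the paper's separate observation that disconnectedness of $H$ is harmless (the point being $\mathscr L\subset\Ker\tau$); and your linear-independence check, while correct (it needs Lemma~\ref{lemma_disjoint_union} to see that each $\varpi_\alpha$ occurs in at most one $\lambda_\varphi$, plus Proposition~\ref{prop_sphericity_criterion} for independence of $\Phi$), is redundant once freeness and the color bijection are granted.
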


\begin{proof}
In the case of connected $H$ the assertion was proved
in~\cite[Theorem~4]{AG}. (In fact, in~\cite{AG} the group $H$ was
assumed to be standardly embedded in~$B$, so one has to translate
those results into our settings.) But the condition of $H$ being
connected is inessential here. Indeed, it is obvious that for every
$\alpha \in \Pi$ the subspace
$V(\varpi^*_\alpha)^{(H^0)}_{-\tau(\varpi_\alpha)} \subset
V(\varpi^*_\alpha)$ is spanned by a lowest-weight vector
$w_{\varpi^*_\alpha}$, which is even $B^-$-semi-invariant. Further,
it was shown in~\cite{AG} that for every $\varphi \in \Phi$ the
subspace $V(\lambda^*_\varphi)^{(H^0)}_{- \tau(\lambda_\varphi) +
\varphi} \subset V(\lambda^*_\varphi)$ is spanned by a vector of the
form $(\sum \limits_{\beta \in \Psi_\varphi} b_\beta e_{\beta})
\cdot w_{\lambda^*_\varphi}$, where all the coefficients $b_\beta$
are nonzero. For all of these vectors to be $S$-semi-invariant, the
only condition that matters is $\mathscr L \subset \Ker \tau$.
\end{proof}

Taking into account Proposition~\ref{prop_ews_in_general}, we get
the following theorem.

\begin{theorem} \label{thm_solv_ews}
Suppose that $G = C \times G^{ss}$ and $G^{ss}$ is simply connected.
Then $\widehat \Lambda^+_{G / H} = \widehat \Lambda^+_{G^{ss} /
H^{ss}} \oplus \lbrace (\nu, -\nu_H) \mid \nu \in \mathfrak X(C)
\rbrace$, where $\widehat \Lambda^+_{G^{ss} / H^{ss}}$ is freely
generated by all the elements\,~$\Omega_\alpha$, $\alpha \in \Pi$,
and all the elements\,~$\Omega_\varphi$, $\varphi \in \Phi$.
\end{theorem}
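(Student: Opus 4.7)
The plan is to obtain this theorem as a direct combination of the two immediately preceding results, namely Proposition~\ref{prop_ews_in_general} and Proposition~\ref{prop_solv_ews}. No additional geometric or representation-theoretic input should be required; the argument is essentially a bookkeeping reduction from $G$ to $G^{ss}$.

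First, since by assumption $G = C \times G^{ss}$ with $G^{ss}$ simply connected, Proposition~\ref{prop_ews_in_general} applies verbatim and yields the decomposition
\[
\widehat \Lambda^+_{G/H} = \widehat \Lambda^+_{G^{ss}/H^{ss}} \oplus \lbrace (\nu, -\nu_H) \mid \nu \in \mathfrak X(C) \rbrace,
\]
under the natural inclusions $\mathfrak X_+(B^{ss}) \subset \mathfrak X_+(B)$ and $\mathfrak X(H^{ss}) \subset \mathfrak X(H)$. This establishes the first assertion of the theorem at once, so it remains only to describe the summand $\widehat \Lambda^+_{G^{ss}/H^{ss}}$.

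For the second step, I would note that because $S \subset T = C \times T^{ss}$ and the unipotent radical $N$ of $H$ lies in $U^- \subset G^{ss}$, the projection $H^{ss}$ of $H$ to $G^{ss}$ is a strongly solvable spherical subgroup standardly embedded in $B^{-} \cap G^{ss}$ with Levi $S^{ss} = H^{ss} \cap T^{ss}$ equal to the image of $S$ under $T \to T^{ss}$. The combinatorial data that enter the definition of the generators $\Omega_\alpha$ and $\Omega_\varphi$ --- namely the root system $\Delta$, the simple roots $\Pi$, the unipotent radical $N$, the set of active roots $\Psi$, its partition into $\Psi_\varphi$, and the map $\pi$ --- all coincide for the pair $(G,H)$ and the pair $(G^{ss},H^{ss})$, since they depend only on the semisimple part. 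Consequently the elements $\Omega_\alpha$ and $\Omega_\varphi$ make sense in $\mathfrak X_+(B^{ss}) \oplus \mathfrak X(H^{ss})$ in the same way as in $\mathfrak X_+(B) \oplus \mathfrak X(H)$, and Proposition~\ref{prop_solv_ews} applied to $(G^{ss},H^{ss})$ shows that they freely generate $\widehat \Lambda^+_{G^{ss}/H^{ss}}$.

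The only real obstacle --- and it is a mild one --- is verifying that passing from $H$ to $H^{ss}$ does not alter the set $\Phi$ or the definition of the weights $\lambda_\varphi$ and the characters $\tau(\varpi_\alpha)$, $\tau(\lambda_\varphi)$ that appear in $\Omega_\alpha$ and $\Omega_\varphi$. This amounts to checking that the restriction map $\tau \colon \mathfrak X(T) \to \mathfrak X(S)$ factors through $\mathfrak X(T^{ss}) \to \mathfrak X(S^{ss})$ on the relevant sublattice $\ZZ\Pi_0$, which is clear because $\Pi_0 \subset \mathfrak X(T^{ss})$ and $\tau$ is defined by restriction of characters. Once this identification is in place, the theorem follows by combining the two propositions as described.
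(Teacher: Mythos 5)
Your proposal is correct and follows essentially the same route as the paper, which obtains the theorem in one line by combining Proposition~\ref{prop_solv_ews} (applied to $G^{ss}/H^{ss}$) with the decomposition of Proposition~\ref{prop_ews_in_general}. Your additional check that the data $\Psi$, $\Phi$, $\tau$ and hence the elements $\Omega_\alpha$, $\Omega_\varphi$ are unchanged when passing from $H$ to $H^{ss}$ (because $\varpi_\alpha$, $\lambda_\varphi$ and the roots all lie in $\mathfrak X(T^{ss})$ and the $S$-action on $\mathfrak u^-$ factors through $S^{ss}$) is exactly the detail the paper leaves implicit.
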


For every $\alpha \in \Pi$ (resp. $\varphi \in \Phi$), let
$D_\alpha$ (resp. $D_\varphi$) be the color of $G/H$ corresponding
to the indecomposable element $\Omega_\alpha$ (resp.
$\Omega_\varphi$) of $\widehat \Lambda^+_{G^{ss} / H^{ss}}$.

\begin{remark} \label{rem_division_of_colors}
Under the natural morphism $G / H \to G / B^-$, every color
$D_\alpha$, $\alpha \in \Pi$, is the preimage of a color of $G /
B^-$, and every color $D_\varphi$, $\varphi \in \Phi$, maps
dominantly to $G / B^-$.
\end{remark}

We denote by $\widehat S$ the subgroup of\,~$T$ defined by the
vanishing of all elements in~$\mathscr L$. Clearly, $\widehat S$ is
the largest subgroup of\, $T$ normalizing~$N$. We set $\widehat H =
\widehat S \rightthreetimes N$.

\begin{proposition} \label{prop_sph_clos_str_solv}
The spherical closure $\overline H$ of $H$ coincides with~$\widehat
H$.
\end{proposition}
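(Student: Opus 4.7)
The plan is to apply Proposition~\ref{prop_spherical_closure} (after reducing to the case $G = C \times G^{ss}$ with $G^{ss}$ simply connected via Corollary~\ref{crl_simple_projective_spaces}, since both $\widehat H$ and $\overline H$ depend only on $H$ as an abstract subgroup of $G$). This identifies $\overline H$ as the common $G$-stabilizer of the lines $\langle v_D \rangle$, $D \in \mathcal D_{G/H}$. By Theorem~\ref{thm_solv_ews} and the proof of Proposition~\ref{prop_solv_ews}, the colors are $D_\alpha$ ($\alpha \in \Pi$) and $D_\varphi$ ($\varphi \in \Phi$), and the corresponding semi-invariant vectors may be chosen as $v_{D_\alpha} = w_{\varpi_\alpha^*}$, spanning a $B^-$-stable line, and $v_{D_\varphi} = \sum_{\beta \in \Psi_\varphi} b_\beta\, e_\beta w_{\lambda_\varphi^*}$ with all $b_\beta \neq 0$. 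By Corollary~\ref{crl_sph_clos_str_solv} we have $\overline H \subset B^-$, so the $D_\alpha$-lines impose no restriction and only the $D_\varphi$-lines are constraining.

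The inclusion $\widehat H \subset \overline H$ is direct. The subgroup $N$ fixes each $v_D$ since it is unipotent acting on an $H$-semi-invariant vector, while for $t \in \widehat S$ the computation
\[
t \cdot v_{D_\varphi} \;=\; (-\lambda_\varphi)(t) \sum_{\beta \in \Psi_\varphi} b_\beta\, \beta(t)\, e_\beta w_{\lambda_\varphi^*}
\]
yields a scalar multiple of $v_{D_\varphi}$ precisely when $\beta(t)$ is constant for $\beta \in \Psi_\varphi$ and every $\varphi \in \Phi$, which is exactly the vanishing of $\mathscr L$ on $t$. For the reverse inclusion I would first establish two intersection identities. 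Any $u \in \overline H \cap U^-$ is unipotent and stabilizes each line $\langle v_D \rangle$, so it must fix every $v_D$; Lemma~\ref{lemma_H^sharp} then places $u \in H^\sharp$, and $H^\sharp = N$ because $S$ is diagonalizable. Any $t \in \overline H \cap T$ stabilizes each $\langle v_{D_\varphi}\rangle$; reversing the computation above forces $t \in \widehat S$. Hence $\overline H \cap U^- = N$ and $\overline H \cap T = \widehat S$.

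The main obstacle is to pass from these intersection identities to the equality $\overline H = \widehat H$, since an arbitrary $g = tu \in \overline H$ (with $t \in T$, $u \in U^-$) does not obviously have $t, u$ individually in $\overline H$. To close this gap I would use that $\overline H$ is strongly solvable (Corollary~\ref{crl_sph_clos_str_solv}), spherical (the open $B$-orbit of $G/H$ surjects onto the open $B$-orbit of $G/\overline H$), and hence wonderful by Corollary~\ref{crl_str_solv_wond_sph_clos}. Since $N$ is characteristic in $H \triangleleft \overline H$, it is normal in $\overline H$, and the identity $\overline H \cap U^- = N$ identifies $N$ as the unipotent radical $R_u(\overline H)$. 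By the Mostow--Malcev theorem for solvable algebraic groups one obtains a Levi decomposition $\overline H = L \ltimes N$ with $L$ diagonalizable and containing $\widehat S$. The inclusion $L \cap T \subset \overline H \cap T = \widehat S$, combined with the wonderfulness constraint $\Lambda_{G/\overline H} = \mathbb Z \Sigma_{G/\overline H} = \mathbb Z \Pi_0$ coming from Propositions~\ref{prop_sph_syst_of_sph_closure} and~\ref{prop_Sigma_generates_Lambda} (using that for strongly solvable $H$ one has $\Sigma_{G/H} \subset \Pi$, and Proposition~\ref{prop_Sigma_via_supports} together with Theorem~\ref{thm_solv_ews} gives $\Sigma_{G/H} = \Pi_0$), pins down $\dim L = \dim \widehat S$; since $\widehat S \subset L$, this forces $L = \widehat S$. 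Therefore $\overline H = L \ltimes N = \widehat S \ltimes N = \widehat H$.
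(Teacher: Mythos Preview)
Your approach differs substantially from the paper's, and the difficulty you flag as ``the main obstacle'' is precisely where the argument breaks down. The paper avoids this obstacle entirely: after showing $\overline H \subset B^-$ (via the $D_\alpha$-lines, as you do) and $\widehat H \subset \overline H$ (via the $D_\varphi$-lines, as you do), it simply observes that $\overline H \subset N_G(H)$ by the very definition of the spherical closure, and then invokes the external result \cite[Theorem~3]{Avd_norm} that $N_G(H) \cap B^- = \widehat H$. This one citation replaces your entire final paragraph.

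Your intersection identities $\overline H \cap T = \widehat S$ and $\overline H \cap U^- = N$ are correct and nicely argued, and the identification $R_u(\overline H) = N$ is valid. But the passage from there to $\overline H = \widehat H$ does not go through as written. The Levi subgroup $L$ you produce need not lie in $T$: you only know $L$ is diagonalizable inside $B^-$, and the inclusion $L \cap T \subset \widehat S$ says nothing about $\dim L$ unless $L \subset T$. The phrase ``pins down $\dim L = \dim \widehat S$'' is where the gap lives: the equality $\Lambda_{G/\overline H} = \ZZ\Pi_0$ is a rank statement and does not by itself determine $\dim \overline H$. To make a lattice/dimension argument work you would need the identity $\mathfrak X(\overline H/\widehat H) \simeq \Lambda_{G/\widehat H}/\Lambda_{G/\overline H}$ together with a separate computation of $\Lambda_{G/\widehat H}$ (which in turn requires checking that $\widehat H$ is itself spherical and applying Theorem~\ref{thm_PCI_via_EWS} to it). None of this is carried out, and even then one must handle component groups, since $\dim L = \dim \widehat S$ with $\widehat S \subset L$ gives only $L^0 = \widehat S^0$. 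Two minor citation slips: $\overline H \subset B^-$ follows from Proposition~\ref{prop_sph_clos_in_parabolic} rather than Corollary~\ref{crl_sph_clos_str_solv}, and wonderfulness of $\overline H$ comes from Theorem~\ref{thm_sph_closed_wonderful}, not Corollary~\ref{crl_str_solv_wond_sph_clos}.
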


\begin{proof}
Passing to a finite covering of~$G$, without loss of generality we
may assume that $G = C \times G^{ss}$ and $G^{ss}$ is simply
connected. Then we may apply
Proposition~\ref{prop_spherical_closure}. Repeating the proof of
Proposition~\ref{prop_sph_clos_in_parabolic}, we find that the
common stabilizer in~$G$ of all the lines $\langle
w_{\varpi^*_\alpha} \rangle =
V(\varpi^*_\alpha)^{(H)}_{-\tau(\varpi_\alpha)}$, $\alpha \in \Pi$,
coincides with~$B^-$, whence $\overline H \subset B^-$. On the other
hand, by~\cite[Theorem~3]{Avd_norm} one has $N_G(H) \cap B^- =
N_G(H^0) \cap B^- = \widehat H$. (Here we also use that $N_G(H) =
N_G(H^0)$; see~\cite[\S\,5.2, Corollary]{BriP}.) As was already
mentioned in the proof of Theorem~\ref{prop_solv_ews}, for every
$\varphi \in \Phi$ the line $V(\lambda^*_\varphi)^{(H)}_{-
\tau(\lambda_\varphi) + \varphi}$ is $\widehat H$-semi-invariant.
\end{proof}

\begin{corollary} \label{crl_SSWS_ARS}
The following assertions hold:
\begin{enumerate}[label=\textup{(\alph*)},ref=\textup{\alph*}]
\item \label{crl_SSWS_ARS_a}
a strongly solvable wonderful subgroup of $G$ contained in $B^-$ is
uniquely determined by its unipotent radical;

\item \label{crl_SSWS_ARS_b}
there is a bijection between ARS-sets and conjugacy classes in $B^-$
of strongly solvable wonderful subgroups of $G$ contained in $B^-$.
\end{enumerate}
\end{corollary}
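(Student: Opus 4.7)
Part (a) will fall out directly from two earlier results. By Corollary~\ref{crl_str_solv_wond_sph_clos} every strongly solvable wonderful subgroup $H\subset B^-$ satisfies $H=\overline H$. After conjugating in $B^-$ one may assume $H$ is standardly embedded in $B^-$, say $H=S\rightthreetimes N$. Proposition~\ref{prop_sph_clos_str_solv} then identifies $\overline H$ with $\widehat H=\widehat S\rightthreetimes N$, where $\widehat S$ is the largest subgroup of~$T$ normalizing~$N$. Hence $H=\widehat S\rightthreetimes N$ is completely determined by~$N$.

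For part~(b) the strategy is to upgrade Theorem~\ref{thm_classification_ARS} using part~(a): given an ARS-set $(\mathrm M,\pi,\sim)$, I will produce a \emph{canonical} Levi $S=\widehat S$ turning it into an extended ARS-set whose associated subgroup is wonderful, and I will show this is the only choice. The construction: from $(\mathrm M,\pi,\sim)$ recover $\Psi$ via Proposition~\ref{prop_amounts}, let $\mathscr L\subset\mathfrak X(T)$ be the sublattice generated by $\{\alpha-\beta\mid\alpha,\beta\in\Psi,\ \alpha\sim\beta\}$, and set $\widehat S\subset T$ to be the subgroup defined by the vanishing of~$\mathscr L$; by definition $\Ker\tau_{\widehat S}=\mathscr L$.

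The main verification is axiom~(\ref{TT}) for $(\widehat S,\mathrm M,\pi,\sim)$, namely
\[
\mathscr L\cap\ZZ\Pi_0=\ZZ\{\alpha-\beta\mid\alpha,\beta\in\mathrm M,\ \alpha\sim\beta\}.
\]
For the left-hand side, Corollary~\ref{crl_contains_support}(\ref{crl_contains_support_b}) yields $\ZZ\Psi=\ZZ\Pi_0$, hence $\mathscr L\subset\ZZ\Pi_0$ and the intersection equals~$\mathscr L$. For the reverse inclusion I will invoke the explicit description of $\sim$ on $\Psi$ given in the proof of Proposition~\ref{prop_amounts}: if $\alpha,\beta\in\Psi$ and $\alpha\sim\beta$, then there exists $\gamma\in\Delta^+\cup\{0\}$ with $\alpha+\gamma,\beta+\gamma\in\mathrm M$ and $\alpha+\gamma\sim\beta+\gamma$, so that $\alpha-\beta=(\alpha+\gamma)-(\beta+\gamma)$ already lies in the right-hand side. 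Thus $(\widehat S,\mathrm M,\pi,\sim)$ is an extended ARS-set, and by Theorem~\ref{thm_classification_ARS} it corresponds to a unique (up to $T$-conjugation) strongly solvable spherical subgroup $H\subset B^-$ standardly embedded in~$B^-$. Proposition~\ref{prop_sph_clos_str_solv} then gives $\overline H=\widehat S\rightthreetimes N=H$, so $H$ is spherically closed and therefore wonderful by Theorem~\ref{thm_sph_closed_wonderful}.

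It remains to identify this $T$-conjugacy class with the $B^-$-conjugacy class of~$H$ and to check bijectivity. Surjectivity: any strongly solvable wonderful $H\subset B^-$ is $B^-$-conjugate to a standardly embedded subgroup, whose extended ARS-set $(S,\mathrm M,\pi,\sim)$ must have $S=\widehat S$ by part~(a), so it comes from the ARS-set $(\mathrm M,\pi,\sim)$. Injectivity: if two standardly embedded wonderful $H,H'\subset B^-$ are $B^-$-conjugate, their Lie algebras of unipotent radicals are $\Ad(B^-)$-conjugate, and since $\mathfrak g_{-\alpha}$ is $T$-stable while conjugation by $U^-$ only adds elements of smaller height, the sets $\Psi$ and hence $(\mathrm M,\pi,\sim)$ coincide; finally, two standardly embedded subgroups with the same ARS-set are $T$-conjugate by Theorem~\ref{thm_classification_ARS}, hence $B^-$-conjugate. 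The main obstacle I anticipate is the last step, passing from $T$-conjugacy to $B^-$-conjugacy cleanly; the cleanest argument will be to track how conjugation by $u\in U^-$ affects $\Psi$ through weight considerations, using that each weight space $\mathfrak g_{-\alpha}$ is $T$-fixed and that $\Ad(u)-\mathrm{id}$ strictly raises weights in the root-height filtration.
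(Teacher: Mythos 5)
Part (a) of your argument is exactly the paper's: $H$ is spherically closed by Corollary~\ref{crl_str_solv_wond_sph_clos}, and Proposition~\ref{prop_sph_clos_str_solv} identifies $H=\overline H$ with $\widehat S\rightthreetimes N$, so the Levi is recovered from~$N$. For part (b), your explicit construction of the inverse map --- defining $\mathscr L$ from the ARS-set, setting $S=\widehat S$, and verifying axiom~(\ref{TT}) via the characterization of $\sim$ in the proof of Proposition~\ref{prop_amounts} (every difference $\alpha-\beta$ with $\alpha\sim\beta$ in $\Psi$ equals $(\alpha+\gamma)-(\beta+\gamma)$ with $\alpha+\gamma,\beta+\gamma\in\mathrm M$) --- is correct and is a useful elaboration of what the paper leaves implicit.

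The genuine gap is the injectivity/well-definedness step: that two standardly embedded wonderful subgroups $H,H'$ that are conjugate by $b\in B^-$ have the same ARS-set. The paper disposes of this by citing \cite[Proposition~13]{Avd_solv}, which asserts that such $H,H'$ are already conjugate by an element of $N_G(T)\cap B^-=T$. Your proposed substitute via the height filtration does not close. What the unipotence of $\Ad(u)-\mathrm{id}$ on the filtration $F_k=\bigoplus_{\hgt\alpha\ge k}\mathfrak g_{-\alpha}$ gives you is only that $\mathfrak n$ and $\mathfrak n'=\Ad(u)\mathfrak n$ have the same associated graded subspace of $\mathfrak u^-$; this does \emph{not} determine the set $\Psi$, because an element $e_{-\alpha}+y$ of $\mathfrak n'$ with $y\in F_{\hgt\alpha+1}$ contributes $e_{-\alpha}$ to the graded piece without $e_{-\alpha}$ itself lying in $\mathfrak n'$, i.e.\ without $\mathfrak g_{-\alpha}\subset\mathfrak n'$. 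To rule this out one must exploit that both $\mathfrak n$ and $\mathfrak n'$ are sums of weight spaces for the respective Levi tori and that each weight space has codimension at most one in the corresponding $\mathfrak u^-_{-\varphi}$ --- which is exactly the content of the cited Proposition~13 and is not a one-line weight count. Either invoke that result or supply the full argument; as written, this step is an assertion rather than a proof.
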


\begin{proof}
Recall from Corollary~\ref{crl_str_solv_wond_sph_clos} that every
strongly solvable wonderful subgroup of~$G$ is spherically closed.

(\ref{crl_SSWS_ARS_a}) (compare with the proof
of~\cite[Lemma~32]{Avd_solv}) Let $H$ be a strongly solvable
wonderful subgroup of $G$ standardly embedded in~$B^-$ and let $N$
be the unipotent radical of~$H$. Then by
Proposition~\ref{prop_sph_clos_str_solv} a Levi subgroup of $H$ is
recovered as a Levi subgroup of $N_{B^-}(N)$.

(\ref{crl_SSWS_ARS_b}) Let $H, H' \subset G$ be two strongly
solvable wonderful subgroups standardly embedded in~$B^-$ and let
$\Upsilon_0(H), \Upsilon_0(H')$ be the corresponding ARS-sets.
Suppose that $H' = bHb^{-1}$ for some $b \in B^-$. Then
by~\cite[Proposition~13]{Avd_solv} one has $H' = b' H b'^{-1}$,
where $b' \in N_G(T) \cap\nobreak B^- =\nobreak T$, whence
$\Upsilon_0(H) = \Upsilon_0(H')$.
\end{proof}

According to
Corollary~\ref{crl_contains_support}(\ref{crl_contains_support_c})
and Proposition~\ref{prop_sphericity_criterion}, for every $\mu \in
\ZZ \Pi_0 + \Ker \tau$ and every $\varphi \in \Phi$ there exists a
unique integer $J(\varphi, \mu)$ such that
\begin{equation} \label{eqn_tau(mu)}
\tau(\mu) = \sum \limits_{\varphi \in \Phi} J(\varphi, \mu)\varphi.
\end{equation}

The following lemma is a direct consequence
of~\cite[Lemma~10]{Avd_solv}.

\begin{lemma} \label{lemma_disjoint_union}
The union $\pi(\Psi) = \bigcup \limits_{\varphi \in \Phi}
\pi(\Psi_\varphi)$ is disjoint.
\end{lemma}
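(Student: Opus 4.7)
The plan is to prove the equivalent statement that if $\alpha, \beta \in \Psi$ satisfy $\pi(\alpha) = \pi(\beta)$, then $\tau(\alpha) = \tau(\beta)$. The first step is to reduce to the level of maximal active roots. Using the decomposition $\Psi = \bigcup_{\alpha^* \in \mathrm M} F(\alpha^*)$ together with the fact, recalled in the excerpt, that $\pi \colon F(\alpha^*) \to \Supp \alpha^*$ is a bijection (Corollary~6 of~\cite{Avd_solv}), two active roots lying in the same fibre and sharing a $\pi$-value must coincide. So I may assume $\alpha \in F(\alpha^*)$ and $\beta \in F(\beta^*)$ with $\alpha^* \ne \beta^* \in \mathrm M$ and $\delta := \pi(\alpha) = \pi(\beta) \in \Supp \alpha^* \cap \Supp \beta^*$.

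The second step is a case analysis based on conditions~(\ref{DD}) and~(\ref{EE}) of Definition~\ref{def_ARS} applied to the pair $\alpha^*, \beta^*$. Condition~(\ref{D0}) is ruled out by the non-empty intersection of supports. Under~(\ref{D1}), the unique shared simple root $\delta'$ must equal $\delta$; since $\delta'$ is terminal in both $\Supp\alpha^*$ and $\Supp\beta^*$ and $\pi(\alpha^*), \pi(\beta^*) \ne \delta'$, the simple root $\delta'$ itself is an active root subordinate to both $\alpha^*$ and $\beta^*$ with $\pi(\delta') = \delta'$, so by bijectivity $\alpha = \beta = \delta'$ and the claim is trivial. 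Conditions~(\ref{E1}) and~(\ref{E2}) already force $\alpha^* \sim \beta^*$, and a short computation using the explicit shapes of $\alpha^*, \beta^*$ together with condition~(\ref{TT}) gives $\tau(\alpha) = \tau(\beta)$. The remaining case is~(\ref{D2}): here $\delta$ must lie in the shared chain $\{\gamma_0, \ldots, \gamma_s\}$ of Figure~\ref{diagram_difficult}, and one must verify that the subordinate roots $\alpha \in F(\alpha^*), \beta \in F(\beta^*)$ singled out by $\pi(\alpha) = \pi(\beta) = \delta$ actually coincide as positive roots (a sub-chain $\gamma_j + \ldots + \gamma_s$ of the shared Dynkin piece).

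The hard part will be the combinatorial check in case~(\ref{D2}), and to a lesser extent~(\ref{E2}), where one must trace through the Dynkin configuration of Figure~\ref{diagram_difficult} and use condition~(\ref{CC}) (which prevents $\Supp\alpha^*$ from being swallowed by the supports of the other maximal roots) to pin down the structure of subordinate active roots. This bookkeeping is precisely the content of~\cite[Lemma~10]{Avd_solv}, which classifies the possible collisions of $\pi$-values across different $F(\alpha^*)$'s; invoking it directly yields the disjointness $\pi(\Psi_\varphi) \cap \pi(\Psi_{\varphi'}) = \varnothing$ for $\varphi \ne \varphi'$, which is the desired statement.
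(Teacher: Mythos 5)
Your proof is correct and ends up exactly where the paper does: the paper offers no argument beyond noting that the lemma is a direct consequence of \cite[Lemma~10]{Avd_solv}, which is precisely the citation you close with. The preliminary case analysis you sketch (reduction to maximal active roots via the bijectivity of $\pi|_{F(\alpha^*)}$, followed by the alternatives (D0)--(E2)) is a plausible reconstruction of what that external lemma encodes, though as written it leaves the structure of $F(\alpha^*)$ unverified for the non-type-$\mathsf A$ rows of Table~\ref{table_active_roots} — a gap that your final appeal to the cited lemma covers in the same way the paper's own one-line proof does.
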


In view of this lemma, every root $\alpha \in \Pi_0$ determines a
unique weight $\varphi[\alpha] \in \Phi$ such that $\alpha \in
\pi(\Phi_{\varphi[\alpha]})$.

\begin{theorem} \label{thm_PCI_via_EWS}
The homogeneous spherical datum of\, $G / H$ is determined as
follows:
\begin{enumerate}[label=\textup{(\alph*)},ref=\textup{\alph*}]
\item \label{thm_PCI_via_EWS_a}
$\Lambda_{G/H} = \ZZ \Pi_0 + \Ker \tau$.

\item \label{thm_PCI_via_EWS_b}
$\Pi^p_{G/H} = \varnothing$.

\item \label{thm_PCI_via_EWS_c}
$\Sigma_{G/H} = \Pi_0$.

\item \label{thm_PCI_via_EWS_d}
The set $\mathcal D^a_{G/H}$ consists of all divisors $D_\alpha$,
$\alpha \in \Pi_0$, and all divisors $D_\varphi$, $\varphi \in
\Phi$. Moreover, one has
\begin{itemize}
\item
$\langle \varkappa(D_\alpha), \mu \rangle = \langle \alpha^\vee, \mu
\rangle - J(\varphi[\alpha], \mu)$ for every $\alpha \in \Pi_0$ and
$\mu \in \ZZ\Pi_0 + \Ker \tau$;

\item
$\langle \varkappa(D_\varphi), \mu \rangle = J(\varphi, \mu)$ for
every $\varphi \in \Phi$ and $\mu \in \ZZ\Pi_0 + \Ker \tau$.
\end{itemize}
\end{enumerate}
\end{theorem}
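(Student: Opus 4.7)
The plan is to read off the homogeneous spherical datum of $G/H$ from the explicit description of the extended weight semigroup provided by Theorem~\ref{thm_solv_ews}, using the general dictionary established in \S\,\ref{subsec_ews_pci_interrelations}. I first pass to a finite covering so that $G = C \times G^{ss}$ with $G^{ss}$ simply connected, which leaves $\Lambda_{G/H}$, $\Pi^p_{G/H}$, $\Sigma_{G/H}$, $\mathcal D^a_{G/H}$, and $\varkappa$ unaffected. In this setting $\widehat\Lambda^+_{G/H}$ is freely generated by the $\Omega_\alpha$ ($\alpha \in \Pi$), $\Omega_\varphi$ ($\varphi \in \Phi$), and $(\nu, -\nu_H)$ ($\nu \in \mathfrak X(C)$).

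For part~(a), Proposition~\ref{prop_weight_lattice} reduces the problem to identifying the $\mu$ with $(\mu, 0) \in \widehat\Lambda$. Writing such an element in the free basis as $\sum k_\alpha \Omega_\alpha + \sum m_\varphi \Omega_\varphi + (\nu, -\nu_H)$, the vanishing of its second component becomes $\tau(\mu) = \sum_\varphi m_\varphi \varphi$. Since $\tau(\ZZ\Pi_0) = \ZZ\Phi$ by Corollary~\ref{crl_contains_support}(c) and the elements of $\Phi$ are linearly independent by Proposition~\ref{prop_sphericity_criterion}, this forces $\mu \in \ZZ\Pi_0 + \Ker\tau$ and determines the $m_\varphi$ uniquely as the integers $J(\varphi, \mu)$ of~(\ref{eqn_tau(mu)}).

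Parts~(b), (c) and the identification of colors in~(d) follow by inspecting the supports of the first components of the generators: $\supp \varpi_\alpha = \{\varpi_\alpha\}$ and $\supp \lambda_\varphi = \{\varpi_\beta \mid \beta \in \pi(\Psi_\varphi)\}$. Lemma~\ref{lemma_disjoint_union} says that the sets $\pi(\Psi_\varphi)$ are disjoint with union $\Pi_0$, so Proposition~\ref{prop_Sigma_via_supports} immediately yields $\Pi^p_{G/H} = \varnothing$ and $\Sigma_{G/H} \cap \Pi = \Pi_0$. Since no free generator has first component of the form $2\varpi_\alpha$, Corollary~\ref{crl_2alpha} gives $\Sigma_{G/H} \cap 2\Pi = \varnothing$, and Corollary~\ref{crl_strongly_solvable_HSD} guarantees $\Sigma_{G/H} \subset \Pi$ because $H$ is strongly solvable, whence $\Sigma_{G/H} = \Pi_0$. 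Lemma~\ref{lemma_fwls} then shows that $\mathcal D(\alpha) = \{D_\alpha, D_{\varphi[\alpha]}\}$ for every $\alpha \in \Pi_0 = \Pi^a$, so $\mathcal D^a_{G/H} = \{D_\alpha \mid \alpha \in \Pi_0\} \cup \{D_\varphi \mid \varphi \in \Phi\}$.

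Finally, for the $\varkappa$-values in~(d), I will decompose $(\mu, 0)$ for $\mu \in \Lambda_{G/H}$ in the free basis by taking $m_\varphi = J(\varphi, \mu)$ as forced above and expanding $\mu = \sum_\alpha \langle \alpha^\vee, \mu \rangle \varpi_\alpha + \mu_C$ with $\mu_C \in \mathfrak X(C)$; the coefficient of $\Omega_\alpha$ then works out to $\langle \alpha^\vee, \mu \rangle - J(\varphi[\alpha], \mu)$ when $\alpha \in \Pi_0$ and to $\langle \alpha^\vee, \mu \rangle$ otherwise (but such $D_\alpha$ lie outside $\mathcal D^a$). Proposition~\ref{prop_value_of_color} then translates these coefficients directly into the stated values of $\langle \varkappa(D_\alpha), \mu \rangle$ and $\langle \varkappa(D_\varphi), \mu \rangle$. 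The argument is essentially a bookkeeping translation from the explicit generators to the principal invariants; the only point requiring care is the uniqueness of the decomposition of $(\mu, 0)$, and this is guaranteed by the freeness assertion of Theorem~\ref{thm_solv_ews}.
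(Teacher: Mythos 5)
Your proposal is correct and follows essentially the same route as the paper: pass to the covering $G = C \times G^{ss}$, use the free generators of $\widehat\Lambda^+_{G/H}$ from Theorem~\ref{thm_solv_ews} together with Propositions~\ref{prop_weight_lattice}, \ref{prop_value_of_color}, \ref{prop_Sigma_via_supports}, Lemma~\ref{lemma_fwls}, and Corollary~\ref{crl_strongly_solvable_HSD}, and read off everything from the explicit decomposition of $(\mu,0)$, which coincides with the paper's formula~(\ref{eqn_element_of_weight_lattice}). The only cosmetic differences are that you phrase the second-component condition directly as $\tau(\mu)=\sum_\varphi m_\varphi\varphi$ instead of introducing auxiliary roots $\alpha_\varphi\in\Psi_\varphi$, and the appeal to Corollary~\ref{crl_2alpha} is redundant given $\Sigma\subset\Pi$.
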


\begin{proof}
Passing to a finite covering of~$G$, without loss of generality we
may assume that $G = C \times G^{ss}$ and $G^{ss}$ is simply
connected.

(\ref{thm_PCI_via_EWS_a}) Let $\mu \in \mathfrak X(T)$. By
Proposition~\ref{prop_weight_lattice} and formula~(\ref{eqn_(mu,
0)}), $\mu \in \Lambda_{G/H}$ if and only if there is an expression
\begin{equation} \label{eqn_mu_in_weight_lattice}
(\mu,0) = \sum \limits_{\alpha \in \Pi} c_\alpha \Omega_\alpha +
\sum \limits_{\varphi \in \Phi} c_\varphi \Omega_\varphi + (\nu, -
\tau(\nu)),
\end{equation}
where $c_\alpha, c_\varphi \in \ZZ$ and $\nu \in \mathfrak X(C)$.

First suppose that $\mu \in \Lambda_{G/H}$.
Then~(\ref{eqn_mu_in_weight_lattice}) implies the following equality
in the group $\mathfrak X(S)$:
$$
0 = -\sum \limits_{\alpha \in \Pi} c_\alpha \tau(\varpi_\alpha)
-\sum \limits_{\varphi \in \Phi} c_\varphi \tau(\lambda_\varphi) +
\sum \limits_{\varphi \in \Phi} c_\varphi \varphi - \tau(\nu).
$$
For every $\varphi \in \Phi$, fix any root $\alpha_\varphi \in
\Psi_\varphi$. Then
$$
\sum \limits_{\alpha \in \Pi} c_\alpha \varpi_\alpha + \sum
\limits_{\varphi \in \Phi} c_\varphi \lambda_\varphi - \sum
\limits_{\varphi \in \Phi} c_\varphi \alpha_\varphi + \nu \in \Ker
\tau.
$$
Hence in view of~(\ref{eqn_mu_in_weight_lattice}) one has
$$
\mu = \sum \limits_{\alpha \in \Pi} c_\alpha \varpi_\alpha + \sum
\limits_{\varphi \in \Phi} c_\varphi \lambda_\varphi + \nu \in \sum
\limits_{\varphi \in \Phi} c_\varphi \alpha_\varphi + \Ker \tau
\subset \ZZ\Psi + \Ker \tau = \ZZ \Pi_0 + \Ker \tau,
$$
where the latter equality holds by
Corollary~\ref{crl_contains_support}(\ref{crl_contains_support_b}).

Conversely, suppose that $\mu \in \ZZ \Pi_0 + \Ker \tau = \ZZ \Psi +
\Ker \tau$ and consider the corresponding expression of the
form~(\ref{eqn_tau(mu)}). One has
\begin{multline*}
(0, \tau(\mu)) = \sum \limits_{\varphi \in \Phi} J(\varphi, \mu) (0,
\varphi) =\\ \sum \limits_{\varphi \in \Phi} J(\varphi, \mu)
(\Omega_{\varphi} - \sum \limits_{\alpha \in \pi(\Psi_\varphi)}
\Omega_\alpha) = \sum \limits_{\varphi \in \Phi} J(\varphi, \mu)
\Omega_\varphi - \sum \limits_{\varphi \in \Phi} \sum
\limits_{\alpha \in \pi(\Psi_\varphi)} J(\varphi, \mu)
\Omega_\alpha.
\end{multline*}
By Lemma~\ref{lemma_disjoint_union}, the double sum in the latter
expression is just a sum over the set $\pi(\Psi) = \Pi_0$, whence
\begin{equation*}
(0, \tau(\mu)) = \sum \limits_{\varphi \in \Phi} J(\varphi, \mu)
\Omega_\varphi - \sum \limits_{\alpha \in \Pi_0} J(\varphi[\alpha],
\mu) \Omega_\alpha.
\end{equation*}
Recall that there is a unique expression $\mu = \mu^{ss} + \mu^C$,
where $\mu^{ss} \in \mathfrak X(B^{ss})$ and $\mu^C \in \mathfrak
X(C)$. Then $(\mu, - \tau(\mu)) = \sum \limits_{\alpha \in \Pi}
\langle \alpha^\vee, \mu \rangle \Omega_\alpha + (\mu^C, -
\tau(\mu^C))$ and we finally obtain
\begin{equation} \label{eqn_element_of_weight_lattice}
(\mu, 0) = \sum \limits_{\alpha \in \Pi} \langle \alpha^\vee, \mu
\rangle \Omega_\alpha - \sum \limits_{\alpha \in \Pi_0}
J(\varphi[\alpha], \mu) \Omega_\alpha + \sum \limits_{\varphi \in
\Phi} J(\varphi, \mu) \Omega_\varphi + (\mu^C, - \tau(\mu^C)),
\end{equation}
whence $\mu \in \Lambda_{G/H}$.

(\ref{thm_PCI_via_EWS_b})--(\ref{thm_PCI_via_EWS_d}) The knowledge
of the semigroup $\widehat \Lambda^+_{G / H}$ in combination with
Proposition~\ref{prop_Sigma_via_supports} yields $\Pi^p_{G / H} =
\varnothing$ and $\Sigma_{G / H} \cap \Pi = \Pi_0$. As $\Sigma_{G /
H} \subset \Pi$ (see Corollary~\ref{crl_strongly_solvable_HSD}), we
get $\Sigma_{G / H} = \Pi_0$. At last, the set $\mathcal D^a_{G /
H}$ consists of all the divisors $D_\alpha$ with $\alpha \in \Pi_0$
and all the divisors $D_\varphi$, where $\varphi \in \Phi$. By
Proposition~\ref{prop_value_of_color}, the values on $\Lambda_{G /
H}$ of elements in $\varkappa(\mathcal D^a_{G / H})$ are read off
from expression~(\ref{eqn_element_of_weight_lattice}).
\end{proof}

\subsection{Relationship with Luna's 1993 approach}
\label{subsec_1993&explicit}

Theorem~\ref{thm_AM_SSWS} together with
Corollary~\ref{crl_SSWS_ARS}(\ref{crl_SSWS_ARS_b}) imply that there
is a natural bijection between admissible maps and ARS-sets. The
goal of this subsection is to find an explicit description of this
bijection.

Let $H \subset G$ be a strongly solvable wonderful subgroup
standardly embedded in~$B^-$. Let $(\mathrm M, \pi, \sim)$
(resp.~$\eta$) be the ARS-set (resp. admissible map) corresponding
to~$H$. Recall that by Proposition~\ref{prop_amounts} the set
$(\mathrm M, \pi, \sim)$ amounts to the pair $(\Psi, \sim)$.

From \S\,\ref{subsec_wond_B-var} we recall the set $\Pi_\eta =
\lbrace \alpha \in \Pi \mid \eta(\alpha, \alpha) = 1 \rbrace$. One
has $\Pi_\eta = \Pi_0$ since both $\Pi_\eta$ and $\Pi_0$ coincide
with~$\Sigma_{G / H}$ (see
Proposition~\ref{prop_invariants_via_AM}(\ref{prop_invariants_via_AM_b})
and Theorem~\ref{thm_PCI_via_EWS}(\ref{thm_PCI_via_EWS_c})).

We first express $\eta$ in terms of the pair $(\Psi, \sim)$.

Clearly, $B^- / H$ is a smooth affine toric $T$-variety whose weight
lattice $\mathfrak X = \mathfrak X_{B^- / H}$ is generated by the
set~$\Pi_\eta$. Let $\mathcal F = \mathcal F_{B^-/H}$ be the
corresponding fan in $Q_{B^-/H} = \Hom_\ZZ(\mathfrak X, \QQ)$. As
$B^-/H$ is smooth and affine, $\mathcal F$ contains a unique maximal
cone~$\mathcal C_0$, which is regular.

The natural projection $B^- / H \to T / S$ yields a $T$-equivariant
isomorphism $B^- / H \simeq T *_S\nobreak (U^- / N)$. Hence
$T$-stable prime divisors in $B^- / H$ are in natural bijection with
$S$-stable prime divisors in $U^- / N$. By~\cite[Lemma~1.4]{Mon},
there is an $S$-equivariant isomorphism $U^- / N \simeq \mathfrak
u^- / \mathfrak n$. Next,
Proposition~\ref{prop_sphericity_criterion} yields an
$S$-equivariant isomorphism
\begin{equation} \label{eqn_u-/n}
\mathfrak u^- / \mathfrak n \simeq \bigoplus\limits_{\varphi \in
\Phi} \CC_{-\varphi},
\end{equation}
where $\CC_{-\varphi}$ is the one-dimensional $S$-module on which
$S$ acts via the character~$-\varphi$. Clearly, the $S$-stable prime
divisors in $\bigoplus \limits_{\varphi \in \Phi} \CC_{-\varphi}$
are just the coordinate hyperplanes, hence they are in natural
bijection with the set~$\Phi$. For every $\varphi \in \Phi$, let
$E_\varphi$ be the prime divisor in $B^- / H$ corresponding
to~$\varphi$ via the above-mentioned natural bijections. Let also
$q_\varphi$ denote the element of $\mathcal F^1$ corresponding
to~$E_\varphi$.

\begin{lemma} \label{lemma_q_varphi}
For every $\varphi \in \Phi$ and every $\alpha \in \Psi$, one has
\begin{equation} \label{eqn_values_of_q_varphi}
\langle q_\varphi, \alpha \rangle = \begin{cases} 1 & \text{if
$\alpha \in \Psi_\varphi$}; \\ 0 & \text{if $\alpha \notin
\Psi_\varphi$}.
\end{cases}
\end{equation}
In particular, $\langle q_\varphi, \Ker \tau \rangle = 0$ for every
$\varphi \in \Phi$.
\end{lemma}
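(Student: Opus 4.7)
The plan is to unravel the geometry of $B^-/H$ and reduce the computation of the valuation $\langle q_\varphi,\alpha\rangle$ to a trivial calculation on the vector space $\mathfrak u^-/\mathfrak n$. By the discussion preceding the lemma, there is a chain of $T$-equivariant isomorphisms
\[
B^-/H \;\simeq\; T *_S (U^-/N) \;\simeq\; T *_S (\mathfrak u^-/\mathfrak n) \;\simeq\; T *_S \bigoplus_{\varphi\in\Phi}\CC_{-\varphi},
\]
under which $E_\varphi$ is identified with $T *_S \{z_\varphi=0\}$, where $z_\varphi$ is the coordinate function on the summand $\CC_{-\varphi}$ (a function $S$-semi-invariant of weight $\varphi$). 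By definition, $\langle q_\varphi,\alpha\rangle$ equals the order of vanishing along $E_\varphi$ of a nonzero $T$-semi-invariant rational function $f_\alpha\in\CC(B^-/H)$ of weight~$\alpha$.

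To compute this, I would pull $f_\alpha$ back along the principal $S$-bundle $T\times (U^-/N)\to T*_S(U^-/N)$; writing the pullback as $(t,v)\mapsto \alpha(t)g_\alpha(v)$, the $S$-invariance of the pullback translates into $g_\alpha$ being an $S$-semi-invariant rational function on $U^-/N$ of weight $\tau(\alpha)\in\Phi$. By Proposition~\ref{prop_sphericity_criterion}, the weights $\Phi$ are linearly independent in $\mathfrak X(S)$, so the space of $S$-semi-invariant rational functions on $\bigoplus_{\varphi\in\Phi}\CC_{-\varphi}$ of any prescribed weight $\varphi'\in\Phi$ is one-dimensional and spanned by $z_{\varphi'}$. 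Thus if $\alpha\in\Psi_{\varphi'}$ then $g_\alpha$ is a nonzero scalar multiple of $z_{\varphi'}$, whose order of vanishing along the coordinate hyperplane $\{z_\varphi=0\}$ is $1$ if $\varphi=\varphi'$ and $0$ otherwise. This yields~(\ref{eqn_values_of_q_varphi}).

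The second assertion follows immediately: since we are in the wonderful case, $\mathfrak X_{B^-/H}=\ZZ\Pi_0$, and condition~(\ref{TT}) in the extended ARS-set says that $\Ker\tau\cap\ZZ\Pi_0$ is $\ZZ$-spanned by differences $\alpha-\beta$ with $\alpha,\beta\in\mathrm M$ and $\alpha\sim\beta$. For any such pair, $\tau(\alpha)=\tau(\beta)$ forces $\alpha,\beta$ to lie in the same $\Psi_{\varphi'}$, and formula~(\ref{eqn_values_of_q_varphi}) gives $\langle q_\varphi,\alpha\rangle=\langle q_\varphi,\beta\rangle$, so the two contributions cancel.

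There is no substantial obstacle; the only point that needs care is the bookkeeping of weight conventions in the homogeneous bundle picture (in particular, confirming that $z_\varphi$ carries weight~$+\varphi$ under $S$ when $S$ acts on $\CC_{-\varphi}$ with weight~$-\varphi$, and that pullback along the $S$-bundle converts the $T$-weight $\alpha$ on $f_\alpha$ into the $S$-weight $\tau(\alpha)$ on $g_\alpha$).
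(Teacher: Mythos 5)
Your proof is correct and follows essentially the same route as the paper: both rest on the $T$-equivariant identification $B^-/H \simeq T *_S (\mathfrak u^-/\mathfrak n) \simeq T*_S\bigoplus_{\varphi\in\Phi}\CC_{-\varphi}$ and on the linear independence of $\Phi$ in $\mathfrak X(S)$. The only (harmless) difference in packaging is that the paper identifies the cone $\mathcal C_0$ with the pullback under $\tau$ of $(\QQ^+\Phi)^\vee$, so that $\langle q_\varphi,\cdot\rangle$ visibly factors through $\tau$ and the $\Ker\tau$ assertion is immediate, whereas you compute $\ord_{E_\varphi}(f_\alpha)$ directly and then invoke axiom~(\ref{TT}) for the last claim.
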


\begin{proof}
According to the above discussion, there is a $T$-equivariant
isomorphism
\begin{equation} \label{eqn_B-/H}
B^- / H \simeq T *_S (\mathfrak u^- / \mathfrak n).
\end{equation}
By Theorem~\ref{thm_homogeneous_bundles}, the right-hand side
of~(\ref{eqn_B-/H}) is the geometrical quotient of $T \times
(\mathfrak u^- / \mathfrak n)$ by the action of~$S$ given by $s(t,x)
= (ts^{-1}, sx)$, where $s \in S$, $t \in T$, $x \in\nobreak
\mathfrak u^- / \mathfrak n$. Consequently, $\CC[B^- / H] \simeq
(\CC[T] \otimes \CC[\mathfrak u^- / \mathfrak n])^S$, where the
invariants are taken with respect to the induced diagonal action
of~$S$ on functions. Taking into account~(\ref{eqn_u-/n}), one
easily deduces that the weight semigroup of $T$-semi-invariant
regular functions on $B^- / H$ equals $\tau^{-1}(\ZZ^+ \Phi)$.
Corollary~\ref{crl_contains_support}(\ref{crl_contains_support_c})
yields $\tau(\mathfrak X) = \ZZ \Phi$, hence the cone $\mathcal C_0$
is the image in $Q_{B^- / H}$ of the cone $(\QQ^+ \Phi)^\vee \subset
\Hom_\ZZ(\ZZ\Phi, \QQ)$. Clearly, the set $\lbrace q_\varphi \mid
\varphi \in \Phi \rbrace$ is the image in $Q_{B^- / H}$ of the basis
of $\Hom_\ZZ(\ZZ\Phi, \ZZ)$ dual to~$\Phi$, whence the claim.
\end{proof}

Below we shall need the map $\rho_\eta \colon \Pi_\eta \to
\Hom_\ZZ(\ZZ\Pi_\eta, \ZZ)$ given by~(\ref{eqn_rho_alpha}). We also
recall that the fan $\mathcal F$ coincides with the fan $\mathcal
F^0_\eta$ introduced in the proof of
Proposition~\ref{prop_invariants_via_AM}, see also
Remark~\ref{rem_cone_of_B/H}.

\begin{lemma} \label{lemma_fans_coincide}
For every $\alpha \in \Pi_0$, one has $\rho_\eta(\alpha) =
q_{\varphi[\alpha]}$.
\end{lemma}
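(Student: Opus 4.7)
The idea is to check that $\rho_\eta(\alpha)$ and $q_{\varphi[\alpha]}$, regarded as elements of the common space $\Hom_\ZZ(\ZZ\Pi_0,\QQ)$, agree on each $\beta\in\Pi_0$. The ambient spaces do coincide since $\Pi_\eta=\Pi_0$: indeed, both sets equal $\Sigma_{G/H}$ by Proposition~\ref{prop_invariants_via_AM}(\ref{prop_invariants_via_AM_b}) and Theorem~\ref{thm_PCI_via_EWS}(\ref{thm_PCI_via_EWS_c}).

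The first part of the plan is to establish $\Pi_0\subset\Psi$, whence $\varphi[\alpha]=\tau(\alpha)$ for every $\alpha\in\Pi_0$. For the inclusion I would reread the proof of Proposition~\ref{prop_wonderful_B-var}: a simple root $\alpha$ belongs to $\Pi_\eta$ if and only if $U_{-\alpha}$ acts nontrivially on $Z_\eta$, and since this action comes from left translation on the open subset $B^-/H\subset Z_\eta$, nontriviality is equivalent to $e_{-\alpha}\notin\mathfrak n$, i.e.\ $\alpha\in\Psi$. This forces $\Pi_0=\Pi_\eta=\Pi\cap\Psi$. For $\alpha\in\Pi_0$, line~1 of Table~\ref{table_active_roots} with $r=1$ gives $\pi(\alpha)=\alpha$, so $\alpha\in\pi(\Psi_{\tau(\alpha)})$ and Lemma~\ref{lemma_disjoint_union} yields the identification $\varphi[\alpha]=\tau(\alpha)$.

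The second part is to carry out the pairing calculation for an arbitrary $\beta\in\Pi_0$. On the toric-variety side, since $\beta\in\Psi$, Lemma~\ref{lemma_q_varphi} gives $\langle q_{\varphi[\alpha]},\beta\rangle=1$ if $\tau(\beta)=\tau(\alpha)$ and $0$ otherwise. On the wonderful-variety side, $\langle\rho_\eta(\alpha),\beta\rangle=\eta(\alpha,\beta)$ by~(\ref{eqn_rho_alpha}); to evaluate this I would invoke Proposition~\ref{prop_AM_via_SSSS} with the distinguished subset $\mathcal D'=\{D_\varphi:\varphi\in\Phi\}$ from Remark~\ref{rem_division_of_colors}. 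Theorem~\ref{thm_PCI_via_EWS}(\ref{thm_PCI_via_EWS_d}) says $\langle\varkappa(D_\varphi),\alpha\rangle=J(\varphi,\alpha)$, and because $\tau(\alpha)\in\Phi$ the defining expression $\tau(\alpha)=\sum_\varphi J(\varphi,\alpha)\varphi$ collapses to $J(\varphi,\alpha)=\delta_{\varphi,\tau(\alpha)}$. Hence $D^+_\alpha=D_{\tau(\alpha)}=D_{\varphi[\alpha]}$, and applying the same argument to $\beta$ gives $\eta(\alpha,\beta)=J(\varphi[\alpha],\beta)=\delta_{\tau(\alpha),\tau(\beta)}$. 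The two sides agree on every $\beta\in\Pi_0$, yielding the claim.

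The only conceptually delicate point is the identification $\Pi_0=\Pi\cap\Psi$ used above; once this is pinned down, everything else is bookkeeping across the three parallel descriptions of the colors of $X_\eta$ coming from Proposition~\ref{prop_invariants_via_AM}, Remark~\ref{rem_division_of_colors}, and Theorem~\ref{thm_PCI_via_EWS}.
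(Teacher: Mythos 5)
Your argument breaks at its very first step: the identification $\Pi_0=\Pi\cap\Psi$ is false, and with it the claim $\varphi[\alpha]=\tau(\alpha)$. By definition $\Pi_0=\pi(\Psi)=\bigcup_{\beta\in\mathrm M}\Supp\beta$, and a simple root lying in the support of a maximal active root need not itself be active. Concretely, take $G$ of type $\mathsf A_2$ with $\Psi=\lbrace\alpha_1+\alpha_2\rbrace\cup\lbrace\alpha_2\rbrace$ (two equivalence classes; this is item~1, DSC $\lbrace 1,4\rbrace$ of Table~\ref{table_A2}). Then $\mathrm M=\lbrace\alpha_1+\alpha_2\rbrace$ and $\Pi_0=\lbrace\alpha_1,\alpha_2\rbrace$, yet $\alpha_1\notin\Psi$, i.e.\ $\mathfrak g_{-\alpha_1}\subset\mathfrak n$; here $\varphi[\alpha_1]=\tau(\alpha_1+\alpha_2)$, whereas $\tau(\alpha_1)=\tau(\alpha_1+\alpha_2)-\tau(\alpha_2)$ is a different element (not in $\Phi$ at all). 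Your justification via the $U_{-\alpha}$-action also fails: the kernel of the left-translation action of $B^-$ on $B^-/H$ is the largest \emph{normal} subgroup of $B^-$ contained in $H$, so $U_{-\alpha_1}\subset N$ does not make $U_{-\alpha_1}$ act trivially (and indeed $\eta(\alpha_1,\alpha_1)=1$ in this example). The error propagates to your conclusion: the formula $\eta(\alpha,\beta)=\delta_{\tau(\alpha),\tau(\beta)}$ gives $\eta(\alpha_2,\alpha_1)=0$, while the table shows the correct value is $-1$.

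The substantive content of the lemma is exactly the point you skip: one must evaluate $\langle q_{\varphi[\alpha]},\alpha\rangle$ for a simple root $\alpha\in\Pi_0$ that is in general \emph{not} active, whereas Lemma~\ref{lemma_q_varphi} only computes $\langle q_\varphi,\cdot\rangle$ on $\Psi$ and on $\Ker\tau$. The paper's proof handles this by choosing $\widehat\alpha\in\Psi_{\varphi[\alpha]}$ with $\pi(\widehat\alpha)=\alpha$, using Corollary~\ref{crl_contains_support}(\ref{crl_contains_support_a}) and Proposition~\ref{prop_associated_root} to write $\alpha\in\widehat\alpha+\ZZ(F(\widehat\alpha)\backslash\lbrace\widehat\alpha\rbrace)$, and observing that all subordinate roots have $\tau$-values different from $\tau(\widehat\alpha)$, so that $\langle q_{\varphi[\alpha]},\alpha\rangle=\langle q_{\varphi[\alpha]},\widehat\alpha\rangle=1$; writing $q_{\varphi[\alpha]}=\rho_\eta(\gamma)$ for some $\gamma\in\Pi_0$, formula~(\ref{eqn_rho_alpha}) gives $\eta(\gamma,\alpha)=1$ and axiom~(\ref{AM3}) forces $\rho_\eta(\gamma)=\rho_\eta(\alpha)$. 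Your plan could be repaired by replacing the false identity $\varphi[\alpha]=\tau(\alpha)$ with this computation, but as written the proof does not go through.
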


\begin{proof}
Fix a root $\widehat \alpha \in \Psi_{\varphi[\alpha]}$ such that
$\pi(\widehat \alpha) = \alpha$. By
Corollary~\ref{crl_contains_support}(\ref{crl_contains_support_a}),
one has $\alpha \in \ZZ F(\widehat \alpha)$.
Proposition~\ref{prop_associated_root} yields $\alpha \in \widehat
\alpha + \ZZ \lbrace F(\widehat \alpha) \backslash \lbrace \widehat
\alpha \rbrace \rbrace$. Since $\tau(\beta) \ne \tau(\widehat
\alpha)$ for all $\beta \in F(\widehat \alpha) \backslash \lbrace
\widehat \alpha \rbrace$ (see~\cite[Lemma~5(a)]{Avd_solv}),
by~(\ref{eqn_values_of_q_varphi}) we obtain $\langle
q_{\varphi[\alpha]}, \alpha \rangle = 1$. Let $\gamma \in \Pi_0$ be
such that $q_{\varphi[\alpha]} = \rho_\eta(\gamma)$. Then
formula~(\ref{eqn_rho_alpha}) yields $\eta(\gamma,\alpha) = 1$,
whence $\rho_\eta(\gamma) = \rho_\eta(\alpha)$.
\end{proof}

\begin{theorem}
Suppose that $\alpha, \beta \in \Pi$. Then
$$
\eta(\alpha, \beta) = \begin{cases} J(\varphi[\alpha], \beta) &
\text{if $\alpha, \beta \in \Pi_0$};
\\ 0 & \text{otherwise}.\end{cases}
$$
\end{theorem}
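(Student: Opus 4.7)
The plan is to split the identity into three cases according to whether $\alpha, \beta$ lie in $\Pi_0$, and then, in the essential case $\alpha, \beta \in \Pi_0$, to chain together the identifications
$$
\eta(\alpha, \beta) \;=\; \langle \rho_\eta(\alpha), \beta \rangle \;=\; \langle q_{\varphi[\alpha]}, \beta \rangle \;=\; J(\varphi[\alpha], \beta),
$$
using formula~\eqref{eqn_admissible_map_via_EBs}, Lemma~\ref{lemma_fans_coincide}, and Lemma~\ref{lemma_q_varphi}.

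First I would dispose of the two ``vanishing'' cases. Recall from Proposition~\ref{prop_invariants_via_AM}(\ref{prop_invariants_via_AM_b}) combined with Theorem~\ref{thm_PCI_via_EWS}(\ref{thm_PCI_via_EWS_c}) that $\Pi_\eta = \Sigma_{G/H} = \Pi_0$. If $\alpha \in \Pi \setminus \Pi_0$, then $\eta(\alpha,\alpha) = 0$, and axiom~\eqref{AM2} of an admissible map forces $\eta(\alpha,\beta) = 0$. If instead $\alpha \in \Pi_0$ but $\beta \in \Pi \setminus \Pi_0$, then linear independence of $\Pi$ in $\mathfrak X(T)$ guarantees $\beta \notin \ZZ \Pi_0 = \mathfrak X_\eta$, and definition~\eqref{eqn_admissible_map_via_EBs} again yields $\eta(\alpha,\beta) = 0$. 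In both situations the right-hand side of the theorem vanishes for the same reason ($\varphi[\alpha]$ is undefined, or else both coordinates lie outside $\Pi_0$), so these cases are settled.

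The main case is $\alpha, \beta \in \Pi_0$. Formula~\eqref{eqn_admissible_map_via_EBs} together with $\beta \in \mathfrak X_\eta$ gives $\eta(\alpha,\beta) = \langle \rho_\eta(\alpha), \beta \rangle$, and Lemma~\ref{lemma_fans_coincide} rewrites this as $\langle q_{\varphi[\alpha]}, \beta\rangle$. Thus the theorem reduces to proving the identity
\begin{equation*}
\langle q_\varphi, \mu \rangle \;=\; J(\varphi, \mu) \qquad \text{for all } \varphi \in \Phi \text{ and } \mu \in \ZZ \Pi_0.
\end{equation*}
Here the key observation is that both sides vanish on $\Ker\tau$: for the left side this is the final assertion of Lemma~\ref{lemma_q_varphi}, while for the right side it is immediate from the defining equation~\eqref{eqn_tau(mu)}. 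Consequently both sides factor through the map $\tau \colon \ZZ \Pi_0 \to \ZZ \Phi$, which is surjective by Corollary~\ref{crl_contains_support}(\ref{crl_contains_support_c}). It therefore suffices to check agreement on the generating set $\Phi$ of $\ZZ \Phi$. For a given $\varphi' \in \Phi$, choose any active root $\gamma \in \Psi_{\varphi'}$; then $\gamma \in \ZZ \Pi_0$ by Corollary~\ref{crl_contains_support}(\ref{crl_contains_support_b}) and $\tau(\gamma) = \varphi'$. The first clause of Lemma~\ref{lemma_q_varphi} gives $\langle q_\varphi, \gamma \rangle = \delta_{\varphi, \varphi'}$, and~\eqref{eqn_tau(mu)} likewise gives $J(\varphi, \gamma) = \delta_{\varphi, \varphi'}$, finishing the proof.

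I do not expect any serious obstacle: every ingredient is already in place in \S\ref{subsec_1993&explicit}, and the argument is a straightforward duality check. The only point that requires mild care is the clean separation of the three cases and the observation that $\Ker\tau \subset \ZZ\Pi_0$ in the wonderful setting (so that $J(\varphi, \cdot)$ and $\langle q_\varphi, \cdot\rangle$ have the same natural domain of definition, namely $\ZZ\Pi_0$), which follows from Theorem~\ref{thm_PCI_via_EWS}(\ref{thm_PCI_via_EWS_a}) together with $\Lambda_{G/H} = \ZZ\Sigma_{G/H}$ for wonderful $H$ (Proposition~\ref{prop_Sigma_generates_Lambda}).
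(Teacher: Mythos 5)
Your proposal is correct and follows essentially the same route as the paper's proof: the chain $\eta(\alpha,\beta) = \langle \rho_\eta(\alpha), \beta\rangle = \langle q_{\varphi[\alpha]}, \beta\rangle = J(\varphi[\alpha],\beta)$ via formula~(\ref{eqn_rho_alpha})/(\ref{eqn_admissible_map_via_EBs}), Lemma~\ref{lemma_fans_coincide}, and Lemma~\ref{lemma_q_varphi}, with the vanishing cases handled by noting that one of $\alpha,\beta$ lies outside $\mathfrak X_\eta = \ZZ\Pi_0$. You merely spell out in more detail the linearity argument behind the last equality and the case analysis, both of which the paper leaves implicit.
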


\begin{proof}
Fix $\alpha, \beta \in \Pi_0$ and write $\tau(\beta) = \sum
\limits_{\varphi \in \Phi} J(\varphi, \beta) \varphi$ according
to~(\ref{eqn_tau(mu)}). In view of formula~(\ref{eqn_rho_alpha}),
Lemma~\ref{lemma_fans_coincide}, and Lemma~\ref{lemma_q_varphi} one
has
$$
\eta(\alpha, \beta) = \langle \rho_\eta(\alpha), \beta \rangle =
\langle q_{\varphi[\alpha]}, \beta \rangle = J(\varphi[\alpha],
\beta).
$$
To complete the proof, it remains to notice that $\eta(\alpha,
\beta) = 0$ whenever at least one of the roots $\alpha, \beta$ is
not in $\mathfrak X = \ZZ \Pi_0$.
\end{proof}

Our next goal is to establish the converse part of the relationship
between the two approaches. The following lemma plays a key role in
that.

\begin{lemma} \label{lemma_crucial}
Let $\alpha$ be an active root and let $\beta$ be a linear
combination of simple roots in~$\Pi_0$ with non-negative integer
coefficients. Suppose that $\tau(\beta) = \tau(\alpha)$. Then
$\beta$ is an active root.
\end{lemma}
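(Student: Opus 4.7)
The plan is to use axiom~(\ref{TT}) of the extended ARS-set to express $\beta - \alpha$ as an integer combination of elementary differences $\delta - \gamma$ with $\delta \sim \gamma$ in $\mathrm M$, and then to proceed by induction combined with a case analysis driven by the classification of active roots.

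By Corollary~\ref{crl_contains_support}(\ref{crl_contains_support_b}), $\alpha \in \ZZ\Psi = \ZZ\Pi_0$, and $\beta \in \ZZ^+\Pi_0 \subset \ZZ\Pi_0$ by hypothesis. Thus $\beta - \alpha \in \Ker\tau \cap \ZZ\Pi_0$, and axiom~(\ref{TT}) gives
\[
\beta - \alpha = \sum_i m_i(\delta_i - \gamma_i), \qquad m_i \in \ZZ, \; \delta_i, \gamma_i \in \mathrm M, \; \delta_i \sim \gamma_i.
\]
I would induct on $\sum_i |m_i|$, at each stage choosing an ordering of the summands so that the partial sums $\alpha + \sum_{j \le k} m_j(\delta_j - \gamma_j)$ remain in $\ZZ^+\Pi_0$. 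This reduces the statement to the single-step case $\beta = \alpha + (\delta - \gamma)$ with $\delta \sim \gamma$ in $\mathrm M$ and $\beta \in \ZZ^+\Pi_0$.

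The main step is then a case analysis. Let $\mu \in \mathrm M$ be such that $\alpha \in F(\mu)$. Each of the pairs $(\mu, \delta)$, $(\mu, \gamma)$ satisfies one of conditions~(\ref{D0}), (\ref{D1}), (\ref{E1}), (\ref{D2}), (\ref{E2}) of Definition~\ref{def_ARS}; likewise the pair $(\delta, \gamma)$ satisfies one of (\ref{D0}), (\ref{D1}), (\ref{E1}), (\ref{D2}), (\ref{E2}). Running through these configurations and using the explicit form of active roots from Table~\ref{table_active_roots}, one computes $\beta = \alpha + \delta - \gamma$ and matches it against an entry of that same table, reading off $\pi(\beta)$ from $\pi(\alpha)$, $\pi(\delta)$, $\pi(\gamma)$ via Proposition~\ref{prop_associated_root}.

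The principal obstacle will be the overlapping configurations, especially~(\ref{E2}) where $\delta$ and $\gamma$ share a long common tail, because there the supports of $\alpha$, $\delta$, $\gamma$ can interleave in several ways and the non-negativity condition $\beta \in \ZZ^+\Pi_0$ has to be carefully maintained; here Corollary~\ref{crl_non-acute} (pairwise non-acute angles between maximal active roots) constrains the combinatorics enough to make the case-by-case verification tractable. Once $\beta$ is identified with an entry of Table~\ref{table_active_roots} with $\tau(\beta) = \varphi \in \Phi$, the remaining condition $\mathfrak g_{-\beta} \not\subset \mathfrak n$, i.e.\ $\xi_\varphi(e_{-\beta}) \ne 0$, is obtained from $\xi_{\tau(\mu)}(e_{-\mu}) \ne 0$ by the propagation rule of Proposition~\ref{prop_crucial}: the same case analysis produces the chain $\mu = \beta_0, \beta_1, \ldots, \beta_N = \beta$ of active roots (with each consecutive difference a single positive root) along which $\xi$ is transported, so nonvanishing at $\beta_0$ forces nonvanishing at $\beta_N = \beta$, completing the proof that $\beta \in \Psi_\varphi$.
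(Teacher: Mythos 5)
Your starting point---using condition~(\ref{TT}) to write $\beta - \alpha$ as an integer combination of differences of equivalent maximal active roots---matches the paper, but both reductions you then propose have genuine gaps. First, the induction on $\sum_i |m_i|$ rests on the unproven claim that the elementary differences can be ordered so that every partial sum stays in $\ZZ^+\Pi_0$; nothing guarantees such an ordering exists, and without it the induction does not get off the ground. Second, even granting that reduction, your single-step case $\beta = \alpha + \delta - \gamma$ comes with no control on how $\alpha$ sits relative to $\gamma$, so the case analysis over configurations (\ref{D0})--(\ref{E2}) is far less constrained than you suggest, and the concluding ``transport of $\xi_\varphi$ along a chain'' via Proposition~\ref{prop_crucial} is a plan rather than an argument.

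The idea you are missing is that condition~(\ref{CC}), combined with the hypothesis $\beta \in \ZZ^+\Pi_0$, collapses the (\ref{TT})-decomposition entirely, so that no induction is needed. After normalizing so that all $p_i > 0$ and $\alpha_i \ne \beta_j$, axiom~(\ref{CC}) provides for each $i$ a simple root $\gamma_i \in \Supp\beta_i$ lying in no other support among the $\alpha_j, \beta_j$; non-negativity of the coefficient of $\gamma_i$ in $\beta$ then forces $\gamma_i \in \Supp\alpha$, hence $\alpha \in F(\beta_i)$ for every $i$. This is incompatible with two distinct $\beta_i$, and the resulting coefficient bound at $\gamma_1$ gives $k = 1$ and $p_1 = 1$. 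One is left with $\beta = \alpha_1 - (\beta_1 - \alpha)$ where $\alpha$ is subordinate to $\beta_1$, so $\beta_1 - \alpha$ is either zero or a positive \emph{non-active} root whose support is forced into $\Supp\alpha_1$; conditions (\ref{E1})/(\ref{E2}) for the pair $(\alpha_1, \beta_1)$ together with Proposition~\ref{prop_associated_root} then exhibit $\beta$ as an active root subordinate to $\alpha_1$, which yields $\beta \in \Delta^+$ and $\mathfrak g_{-\beta} \not\subset \mathfrak n$ simultaneously, with no matching against Table~\ref{table_active_roots} required.
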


\begin{proof}
For $\beta = \alpha$ there is nothing to prove, therefore we assume
$\beta \ne \alpha$. By condition~(\ref{TT}) there are maximal active
roots $\alpha_1, \beta_1, \ldots, \alpha_k, \beta_k$ and integers
$p_1, \ldots, p_k$ such that
$$
\tau(\alpha_1) = \tau(\beta_1), \ldots, \tau(\alpha_k) =
\tau(\beta_k)
$$
and
$$
\beta = \alpha + p_1(\alpha_1 - \beta_1) + \ldots + p_k(\alpha_k -
\beta_k).
$$
Without loss of generality we may assume that $p_i > 0$ for all $i$
and $\alpha_i \ne \beta_j$ for all $i,j$. By condition~(\ref{CC}),
for every $i = 1, \ldots, k$ there is a simple root $\gamma_i \in
\Supp \beta_i$ that is contained in none of $\Supp \alpha_j$ and
none of $\Supp \beta_j$ with $\beta_j \ne \beta_i$. This implies
$\gamma_i \in \Supp \alpha$, whence $\alpha \in F(\beta_i)$ for all
$i = 1, \ldots, k$. Further, for $\beta_i \ne \beta_j$ one has
$\gamma_i \in \Supp \alpha \subset \Supp \beta_j$, which contradicts
the condition $\gamma_i \notin \Supp \beta_j$. It follows that
$\beta_1 = \ldots = \beta_k$ and $p_1 + \ldots + p_k = 1$, whence $k
= 1$, $p_1 = 1$, and $\beta = \alpha + \alpha_1 - \beta_1 = \alpha_1
- \beta'_1$, where $\beta'_1 = \beta_1 - \alpha$. Since $\alpha \in
F(\beta_1)$, one has either $\beta'_1 = 0$ or $\beta'_1 \in \Delta^+
\backslash \Psi$. The first case yields $\beta = \alpha_1 \in \Psi$.
In the second case we obtain $\Supp \beta'_1 \subset \Supp
\alpha_1$. Since $\tau(\alpha_1) = \tau(\beta_1)$, one of conditions
(\ref{E1}) or (\ref{E2}) holds for $\alpha_1$ and $\beta_1$. A
simple analysis then yields $\beta = \alpha_1 - \beta'_1 \in \Psi$.
\end{proof}

\begin{remark}
The proof of Lemma~\ref{lemma_crucial} is valid for arbitrary (not
necessarily wonderful) strongly solvable spherical subgroup $H
\subset G$ standardly embedded in~$B^-$.
\end{remark}

We now recover the pair $(\Psi, \sim)$ from the admissible
map~$\eta$.

\begin{theorem} \label{thm_ARS_via_AM}
The following assertions hold:
\begin{enumerate}[label=\textup{(\alph*)},ref=\textup{\alph*}]
\item \label{thm_ARS_via_AM_a}
the map $\Psi_\varphi \mapsto q_{\varphi}$ is a bijection between
the equivalence classes of\, $\Psi$ and the set
$\rho_\eta(\Pi_\eta)$;

\item \label{thm_ARS_via_AM_b}
for a fixed $\varrho_0 \in \rho_\eta(\Pi_\eta)$, an element $\alpha
\in \ZZ^+ \Pi_\eta$ is an active root in the corresponding
equivalence class if and only if it satisfies the following system
of linear equations:
$$
\begin{cases}
\langle \varrho_0, \alpha \rangle = 1; & \\
\langle \varrho, \alpha \rangle = 0 & \text{for all } \varrho \in
\rho_\eta(\Pi_\eta) \backslash \lbrace \varrho_0 \rbrace.
\end{cases}
$$
\end{enumerate}
\end{theorem}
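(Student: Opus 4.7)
The plan is to reduce both assertions to Lemma~\ref{lemma_q_varphi} and Lemma~\ref{lemma_fans_coincide}, which tie the elements $q_\varphi \in \mathcal F^1$ (arising geometrically from the divisors $E_\varphi \subset B^-/H$) to the purely combinatorial data $\rho_\eta$. The statement in part~(b) will then become the condition $\tau(\alpha) = \varphi_0$, and Lemma~\ref{lemma_crucial} will produce an active root from any such non-negative integral solution.

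For part~(a), I would first observe that $\Pi_\eta = \Pi_0$ (both being equal to $\Sigma_{G/H}$, cf.\ Proposition~\ref{prop_invariants_via_AM}(\ref{prop_invariants_via_AM_b}) and Theorem~\ref{thm_PCI_via_EWS}(\ref{thm_PCI_via_EWS_c})). Lemma~\ref{lemma_fans_coincide} gives $\rho_\eta(\alpha) = q_{\varphi[\alpha]}$ for every $\alpha \in \Pi_0$, so $\rho_\eta(\Pi_\eta) \subset \lbrace q_\varphi \mid \varphi \in \Phi \rbrace$; conversely every $\varphi \in \Phi$ satisfies $\Psi_\varphi \ne \varnothing$ (since $c_\varphi = 1$), and then any $\alpha \in \pi(\Psi_\varphi)$ yields $\varphi[\alpha] = \varphi$, showing the reverse inclusion. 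Injectivity of $\varphi \mapsto q_\varphi$ is immediate from Lemma~\ref{lemma_q_varphi}: for $\varphi \ne \varphi'$, picking any $\alpha \in \Psi_\varphi$ gives $\langle q_\varphi, \alpha \rangle = 1$ while $\langle q_{\varphi'}, \alpha \rangle = 0$. Since the equivalence classes of~$\Psi$ are precisely the sets~$\Psi_\varphi$ for $\varphi \in \Phi$, this yields the bijection $\Psi_\varphi \leftrightarrow q_\varphi$.

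For part~(b), the second half of Lemma~\ref{lemma_q_varphi} asserts $\langle q_\varphi, \Ker\tau \rangle = 0$, so each $q_\varphi$ factors as $q_\varphi = q'_\varphi \circ \tau$ for a unique $q'_\varphi \in \Hom_\ZZ(\tau(\ZZ\Pi_0), \QQ)$. Using Corollary~\ref{crl_contains_support}(\ref{crl_contains_support_c}) we have $\tau(\ZZ\Pi_0) = \ZZ\Phi$, and by choosing test roots $\alpha \in \Psi_{\varphi'}$ (so $\tau(\alpha) = \varphi'$) and applying Lemma~\ref{lemma_q_varphi}, I see that $q'_\varphi$ is the element of the dual basis of $\Hom_\ZZ(\ZZ\Phi, \ZZ)$ corresponding to~$\varphi$. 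Hence for every $\alpha \in \ZZ\Pi_0$ one has $\langle q_\varphi, \alpha \rangle = J(\varphi, \alpha)$ in the notation of~(\ref{eqn_tau(mu)}), and the linear system in the statement is equivalent to $\tau(\alpha) = \varphi_0$, where $\varphi_0 \in \Phi$ is determined by $q_{\varphi_0} = \varrho_0$.

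The forward implication (every $\alpha \in \Psi_{\varphi_0}$ satisfies the system) is then immediate from Lemma~\ref{lemma_q_varphi}. The substantive direction is the converse: given $\alpha \in \ZZ^+ \Pi_\eta$ with $\tau(\alpha) = \varphi_0$, I would fix any $\alpha_0 \in \Psi_{\varphi_0}$ (nonempty because $\varphi_0 \in \Phi$), observe $\tau(\alpha) = \tau(\alpha_0)$, and apply Lemma~\ref{lemma_crucial} to conclude that $\alpha$ itself is an active root; since $\tau(\alpha) = \varphi_0$, we get $\alpha \in \Psi_{\varphi_0}$. The only genuine obstacle in the whole argument is Lemma~\ref{lemma_crucial}, whose proof requires the full force of axioms~(\ref{TT}), (\ref{CC}), (\ref{E1}), (\ref{E2}) of an extended ARS-set; but that lemma has already been established above.
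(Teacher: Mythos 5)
Your proposal is correct and follows essentially the same route as the paper: part (a) via the identity $\rho_\eta(\alpha) = q_{\varphi[\alpha]}$ from Lemma~\ref{lemma_fans_coincide} (which the paper dismisses as obvious), and part (b) by using Lemma~\ref{lemma_q_varphi} to translate the linear system into the condition $\tau(\alpha) = \varphi_0$ and then invoking Lemma~\ref{lemma_crucial} for the converse direction. Your unpacking of why $q_\varphi$ factors through $\tau$ as the dual basis element of $\Hom_\ZZ(\ZZ\Phi,\ZZ)$ is exactly what the proof of Lemma~\ref{lemma_q_varphi} establishes, so nothing is missing.
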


\begin{proof}
(\ref{thm_ARS_via_AM_a}) Obvious.

(\ref{thm_ARS_via_AM_b}) Since $\Pi_0 = \Pi_\eta$, every active root
is contained in $\ZZ^+ \Pi_\eta$. By Lemma~\ref{lemma_q_varphi}, for
every $\varphi \in \Phi$ an element $\alpha \in \ZZ \Pi_\eta$
satisfies $\tau(\alpha) = \varphi$ if and only if it satisfies
equalities~(\ref{eqn_values_of_q_varphi}). By
Lemma~\ref{lemma_crucial}, every element $\alpha \in \ZZ \Pi_\eta$
with $\tau(\alpha) = \varphi$ is in fact an active root.
\end{proof}

\begin{remark}
Proposition~\ref{prop_AM_via_SSSS} combined with
Theorem~\ref{thm_ARS_via_AM} provide a method for determining
explicitly a strongly solvable wonderful subgroup starting from its
spherical system. This method was suggested by D.~Luna as a
conjecture in a private note addressed to the author.
\end{remark}

\begin{example}
Suppose that $\eta(\alpha, \beta) = 0$ for all $\alpha, \beta \in
\Pi$. Then $\Pi_\eta = \varnothing$, hence $\Psi = \varnothing$ and
the corresponding strongly solvable wonderful subgroup of~$G$ is
just~$B^-$.
\end{example}

\begin{example}
Suppose that
$$
\eta(\alpha, \beta) =
\begin{cases}
1 & \text{if } \alpha = \beta; \\
0 & \text{otherwise.}
\end{cases}
$$
Then $\Pi_\eta = \Pi$, $\rho_\eta(\Pi_\eta) = \lbrace \breve \alpha
\mid \alpha \in \Pi \rbrace$, and $\Psi = \Pi$ with pairwise
non-equivalent roots. The corresponding strongly solvable wonderful
subgroup of~$G$ is $T \rightthreetimes (U^-, U^-)$.
\end{example}

Other examples for $G$ of small rank can be found in
Appendix~\ref{sect_lists}.

\subsection{Computation of extended ARS-sets via homogeneous
spherical data} \label{subsec_ext_ARS_via_HSD}

In this subsection we assume that $G = C \times G^{ss}$ and $G^{ss}$
is simply connected.

Let $\mathscr H = (\Lambda, \Pi^p, \Sigma, \mathcal D^a)$ be a
strongly solvable homogeneous spherical datum and let $H \subset G$
be a spherical subgroup with $\mathscr H_{G / H} = \mathscr H$. By
Proposition~\ref{prop_sph_clos_in_parabolic}, the spherical closure
$\overline H$ of~$H$ is strongly solvable as well.
Corollary~\ref{crl_strongly_solvable_HSD} yields $\Pi^p =
\varnothing$ and $\Sigma \subset \Pi$, which implies $\mathscr S_{G
/ \overline H} = (\Pi^p, \Sigma, \mathcal D^a)$ (with $\varkappa$
restricted to $\Hom_\ZZ(\ZZ\Sigma, \ZZ)$) by
Proposition~\ref{prop_sph_syst_of_sph_closure}.

Since $\overline H$ is strongly solvable, there exists a subset
$\mathcal D'_{G/\overline H} \subset \mathcal D^a_{G/\overline H}$
satisfying the conditions of
Proposition~\ref{prop_strongly_solvable}. This subset is the
distinguished subset of colors of a uniquely determined
$G$-equivariant morphism $G / \overline H \to G / B^-$. Hence we
have a natural $G$-equivariant morphism $G / H \to G / B^-$ and may
assume $H \subset B^-$. Moreover, it may be also assumed that $H$ is
standardly embedded in~$B^-$, so that $H = S \rightthreetimes N$,
where $S \subset T$ and $N \subset U^-$. Let $\tau, \Phi, \Psi,
\ldots$ be as in \S\,\ref{subsec_solv_description}.

Let $\mathcal D'_{G/H} \subset \mathcal D^a_{G/H}$ be the subset
corresponding to $\mathcal D'_{G/\overline H}$ under the natural
bijection between $\mathcal D_{G/H}$ and $\mathcal D_{G/\overline
H}$. Then the set $\mathcal D^\circ = \mathcal D^\circ_{G / H} =
\mathcal D_{G/H} \backslash \mathcal D'_{G/H}$ contains exactly
$|\Pi|$ elements. More precisely, for every $\alpha \in \Pi$ the set
$\mathcal D_{G/H}(\alpha) \cap \mathcal D^\circ$ contains exactly
one element, we denote it by~$D_\alpha$. Theorem~\ref{thm_solv_ews}
and Remark~\ref{rem_division_of_colors} imply that for every $\alpha
\in \Pi$ one has $(\lambda_{D_\alpha}, \chi_{D_\alpha}) =
(\varpi_\alpha, - \tau(\varpi_\alpha))$.

We recall that in \S\,\ref{subsec_ews_pci_interrelations} we
introduced the notation $\ZZ^{\mathcal D}$ for the free Abelian
group generated by the set $\mathcal D = \mathcal D_{G/H}$. By
Proposition~\ref{prop_gen_char}, the homomorphism
$$
\psi \colon \ZZ^{\mathcal D} \oplus \mathfrak X(C) \to \mathfrak
X(H) \simeq \mathfrak X(S),
$$
given by $D \mapsto \chi_D$ for every $D \in \mathcal D_{G/H}$ and
$\nu \mapsto \tau(\nu)$ for every $\nu \in \mathfrak X(C)$, is
surjective. Proposition~\ref{prop_kernel} says that $\Ker \psi$ is
generated by the elements $\sum \limits_{D \in \mathcal D} \langle
\varkappa(D), \mu \rangle D - \mu^C$, where $\mu$ runs over a basis
of~$\Lambda_{G/H}$.

Let $\ZZ^{\mathcal D^\circ} \subset \ZZ^{\mathcal D}$ be the
subgroup generated by the set $\mathcal D^\circ$. We identify
$\ZZ^{\mathcal D^\circ}$ with $\mathfrak X(T^{ss})$ via the
isomorphism given by $D_\alpha \mapsto - \varpi_\alpha$. Thus the
group $\mathfrak X(T)$ is identified with $\ZZ^{\mathcal D^\circ}
\oplus \mathfrak X(C)$. Modulo this identification, the subgroup $S
\subset T$ is recovered as follows.

\begin{proposition}
One has $\Ker \tau = (\ZZ^{\mathcal D^\circ} \oplus \mathfrak X(C))
\cap \Ker \psi$.
\end{proposition}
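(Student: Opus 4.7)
The statement should follow by unfolding the definitions: my plan is to show that under the identification $\mathfrak X(T) \simeq \ZZ^{\mathcal D^\circ} \oplus \mathfrak X(C)$ supplied just before the proposition, the restriction of $\psi$ to this summand coincides with~$\tau$.

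First I would compute $\psi$ on the generators $D_\alpha$ of $\ZZ^{\mathcal D^\circ}$. By the discussion preceding the proposition, $\mathcal D^\circ$ consists of exactly the colors $D_\alpha$ for $\alpha \in \Pi$, and Theorem~\ref{thm_solv_ews} combined with Remark~\ref{rem_division_of_colors} gives $(\lambda_{D_\alpha}, \chi_{D_\alpha}) = (\varpi_\alpha, -\tau(\varpi_\alpha))$. Hence $\psi(D_\alpha) = \chi_{D_\alpha} = -\tau(\varpi_\alpha) = \tau(-\varpi_\alpha)$. Since $D_\alpha$ is identified with $-\varpi_\alpha \in \mathfrak X(T^{ss})$, the restriction $\psi|_{\ZZ^{\mathcal D^\circ}}$ agrees with $\tau|_{\mathfrak X(T^{ss})}$. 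On the second summand we have $\psi(\nu) = \tau(\nu)$ by the very definition of~$\psi$. Therefore $\psi|_{\ZZ^{\mathcal D^\circ} \oplus \mathfrak X(C)} = \tau$.

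Taking kernels gives immediately
\[
(\ZZ^{\mathcal D^\circ} \oplus \mathfrak X(C)) \cap \Ker \psi = \Ker\bigl(\psi|_{\ZZ^{\mathcal D^\circ} \oplus \mathfrak X(C)}\bigr) = \Ker \tau,
\]
which is the desired equality. There is no serious obstacle: the only subtle point is to verify that the identifications $\mathfrak X(H) \simeq \mathfrak X(S)$ (coming from $H = S \rightthreetimes N$, so that every character of $H$ factors through the restriction to~$S$) and $\mathfrak X(T^{ss}) \simeq \ZZ^{\mathcal D^\circ}$ are being used consistently, which is a direct bookkeeping check.
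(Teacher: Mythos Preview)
Your proof is correct and follows essentially the same approach as the paper: the paper's one-line argument observes that $\chi_{D_\alpha} = -\tau(\varpi_\alpha)$, which (together with $\psi(\nu)=\tau(\nu)$ on $\mathfrak X(C)$) is precisely the statement that $\psi|_{\ZZ^{\mathcal D^\circ}\oplus\mathfrak X(C)}=\tau$ under the given identification, from which the kernel equality is immediate. Your version simply makes this explicit.
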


\begin{proof}
Since $\chi_{D_\alpha} = - \tau(\varpi_\alpha)$, the restriction of
the map $\psi$ to $\ZZ^{\mathcal D^\circ} \oplus \mathfrak X(C)$ is
surjective.
\end{proof}

For every $D \in \mathcal D'_{G/H}$, we set $\Pi_D = \lbrace \alpha
\in \Pi \mid \langle \varkappa(D), \alpha \rangle = 1 \rbrace$.

\begin{theorem} \label{thm_H_via_PCI}
The following assertions hold:
\begin{enumerate}[label=\textup{(\alph*)},ref=\textup{\alph*}]
\item \label{thm_H_via_PCI_a}
$\Phi = \lbrace \psi(D - \sum \limits_{\alpha \in \Pi_D} D_\alpha)
\mid D \in \mathcal D'_{G/H} \rbrace \subset \mathfrak X(S)$;

\item \label{thm_H_via_PCI_b}
for a given $\varphi \in \Phi$, one has $\Psi_\varphi = \lbrace
\alpha \in \ZZ^+\Sigma \mid \tau(\alpha) = \varphi \rbrace$.
\end{enumerate}
\end{theorem}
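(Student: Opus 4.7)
The approach is to combine the explicit description of $\widehat{\Lambda}^+_{G/H}$ from Theorem \ref{thm_solv_ews} with the geometric identification of the distinguished colors from Remark \ref{rem_division_of_colors}. First, I would identify $\mathcal D^\circ$ and $\mathcal D'_{G/H}$ concretely: Theorem \ref{thm_solv_ews} puts the colors of $G/H$ in bijection with the free generators $\Omega_\alpha$ ($\alpha \in \Pi$) and $\Omega_\varphi$ ($\varphi \in \Phi$) of $\widehat{\Lambda}^+_{G^{ss}/H^{ss}}$, producing colors $D_\alpha$ and $D_\varphi$; Remark \ref{rem_division_of_colors} then identifies $\mathcal D^\circ = \{D_\alpha : \alpha \in \Pi\}$ (preimages of colors of $G/B^-$) and $\mathcal D'_{G/H} = \{D_\varphi : \varphi \in \Phi\}$ (colors mapping dominantly onto $G/B^-$), consistent with the notation of Theorem \ref{thm_PCI_via_EWS}.

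For part (a), I would fix $\varphi \in \Phi$ and compute $\Pi_{D_\varphi}$. For $\alpha \in \Pi \cap \Sigma = \Pi_0$, axiom (A1) equates $\langle \varkappa(D_\varphi), \alpha \rangle = 1$ with $D_\varphi \in \mathcal D(\alpha)$, i.e., $\varpi_\alpha \in \supp \lambda_\varphi$ by Lemma \ref{lemma_fwls}, i.e., $\alpha \in \pi(\Psi_\varphi)$; this is confirmed by Theorem \ref{thm_PCI_via_EWS}(d), which gives $\langle \varkappa(D_\varphi), \alpha \rangle = J(\varphi, \alpha) = \delta_{\varphi, \varphi[\alpha]}$ using $\tau(\alpha) = \varphi[\alpha]$ and the linear independence of $\Phi$ in $\mathfrak X(S)$. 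Hence $\Pi_{D_\varphi} = \pi(\Psi_\varphi)$, and a direct computation yields
\[
\psi\Bigl(D_\varphi - \sum_{\alpha \in \pi(\Psi_\varphi)} D_\alpha\Bigr) = \bigl(-\tau(\lambda_\varphi) + \varphi\bigr) + \sum_{\alpha \in \pi(\Psi_\varphi)} \tau(\varpi_\alpha) = \varphi,
\]
using $\lambda_\varphi = \sum_{\alpha \in \pi(\Psi_\varphi)} \varpi_\alpha$ and $\psi(D_\alpha) = \chi_{D_\alpha} = -\tau(\varpi_\alpha)$. As $\varphi$ ranges over $\Phi$, this map yields exactly the set $\Phi$.

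For part (b), the containment $\Psi_\varphi \subseteq \{\alpha \in \ZZ^+\Sigma : \tau(\alpha) = \varphi\}$ follows at once from Corollary \ref{crl_contains_support}(a), which places $\Psi \subseteq \ZZ^+\Pi_0 = \ZZ^+\Sigma$, together with the definition of $\Psi_\varphi$. For the reverse inclusion, I would fix any $\alpha_0 \in \Psi_\varphi$ (which is nonempty since $\varphi \in \Phi$) and invoke Lemma \ref{lemma_crucial}: any $\alpha \in \ZZ^+\Sigma$ with $\tau(\alpha) = \varphi = \tau(\alpha_0)$ is automatically an active root, and then $\tau(\alpha) = \varphi$ places $\alpha$ in $\Psi_\varphi$. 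The main technical point is the identification $\Pi_{D_\varphi} = \pi(\Psi_\varphi)$ in part (a), which relies on the combined use of axiom (A1), Lemma \ref{lemma_fwls}, and the explicit formula of Theorem \ref{thm_PCI_via_EWS}(d); once this is established, everything else reduces to the formal manipulation of the data furnished by Theorem \ref{thm_solv_ews}.
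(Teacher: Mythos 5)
Your proof follows the paper's argument essentially step for step: part~(\ref{thm_H_via_PCI_a}) rests on the identification $\mathcal D'_{G/H} = \lbrace D_\varphi \mid \varphi \in \Phi \rbrace$ coming from Theorem~\ref{thm_solv_ews} and Remark~\ref{rem_division_of_colors}, on the equality $\Pi_{D_\varphi} = \pi(\Psi_\varphi)$ (which is the paper's observation that $\supp \lambda_\varphi = \lbrace \varpi_\alpha \mid \alpha \in \Pi_{D_\varphi} \rbrace$), and on the computation $\psi(D_\varphi - \sum_{\alpha \in \Pi_{D_\varphi}} D_\alpha) = \varphi$; part~(\ref{thm_H_via_PCI_b}) is exactly the paper's appeal to Lemma~\ref{lemma_crucial}.

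One caveat: your parenthetical ``confirmation'' that $\langle \varkappa(D_\varphi), \alpha \rangle = J(\varphi, \alpha) = \delta_{\varphi, \varphi[\alpha]}$ because $\tau(\alpha) = \varphi[\alpha]$ is false in general. A simple root $\alpha \in \Pi_0$ need not itself be an active root, and then $\tau(\alpha)$ is not a single element of $\Phi$ but a $\ZZ$-linear combination of them. For instance, for the subgroup of a group of type $\mathsf A_2$ with $\Psi = \lbrace \alpha_1 + \alpha_2 \rbrace \cup \lbrace \alpha_2 \rbrace$ (line~1, distinguished subset $\lbrace 1,4 \rbrace$ in Table~\ref{table_A2}) one has $\tau(\alpha_1) = \varphi[\alpha_1] - \varphi[\alpha_2]$, so $J(\varphi[\alpha_2], \alpha_1) = -1 \ne 0$; equivalently, $\eta(\alpha_2, \alpha_1) = -1$ in the corresponding admissible map. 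What is true — and all that is needed — is that $J(\varphi, \alpha) = 1$ if and only if $\varphi = \varphi[\alpha]$ (the value may well be negative for other $\varphi$), and that is precisely what your primary chain of equivalences via the definition of $\mathcal D(\alpha)$ and Lemma~\ref{lemma_fwls} establishes. So the proof stands once that redundant sentence is deleted.
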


\begin{proof}
The set $\Phi$ is in natural bijection with the set $\mathcal
D'_{G/H}$. Under this bijection, a weight $\varphi \in \Phi$
corresponds to the color $D_\varphi$ such that
$(\lambda_{D_\varphi}, \chi_{D_\varphi}) = (\lambda_\varphi, -
\tau(\lambda_\varphi) + \varphi)$; see Theorem~\ref{thm_solv_ews}.
Evidently, $\varphi = \psi(D_\varphi - \sum \limits_{\alpha \in \Pi
: \varpi_\alpha \in \supp \lambda_\varphi} D_\alpha)$. To prove
part~(\ref{thm_H_via_PCI_a}) it remains to notice that $\supp
\lambda_\varphi = \lbrace \varpi_\alpha \mid \alpha \in
\Pi_{D_\varphi} \rbrace$. Part~(\ref{thm_H_via_PCI_b}) is a direct
consequence of Lemma~\ref{lemma_crucial}.
\end{proof}

\section{Possible generalizations of Luna's 1993 classification
and the explicit classification} \label{sect_generalizations}

A natural question arising in connection with Luna's 1993
classification and the explicit classification is whether it is
possible to generalize them to the case of arbitrary spherical
subgroups. In this section we present some ideas on this question.

Let $H \subset G$ be a subgroup and let $H_u$ be its unipotent
radical. It is well known (see, for instance,~\cite[\S\,30.3]{Hum})
that there exists a parabolic subgroup $P \subset G$ such that $H
\subset P$ and $H_u \subset P_u$, where $P_u$ is the unipotent
radical of~$P$. Accordingly, one can pose a problem of classifying
all spherical subgroups $H \subset G$ contained in a fixed parabolic
subgroup $P \subset G$ so that $H_u \subset P_u$. We note the
following two ``opposite'' particular cases of this problem:
\begin{itemize}
\item
$P = G$, which implies that $H$ is reductive;

\item
$P$ is a Borel subgroup of~$G$, which is equivalent to $H$ being
strongly solvable.
\end{itemize}

In \S\,\ref{subsec_generalization_Luna_1993} (resp.
\S\,\ref{subsec_generalization_expl_class}) we discuss possibilities
of generalizing Luna's 1993 classification (resp. the explicit
classification) in order to solve the indicated problem in the case
where $P$ is any proper parabolic subgroup of~$G$.

\subsection{The case of Luna's 1993 classification}
\label{subsec_generalization_Luna_1993}

Let $P \subset G$ be a proper parabolic subgroup and let $L$ be a
Levi subgroup of~$P$. Fix a Borel subgroup $B_L$ of~$L$.

The two definitions below are direct generalizations of
Definitions~\ref{def_spher_B-var} and~\ref{def_wond_B-var}.

\begin{definition}
A normal irreducible $P$-variety $Z$ is said to be
\textit{spherical} if $B_L$ has an open orbit in~$Z$, that is, $Z$
is a spherical $L$-variety.
\end{definition}

\begin{definition}
A spherical $P$-variety $Z$ is said to be \textit{wonderful} if it
possesses the following properties:
\begin{enumerate}[label=\textup{(WP\arabic*)},ref=\textup{WP\arabic*}]
\item
$Z$ is smooth and complete;

\item
there is exactly one closed $P$-orbit $Z_0$ in~$Z$;

\item
every irreducible $B_L$-stable closed subvariety $Z' \subset Z$
containing $Z_0$ is actually $P$-stable.
\end{enumerate}
\end{definition}

Generalizing the proof of Proposition~\ref{prop_wonderful_B&G} one
can prove the following result.

\begin{proposition}
Let $Z$ be a $P$-variety and consider the $G$-variety $X = G *_P Z$.
\begin{enumerate}[label=\textup{(\alph*)},ref=\textup{\alph*}]
\item $Z$ is a spherical $P$-variety if and only if $X$ is a
spherical $G$-variety;

\item $Z$ is a wonderful $P$-variety if and only if $X$ is a
wonderful $G$-variety.
\end{enumerate}
\end{proposition}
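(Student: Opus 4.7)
The plan is to carry over the proof of Proposition~\ref{prop_wonderful_B&G} essentially verbatim, with $B^-$ replaced by $P$ and $T$ replaced by the Levi subgroup~$L$. The preparatory step is to pick the appropriate Borel subgroup of~$G$: let $P^-$ denote the parabolic opposite to $P$ with respect to~$L$ and let $P_u^-$ be its unipotent radical, and set $B_G = B_L \cdot P_u^-$. Then $B_G$ is a Borel of~$G$ with $B_G \cap P = B_L$ (since $P \cap P_u^- = \{e\}$), and by Bruhat decomposition the orbit $B_G \cdot o_P \subset G/P$ is open of dimension $\dim P_u^-$. Consider the natural morphism $\phi \colon X \to G/P$ and identify $Z$ with $\phi^{-1}(o_P)$. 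Applying Proposition~\ref{prop_morphism_to_L/K} to the $B_G$-equivariant projection $X_0 := \phi^{-1}(B_G \cdot o_P) \to B_G/B_L$, I would obtain
$$
X_0 \simeq B_G *_{B_L} Z \simeq P_u^- \times Z,
$$
on which $B_G$ acts by $bu \cdot (v, z) = (buvb^{-1}, bz)$ for $b \in B_L$, $u, v \in P_u^-$, $z \in Z$, generalizing formula~(\ref{eqn_action_on_X0}).

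Part~(a) is then immediate: $X$ is spherical iff $B_G$ has an open orbit in~$X_0$, and since $P_u^-$ acts freely and transitively on its own factor, this holds precisely when $B_L$ has an open orbit in~$Z$, i.e.\ when $Z$ is a spherical $P$-variety.

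For part~(b) I would invoke Theorem~\ref{thm_criterion_wonderful} and follow the same two-sided argument. Given a wonderful $X$, smoothness of~$Z$ follows from Proposition~\ref{prop_smoothness} and completeness from $Z \subset X$ being closed. The map $O \mapsto O \cap Z$ is a closure-preserving bijection between $G$-orbits in~$X$ and $P$-orbits in~$Z$, so the unique closed $G$-orbit $X^c$ cuts out a unique closed $P$-orbit $Z_0 = X^c \cap Z$. For the toroidality axiom, any $B_L$-stable irreducible closed $Z' \subset Z$ containing $Z_0$ yields $B_G Z' = P_u^- \cdot Z'$, which is closed and $B_G$-stable in~$X_0$ and contains $X^c \cap X_0$; its closure $X' \subset X$ is $B_G$-stable, irreducible, closed, and contains~$X^c$, hence $G$-stable because $X$ is toroidal, and so $Z' = X' \cap Z$ is $P$-stable. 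Conversely, if $Z$ is wonderful, then Propositions~\ref{prop_smoothness} and~\ref{prop_completeness} applied to $X = G *_P Z$ give smoothness and completeness, the orbit correspondence gives $X^c = G Z_0$ as the unique closed $G$-orbit, and any $B_G$-stable irreducible closed $X' \subset X$ containing $X^c$ intersects~$Z$ in a $B_L$-stable irreducible closed subvariety containing~$Z_0$, which is $P$-stable by assumption, forcing $X' = G (X' \cap Z)$ to be $G$-stable.

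The conceptual content is already present in the proof of Proposition~\ref{prop_wonderful_B&G}; the only non-routine point is the initial bookkeeping of which Borel of~$G$ to use and the verification that its intersection with~$P$ is exactly~$B_L$, so that the bundle identification $X_0 \simeq P_u^- \times Z$ behaves correctly. Once this identification is in place the rest is a direct translation, and I expect no genuine obstacle beyond checking that the replacements $B^- \rightsquigarrow P$, $T \rightsquigarrow L$, $B \rightsquigarrow B_G$, $U \rightsquigarrow P_u^-$ are consistently made throughout.
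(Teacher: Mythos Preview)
Your proposal is correct and is precisely what the paper intends: the paper does not spell out a proof but simply states that the result follows by ``generalizing the proof of Proposition~\ref{prop_wonderful_B&G}'', and your write-up carries out exactly this generalization with the correct substitutions $B^- \rightsquigarrow P$, $T \rightsquigarrow L$, $U \rightsquigarrow P_u^-$, together with the appropriate choice of Borel $B_G = B_L \cdot P_u^-$ so that $B_G \cap P = B_L$ and the open cell in $G/P$ is $B_G$-homogeneous.
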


Thus the classification of wonderful subgroups $H \subset G$ such
that $H \subset P$ and $H_u \subset P_u$ reduces to that of
wonderful $P$-varieties. The latter would be possible if there were
a description of connected automorphism groups of smooth complete
spherical $L$-varieties. Unfortunately, at the moment no such
description exists except for the case where $L$ is a maximal torus
of~$G$, which yields a classification of wonderful $B^-$-varieties.

\subsection{The case of the explicit classification}
\label{subsec_generalization_expl_class}

Before we start our discussion, we need to introduce the notion of a
generalized root.

Let $Q \subset G$ be a parabolic subgroup and let $M$ be a Levi
subgroup of~$Q$. We denote by~$C_M$ the center of~$M$. Let
$\Delta_M$ be the set of nonzero weights for the natural action of
$C_M$ on~$\mathfrak g$. In the paper~\cite{Ko} Kostant undertook a
detailed study of the set~$\Delta_M$ and found that it has many
properties in common with the usual set of roots. For this reason we
refer to elements in $\Delta_M$ as ``generalized roots'' (or just
``$C_M$-roots''). It is well known that for every $\nu \in \Delta_M$
the representation of $M$ on the corresponding weight subspace
$\mathfrak g(\nu) \subset \mathfrak g$ is irreducible. (The latter
is a direct consequence of~\cite[Lemma~3.9]{GOV}; see
also~\cite[Theorem~0.1]{Ko}.)

We now take $P$ and $L$ to be as
in~\S\,\ref{subsec_generalization_Luna_1993}, so that $P = L
\rightthreetimes P_u$. Let $H \subset P$ be a subgroup satisfying
$H_u \subset P_u$. Replacing $H$ by a conjugate subgroup, we may
assume that $K = H \cap L$ is a Levi subgroup of~$H$, so that $H = K
\rightthreetimes H_u$. In this situation, we say that $H$ is
\textit{standardly embedded in~$P$} (with respect to~$L$). Let $S
\subset K$ be a generic stabilizer of the natural action of $K$ on
$\mathfrak l / \mathfrak k$. The subgroup $S$ is reductive;
see~\cite[Corollary~8.2]{Kn90} or~\cite[Theorem~3(ii) and
\S\,2.1]{Pa90}. By \cite[Theorem~1.2(i)]{Pa94}, $H$ being spherical
is equivalent to the following two conditions holding
simultaneously:

\begin{itemize}
\item
$K$ is a spherical subgroup of~$L$;

\item
$\mathfrak p_u / \mathfrak h_u$ is a spherical $S$-module (that is,
$\mathfrak p_u / \mathfrak h_u$ is spherical as an $S$-variety).
\end{itemize}
We note that this sphericity criterion is very useful in practice
for checking whether a given subgroup $H$ standardly embedded in~$P$
is spherical in~$G$. Indeed, a computation of the group $S$ for a
given reductive spherical subgroup $K \subset L$ easily reduces to
the cases listed in Tables~4 and~5 of the paper~\cite{KnoV}.
Besides, there is a complete classification of spherical modules of
reductive groups, which was obtained in~\cite{Kac} (the case of a
simple module), \cite{BenR}, and~\cite{Lea} (the latter two papers
dealt independently with the general case).

Assume that a spherical subgroup $H \subset P$ standardly embedded
in~$P$ is fixed. Results of Knop (see~\cite[Corollary~8.2]{Kn90})
and Panyushev (see~\cite[Theorem~1(iii), Theorem~3(ii), and
\S\,2.1]{Pa90}) imply that there is a parabolic subgroup $P_L$ of
$L$ with a Levi subgroup $M$ such that $(M, M) \subset S \subset M$.
Then $Q = P_L \rightthreetimes P_u$ is a parabolic subgroup of $G$
and $M$ is a Levi subgroup of $Q$. Let $C_M$ denote the center
of~$M$ and let $\Delta_M$ be the corresponding set of generalized
roots. We denote by $\Delta^+_M \subset \Delta_M$ the set of weights
for the natural action of $C_M$ on~$\mathfrak p_u$. Put
$$
\Psi_{H, S} = \lbrace \nu \in \Delta^+_M \mid \mathfrak g(\nu)
\not\subset \mathfrak h_u \rbrace.
$$
The set $\Psi_{H, S}$ is a direct analogue of the set of active
roots appearing in the explicit classification. Accordingly,
generalized roots in $\Psi_{H, S}$ will be called
\textit{generalized active roots}.

The following conjecture is a direct generalization of
Theorem~\ref{thm_first_approx}.

\begin{conjecture} \label{conj_first_approx}
Up to conjugation by an element of $N_L(K)$, a spherical subgroup $H
\subset G$ standardly embedded in~$P$ is uniquely determined by the
triple $(K,S, \Psi_{H,S})$.
\end{conjecture}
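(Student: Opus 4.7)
My plan is to extend the proof of Theorem~\ref{thm_first_approx} from the strongly solvable setting (where the Levi $L$ is the maximal torus $T$) to an arbitrary parabolic $P = L \rightthreetimes P_u$, replacing the root decomposition of~$\mathfrak u^-$ by the $C_M$-weight decomposition $\mathfrak p_u = \bigoplus_{\nu \in \Delta_M^+} \mathfrak g(\nu)$. Since $K = H \cap L$ is intrinsic to~$H$ and $S \subset K$ is determined up to $K$-conjugacy as a generic stabilizer for the action of~$K$ on~$\mathfrak l / \mathfrak k$, both $K$ and (the conjugacy class of) $S$ are recovered from $H$. Hence the claim reduces to showing that the subspace $\mathfrak h_u \subset \mathfrak p_u$ is determined by $(S, \Psi_{H,S})$ up to the action of $N_L(K)$ on~$\mathfrak p_u$ by conjugation.

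The first step is to observe that since $(M,M) \subset S \subset M$ and $S$ centralizes $C_M$, the $C_M$-decomposition $\mathfrak p_u = \bigoplus_\nu \mathfrak g(\nu)$ is $S$-stable, and each $\mathfrak g(\nu)$ remains irreducible as an $S$-module (Kostant's irreducibility for~$M$ combined with the fact that $C_M^0 \cap S$ acts by a single character). Because $\mathfrak h_u$ is $K$-stable and hence $S$-stable, its intersection with each irreducible summand $\mathfrak g(\nu)$ is either~$0$ or $\mathfrak g(\nu)$; thus $\Psi_{H,S}$ exactly records which $\mathfrak g(\nu)$ survive in $\mathfrak p_u / \mathfrak h_u$. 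Grouping the $\mathfrak g(\nu)$'s by $S$-isomorphism type, each $S$-isotypical component $V \subset \mathfrak p_u$ is a sum $\mathfrak g(\nu_1) \oplus \dots \oplus \mathfrak g(\nu_k)$ of mutually isomorphic copies of some irreducible~$W$, and $\mathfrak h_u \cap V$ is some ``diagonally embedded'' copy of~$W^{\oplus r(V)}$. The sphericity of $\mathfrak p_u / \mathfrak h_u$ as an $S$-module (Panyushev's criterion) bounds the multiplicities in $V / (\mathfrak h_u \cap V)$ by way of the Kac--Benson--Ratcliff--Leahy classification of spherical modules, yielding the natural analogue of the condition $c_\varphi \le 1$ from Proposition~\ref{prop_sphericity_criterion}.

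The essential and hardest step is to show that, once $\Psi_{H,S}$ is fixed, the diagonal embedding of~$\mathfrak h_u \cap V$ inside each isotypical component $V$ is unique up to the action of~$N_L(K)$. In the strongly solvable case this was achieved by introducing the linear functionals $\xi_\varphi$, unique up to scaling, and proving (Proposition~\ref{prop_crucial} and its consequence Corollary~\ref{crl_maximal_linear_functions}) that the $\xi_\varphi$ attached to maximal active roots determine all the others via commutator identities in~$\mathfrak g$. The analogue here is to attach, to each isotypical component~$V$, an element of $\Hom_S(W^{\oplus k}, W^{\oplus(k - r(V))})$ well-defined up to the natural $\GL$-ambiguity in the target, to establish commutator relations linking these data across isotypical components (a genuine generalization of Proposition~\ref{prop_crucial}), and finally to show that $N_L(K)$ acts transitively on the admissible configurations. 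The main obstacle will be this last transitivity statement: unlike the torus case, where $T$-conjugation on each one-dimensional $\mathfrak g_{-\alpha}$ is plainly by scalars, in the general case one must analyze the action of $N_L(K)/K$ on each isotypical component and show it is large enough to absorb all remaining ambiguity, which will likely require a reduction to ``primitive'' spherical modules via the Kac--Benson--Ratcliff--Leahy lists and a structural description of~$N_L(K)$ refining \cite[\S\,5.2]{BriP}.
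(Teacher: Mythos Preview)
The statement you are attempting to prove is labelled \emph{Conjecture} in the paper, and the paper gives no proof of it. Immediately after stating it, the author remarks that a proof ``in full generality has to be based on a large number of case-by-case considerations.'' So there is no paper proof to compare your proposal against; what you have written is a plausible strategic outline for attacking an open problem, not a proof, and you yourself identify the decisive step (transitivity of the $N_L(K)$-action on the admissible configurations of $\mathfrak h_u$ inside the isotypical components) as unproved.

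Beyond that structural point, a few places where your outline is thinner than it may appear. First, your analogue of the bound $c_\varphi \le 1$ is not quite right: for non-toral $S$, sphericity of the $S$-module $\mathfrak p_u/\mathfrak h_u$ does \emph{not} in general force the $S$-isotypical multiplicities to be at most~$1$ (already in the Kac--Benson--Ratcliff--Leahy lists there are spherical modules with repeated irreducible summands), and you have stated no analogue of the linear-independence half of Proposition~\ref{prop_sphericity_criterion}. Second, up to the last paragraph you use only that $\mathfrak h_u$ is an $S$-submodule of $\mathfrak p_u$; the fact that it is a Lie subalgebra normalized by $K$ is what produces the commutator constraints generalizing Proposition~\ref{prop_crucial}, and those constraints are precisely what one needs in order to reduce to maximal generalized active roots as in Corollary~\ref{crl_maximal_linear_functions}. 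Formulating and proving that generalization is genuine work, not a formality. Third, even granting all of the above, the final transitivity claim requires an explicit description of how $N_L(K)/K$ acts on the $S$-isotypical pieces of $\mathfrak p_u$; in the torus case $N_L(K)$ contains $T$, which acts by independent scalars on the one-dimensional root spaces, but for general $K$ there is no such cheap source of transitivity, and this is exactly where the author anticipates the case-by-case analysis.
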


If this conjecture is true, then it can serve as a first
approximation to an explicit classification (that is, in terms of
Lie algebras) of arbitrary spherical subgroups in reductive groups,
just like Theorem~\ref{thm_first_approx} serves as a first
approximation to the explicit classification of strongly solvable
spherical subgroups. It seems to the author that, in contrast to the
strongly solvable case, a proof of the conjecture in full generality
has to be based on a large number of case-by-case considerations.

The importance of Conjecture~\ref{conj_first_approx} (or perhaps of
a similar result) also consists in the fact that it provides
``normal forms'' for spherical subgroups, which creates a background
for solving problems~(\ref{P1}) and~(\ref{P2}) in a way similar to
that in the strongly solvable case. More precisely, it seems to be
possible to compute the principal combinatorial invariants of a
spherical subgroup given by a normal form of it, as well as to find
a normal form of a spherical subgroup given by its principal
combinatorial invariants (or by its homogeneous spherical datum).

\appendix

\section{Homogeneous bundles} \label{sect_homogeneous_bundles}

In this appendix we recall the construction of a homogeneous bundle.
Let $L$ be a group, $K$ a subgroup of~$L$, and $Z$ an arbitrary
$K$-variety. By definition, the homogeneous bundle $L *_K Z$ over $L
/ K$ associated with $Z$ is the quotient set of $L \times Z$ by the
action of $K$ given by the formula $k (l, z) = (lk^{-1}, kz)$, where
$k \in K$, $l \in L$, $z \in Z$. Clearly, $L *_K Z$ is equipped with
an action of $L$ induced by the natural action on $L \times Z$ by
left translation of the first factor. Moreover, there is a natural
$L$-equivariant map $L *_K Z \to L / K$, which justifies the term
``homogeneous bundle over~$L / K$''.

\begin{theorem}[see {\cite[Corollary~2]{Bia}, \cite[Theorem~4.19]{PV}}]
\label{thm_homogeneous_bundles} If $Z$ is covered by $K$-stable
quasiprojective open subsets, then the set $L *_K Z$ has the
structure of an algebraic variety such that the quotient map $L
\times Z \to L *_K Z$ by the action of $K$ is a geometric quotient.
\end{theorem}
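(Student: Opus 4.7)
The plan is to reduce the construction of $L *_K Z$ as an algebraic variety to the general theorem on geometric quotients by free actions of algebraic groups, namely \cite[Corollary~2]{Bia} (equivalently \cite[Theorem~4.19]{PV}), which asserts that a free $K$-action on a variety $X$ admits a geometric quotient whenever $X$ can be covered by $K$-stable quasiprojective open subsets. Thus the proof boils down to checking freeness of the $K$-action on $L \times Z$ and producing the required cover.

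Freeness is immediate from the definition of the action $k(l,z) = (lk^{-1}, kz)$: if $k$ fixes a pair $(l,z)$, then $lk^{-1} = l$, which forces $k = e$, since $K$ already acts freely on $L$ by right translation. To produce the cover, I would first invoke Chevalley's theorem to realize the homogeneous space $L/K$ as a smooth algebraic variety equipped with a canonical morphism $\pi \colon L \to L/K$. Choosing an affine open covering $\{U_j\}$ of $L/K$ and setting $\widetilde U_j = \pi^{-1}(U_j) \subset L$, one obtains $K$-stable (for right translation) open subsets of $L$ which are quasiprojective, since $L$, being an algebraic group, is itself quasiprojective. By the hypothesis on $Z$, there is a covering $\{Z_i\}$ of $Z$ by $K$-stable quasiprojective open subsets. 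The products $\widetilde U_j \times Z_i$ are then $K$-stable for the diagonal action, quasiprojective (as products of quasiprojective varieties), and cover $L \times Z$.

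Having assembled both ingredients, one applies the cited quotient theorem to produce the geometric quotient $(L \times Z)/K$, which is by definition $L *_K Z$. The main subtlety is the verification that the chosen cover $\{\widetilde U_j \times Z_i\}$ consists of $K$-stable opens for the \emph{diagonal} $K$-action: this is straightforward once one notes that each factor is itself $K$-stable under the relevant action. A minor additional check is that the natural $L$-action on $L *_K Z$, induced from left translation on the first factor of $L \times Z$ (which commutes with the $K$-action and hence descends), makes the quotient map $L \times Z \to L *_K Z$ an $L$-equivariant morphism — but this is formal from the universal property of geometric quotients.
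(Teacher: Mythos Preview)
The paper does not give its own proof of this theorem: it is stated as a citation of \cite[Corollary~2]{Bia} and \cite[Theorem~4.19]{PV}, with no proof environment following. Your proposal correctly unpacks why those cited results apply---verifying freeness of the $K$-action on $L \times Z$ and constructing the required $K$-stable quasiprojective cover---so it is consistent with, and more explicit than, what the paper does.
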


The assumptions of Theorem~\ref{thm_homogeneous_bundles} are
fulfilled in case of connected $K$ and normal $Z$
(see~\cite[Lemma~8]{Sum}) or in case of quasiprojective~$Z$. This
turns out to be enough for all homogeneous bundles considered in
this paper to be algebraic varieties in the indicated sense.

Below we list a few properties of homogeneous bundles.

\begin{proposition}[see {\cite[\S\,2.1]{Tim}}] \label{prop_morphism_to_L/K}
If an $L$-variety $X$ admits an $L$-equivariant morphism $\phi
\colon X \to L / K$, then $X \simeq L *_K Z$, where $Z =
\phi^{-1}(o)$.
\end{proposition}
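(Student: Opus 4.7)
The plan is to construct a natural $L$-equivariant morphism $L \times Z \to X$, show it is $K$-invariant with respect to the action defining $L *_K Z$, and then check that the induced morphism $L *_K Z \to X$ is an isomorphism.

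First I would set $Z = \phi^{-1}(o)$, where $o = eK \in L/K$. Since the action of $K$ on $L/K$ fixes $o$ and $\phi$ is $L$-equivariant, the subvariety $Z \subset X$ is $K$-stable, hence may be regarded as a $K$-variety. Consider the morphism $\Phi \colon L \times Z \to X$ defined by $\Phi(l,z) = lz$. This map is $L$-equivariant for the $L$-action on $L \times Z$ by left translation on the first factor. A direct check gives $\Phi(lk^{-1}, kz) = lk^{-1}(kz) = lz = \Phi(l,z)$ for every $k \in K$, so $\Phi$ is constant on the $K$-orbits of the action $k \cdot (l,z) = (lk^{-1}, kz)$. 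By the universal property of the geometric quotient (Theorem~\ref{thm_homogeneous_bundles}), $\Phi$ factors through an $L$-equivariant morphism $\overline \Phi \colon L *_K Z \to X$.

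Next I would verify that $\overline \Phi$ is a bijection on points. For surjectivity: given $x \in X$, write $\phi(x) = lo$ for some $l \in L$; then $\phi(l^{-1}x) = o$, so $l^{-1}x \in Z$ and $x = \Phi(l, l^{-1}x)$. For injectivity: if $\Phi(l_1, z_1) = \Phi(l_2, z_2)$, then applying $\phi$ yields $l_1 o = l_2 o$, so $k = l_1^{-1}l_2 \in K$ and $z_1 = l_1^{-1}l_2 z_2 = kz_2$, whence $(l_1, z_1) = (l_2 k^{-1}, k z_2)$ represents the same class in $L *_K Z$.

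Finally I would upgrade this bijection to an isomorphism of varieties. The cleanest way is to work with the fiber product $L \times_{L/K} X$: the morphism $(l,x) \mapsto (l, l^{-1}x)$ gives an $L$-equivariant isomorphism $L \times_{L/K} X \xrightarrow{\sim} L \times Z$, under which the second projection $L \times_{L/K} X \to X$ becomes the map $\Phi$. Since the quotient morphism $L \to L/K$ is a principal $K$-bundle, the base change $L \times_{L/K} X \to X$ is also a principal $K$-bundle, and hence is itself a geometric quotient for the $K$-action $k \cdot (l,x) = (lk^{-1}, x)$. Transporting this through the isomorphism identifies $\Phi$ with a geometric quotient of $L \times Z$ by the $K$-action used in the definition of $L *_K Z$; by uniqueness of geometric quotients (the same one used in Theorem~\ref{thm_homogeneous_bundles}), the induced map $\overline \Phi \colon L *_K Z \to X$ is an $L$-equivariant isomorphism. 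The main technical point is this last step, i.e.\ identifying $\Phi$ with a geometric quotient; everything before that is essentially formal manipulation with the defining actions.
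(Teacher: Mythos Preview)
The paper does not actually prove this proposition: it is stated in Appendix~\ref{sect_homogeneous_bundles} with a bare reference to \cite[\S\,2.1]{Tim} and no argument. So there is no ``paper's own proof'' to compare against.

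Your argument is correct and is essentially the standard one found in the cited reference. The fiber-product step at the end is the right way to promote the set-theoretic bijection to an isomorphism of varieties, and it has the pleasant side effect of showing that $L *_K Z$ exists as a variety in this situation (since $X$ itself realizes the geometric quotient), without needing the quasi-projectivity hypothesis of Theorem~\ref{thm_homogeneous_bundles} separately. One small comment: when you invoke ``the universal property of the geometric quotient'' to factor $\Phi$ through $\overline\Phi$, you are implicitly assuming $L *_K Z$ already exists; it is cleaner to run the fiber-product argument first and deduce both existence and the isomorphism simultaneously, which is effectively what your last paragraph does.
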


\begin{proposition}[see {\cite[Proposition~4.22]{PV} or
\cite[\S\,2.1]{Tim}}]
\label{prop_smoothness} The variety $L *_K Z$ is smooth
\textup(resp. normal\textup) if and only if $Z$ is smooth
\textup(resp. normal\textup).
\end{proposition}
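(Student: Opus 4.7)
The plan is to exploit the fact that $L *_K Z$ is, in a suitable sense, locally a product with $Z$. Concretely, I would proceed as follows.

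First, I would set up the two natural morphisms in the picture: the quotient map $q \colon L \times Z \to L *_K Z$ coming from the free $K$-action $k(l,z) = (lk^{-1}, kz)$, and the projection $\pi \colon L *_K Z \to L/K$, $[l,z] \mapsto lK$. The base change of $\pi$ along the principal $K$-bundle $L \to L/K$ gives a canonical $K$-equivariant isomorphism $L \times Z \simeq L \times_{L/K}(L *_K Z)$, under which $q$ becomes the projection to the second factor. Since $L \to L/K$ is faithfully flat and smooth (indeed a principal $K$-bundle for the smooth group~$K$), the map $q$ is itself faithfully flat and smooth.

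Second, I would note that smoothness and normality are both properties that are detected by faithfully flat morphisms with smooth (hence geometrically regular) fibers. Namely, if $f \colon Y \to X$ is faithfully flat and smooth, then $Y$ is smooth (resp. normal) if and only if $X$ is smooth (resp. normal). Applying this to $q$, the variety $L *_K Z$ is smooth (resp. normal) if and only if $L \times Z$ is. But $L$ is an algebraic group, so it is smooth, and the projection $L \times Z \to Z$ is smooth with geometrically regular fibers; applying the same principle in the other direction, $L \times Z$ is smooth (resp. normal) if and only if $Z$ is. Chaining the two equivalences yields the statement.

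The one place where the argument requires care is invoking Theorem~\ref{thm_homogeneous_bundles} to guarantee that $q$ really is a geometric quotient, so that the base-change identification of the previous paragraph is actually an isomorphism of varieties rather than of sets. Once that is in place, everything reduces to the standard fact that smoothness and normality descend and ascend along faithfully flat smooth morphisms (so étale-local trivializations of the principal $K$-bundle $L \to L/K$ can be avoided if desired, although they give an alternative path: an étale cover $U \to L/K$ along which $L$ trivializes also trivializes $L *_K Z$ as $U \times Z$, and both properties in question are étale-local). The main obstacle is therefore not conceptual but bookkeeping: verifying that the local trivializations (or the faithful flatness of $q$) hold under the hypotheses on~$Z$ used in Theorem~\ref{thm_homogeneous_bundles}, which is why one must invoke that theorem at the outset.
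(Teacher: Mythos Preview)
The paper does not actually prove this proposition: it is stated with references to \cite[Proposition~4.22]{PV} and \cite[\S\,2.1]{Tim} and no proof environment follows. Your argument is the standard one and is correct; it is essentially what one finds in those references, where the key point is precisely that the quotient map $L \times Z \to L *_K Z$ is a locally trivial fibration (in the \'etale topology) with fiber~$K$, so smoothness and normality of the total space are equivalent to those of~$Z$.
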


\begin{proposition} \label{prop_completeness}
Let $P \subset G$ be a parabolic subgroup and let $Z$ be a
$P$-variety. The variety $G *_P Z$ is complete if and only if $Z$ is
complete.
\end{proposition}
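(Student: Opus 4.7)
The strategy is to analyse the natural $G$-equivariant morphism $\pi \colon X = G *_P Z \to G/P$ sending $[g,z]$ to $gP$. Since $P$ is parabolic, $G/P$ is projective and hence complete. The structure morphism of $X$ to a point factors through $\pi$, and the composite $G/P \to \mathrm{pt}$ is already proper, so it is enough to show that $\pi$ is proper precisely when $Z$ is complete.

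For the direction ``$X$ complete implies $Z$ complete'' I would identify $Z$ with the fiber $\pi^{-1}(o) = \lbrace [e,z] \mid z \in Z \rbrace$, which is a closed subvariety of $X$; a closed subvariety of a complete variety is complete, so $Z$ is complete.

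For the reverse direction, assume $Z$ is complete. I would invoke the Zariski-local triviality of the principal $P$-bundle $G \to G/P$: the Bruhat decomposition furnishes a Zariski-open affine subset of $G/P$ (the big cell) admitting an explicit section through the unipotent radical of the opposite parabolic, and $G$-translates of this open subset cover $G/P$. For any open $U \subset G/P$ carrying a section $\sigma \colon U \to G$, the assignment $[g,z] \mapsto (gP,\, \sigma(gP)^{-1} g \cdot z)$ yields an isomorphism $\pi^{-1}(U) \xrightarrow{\sim} U \times Z$ intertwining $\pi|_{\pi^{-1}(U)}$ with the first projection. Since $Z$ is complete, each projection $U \times Z \to U$ is proper, and because properness is local on the base, $\pi$ itself is proper; hence $X$ is complete.

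No serious obstacle arises: modulo the Zariski-local triviality of $G \to G/P$ for parabolic $P$, the argument reduces to the elementary fact that product projections with complete second factor are proper. This local triviality is the only step that genuinely uses the hypothesis that $P$ is parabolic rather than an arbitrary closed subgroup; for a general closed subgroup one would only have \'etale-local triviality, which would still suffice here but is unnecessary in the present setting.
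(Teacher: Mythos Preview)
Your proposal is correct and follows essentially the same approach as the paper: both prove the forward direction by identifying $Z$ with the closed fiber $\pi^{-1}(o)$, and the reverse direction by locally trivializing $\pi$ over translates of the big cell $P_u^- o$ so that $\pi$ becomes a product projection $U \times Z \to U$. The only difference is cosmetic: you invoke the slogan ``properness is local on the base,'' whereas the paper unwinds the definition of completeness directly by checking that $(G *_P Z) \times W \to (G/P) \times W$ is closed for every~$W$.
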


The proof of this proposition provided below was communicated to the
author by D.\,A.~Timashev.

\begin{proof}
We may assume that $P \supset B$. Let $P^-$ be a parabolic subgroup
opposite to $P$ (that is, $P \cap P^-$ is a Levi subgroup of
both~$P$ and~$P^-$) and let $P_u^-$ be the unipotent radical
of~$P^-$. Consider the natural morphism $\phi \colon G *_P Z \to G /
P$. Since $Z \simeq \phi^{-1}(o)$, $Z$ is complete whenever $G *_P
Z$ is so. Conversely, assume that $Z$ is complete. To prove that $G
*_P Z$ is complete, by definition one has to show that for every
algebraic variety $W$ the projection morphism $(G *_P Z) \times W
\to W$ is closed. Since $G / P$ is complete, it suffices to show
that the morphism
$$
(G *_P Z) \times W \to (G / P) \times W
$$
extending $\phi$ is closed. As $G / P$ is covered by a finite number
of shifts of the open subset $P^-o = P_u^-o \simeq P_u^-$, the
problem reduces to showing that the morphism
$$
\phi^{-1}(P_u^-o) \times W \to P_u^-o \times W
$$
is closed. But $\phi^{-1}(P_u^-o) \simeq P_u^- \times Z$, and so the
required result follows from the completeness of~$Z$.
\end{proof}

For a more detailed discussion of the construction of a homogeneous
bundle see~\cite{Bia}, \cite[\S\,4.8]{PV}, or~\cite[\S\,2.1]{Tim}.

\section{Strongly solvable wonderful subgroups in small rank}
\label{sect_lists}

This appendix is illustrative. Here we list all strongly solvable
spherical systems for all semisimple groups $G$ of rank at most~$2$
and also for any simple group of type~$\mathsf A_3$. Since every
strongly solvable wonderful subgroup $H \subset G$ is spherically
closed (see Corollary~\ref{crl_str_solv_wond_sph_clos}), $H$
contains the center of~$G$. Therefore our lists depend only on the
Dynkin diagram of~$G$. We recall that by
Corollary~\ref{crl_strongly_solvable_SS} every strongly solvable
spherical system has the form $(\varnothing, \Pi', \mathcal D^a)$
for some subset $\Pi' \subset \Pi$.

For every strongly solvable spherical system, we indicate all
distinguished subsets of colors satisfying the conditions of
Proposition~\ref{prop_strongly_solvable}. For every such subset of
colors, we also indicate the corresponding admissible map and the
corresponding set of active roots divided into equivalence classes.

It turns out that in all cases it is enough to list all strongly
solvable spherical systems $(\varnothing, \Pi', \mathcal D^a)$ with
$\Pi' = \Pi$ (according to the general theory, these are called
\textit{cuspidal}; see~\cite[\S\,3.4]{Lu01}), since all other
strongly solvable spherical systems (together with all possible
admissible maps and sets of active roots) naturally come from
strongly solvable spherical systems corresponding to proper
subdiagrams of the Dynkin diagram of~$\Pi$. We note that cuspidal
strongly solvable spherical systems are uniquely determined by the
set~$\mathcal D^a$, and in this case the set $\mathcal D$ of colors
of the corresponding wonderful $G$-variety coincides with~$\mathcal
D^a$.

For every~$G$, the only strongly solvable spherical system
$(\varnothing, \Pi', \mathcal D^a)$ with $\Pi' = \varnothing$
coincides with $(\varnothing, \varnothing, \varnothing)$ and
corresponds to the wonderful variety $G / B^-$ (see
Example~\ref{example_hom_wond}). In this case, the admissible map
vanishes and the set of active roots is empty.

If $\rk G = 1$, then the only cuspidal strongly solvable spherical
system has the form $(\varnothing, \lbrace \alpha \rbrace, \lbrace
D^+, D^-\rbrace)$, where $\alpha$ is the unique simple root of~$G$
and $\langle \varkappa(D^+), \alpha \rangle = \langle
\varkappa(D^-), \alpha \rangle = 1$. The subsets $\lbrace D^+
\rbrace, \lbrace D^- \rbrace \subset \mathcal D^a$ are the only
distinguished subsets of colors satisfying the conditions of
Proposition~\ref{prop_strongly_solvable}, both determine the same
admissible map $\eta$ given by $\eta(\alpha, \alpha) = 1$ and the
same set of active roots, which is just $\lbrace \alpha \rbrace$.

If $\rk G = 2$, then, depending on the type of the Dynkin diagram
of~$\Pi$, the information about all cuspidal strongly solvable
spherical systems is presented in Table~\ref{table_A1timesA1}
(type~$\mathsf A_1 \times \mathsf A_1$), Table~\ref{table_A2}
(type~$\mathsf A_2$), Table~\ref{table_B2} (type~$\mathsf B_2$), and
Table~\ref{table_G2} (type~$\mathsf G_2$).

At last, the information about all cuspidal strongly solvable
spherical systems in the case where $G$ is of type~$\mathsf A_3$ is
presented in Table~\ref{table_A3}.

The notation used in Tables~\ref{table_A1timesA1}--\ref{table_A3} is
as follows. If $G$ is simple, then $\alpha_i$ denotes the $i$th
simple root of~$G$. If $G$ is of type $\mathsf A_1 \times \mathsf
A_1$, then $\alpha_1, \alpha_2$ denote the two different simple
roots of~$G$. Each matrix in the column ``$\mathcal D^a$''
represents the set $\mathcal D^a$ of a cuspidal strongly solvable
spherical system. Elements of $\mathcal D^a$ are in bijection with
rows of the matrix. For every $D \in \mathcal D^a$, the $i$th
element in the corresponding row is the value $\langle \varkappa(D),
\alpha_i \rangle$. In the column ``DSC'' (the heading is an
abbreviation for ``distinguished subsets of colors'') we list all
subsets of $\mathcal D^a$ satisfying the conditions of
Proposition~\ref{prop_strongly_solvable}. Each subset is given by
numbers of the corresponding rows of the matrix in the column
``$\mathcal D^a$''. For every such subset $\mathcal D'$, in the
column ``Admissible map'' (resp.~``Active roots'') we indicate the
matrix of the corresponding admissible map (resp. the equivalence
classes of active roots) determined by~$\mathcal D'$.

\renewcommand{\tabcolsep}{0pt}

\newcommand{\wl}{2cm}
\newcommand{\wc}{4cm}
\renewcommand{\wr}{4cm}

\vspace{2em}

\begin{table}[h]

\caption{Cuspidal strongly solvable spherical systems in
type~$\mathsf A_1 \times \mathsf A_1$} \label{table_A1timesA1}

\begin{tabular}{|c|c|c|}
\hline No. & $\mathcal D^a$ &
\begin{tabular}{p{\wl}|p{\wc}|c}
\centering{DSC} & \centering{Admissible map} &
\begin{tabular}{p{\wr}}\centering{Active roots}\end{tabular} \\
\end{tabular}\\

\hline

1 & $\begin{bmatrix} 1 & 0 \\ 1 & 0 \\ 0 & 1 \\ 0 & 1\end{bmatrix}$
&

\begin{tabular}{p{\wl}|p{\wc}|c}

\centering{\begin{tabular}{c}$1,3\lefteqn{\text{ or}}$ \\
$1,4\lefteqn{\text{ or}}$ \\ $2,3\lefteqn{\text{ or}}$ \\
$2,4$ \end{tabular}} & \centering{$\begin{bmatrix} 1 & 0 \\ 0 & 1
\end{bmatrix}$} &
\begin{tabular}{p{\wr}}\centering{$\lbrace \alpha_1 \rbrace,
\lbrace \alpha_2 \rbrace$} \end{tabular} \\

\end{tabular}\\

\hline

2 & $\begin{bmatrix} 1 & 0 \\ 0 & 1 \\ 1 & 1 \end{bmatrix}$ &

\begin{tabular}{p{\wl}|p{\wc}|c}
\centering{$\vphantom{\begin{bmatrix} 1 & 0 \\ 0 & 1
\\ 1 & 1 \end{bmatrix}}3$} & \centering{$\begin{bmatrix} 1 & 1 \\
1 & 1
\end{bmatrix}$} &
\begin{tabular}{p{\wr}}
\centering{$\lbrace \alpha_1, \alpha_2 \rbrace$}\end{tabular}\\

\end{tabular}\\

\hline
\end{tabular}
\end{table}



\begin{table}[!h]

\caption{Cuspidal strongly solvable spherical systems in
type~$\mathsf A_2$} \label{table_A2}

\begin{tabular}{|c|c|c|}
\hline No. & $\mathcal D^a$ &
\begin{tabular}{p{\wl}|p{\wc}|c}
\centering{DSC} & \centering{Admissible map} &
\begin{tabular}{p{\wr}}\centering{Active roots}\end{tabular} \\
\end{tabular}\\

\hline

1 & $\begin{bmatrix} 1 & 0 \\ 0 & 1 \\ 1 & -1 \\ -1 &
1\end{bmatrix}$ &

\begin{tabular}{p{\wl}|p{\wc}|c}

\centering{$1,2$} & \centering{$\begin{bmatrix} 1 & 0 \\ 0 & 1
\end{bmatrix}$} &
\begin{tabular}{p{\wr}}\centering{$\lbrace \alpha_1 \rbrace,
\lbrace \alpha_2 \rbrace$} \end{tabular} \\

\hline

\centering{$1,4$} & \centering{$\begin{bmatrix} 1 & 0 \\
-1 & 1 \end{bmatrix}$} &
\begin{tabular}{p{\wr}} \centering{$\lbrace \alpha_1 {+} \alpha_2
\rbrace, \lbrace \alpha_2 \rbrace$} \end{tabular}\\

\hline

\centering{$2,3$} & \centering{$\begin{bmatrix} 1 & -1 \\ 0 & 1
\end{bmatrix}$} &
\begin{tabular}{p{\wr}} \centering{$\lbrace \alpha_1 \rbrace,
\lbrace \alpha_1 {+} \alpha_2 \rbrace$} \end{tabular}\\

\end{tabular}\\

\hline

2 & $\begin{bmatrix} 1 & 1 \\ 1 & -2 \\ -2 & 1 \end{bmatrix}$ &

\begin{tabular}{p{\wl}|p{\wc}|c}
\centering{$\vphantom{\begin{bmatrix} 1 & 1 \\ 1 & -2
\\ -2 & 1 \end{bmatrix}}1$} & \centering{$\begin{bmatrix} 1 & 1 \\
1 & 1
\end{bmatrix}$} &
\begin{tabular}{p{\wr}}
\centering{$\lbrace \alpha_1, \alpha_2 \rbrace$}\end{tabular}\\

\end{tabular}\\

\hline
\end{tabular}
\end{table}



\begin{table}[!h]

\caption{Cuspidal strongly solvable spherical systems in
type~$\mathsf B_2$} \label{table_B2}

\begin{tabular}{|c|c|c|}
\hline No. & $\mathcal D^a$ &
\begin{tabular}{p{\wl}|p{\wc}|c}
\centering{DSC} & \centering{Admissible map} &
\begin{tabular}{p{\wr}}\centering{Active roots}\end{tabular} \\
\end{tabular}\\

\hline

1 & $\begin{bmatrix} 1 & 0 \\ 0 & 1 \\ 1 & -1 \\ -2 & 1
\end{bmatrix}$ &

\begin{tabular}{p{\wl}|p{\wc}|c}

\centering{$1,2$} & \centering{$\begin{bmatrix} 1 & 0 \\ 0 & 1
\end{bmatrix}$} &
\begin{tabular}{p{\wr}}\centering{$\lbrace \alpha_1 \rbrace,
\lbrace \alpha_2 \rbrace$} \end{tabular} \\

\hline

\centering{$1,4$} & \centering{$\begin{bmatrix} 1 & 0 \\
-2 & 1 \end{bmatrix}$} &
\begin{tabular}{p{\wr}}\centering{$\lbrace \alpha_1 {+}
2\alpha_2 \rbrace, \lbrace \alpha_2 \rbrace$} \end{tabular} \\

\hline

\centering{$2,3$} & \centering{$\begin{bmatrix} 1 & -1 \\ 0 & 1
\end{bmatrix}$} &
\begin{tabular}{p{\wr}}
\centering{$\lbrace \alpha_1 \rbrace, \lbrace \alpha_1 {+}
\alpha_2 \rbrace$} \end{tabular}\\

\end{tabular}\\

\hline

2 & $\begin{bmatrix} 1 & 0 \\ -1 & 1 \\ 1 & -1 \\ -1 & 1
\end{bmatrix}$ &

\begin{tabular}{p{\wl}|p{\wc}|c}

\centering{\begin{tabular}{c}$1,2$\\ or \\ $1,4$ \end{tabular}} &
\centering{$\vphantom{\begin{bmatrix} 1 & 0 \\ -1 & 1 \\ 1 & -1 \\
-1 & 1 \end{bmatrix}}
\begin{bmatrix} 1 & 0 \\
-1 & 1
\end{bmatrix}$} &
\begin{tabular}{p{\wr}}\centering{$\lbrace \alpha_1 {+} \alpha_2 \rbrace,
\lbrace \alpha_2 \rbrace$} \end{tabular} \\

\end{tabular}\\

\hline

3 & $\begin{bmatrix} 1 & 1 \\ 1 & -2 \\ -3 & 1 \end{bmatrix}$ &

\begin{tabular}{p{\wl}|p{\wc}|c}
\centering{$\vphantom{\begin{bmatrix} 1 & 1 \\ 1 & -2
\\ -2 & 1 \end{bmatrix}}1$} & \centering{$\begin{bmatrix} 1 & 1 \\
1 & 1 \end{bmatrix}$} &
\begin{tabular}{p{\wr}}
\centering{$\lbrace \alpha_1, \alpha_2 \rbrace$}\end{tabular}\\

\end{tabular}\\

\hline
\end{tabular}
\end{table}


\begin{table}[!h]

\caption{Cuspidal strongly solvable spherical systems in
type~$\mathsf G_2$} \label{table_G2}

\begin{tabular}{|c|c|c|}
\hline No. & $\mathcal D^a$ &
\begin{tabular}{p{\wl}|p{\wc}|c}
\centering{DSC} & \centering{Admissible map} &
\begin{tabular}{p{\wr}}\centering{Active roots}\end{tabular} \\
\end{tabular}\\

\hline

1 & $\begin{bmatrix} 1 & 0 \\ 0 & 1 \\ 1 & -3 \\ -1 & 1
\end{bmatrix}$ &

\begin{tabular}{p{\wl}|p{\wc}|c}

\centering{$1,2$} & \centering{$\begin{bmatrix} 1 & 0 \\ 0 & 1
\end{bmatrix}$} &
\begin{tabular}{p{\wr}}\centering{$\lbrace \alpha_1 \rbrace,
\lbrace \alpha_2 \rbrace$} \end{tabular} \\

\hline

\centering{$1,4$} & \centering{$\begin{bmatrix} 1 & 0 \\ -1 & 1
\end{bmatrix}$} &
\begin{tabular}{p{4cm}}
\centering{$\lbrace \alpha_1 {+} \alpha_2 \rbrace, \lbrace \alpha_2 \rbrace$} \end{tabular}\\

\hline

\centering{$2,3$} & \centering{$\begin{bmatrix} 1 & -3 \\
0 & 1 \end{bmatrix}$} &
\begin{tabular}{p{\wr}}\centering{$\lbrace \alpha_1 \rbrace,
\lbrace 3\alpha_1 {+} \alpha_2 \rbrace$} \end{tabular} \\

\end{tabular}\\

\hline

2 & $\begin{bmatrix} 1 & -1 \\ 0 & 1 \\ 1 & -2 \\ -1 & 1
\end{bmatrix}$ &

\begin{tabular}{p{\wl}|p{\wc}|c}

\centering{$1,2$} & \centering{$\begin{bmatrix} 1 & -1 \\
0 & 1
\end{bmatrix}$} &
\begin{tabular}{p{\wr}}\centering{$\lbrace \alpha_1 \rbrace,
\lbrace \alpha_1 {+} \alpha_2 \rbrace$} \end{tabular} \\

\hline

\centering{$2,3$} & \centering{$\begin{bmatrix} 1 & -2 \\ 0 & 1
\end{bmatrix}$} &
\begin{tabular}{p{\wr}}\centering{$\lbrace \alpha_1 \rbrace,
\lbrace 2\alpha_1 {+} \alpha_2 \rbrace$} \end{tabular} \\

\end{tabular}\\

\hline

3 & $\begin{bmatrix} 1 & 1 \\ 1 & -4 \\ -2 & 1 \end{bmatrix}$ &

\begin{tabular}{p{\wl}|p{\wc}|c}
\centering{$\vphantom{\begin{bmatrix} 1 & 1 \\ 1 & -2
\\ -2 & 1 \end{bmatrix}}1$} & \centering{$\begin{bmatrix} 1 & 1 \\
1 & 1 \end{bmatrix}$} &
\begin{tabular}{p{\wr}}
\centering{$\lbrace \alpha_1, \alpha_2 \rbrace$}\end{tabular}\\

\end{tabular}\\

\hline
\end{tabular}
\end{table}



\begin{longtable}{|c|c|c|}

\caption{Cuspidal strongly solvable spherical systems in
type~$\mathsf A_3$} \label{table_A3} \\

\hline No. & $\mathcal D^a$ &
\begin{tabular}{p{\wl}|p{\wc}|c}
\centering{DSC} & \centering{Admissible map} &
\begin{tabular}{p{\wr}}\centering{Active roots}\end{tabular} \\
\end{tabular}\\ \endfirsthead

\hline No. & $\mathcal D^a$ &
\begin{tabular}{p{\wl}|p{\wc}|c}
\centering{DSC} & \centering{Admissible map} &
\begin{tabular}{p{\wr}}\centering{Active roots}\end{tabular} \\
\end{tabular}\\ \endhead

\hline

1 & $\begin{bmatrix} 1 & 0 & 0 \\ 0 & 1 & 0 \\ 0 & 0 & 1 \\ 1 & -1 &
0 \\ -1 & 1 & -1 \\ 0 & -1 & 1
\end{bmatrix}$ &

\begin{tabular}{p{\wl}|p{\wc}|c}

\centering{$1,2,3$} & \centering{$\begin{bmatrix} 1 & 0 & 0 \\ 0 & 1
& 0 \\ 0 & 0 & 1 \end{bmatrix}$} &
\begin{tabular}{p{\wr}}\centering{$\lbrace \alpha_1 \rbrace$,
$\lbrace \alpha_2 \rbrace$, $\lbrace \alpha_3 \rbrace$} \end{tabular} \\

\hline

\centering{$1,2,6$} & \centering{$\begin{bmatrix} 1 & 0 & 0 \\
0 & 1 & 0 \\ 0 & -1 & 1 \end{bmatrix}$} &
\begin{tabular}{p{\wr}}\centering{$\lbrace \alpha_1 \rbrace$,
$\lbrace \alpha_2 {+} \alpha_3 \rbrace$, $\lbrace
\alpha_3 \rbrace$} \end{tabular} \\

\hline

\centering{$1,3,5$} & \centering{$\begin{bmatrix} 1 & 0 & 0 \\ -1 &
1 & -1 \\ 0 & 0 & 1 \end{bmatrix}$} &
\begin{tabular}{p{4cm}} \centering{$\lbrace \alpha_1 {+} \alpha_2
\rbrace$, $\lbrace \alpha_2 \rbrace$, $\lbrace \alpha_2 {+}
\alpha_3 \rbrace$} \end{tabular}\\

\hline

\centering{$2,3,4$} & \centering{$\begin{bmatrix} 1 & -1 & 0 \\ 0 &
1 & 0 \\ 0 & 0 & 1 \end{bmatrix}$}
&\begin{tabular}{p{\wr}}\centering{$\lbrace \alpha_1 \rbrace$,
$\lbrace \alpha_1 + \alpha_2 \rbrace$,
$\lbrace \alpha_3 \rbrace$} \end{tabular}\\

\hline

\centering{$2,4,6$} & \centering{$\begin{bmatrix} 1 & -1 & 0 \\ 0 &
1 & 0 \\ 0 & -1 & 1 \end{bmatrix}$} &
\begin{tabular}{p{\wr}} \centering{$\lbrace \alpha_1 \rbrace$,
$\lbrace \alpha_1 {+} \alpha_2 {+} \alpha_3 \rbrace$,
$\lbrace \alpha_3 \rbrace$} \end{tabular}\\

\end{tabular}\\

\hline

2 & $\begin{bmatrix} 1 & 0 & 0 \\ -1 & 1 & 0 \\ 0 & -1 & 1 \\ 1 & -1
& 0 \\ 0 & 1 & -1 \\ 0 & 0 & 1 \end{bmatrix}$ &

\begin{tabular}{p{\wl}|p{\wc}|c}

\centering{$1,2,3$} & \centering{$\begin{bmatrix} 1 & 0 & 0 \\ -1 &
1 & 0 \\ 0 & -1 & 1 \end{bmatrix}$} &
\begin{tabular}{p{\wr}}\centering{$\lbrace \alpha_1 {+} \alpha_2 {+}
\alpha_3 \rbrace$, $\lbrace \alpha_2 {+} \alpha_3 \rbrace$,
$\lbrace \alpha_3 \rbrace$} \end{tabular} \\

\hline

\centering{$1,2,6$} & \centering{$\begin{bmatrix} 1 & 0 & 0 \\
-1 & 1 & 0 \\ 0 & 0 & 1 \end{bmatrix}$} &
\begin{tabular}{p{\wr}}\centering{$\lbrace \alpha_1 {+} \alpha_2
\rbrace$, $\lbrace \alpha_2 \rbrace$, $\lbrace \alpha_3
\rbrace$} \end{tabular}\\

\hline

\centering{$1,5,6$} & \centering{$\begin{bmatrix} 1 & 0 & 0 \\ 0 & 1
& -1 \\ 0 & 0 & 1 \end{bmatrix}$} &
\begin{tabular}{p{\wr}} \centering{$\lbrace \alpha_1 \rbrace$,
$\lbrace \alpha_2 \rbrace$, $\lbrace \alpha_2 {+} \alpha_3 \rbrace$}
\end{tabular}\\

\hline

\centering{$4,5,6$} & \centering{$\begin{bmatrix} 1 & -1 & 0 \\ 0 &
1 & -1 \\ 0 & 0 & 1 \end{bmatrix}$} &
\begin{tabular}{p{\wr}} \centering{$\lbrace \alpha_1 \rbrace$,
$\lbrace \alpha_1 {+} \alpha_2 \rbrace$, $\lbrace \alpha_1 {+}
\alpha_2 {+} \alpha_3 \rbrace$} \end{tabular}\\

\end{tabular}\\

\hline

3 & $\begin{bmatrix} 1 & 0 & 0 \\ 0 & 1 & 1 \\ 1 & -1 & 0 \\ -1 & 1
& -2 \\ 0 & -2 & 1 \end{bmatrix}$ &

\begin{tabular}{p{\wl}|p{\wc}|c}

\centering{$1,2$} & \centering{$\begin{bmatrix} 1 & 0 & 0 \\ 0 & 1 &
1 \\ 0 & 1 & 1 \end{bmatrix}$} &
\begin{tabular}{p{\wr}}\centering{$\lbrace \alpha_1 \rbrace$,
$\lbrace \alpha_2, \alpha_3 \rbrace$} \end{tabular} \\

\hline

\centering{$2,3$} & \centering{$\begin{bmatrix} 1 & -1 & 0 \\
0 & 1 & 1 \\ 0 & 1 & 1 \end{bmatrix}$} &
\begin{tabular}{p{\wr}}\centering{$\lbrace \alpha_1 \rbrace$,
$\lbrace \alpha_1 {+} \alpha_2, \alpha_3 \rbrace$} \end{tabular}\\

\end{tabular}\\

\hline

4 & $\begin{bmatrix} 1 & 0 & 1 \\ 0 & 1 & 0 \\ 1 & -1 & -1 \\ -1 & 1
& -1 \\ -1 & -1 & 1 \end{bmatrix}$ &

\begin{tabular}{p{\wl}|p{\wc}|c}

\centering{$1,2$} & \centering{$\begin{bmatrix} 1 & 0 & 1 \\ 0 & 1 &
0 \\ 1 & 0 & 1 \end{bmatrix}$} &
\begin{tabular}{p{\wr}}\centering{$\lbrace\alpha_1, \alpha_3 \rbrace$,
$\lbrace \alpha_2 \rbrace$} \end{tabular} \\

\hline

\centering{$1,4$} & \centering{$\begin{bmatrix} 1 & 0 & 1 \\
-1 & 1 & -1 \\ 1 & 0 & 1 \end{bmatrix}$} &
\begin{tabular}{p{\wr}}\centering{$\lbrace \alpha_1 {+} \alpha_2,
\alpha_2 {+} \alpha_3 \rbrace$,
$\lbrace \alpha_2 \rbrace$} \end{tabular}\\

\end{tabular}\\

\hline

5 & $\begin{bmatrix} 1 & 1 & 0 \\ 0 & 0 & 1 \\ 1 & -2 & 0 \\ -2 & 1
& -1
\\ 0 & -1 & 1 \end{bmatrix}$ &

\begin{tabular}{p{\wl}|p{\wc}|c}

\centering{$1,2$} & \centering{$\begin{bmatrix} 1 & 1 & 0 \\ 1 & 1 &
0 \\ 0 & 0 & 1 \end{bmatrix}$} &
\begin{tabular}{p{\wr}}\centering{$\lbrace \alpha_1, \alpha_2 \rbrace$,
$\lbrace \alpha_3 \rbrace$} \end{tabular} \\

\hline

\centering{$1,5$} & \centering{$\begin{bmatrix} 1 & 1 & 0 \\
1 & 1 & 0 \\ 0 & -1 & 1 \end{bmatrix}$} &
\begin{tabular}{p{\wr}}\centering{$\lbrace \alpha_1, \alpha_2 {+}
\alpha_3 \rbrace$, $\lbrace \alpha_3 \rbrace$} \end{tabular}\\

\end{tabular}\\

\hline

6 & $\begin{bmatrix} 1 & 0 & 1 \\ -1 & 1 & 0 \\ 1 & -1 & -1 \\ 0 & 1
& -1 \\ -1 & -1 & 1 \end{bmatrix}$ &

\begin{tabular}{p{\wl}|p{\wc}|c}

\centering{$1,2$} & \centering{$\begin{bmatrix} 1 & 0 & 1 \\ -1 & 1
& 0 \\ 1 & 0 & 1 \end{bmatrix}$} &
\begin{tabular}{p{\wr}}\centering{$\lbrace \alpha_1 {+} \alpha_2,
\alpha_3 \rbrace$, $\lbrace \alpha_2 \rbrace$} \end{tabular} \\

\hline

\centering{$1,4$} & \centering{$\begin{bmatrix} 1 & 0 & 1 \\
0 & 1 & -1 \\ 1 & 0 & 1 \end{bmatrix}$} &
\begin{tabular}{p{\wr}}\centering{$\lbrace \alpha_1, \alpha_2 {+}
\alpha_3 \rbrace$, $\lbrace \alpha_2 \rbrace$} \end{tabular}\\

\end{tabular}\\

\hline

7 & $\begin{bmatrix} 1 & -1 & 1 \\ 0 & 1 & 0
\\ 1 & 0 & -1 \\ -1 & 1 & -1 \\ -1 & 0 & 1 \end{bmatrix}$ &

\begin{tabular}{p{\wl}|p{\wc}|c}
\centering{$\vphantom{\begin{bmatrix} 1 & -1 & 1 \\ 0 & 1 & 0
\\ 1 & 0 & -1 \\ -1 & 1 & -1 \\ -1 & 0 & 1 \end{bmatrix}}1,2$} &
\centering{$\begin{bmatrix} 1 & -1 & 1 \\ 0 & 1 & 0 \\
1 & -1 & 1 \\ \end{bmatrix}$} &
\begin{tabular}{p{\wr}} \centering{$\lbrace \alpha_1,
\alpha_3 \rbrace$, $\lbrace \alpha_1 {+} \alpha_2, \alpha_2 {+}
\alpha_3 \rbrace$}\end{tabular}\\

\end{tabular}\\

\hline

8 & $\begin{bmatrix} 1 & 1 & 1 \\ 1 & -2 & -1
\\ -2 & 1 & -2 \\ -1 & -2 & 1 \end{bmatrix}$ &

\begin{tabular}{p{\wl}|p{\wc}|c}
\centering{$\vphantom{\begin{bmatrix} 1 & 1 & 1 \\ 1 & -2 & -1
\\ -2 & 1 & -2 \\ -1 & -2 & 1 \end{bmatrix}}1$} &
\centering{$\begin{bmatrix} 1 & 1 & 1 \\ 1 & 1 & 1 \\
1 & 1 & 1 \\ \end{bmatrix}$} &
\begin{tabular}{p{\wr}} \centering{$\lbrace \alpha_1,
\alpha_2, \alpha_3 \rbrace$}\end{tabular}\\

\end{tabular}\\

\hline

\end{longtable}

\end{document}